\documentclass[letterpaper, 11pt,  reqno]{amsart}
\usepackage{graphicx} 
\usepackage{amsmath,amssymb,amscd,amsthm,amsxtra,esint}
\usepackage{color}
\usepackage[margin=1.1in,marginparwidth=1.5cm, marginparsep=0.5cm]{geometry} 
\usepackage{tikz-cd}
\tikzset{smalltext/.style={"\textup{\small #1}" description}}

\usepackage[markup=default]{changes}
\definechangesauthor[name={Reviewer 1}, color=orange]{R1} 
\definechangesauthor[name={Reviewer 2}, color=blue]{R2} 
\definechangesauthor[name={Reviewer 3}, color=green]{R3}

\usepackage[implicit=true]{hyperref}

\allowdisplaybreaks[4]

\definecolor{gr}{rgb}   {0.,   0.69,   0.23 }
\definecolor{bl}{rgb}   {0.,   0.5,   1. }
\definecolor{mg}{rgb}   {0.85,  0.,    0.85}
\definecolor{yl}{rgb}   {0.8,  0.7,   0.}
\definecolor{or}{rgb}  {0.7,0.2,0.2}

\DeclareMathOperator{\Law}{Law}

\newcommand{\noi}{\noindent}
\newcommand{\R}{\mathbb{R}}
\newcommand{\T}{\mathbb{T}}
\newcommand{\Z}{\mathbb{Z}}
\newcommand{\N}{\mathbb{N}}

\newcommand{\E}{\mathbb{E}}
\newcommand{\PP}{\mathbb{P}}
\newcommand{\D}{\mathcal{D}}
\renewcommand{\H}{\mathcal{H}}
\newcommand{\F}{\mathcal{F}}

\newcommand{\C}{\mathcal{C}}

\newcommand{\A}{\mathcal{A}}

\newcommand{\al}{\alpha}
\newcommand{\be}{\beta}
\newcommand{\ta}{\theta}
\newcommand{\s}{\sigma}
\newcommand{\eps}{\varepsilon}
\newcommand{\om}{\omega}
\newcommand{\Om}{\Omega}
\newcommand{\Dl}{\Delta}
\newcommand{\dl}{\delta}
\newcommand{\ld}{\lambda}
\newcommand{\nb}{\nabla}

\newcommand{\dt}{\partial_t}
\newcommand{\ind}{\mathbf 1}
\newcommand{\ft}{\widehat}

\newcommand{\les}{\lesssim}
\newcommand{\ges}{\gtrsim}
\newcommand{\cj}{\overline}

\newcommand{\uu}{\mathbf{u}}
\newcommand{\vv}{\mathbf{v}}
\newcommand{\ww}{\mathbf{w}}

\newcommand{\HLSe}{$\textup{HLSM}_{\eps, N}$ }
\newcommand{\PLS}{$\textup{PLSM}_N$ }
\newcommand{\NLWe}{$\textup{SdNLW}_\eps$ }

\newcommand{\1}{\hspace{0.5mm}\text{I}\hspace{0.5mm}}
\newcommand{\II}{\text{I \hspace{-2.8mm} I} }
\newcommand{\III}{\text{I \hspace{-2.9mm} I \hspace{-2.9mm} I}}
\newcommand{\jb}[1]{\langle #1 \rangle}

\newcommand{\wick}[1]{:\!{#1}\!:}
\newcommand{\deff}{\stackrel{\textup{def}}{=}}

\newtheorem{theorem}{Theorem}[section]

\newtheorem{lemma}[theorem]{Lemma}
\newtheorem{proposition}[theorem]{Proposition}

\newtheorem{remark}[theorem]{Remark}

\newtheorem*{ackno}{Acknowledgements}

\numberwithin{equation}{section}
\numberwithin{theorem}{section}

\makeatletter
\@namedef{subjclassname@2020}{%
  \textup{2020} Mathematics Subject Classification}
\makeatother

\title[S-K approximation of the HLSM and its mean-field limit]{On the Smoluchowski-Kramers approximation \\
for the hyperbolic $O(N)$ linear sigma model \\
and its mean-field limit}

\author[A.~Czernik, R.~Liu, and S.~Liu]{Alexander Czernik, Ruoyuan Liu, and Shao Liu}

\address{
Alexander Czernik, Mathematical Institute\\
University of Bonn\\
Endenicher Allee 60\\
53115\\
Bonn\\
Germany}

\email{s6alczer@uni-bonn.de}

\address{
Ruoyuan Liu, Mathematical Institute\\
University of Bonn\\
Endenicher Allee 60\\
53115\\
Bonn\\
Germany}

\email{ruoyuanl@math.uni-bonn.de}

\address{
Shao Liu, Mathematical Institute\\
University of Bonn\\
Endenicher Allee 60\\
53115\\
Bonn\\
Germany}

\email{shaoliu@math.uni-bonn.de}

\subjclass[2020]{35L71, 60H15, 35R60, 35K58}

\begin{document}

\baselineskip = 14pt

\keywords{$O(N)$ linear sigma model, stochastic nonlinear wave equation, stochastic nonlinear heat equation, Smoluchowski-Kramers approximation, mean-field limit, Gibbs measure}

\begin{abstract} 
We study the hyperbolic $O(N)$ linear sigma model, i.e.~a system of $N$ interacting stochastic damped nonlinear wave equations (SdNLW) with coupled cubic nonlinearities, posed on the two-dimensional torus and indexed by a parameter $\eps > 0$. We show that as $\eps$ goes to zero (Smoluchowski-Kramers approximation) and $N$ goes to infinity (mean-field limit), each component of the solution to the SdNLW system converges to the solution to the stochastic nonlinear heat equation (SNLH) with a mean-field nonlinearity. We prove such convergence via two regimes: first with $\eps$ going to zero to obtain the parabolic $O(N)$ linear sigma model, i.e.~a system of $N$ coupled SNLH, and then with $N$ going to infinity; or first with $N$ going to infinity for each component to obtain the mean-field SdNLW and then with $\eps$ going to zero. As a result, we obtain a commutative diagram regarding the convergence from the hyperbolic $O(N)$ linear sigma model to the mean-field SNLH.
\end{abstract}


\maketitle

\tableofcontents

\section{Introduction}

\subsection{Hyperbolic $O(N)$ linear sigma model and the Smoluchowski-Kramers approximation}

In this paper, we study the following coupled system of stochastic damped nonlinear wave equations (SdNLW) on the two-dimensional torus $\T^2 = (\R / 2 \pi \Z)^2$ with a parameter $0 < \eps \leq 1$:
\begin{align}
    (\eps^2 \dt^2 + \dt + 1 - \Dl) u_\eps^{N, j} = - \frac{1}{N} \sum_{k = 1}^N (u_\eps^{N, k})^2 u_\eps^{N, j} + \sqrt{2} \xi^j , \qquad j = 1 , \dots, N ,
\label{NLWNe0}
\end{align}

\noi
where $\xi^j$'s are independent real-valued space-time white noises on $\R_+ \times \T^2$. When $\eps = 1$, the system \eqref{NLWNe0} is also referred to as the hyperbolic $O(N)$ linear sigma model, whose study was initiated by Oh, the second author, and the third author in \cite{LLO}. For later convenience, we address the system \eqref{NLWNe0} by the abbreviation $\text{HLSM}_{\eps, N}$, which stands for the hyperbolic $O(N)$ linear sigma model indexed by a parameter $\eps$. Our goal is to show that each component $u_\eps^{N, j}$ of the solution  to \eqref{NLWNe0} converges globally in time to the solution $u^j$ of the stochastic nonlinear heat equation (SNLH) with a mean-field nonlinearity:
\begin{align}
    (\dt + 1 - \Dl) u^j = - \E [(u^j)^2] u^j + \sqrt{2} \xi^j .
\label{NLH0}
\end{align}

\noi
We exploit such convergence via the following two regimes:
\begin{enumerate}
    \item[I.] We first let $\eps \to 0$ to obtain $u^{N, j}$ with $1 \leq j \leq N$ satisfying the following coupled system of SNLH, which we also refer to as the parabolic $O(N)$ linear sigma model ($\text{PLSM}_N$):
    \begin{align}
        (\dt + 1 - \Dl) u^{N, j} = - \frac{1}{N} \sum_{k = 1}^N (u^{N, k})^2 u^{N, j} + \sqrt{2} \xi^j .
    \label{NLHN0}
    \end{align}

    \noi
    We then take $N \to \infty$ to obtain $u^j$ satisfying \eqref{NLH0}.
    
    \item[II.] We first let $N \to \infty$ to obtain $u_\eps^j$ satisfying the following mean-field SdNLW indexed by a parameter $0 < \eps \leq 1$ (mean-field $\text{SdNLW}_\eps$):
    \begin{align}
        (\eps^2 \dt^2 + \dt + 1 - \Dl) u_\eps^j = - \E [(u_\eps^j)^2] u_\eps^j + \sqrt{2} \xi^j .
    \label{NLWe0}
    \end{align}

    \noi
    We then take $\eps \to 0$ to obtain $u^j$ satisfying \eqref{NLH0}.
\end{enumerate}

\noi
As a result, we obtain the following commutative diagram for global-in-time dynamics:
\begin{figure}[htbp] 
\centering 
    \begin{tikzcd}[cells={nodes={draw}}, row sep=2cm, column sep=4.5cm]
    \textup{\HLSe \eqref{NLWNe0}}
    \arrow[r, smalltext=\normalsize $N \to \infty$] 
    \arrow[d, smalltext=\normalsize $\eps \to 0$]
    & \textup{Mean-field \NLWe \eqref{NLWe0}} \arrow[d, smalltext=\normalsize $\eps \to 0$]\\
    \textup{\PLS \eqref{NLHN0}}\arrow[r, smalltext=\normalsize $N \to \infty$] & \textup{Mean-field SNLH \eqref{NLH0}}
\end{tikzcd}
\caption{Commutative diagram between \HLSe and the mean-field SNLH.}
\label{fig:diagram} 
\end{figure}

\noi
The process of taking $\eps \to 0$ is called the {\it Smoluchowski-Kramers approximation} \cite{Smo, Kra}, which is the main topic in this paper. The process of taking $N \to \infty$ is called the {\it mean-field limit}, which has been studied in \cite{SSZZ} for SNLH and \cite{LLO} for SdNLW.

Let us first discuss some background on the hyperbolic $O(N)$ linear sigma model and the Smoluchowski-Kramers approximation.
In Euclidean quantum field theory, models are described by probability measures on spaces of distributions (see \cite{GJ} for a more detailed introduction). The stochastic quantization paradigm, introduced by Parisi-Wu in \cite{PW}, studies such measures by constructing a stochastic partial differential equation (SPDE) whose unique invariant measure is the target quantum field theory measure. 
One of the most extensively studied examples is the scalar-valued $\Phi^4_2$-model (see \eqref{rhoN0} in Appendix~\ref{APP:inv} with $N = 1$), where the superscript ``4'' denotes the quartic interaction and the subscript ``2'' the spatial dimension. Its parabolic stochastic quantization is given by the SNLH \eqref{NLHN0} with $N = 1$, whose local well-posedness theory was established by Da Prato-Debussche in \cite{DPD} and global theory by Mourrat-Weber in \cite{MW17}; see also \cite{TW18} for a unique ergodicity result. While the scalar-valued case is fundamental, many physically relevant models involve fields taking values in an $N$-dimensional target space. A particular example is the $N$-component $\Phi^4_2$-model introduced by Wilson in \cite{Wil}, given by \eqref{rhoN0}, which is also known as the $O(N)$ linear sigma model. The term ``linear'' refers to the fact that the target space $\R^N$ is a linear space. The corresponding parabolic stochastic quantization is given by the \PLS \eqref{NLHN0}, which was studied by Shen-Smith-Zhu-Zhu in \cite{SSZZ}. 
See also \cite{SZZ} for the (spatial) 3-dimensional setting and also a nice survey paper \cite{Shen}.

In addition to the parabolic approach, there has been significant progress in the study of hyperbolic stochastic quantization, also known as canonical stochastic quantization or Hamiltonian stochastic quantization; see \cite{RSS}. For the scalar $\Phi^4_2$-model, the corresponding hyperbolic stochastic quantization is described by the SdNLW \eqref{NLWNe0} with $N = 1$ and $\eps = 1$. Local well-posedness for this equation was established by Gubinelli-Koch-Oh in \cite{GKO}, while global well-posedness was subsequently proved by Gubinelli-Koch-Oh-Tolomeo in \cite{GKOT}; see also \cite{Tol2} for a unique ergodicity result.
More recently, the hyperbolic framework has been extended beyond the scalar setting. In \cite{LLO}, Oh, the second author, and the third author initiated the study of the $N$-component model by introducing the \HLSe \eqref{NLWNe0} with $\eps = 1$, providing a hyperbolic stochastic quantization of the $O(N)$ linear sigma model.

In the present work, we investigate the convergence of the \HLSe \eqref{NLWNe0} to the \PLS \eqref{NLHN0} in the limit of a vanishing second-order (mass) term ($\eps \to 0$), alongside their mean-field convergence. This zero-mass limit is known as the Smoluchowski-Kramers approximation and has been studied across various finite and infinite dimensional settings; see \cite{CF06-1, CF06-2, CS14, CS16, CFS17, CS17, Sa19, CG20, CX22, CX23, CX24, CB25, CD25, CX25, GV25, Bla26, BL26, LLX26, XZ26}. In the context of stochastic quantization of the $\Phi^4_2$-model, such approximation has been studied by Fukuizumi-Hoshino-Inui in \cite{FHI} and by Zine in \cite{Zine}. 
From an analytical perspective, the approximation connects two fundamentally different analytical regimes: a parabolic dynamics driven by the strong instantaneous smoothing of the heat semigroup and a hyperbolic dynamics governed by dispersive and Hamiltonian effects. 
From a computational perspective, it links overdamped and underdamped Langevin dynamics, balancing the improved state-space exploration of the latter against the greater numerical stability of the former; see \cite{BKKLSW, DKB, Neal}. In particular, it provides a mathematical justification for treating $\eps$ as a tuning parameter without changing the underlying target quantum field theory measure.

The present work naturally extends \cite{Zine}, where the author treated the scalar case $N = 1$. The presence of the additional parameter $N$ leads to a natural question: do the limits $\eps \to 0$ and $N \to \infty$ commute? 
At first glance, the two limiting procedures appear largely independent, since $\eps$ enters only through the second-order time derivative, whereas $N$ appears only in the nonlinearity.
Indeed, we are able to prove the convergence in either order to the unique solution of the limiting equation (the mean-field SNLH \eqref{NLH0}); see Figure~\ref{fig:diagram}.
Nevertheless, one of the main findings of this paper is that, although both convergence routes ultimately lead to the same limiting equation, the analysis of the limit $\eps \to 0$ for global-in-time dynamics differs substantially depending on the order in which the limits are taken. We provide a more detailed discussion in Subsection~\ref{SUB:intro}. 
Roughly speaking, this distinction stems from the fact that the coupled systems \eqref{NLWNe0} and \eqref{NLHN0} are defined pathwise, for each $\om$ in a full-measure subset of $\Om$, whereas the mean-field equations \eqref{NLH0} and \eqref{NLWe0} involve averaging over the entire probability space.

\subsection{Main results on convergence of dynamics}
\label{SUB:intro}

Before stating the main results regarding the commutative diagram in Figure~\ref{fig:diagram}, let us take a closer look at the four equations  \eqref{NLWNe0}, \eqref{NLH0}, \eqref{NLHN0}, and \eqref{NLWe0}.

Let us first look at \HLSe \eqref{NLWNe0}. Given $j \in \N$ and $0 < \eps \leq 1$, we define $\Psi_\eps^j$ as the stochastic convolution satisfying the following linear stochastic damped wave equation with a parameter $\eps$ and zero initial data:
\begin{align}
\begin{cases}
    (\eps^2 \dt^2 + \dt + 1 - \Dl) \Psi_\eps^j = \sqrt{2} \xi^j \\
    (\Psi_\eps^j, \dt \Psi_\eps^j) |_{t = 0} = (0, 0) .
\end{cases}
\label{Psije}
\end{align}

\noi
As can be seen from \eqref{sigmaM} below, the frequency truncated version of $\Psi_\eps^j$ has variance that diverges as the truncation is removed.
This means that the stochastic convolution $\Psi_\eps^j$ is almost surely a distribution but in general not a function on $\T^2$, and so we need to make sense of its powers through renormalization. 

Let us proceed as in \cite{GKO, GKOT, Zine, LLO} with the following first order expansion (see \cite{McK, Bour96}), also known as the Da~Prato-Debussche trick in the parabolic setting (see \cite{DPD}):
\begin{align}
    u_\eps^{N, j} = \Psi_\eps^j + v_\eps^{N, j} , \qquad j = 1, \dots, N ,
\label{expNe}
\end{align}

\noi
where $v_\eps^{N, j}$ is a remainder term satisfying
\begin{align}
\begin{split}
    (\eps^2 \dt^2 + \dt + 1 - \Dl) v_\eps^{N, j} &= - \frac 1N \sum_{k = 1}^N \big( \wick{ (\Psi_\eps^k)^2 \Psi_\eps^j } + \wick{ (\Psi_\eps^k)^2 } v_\eps^{N, j} + 2 v_\eps^{N, k} \wick{ \Psi_\eps^k \Psi_\eps^j } \\
    &\quad + 2 \Psi_\eps^k v_\eps^{N, k} v_\eps^{N, j} + (v_\eps^{N, k})^2 \Psi_\eps^j + (v_\eps^{N, k})^2 v_\eps^{N, j} \big) , \qquad j = 1, \dots, N.
\end{split}
\label{NLWNev}
\end{align}

\noi
On the right-hand side of \eqref{NLWNev}, we have already applied Wick renormalization to the products and powers of $\Psi_\eps^j$'s. See \eqref{wick0} and \eqref{wick} for the precise definitions, but at the moment we make the following interpretation:
\begin{align*}
    \wick{ \Psi_\eps^k \Psi_\eps^j }  &=
    \begin{cases}
        \wick{(\Psi_\eps^j)^2} & \text{if } k = j \\
        \Psi_\eps^k \Psi_\eps^j & \text{if } k \neq j ,
    \end{cases} \\
    \wick{ (\Psi_\eps^k)^2 \Psi_\eps^j }  &=
    \begin{cases}
        \wick{(\Psi_\eps^j)^3} & \text{if } k = j \\
        \wick{(\Psi_\eps^k)^2} \Psi_\eps^j & \text{if } k \neq j ,
    \end{cases}
\end{align*}

\noi
where $\wick{(\Psi_\eps^j)^2}$ and $\wick{(\Psi_\eps^j)^3}$ denote the standard Wick powers.
Then, by setting
\begin{align*}
    \wick{ (u_\eps^{N, k})^2 u_\eps^{N, j} } \, \deff &\big( \wick{ (\Psi_\eps^k)^2 \Psi_\eps^j } + \wick{ (\Psi_\eps^k)^2 } v_\eps^{N, j} + 2 v_\eps^{N, k} \wick{ \Psi_\eps^k \Psi_\eps^j } \\
    &\quad + 2 \Psi_\eps^k v_\eps^{N, k} v_\eps^{N, j} + (v_\eps^{N, k})^2 \Psi_\eps^j + (v_\eps^{N, k})^2 v_\eps^{N, j} \big) ,
\end{align*}

\noi
we get the following renormalized version of \HLSe \eqref{NLWNe0}:
\begin{align}
    (\eps^2 \dt^2 + \dt + 1 - \Dl) u_\eps^{N, j} = - \frac{1}{N} \sum_{k = 1}^N \wick{ (u_\eps^{N, k})^2 u_\eps^{N, j} } + \, \sqrt{2} \xi^j , \qquad j = 1, \dots, N.
\label{NLWNeu}
\end{align}

\noi
As in \cite{GKO, GKOT, LLO}, we say that $\uu_\eps^N = (u_\eps^{N, j})_{1 \leq j \leq N}$ is a solution to the renormalized \HLSe \eqref{NLWNeu} if $u_\eps^{N, j}$ is given by \eqref{expNe} with $\vv_\eps^N = (v_\eps^{N, j})_{1 \leq j \leq N}$ being a solution to the perturbed \HLSe \eqref{NLWNev}.

In order to discuss the convergence problem later on, we need to know that the equation \eqref{NLWNeu} does have a solution. This is provided by the following proposition. For notations, we write
\begin{align*}
    \H^s (\T^2) = H^s (\T^2) \times H^{s - 1} (\T^2) ,
\end{align*}

\noi
where $H^s$ is the $L^2$-based Sobolev space with regularity $s \in \R$. Given $N \in \N$ and a function space $X$, we write $X^{\otimes N}$ as the $N$-fold product of $X$ with itself: $X \times \cdots \times X$.

\begin{proposition}[Pathwise global well-posedness for $\text{HLSM}_{\eps, N}$]
\label{PROP:GWP_NLW}
Let $0 < \eps \leq 1$, $N \in \N$, and $\frac 45 < s < 1$. Let $(\uu_0^N, \uu_1^N) = ((u_0^{N, j}, u_1^{N, j}))_{1 \leq j \leq N} \in \H^s (\T^2)^{\otimes N}$. Then, there exists a unique solution $(\uu_\eps^N, \dt \uu_\eps^N) = ((u_\eps^{N, j}, \dt u_\eps^{N, j}))_{1 \leq j \leq N}$ to the renormalized \HLSe \eqref{NLWNeu} with initial data $(\uu_0^N, \uu_1^N)$ almost surely in the space
\begin{align*}
    ((\Psi_\eps^j, \dt \Psi_\eps^j))_{1 \leq j \leq N} + C (\R_+ ; \H^s (\T^2))^{\otimes N} .
\end{align*}
\end{proposition}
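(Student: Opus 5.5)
The plan is to reduce the statement to the known case $\eps = 1$ treated in \cite{LLO} (building on \cite{GKO, GKOT}), and to track only the places where $\eps$ enters. The argument has four steps.

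\textbf{Step 1: stochastic objects.} First I construct the stochastic objects. For each fixed $0<\eps\le1$, the truncated convolutions $\Psi^j_{\eps,M}$ have a Fourier multiplier that is explicitly computable from the damped wave propagator. Their variance $\sigma_{\eps,M}(t)\sim\log M$ diverges logarithmically. By the standard Wiener chaos, hypercontractivity and Kolmogorov argument, the Wick objects
\[
\Psi^j_\eps,\quad \wick{\Psi_\eps^k\Psi_\eps^j},\quad \wick{(\Psi_\eps^k)^2\Psi_\eps^j}
\]
converge almost surely in $C([0,T];W^{-\delta,\infty}(\T^2))$ for every $T$ and small $\delta>0$. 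The cross terms with $k\ne j$ need no renormalization because the $\xi^j$ are independent. Since $N$ is finite and $\eps$ is fixed, all constants may depend on $N$ and $\eps$.

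\textbf{Step 2: local well-posedness.} Next I prove pathwise local well-posedness of \eqref{NLWNev} for $\vv^N_\eps$ in $C([0,\tau];\H^s)^{\otimes N}$. Writing the damped wave Duhamel operator for $\eps^2\dt^2+\dt+1-\Dl$, I use its smoothing by one derivative, with $\eps$-dependent constants, together with Sobolev embedding and product estimates. The products of $W^{-\delta,\infty}$ distributions with $H^s$ functions ($s>\delta$) land in $H^{-\delta}$, and the Duhamel integral maps $L^1_tH^{s-1}$ into $C_t\H^s$. A contraction argument then gives local existence and uniqueness, with a time of existence depending on the size of the data and of the stochastic objects. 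Uniqueness in the full class stated follows from this contraction.

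\textbf{Step 3: global a priori bound (main obstacle).} To globalize I follow the energy method of \cite{GKOT} as adapted to the vector case in \cite{LLO}. Define the modified energy
\[
E(\vv)=\sum_j\Big(\tfrac12\|v^{N,j}\|_{H^1}^2+\tfrac{\eps^2}{2}\|\dt v^{N,j}\|_{L^2}^2\Big)+\frac1{4N}\int\Big(\sum_k (v^{N,k})^2\Big)^2,
\]
where the form of the quartic term uses that the nonlinearity is the gradient of $\frac1{4N}|\mathbf v|^4$, up to an $\eps$-rescaling of the time variable. Differentiating $E$, the damping term gives a good negative contribution, and the remaining terms involve the stochastic objects. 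These are controlled with the Gronwall-type argument of \cite{GKOT}: the terms cubic in $v$ are bounded by the quartic potential using the coercivity $\int|\mathbf v|^4$, and the terms involving $\wick{\Psi\Psi}$ are handled through interpolation between $H^s$ and $H^1$. The difficulty is that the solution is only in $\H^s$ with $s<1$, so $E$ is infinite. I therefore apply the I-method/high-low splitting of \cite{GKOT}: the data are split into a rough part, evolved linearly, and a smooth $\H^1$ part. The threshold $s>\frac45$ arises exactly from balancing the errors in this splitting, and I expect the estimates of \cite{GKOT, LLO} to carry over verbatim with constants depending on $\eps$.

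\textbf{Step 4: conclusion.} Combining the local theory with this bound on every interval $[0,T]$ yields global existence almost surely, since the stochastic objects are almost surely finite on each $[0,T]$. Taking a countable union over $T\in\N$ gives the statement on $\R_+$.
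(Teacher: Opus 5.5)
Your plan matches the paper's: pathwise local well-posedness of \eqref{NLWNev} from the one-derivative smoothing of $\mathcal{I}_\eps$ together with Lemma~\ref{LEM:sto}, with no auxiliary space so that uniqueness holds in all of $C([0,T];\H^s)^{\otimes N}$, followed by globalization via the hybrid $I$-method/Gronwall argument of \cite{GKOT} as adapted in \cite{LLO}, applied to $E_\eps^N$ with constants allowed to depend on $\eps$ and $N$. One correction to your description: \cite{GKOT} does not split the data into a linearly evolved rough part and a smooth $\H^1$ part (that is Bourgain's high-low method). Instead it controls the modified energy $E_\eps^N(I_M\vv_\eps^N,\dt I_M\vv_\eps^N)$ along an increasing sequence of cutoffs $M_k$, and the threshold $s>\frac45$ comes from needing $2(1-s)<\be<\al<3s-2$ in that iteration.
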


The proof of Proposition~\ref{PROP:GWP_NLW} is similar to that of \cite[Theorem~1.1]{LLO}, which we now briefly describe. Our goal here is to prove global well-posedness of the perturbed \HLSe \eqref{NLWNev} for $(\vv_\eps^N, \dt \vv_\eps^N) = ((v_\eps^{N, j}, \dt v_\eps^{N, j}))_{1 \leq j \leq N}$ with initial data $(\uu_0^{N}, \uu_1^{N})$. In view of the one degree of smoothing from the linear damped wave dynamics from \eqref{defIe} (we allow dependence on $\eps$ here) and the regularity properties of the Wick products and powers of the stochastic convolution $\Psi_\eps^j$ in Lemma~\ref{LEM:sto}, we can prove pathwise local well-posedness for \eqref{NLWNev} using the same framework as in \cite[Proposition~3.1]{LLO}. Note that the proof does not use any auxiliary function space and so the uniqueness of a solution $(\vv_\eps^N, \dt \vv_\eps^N)$ holds in the entire space $C ([0, T] ; \H^s (\T^2))^{\otimes N}$ for some almost surely positive time $T$.

To achieve global well-posedness, we aim to use an energy functional to establish a global-in-time a priori bound for the solution, which allows us to iterate the local well-posedness argument. 
Given a tuple of space-time functions $(\ww^N, \dt \ww^N) = ((w^{N, j}, \dt w^{N, j}))_{1 \leq j \leq N}$, we define the energy functional $E_\eps^N (\ww^N, \dt \ww^N)$ by
\begin{align}
\begin{split}
    E_\eps^N (\ww^N, \dt \ww^N) &\deff \frac 12 \sum_{j = 1}^N \int_{\T^2} \big( (\eps \dt w^{N, j})^2 + (w^{N, j})^2 + |\nb w^{N, j}|^2 \big) dx \\
    &\quad + \frac{1}{4N} \int_{\T^2} \Big( \sum_{j = 1}^N (w^{N, j})^2 \Big)^2 dx .
\end{split}
\label{defEeN}
\end{align}

\noi
It is not hard to check that $E_\eps^N (\ww^N, \dt \ww^N)$ is conserved under the flow of the following coupled NLW system:
\begin{align}
    (\eps^2 \dt^2 + 1 - \Dl) w^{N, j} = - \frac{1}{N} \sum_{k = 1}^N (w^{N, k})^2 w^{N, j} , \qquad j = 1, \dots, N .
\label{NLWsys}
\end{align}

\noi
In fact, we can write \eqref{NLWsys} as the following Hamiltonian formulation:
\begin{align}
    \eps^2 \dt \begin{pmatrix}
        \ww^N \\ \dt \ww^N
    \end{pmatrix} 
    = \begin{pmatrix}
        0 & \textbf{Id}^N \\
        - \textbf{Id}^N & 0
    \end{pmatrix}
    \nb_{(L^2 (\T^2))^{\otimes 2N}} E_\eps^N (\ww^N, \dt \ww^N) ,
\label{Hamil}
\end{align}

\noi
where $\textbf{Id}^N$ denotes the $N \times N$ identity matrix and $\nb_{(L^2 (\T^2))^{\otimes 2N}}$ denotes the Fr\'echet derivative with respect to the norm $(L^2 (\T^2))^{\otimes 2N} = L^2 (\T^2) \times \cdots \times L^2 (\T^2)$.
As in the situation in \cite{GKOT}, there are two difficulties in proving pathwise global well-posedness for \eqref{NLWNev}. Firstly, due to the roughness of the stochastic objects in \eqref{NLWNev}, the local-in-time solution $(\vv_\eps^N (t), \dt \vv_\eps^N (t))$ does not belong to the energy space $(\H^1 (\T^2))^{\otimes N}$. Secondly, the energy $E_\eps^N (\vv_\eps^N, \dt \vv_\eps^N)$ is not conserved under the flow of the perturbed \HLSe \eqref{NLWNev}, even if we assume that $(\vv_\eps^N (t), \dt \vv_\eps^N (t)) \in (\H^1 (\T^2))^{\otimes N}$. The above issues have been solved by Gubinelli-Koch-Oh-Tolomeo in \cite{GKOT} via a hybrid approach combining the $I$-method \cite{CKSTT02} (for dealing with the first issue) and a Gronwall-type argument \cite{BTz14} (for dealing with the second issue). This hybrid approach was adapted to the vector-valued setting in \cite{LLO}. With minor modifications to allow for dependence on $\eps$, this argument also applies to proving pathwise global well-posedness for \eqref{NLWNev}.

\medskip
For the perturbed \HLSe \eqref{NLWNev}, we now take $\eps \to 0$ and $N \to \infty$ to formally obtain the limiting equation:
\begin{align}
    (\dt + 1 - \Dl) v^j = - 2 \E [\Psi_0^j v^j] \Psi_0^j - 2 \E [\Psi_0^j v^j] v^j - \E [(v^j)^2] \Psi_0^j - \E [(v^j)^2] v^j ,
\label{NLHv}
\end{align}

\noi
where $\Psi_0^j$ is the stochastic convolution satisfying the following linear stochastic heat equation with zero initial data:
\begin{align}
\begin{cases}
    (\dt + 1 - \Dl) \Psi_0^j = \sqrt{2} \xi^j \\
    \Psi_0^j |_{t = 0} = 0 .
\end{cases}
\label{Psij0}
\end{align}

\noi
By writing 
\begin{align}
    u^j = \Psi_0^j + v^j ,
\label{exp}
\end{align}

\noi
we can write out the following renormalized version of the mean-field SNLH \eqref{NLH0}:
\begin{align}
    (\dt + 1 - \Dl) u^j = - \E [(u^j)^2 - (\Psi_0^j)^2] u^j + \sqrt{2} \xi^j.
\label{NLHu}
\end{align}

\noi
As before, we say that $u^j$ is a solution to the renormalized mean-field SNLH \eqref{NLHu} if $u^j$ is given by \eqref{exp} with $v^j$ being a solution to the perturbed mean-field SNLH \eqref{NLHv}.

We also have the following well-posedness result for the equation \eqref{NLHu}.

\begin{proposition}[Global well-posedness for the mean-field SNLH]
\label{PROP:GWP_NLH}
Let $\frac 12 < s < 1$ and $u_0^j \in L^4 (\Omega; H^{s} (\T^2))$ given $j \in \N$. Then, for any $T > 0$, there exists a unique solution $u^j$ to the renormalized mean-field SNLH \eqref{NLHu} with initial data $u_0^j$ in the space
\begin{align*}
    \Psi_0^j + L^2 (\Om; C([0, T]; H^s (\T^2))) .
\end{align*}
\end{proposition}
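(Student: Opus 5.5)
We solve the perturbed mean-field SNLH \eqref{NLHv} for $v^j$ with initial data $u_0^j$, via a fixed point argument in $L^2(\Om; C([0,T]; H^s(\T^2)))$. The key structural point is that the nonlinearity in \eqref{NLHv} depends on $v^j$ only through the deterministic time-dependent coefficients $a(t) = \E[\Psi_0^j v^j](t)$ and $b(t) = \E[(v^j)^2](t)$. Once these coefficients are fixed, the equation is linear in $v^j$ with a random forcing given by $\Psi_0^j$.

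We write the mild formulation
\begin{align*}
v^j(t) = P(t) u_0^j - \int_0^t P(t-t')\Big( (2a + b)(t')\, \Psi_0^j(t') + (2a+b)(t')\, v^j(t')\Big) dt',
\end{align*}
where $P(t) = e^{-t(1-\Dl)}$. We use that $\Psi_0^j \in L^p(\Om; C([0,T]; W^{-\dl,\infty}))$ for every $p<\infty$ and every $\dl>0$. This follows from standard estimates, essentially the parabolic analogue of Lemma~\ref{LEM:sto}.

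For $s<1$, the Schauder estimate $\|P(t)f\|_{H^s} \les t^{-(s+\dl)/2}\|f\|_{H^{-\dl}}$ has an integrable singularity. This gives
\begin{align*}
\|v^j\|_{C_TH^s} \les \|u_0^j\|_{H^s} + T^{\theta}\, \|2a+b\|_{L^\infty_T}\big(\|\Psi_0^j\|_{C_TH^{-\dl}} + \|v^j\|_{C_TH^s}\big).
\end{align*}
The coefficients are controlled by
\begin{align*}
|a(t)| \le \E\big[\|\Psi_0^j(t)\|_{H^{-\dl}}\|v^j(t)\|_{H^{\dl}}\big] \les \|\Psi_0^j\|_{L^2_\om C_T H^{-\dl}} \|v^j\|_{L^2_\om C_TH^s}
\end{align*}
and $|b(t)| \le \|v^j\|_{L^2_\om C_TL^2}^2$. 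Both are deterministic, so taking the $L^2(\Om)$ norm of the whole estimate closes a contraction on a ball of $L^2(\Om;C([0,T_0];H^s))$ for small $T_0$. The time $T_0$ is deterministic and depends on $\|u_0^j\|_{L^2_\om H^s}$. The difference estimates are analogous, since $a$ and $b$ are bilinear expressions in $v^j$. The assumption $u_0^j \in L^4$ enters in the next step.

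To globalize, we need an a priori bound on $\|v^j(t)\|_{L^2_\om H^s}$ that does not blow up in finite time. The favorable sign of the mean-field cubic term is what makes this possible. Set $m(t) = \E[(u^j)^2 - (\Psi_0^j)^2] = 2a + b$. We cannot use $u^j$ directly, since it is not in $L^2_x$. Instead we test the $v$-equation with $v^j$ and take expectation:
\begin{align*}
\tfrac12 \tfrac{d}{dt}\E\|v^j\|_{L^2}^2 + \E\|v^j\|_{H^1}^2 = -m(t)\,\E\!\int \Psi_0^j v^j\,dx - m(t)\,\E\|v^j\|_{L^2}^2 .
\end{align*}
Here $\E\int \Psi_0^j v^j\,dx = \int a\,dx$ up to the spatial average, so by spatial homogeneity of the law one can treat $a$ as a function of $(t,x)$ and integrate. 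Writing $m = 2a+b$ with $b\ge 0$, the term $-b\,\E\|v\|_{L^2}^2$ has a good sign. The cross terms $-2a\cdot(\dots)$ and $-b\,\E\!\int\Psi_0^jv^j\,dx$ should be controlled using $|a| \le \|\Psi_0^j\|\,b^{1/2}$ together with Young's inequality, absorbing into $b^2$-type good terms and $\E\|v\|_{H^1}^2$.

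We expect this to yield a Gronwall bound for $\E\|v^j\|_{L^2}^2$ and for $\int_0^T\E\|v^j\|_{H^1}^2$. Applying the mild formulation again, together with the bound on $m$ in $L^1_T$ or $L^2_T$, then upgrades this to control of $\|v^j\|_{L^2_\om C_T H^s}$ by smoothing. Moments up to order four of $u_0^j$ are used when bounding $m$ and when controlling $\E$ of products in the difference estimates. Iterating the local theory then gives existence on any $[0,T]$, and uniqueness follows from the local contraction.

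The main obstacle is this energy step. The coefficient $m$ is not sign-definite because of the $2\E[\Psi_0^jv^j]$ part, and $\Psi_0^j$ is only a distribution. If the direct Young argument fails, we will first try instead a Gronwall argument on $\sup_t \E\|v^j(t)\|_{H^s}^2$ from the mild formula. Since $|m(t)| \les 1 + \E\|v^j(t)\|_{H^s}^2$ and the equation is linear given $m$, this yields a bound of the form $y(t)\le C + C\int_0^t (t-t')^{-\theta'}(1+y)\,y\,dt'$, which only blows up if $y$ does. In that case we would rely on the dissipative identity above to rule out finite-time blow-up of $b$.
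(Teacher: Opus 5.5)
Your proposal has a genuine gap: the global a priori bound is never established. There is also a repairable error in the local theory, which I take first.

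\textbf{Local theory.} You treat $a=\E[\Psi_0^jv^j]$ and $b=\E[(v^j)^2]$ as functions of $t$ alone. They are deterministic functions of $(t,x)$, and no spatial homogeneity of the law is available, since $u_0^j$ is an arbitrary element of $L^4(\Om;H^s(\T^2))$.
\begin{itemize}
\item Your bound $|a(t)|\le\E[\|\Psi_0^j(t)\|_{H^{-\dl}}\|v^j(t)\|_{H^{\dl}}]$ controls the scalar $\E\int_{\T^2}\Psi_0^jv^j\,dx$, not the function $a(t,\cdot)$.
\item Likewise $\|v^j\|_{L^2_\om L^2_x}^2$ controls only $\|b(t)\|_{L^1_x}$.
\item The naive direct estimate $\|a(t)\|_{H^{-\dl}}\le\E\|\Psi_0^j(t)v^j(t)\|_{H^{-\dl}}$ leaves $a\Psi_0^j$ as a product of two negative-regularity distributions. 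So the step $\|(2a+b)\Psi_0^j\|\les\|2a+b\|_{L^\infty_T}\|\Psi_0^j\|$ is unjustified.
\end{itemize}
This can be repaired, but it needs an idea your proposal lacks. The paper, following \cite{SSZZ}, writes $\E[\Psi_0v]\Psi_0=\E[v'\Psi_0'\Psi_0\,|\,\Psi_0]$ with $(\Psi_0',v')$ an independent copy. Then $\Psi_0'\Psi_0$ needs no renormalization and lies in $W^{-\ta,\infty}$; see \eqref{cond0}--\eqref{heat2}. (Alternatively, you would have to prove separately that $a(t,\cdot)$ has positive spatial regularity.) For $b\Psi_0^j$ you need $b\in H^\ta$, which follows from $\|b\|_{H^\ta}\le\E\|v^2\|_{H^\ta}\les\|v\|_{L^2_\om W^{\ta,4}}^2$.

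\textbf{Global bound.} Since $m=2a+b$ is deterministic, your identity actually reads
\[
\frac12\frac{d}{dt}\E\|v^j\|_{L^2}^2+\E\|v^j\|_{H^1}^2=-\int_{\T^2}(2a+b)(a+b)\,dx=-\int_{\T^2}\big(2a^2+3ab+b^2\big)\,dx .
\]
The form $2a^2+3ab+b^2$ is indefinite: at $a=-\frac34 b$ it equals $-\frac18 b^2$. So the right-hand side can be a positive multiple of $\int b^2\,dx$, which is quartic in $v^j$ and cannot be closed by Gronwall.

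The inequality $|a|\le\|\Psi_0^j\|\,b^{1/2}$ you intend to use is false. Cauchy--Schwarz in $\om$ gives $|a(t,x)|\le\E[\Psi_0^j(t,x)^2]^{1/2}b(t,x)^{1/2}$, and $\E[\Psi_0^j(t,x)^2]=\infty$. Any control of $\int ab\,dx$ must put derivatives on $v^j$ and $b$ and interpolate against $\E\|v^j\|_{H^1}^2$ and $\|b\|_{L^2}^2$. That is the nontrivial content of \cite[Theorem~3.6]{SSZZ}, and it is why the statement assumes $u_0^j\in L^4(\Om;H^s)\subset L^4(\Om;L^4)$.

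Your fallback $y(t)\le C+C\int_0^t(t-t')^{-\theta'}(1+y)y\,dt'$ is superlinear and permits finite-time blow-up, so it yields nothing. Moreover, an energy bound in $L^\infty_TL^2_\om L^2_x\cap L^2_TL^2_\om H^1_x$ would not give $L^2_\om C_TH^s$ for $s>\frac12$ by a single smoothing step.

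The paper avoids all of this. It takes the global solution from \cite[Theorem~3.6]{SSZZ}, together with its bounds on $\|v\|_{L^2_\om C_TL^4_x}$ and $\big\|\sup_{0\le t\le T}t^{2\ta+\frac14}\|v(t)\|_{\C^{4\ta}}\big\|_{L^2_\om}$ (see \eqref{L4bdd}). It then upgrades to $L^2_\om C_TH^s$ through the Duhamel formula. Uniqueness in that class comes from a short-interval difference estimate valid for any two solutions.
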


Global well-posedness for the renormalized mean-field SNLH was established in \cite[Theorem~3.6]{SSZZ}, but in a different formulation from that of our Proposition~\ref{PROP:GWP_NLH}. The initial condition in \cite[Theorem~3.6]{SSZZ} is $u_0^j \in L^4 (\Om; L^4 (\T^2))$ and a unique global-in-time solution to the perturbed mean-field SNLH \eqref{NLHv} exists in the space
\begin{align*}
    L^2 (\Om; C ((0, T]; \C^\be (\T^2)) \cap C([0, T]; L^4 (\T^2)) ) 
\end{align*}

\noi
for any $T > 0$ and some sufficiently small $\be > 0$, where $\C^s (\T^2)$ denotes the Besov-H\"older space on $\T^2$ with regularity $s$ (see Subsection~\ref{SUB:sob}). In this paper, we choose to treat the wave equations and the heat equations in a uniform manner by putting the initial data and the solutions in $L^2$-based Sobolev spaces with more or less the same regularity. In order to obtain the desired regularity as claimed in Proposition~\ref{PROP:GWP_NLH}, we need some additional treatment based on the result in \cite{SSZZ}, which we present in Subsection~\ref{SUB:mfSNLH}.

\medskip
We now take a closer look at the two regimes of the convergence problem, starting with regime~I. For the perturbed \HLSe \eqref{NLWNev}, we fix $N \in \N$ and take $\eps \to 0$ to formally obtain the following perturbed $\text{PLSM}_N$:
\begin{align}
\begin{split}
    (\dt + 1 - \Dl) v^{N, j} = - \frac 1N &\sum_{k = 1}^N \big( \wick{ (\Psi_0^k)^2 \Psi_0^j } + \wick{ (\Psi_0^k)^2 } v^{N, j} + 2 v^{N, k} \wick{ \Psi_0^k \Psi_0^j } \\
    &\, + 2 \Psi_0^k v^{N, k} v^{N, j} + (v^{N, k})^2 \Psi_0^j + (v^{N, k})^2 v^{N, j} \big) , \qquad j = 1, \dots, N.
\end{split}
\label{NLHNv}
\end{align}

\noi
By writing
\begin{align}
    u^{N, j} = \Psi_0^j + v^{N, j} , \qquad j = 1, \dots, N ,
\label{expN}
\end{align}

\noi
and setting
\begin{align*}
    \sum_{k = 1}^N \wick{ (u^{N, k})^2 u^{N, j} } \, \deff &\sum_{k = 1}^N \big( \wick{ (\Psi_0^k)^2 \Psi_0^j } + \wick{ (\Psi_0^k)^2 } v^{N, j} + 2 v^{N, k} \wick{ \Psi_0^k \Psi_0^j } \\
    &\quad + 2 \Psi_0^k v^{N, k} v^{N, j} + (v^{N, k})^2 \Psi_0^j + (v^{N, k})^2 v^{N, j} \big) ,
\end{align*}

\noi
we get the following renormalized version of the $\text{PLSM}_N$:
\begin{align}
    (\dt + 1 - \Dl) u^{N, j} = - \frac{1}{N} \sum_{k = 1}^N \wick{ (u^{N, k})^2 u^{N, j} } + \, \sqrt{2} \xi^j , \qquad j = 1, \dots, N.
\label{NLHNu}
\end{align}

\noi
As before, we say that $\uu^N = (u^{N, j})_{1 \leq j \leq N}$ is a solution to the renormalized \PLS \eqref{NLHNu} if $u^{N, j}$ is given by \eqref{expN} with $\vv^N = (v^{N, j})_{1 \leq j \leq N}$ being a solution to \eqref{NLHNv}.

We are now ready to state our convergence result via regime I. Given $N \in \N$, a function space $X$, and $\mathbf{f} = (f^{j})_{1 \leq j \leq N} \in X^{\otimes N}$, we define the norm
\begin{align}
    \| \mathbf{f} \|_{\A_N X} = \| f^{j} \|_{\A_{N, j} X} \deff \bigg( \frac{1}{N} \sum_{j = 1}^N \| f^{j} \|_X^2 \bigg)^{\frac 12} .
\label{defANX}
\end{align}

\begin{theorem}[Convergence via regime I]
\label{THM:conv1}
Let $\frac 45 < s < 1$. Given $N \in \N$, let $(\uu_0^N, \uu_1^N) = ((u_0^{N, j}, u_1^{N, j}))_{1 \leq j \leq N}$ be random and belong to  $\H^s (\T^2)^{\otimes N}$ almost surely and let $u_0^j \in L^4 ( \Om; H^s (\T^2))$ given $j \in \N$. Assume that $\{ u_0^j \}_{j \in \N}$ are independent and identically distributed. Also, we assume that the following convergences hold in probability:
\begin{align*}
    &\text{For each $j \in \N$, } \| u_0^{N, j} - u_0^j \|_{H^s} \longrightarrow 0 \quad \text{as } N \to \infty; \\
    &\| u_0^{N, j} - u_0^j \|_{\A_{N, j} H^s} \longrightarrow 0 \quad \text{as } N \to \infty.
\end{align*}

\smallskip \noi
\textup{(i)~(Pathwise global well-posedness for $\text{PLSM}_N$)} There exists a unique solution $\uu^N = (u^{N, j})_{1 \leq j \leq N}$ to the renormalized \PLS \eqref{NLHNu} with initial data $\uu_0^N$ almost surely in the space
\begin{align*}
    (\Psi_0^j)_{1 \leq j \leq N} + C (\R_+; H^s (\T^2)^{\otimes N}) .
\end{align*}

\smallskip \noi
\textup{(ii)~(Convergence from \HLSe to $\text{PLSM}_N$)} 
Let $N \in \N$ be fixed. Given $0 < \eps \leq 1$, let $\uu_\eps^N = (u_\eps^{N, j})_{1 \leq j \leq N}$ be the solution to the renormalized \HLSe \eqref{NLWNeu} with initial data $(\uu_0^N, \uu_1^N)$ provided by Proposition~\ref{PROP:GWP_NLW}. Let $\uu^N = (u^{N, j})_{1 \leq j \leq N}$ be given by part \textup{(i)}. Then, $\{ \uu_\eps^N \}_{\eps \in (0, 1]}$ converges almost surely to $\uu^N$ in $C(\R_+; H^{- \sigma} (\T^2)^{\otimes N})$, endowed with the compact open topology in time, as $\eps \to 0$ for any $\sigma > 0$.

\smallskip \noi
\textup{(iii)~(Convergence from \PLS to the mean-field SNLH)} 
Let $\uu^N = (u^{N, j})_{1 \leq j \leq N}$ be given by part \textup{(i)}. Fix $j \in \N$ and let $u^j$ be the solution to the renormalized mean-field SNLH \eqref{NLHu} with initial data $u_0^j$ provided by Proposition~\ref{PROP:GWP_NLH}. Then, $\{ u^{N, j} \}_{N \in \N}$ converges in probability to $u^j$ in $C (\R_+; H^{- \sigma} (\T^2))$, endowed with the compact open topology in time, as $N \to \infty$ for any $\sigma > 0$.
\end{theorem}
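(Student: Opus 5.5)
We propose to treat the three parts in order. Throughout, we work with the remainder equations: \eqref{NLWNev} for $\vv_\eps^N$, \eqref{NLHNv} for $\vv^N$, and \eqref{NLHv} for $v^j$.

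\textbf{Part (i).} We first establish almost sure convergence of the enhanced data set as $\eps \to 0$. For each fixed $N$, the tuple
\[
\big(\Psi_\eps^j, \wick{\Psi_\eps^k\Psi_\eps^j}, \wick{(\Psi_\eps^k)^2\Psi_\eps^j}\big)_{j,k\le N}
\]
should converge almost surely to the corresponding heat objects in $C([0,T];W^{-\sigma,\infty})$. The plan is to compare the variances $\sigma$ of the frequency-truncated damped wave and heat propagators, apply Wiener chaos hypercontractivity with Kolmogorov continuity, and use Borel--Cantelli along $\eps=2^{-n}$, interpolating to general $\eps$.

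Local well-posedness of \eqref{NLHNv} in $C([0,T];H^s)^{\otimes N}$ then follows from Schauder estimates for the heat semigroup together with the product estimates of Lemma~\ref{LEM:sto}. For global existence we use the coercive quantity
\[
\frac12\sum_j\|v^{N,j}\|_{L^2}^2.
\]
Testing \eqref{NLHNv} against $v^{N,j}$ yields the good term $-\frac1N\int(\sum_k (v^{N,k})^2)^2$ and the dissipation $\|\nabla v\|_{L^2}^2$. The cross terms are absorbed using Young's inequality against these two terms, with small negative-regularity norms of the stochastic objects placed on the bounded side. This follows the approach of Mourrat--Weber, as adapted in \cite{SSZZ}. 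The resulting $L^2$/$L^4$ a priori bound is then upgraded to $H^s$ by parabolic smoothing.

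\textbf{Part (ii).} For fixed $N$ and $\eps\to 0$, we follow \cite{Zine}. We write the Duhamel formula for \eqref{NLWNev} with the damped wave propagator $\mathcal{I}_\eps$ and compare it with the heat Duhamel operator. The key linear estimate is
\[
\|(\mathcal{I}_\eps-\mathcal{I}_0)F\|_{C_TH^{s-1-\sigma'}}\lesssim \eps^{\theta}\|F\|_{L^\infty_TH^{s-1}},
\]
together with an analogous convergence of the free evolutions of the initial data. The latter holds in $H^{-\sigma}$ only, since the $u_1$ part vanishes like $\eps^2$ times a smoothing factor and the initial layer is confined to times $t\lesssim\eps^2$.

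The main obstacle is \textbf{uniformity in $\eps$ of the global bound} for $\vv_\eps^N$ on $[0,T]$. The $I$-method/Gronwall argument from Proposition~\ref{PROP:GWP_NLW} must be rerun while tracking $\eps$. We would use the energy $E_\eps^N$ from \eqref{defEeN} augmented by the damping term. Note that $\frac{d}{dt}E_\eps^N$ contains $-\int|\dt v|^2$ without an $\eps$ factor, which is favorable. We also add the cross term $\eps^2\int v\,\dt v$, a modified energy as in \cite{Zine}, so that the functional controls $\|v\|_{H^1}$ uniformly.

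Given uniform bounds, we obtain convergence in $C([0,T];H^{-\sigma})$ from a difference estimate via Gronwall. This estimate is first made on short intervals and then iterated, using the uniform bound and the almost sure convergence of the stochastic objects.

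\textbf{Part (iii).} For fixed $j$ and $N\to\infty$, we adapt \cite{SSZZ}. Introduce i.i.d. copies $u^k$, $k\le N$, which solve \eqref{NLHu} with the data $u_0^k$ and the noises $\xi^k$. We then estimate
\[
\|u^{N,j}-u^j\|_{\A_{N,j}}
\]
in $L^2$ by an energy method on the difference. The key term is the law of large numbers error
\[
\frac1N\sum_k\big(\wick{(u^k)^2}-\E[\cdots]\big)u^j,
\]
which has second moment $O(1/N)$ by independence. Uniform-in-$N$ moment bounds from part (i) are used to close a Gronwall inequality. A stopping-time argument converts these moment bounds into convergence in probability of the averaged norm, and exchangeability then gives convergence for each fixed $j$. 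Finally, parabolic smoothing upgrades the convergence from $L^2$ to $C_TH^{-\sigma}$.
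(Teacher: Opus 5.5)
\textbf{Main gap: part (ii).} Your argument rests on a global-in-time bound for $\vv_\eps^N$ on $[0,T]$ that is uniform in $\eps$. You propose to get it by rerunning the $I$-method/Gronwall argument with $\eps$ tracked. The paper says explicitly that it cannot do this (Remark~\ref{RMK:Zine} and the discussion before Lemma~\ref{LEM:B4}).

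The obstruction is not coercivity. $E_\eps^N$ already controls $\|v\|_{H^1}$ uniformly in $\eps$, so adding $\eps^2\int v\,\dt v$ does not help. The problem is the error terms which, schematically, look like $\int \dt I_M v\,(I_M v)^2\, I_M\Psi_\eps$:
\begin{itemize}
\item The energy only controls $\eps\,\dt I_M v$. Bounding $\|\dt I_M v\|_{L^2}$ by the energy, as in \cite{GKOT}, therefore costs $\eps^{-1}$.
\item The only $\eps$-uniform control of $\dt I_M v$ is the damping term $\int_{t_0}^t\int_{\T^2}(\dt I_M v)^2\,dx\,dt'$. Absorbing into it by Cauchy's inequality leaves $\|I_M\Psi_\eps\|_{L^\infty_x}^2\,\|I_M v\|_{L^4_x}^4$.
\item Pathwise, $\|I_M\Psi_\eps\|_{L^\infty_x}$ is of size $\log M$, not $(\log M)^{1/2}$. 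The loss is therefore $(\log M)^2$ rather than $\log M$.
\item The Gronwall step then becomes $F'\les F(1+\log F)^2$. The admissible step length in the iteration of Lemma~\ref{LEM:ite} shrinks like $(\log M_k)^{-1}$. These lengths are summable along $M_k=M_0^{\ld^k}$, so you never reach an arbitrary $T$.
\end{itemize}
The mean-field case escapes this only because $\|I_M\Psi_\eps\|_{L^\infty_x L^2_\om}^2\les \log M$ holds deterministically (Lemma~\ref{LEM:stoI}). That bound has no pathwise analogue for the $N$-component system.

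\textbf{The missing idea.} No uniform bound is needed as an input. In Proposition~\ref{PROP:convv1} the paper takes the $\eps$-independent global bound \eqref{vNTbdd} on the parabolic limit $\vv^N$ from part (i) and runs a continuity argument.
\begin{itemize}
\item Work on any $[0,T']$ on which $\|\vv_\eps^N\|_{\A_N C_{T'}H^{s'}_x}$ is at most four times that bound.
\item On such an interval the difference estimate gives $\|\vv_\eps^N-\vv^N\|_{\A_N C_{T'}H^{s'}_x}\le C(\om)\eps^{\be}$. Its ingredients are Lemma~\ref{LEM:PIconv} for $P_\eps-P_0$ and $\mathcal{I}_\eps-\mathcal{I}_0$, the almost sure H\"older continuity in $\eps$ of the stochastic objects (Lemma~\ref{LEM:sto}~(iii)), and the weighted norm $S_T^{s',\ld}$ with $\ld=\ld(\om)$ large to absorb the terms linear in $\vv_\eps^N-\vv^N$.
\item For $\eps\le\eps_0(\om)$ this improves the assumed bound to twice the limit's bound, so it propagates to all of $[0,T]$.
\end{itemize}
Uniform control in $\eps$ is thus an output, valid pathwise and only for small $\eps$. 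That is all almost sure convergence requires.

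\textbf{Secondary issue: part (iii).} You invoke exchangeability, but it is not among the hypotheses. Per-$j$ convergence should instead come from estimating the $j$-th equation directly. There the $\frac1N\sum_k$ interaction terms are controlled by the averaged difference, and the remaining terms are handled by Gronwall using the per-$j$ convergence of the data; this is \cite[Theorem~4.1]{SSZZ}, which the paper cites. Also, the uniform-in-$N$ bounds you need there cannot come from part (i), since \eqref{vNTbdd} depends on $N$.
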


For part (i) of Theorem~\ref{THM:conv1}, we note that pathwise global well-posedness for the \PLS was established in \cite[Lemma~2.2]{SSZZ}, but, once again, in a different formulation from that of ours. In \cite[Lemma~2.2]{SSZZ}, the initial condition is $\uu_0^N \in L^2 (\T^2)^{\otimes N}$ and each component $v^{N, j}$ of the solution $\vv^N$ to the perturbed \PLS \eqref{NLHNv} exists in the space
\begin{align*}
C ([0, T]; L^2 (\T^2)) \cap L^4 ([0, T]; L^4 (\T^2)) \cap L^2 ([0, T]; H^1 (\T^2))
\end{align*}

\noi
for any $T > 0$. In this paper, we need higher regularity of the solution to the perturbed \PLS \eqref{NLHNv}, and so we present the additional treatment in Subsection~\ref{SUB:SNLHsys}.

Part (iii) of Theorem~\ref{THM:conv1}, the convergence from the \PLS to the mean-field SNLH, has already been established in \cite[Theorem~4.1]{SSZZ}.\footnote{The convergence result in \cite[Theorem~4.1]{SSZZ} is based on the assumptions in \cite[Assumption~4.1]{SSZZ}, which involve an $L^2 (\Om)$-convergence of the initial data. However, we note that in establishing convergence of the solutions in probability, the authors did not use the assumption on the $L^2 (\Om)$-convergence of the initial data.} In fact, by using the first order expansions \eqref{expN} and \eqref{exp}, the authors in \cite{SSZZ} showed that each component $v^{N, j}$ of the solution $\vv^N$ to the perturbed \PLS \eqref{NLHNv} converges in probability to $v^j$ satisfying the perturbed mean-field SNLH \eqref{NLHv} in the space $C([0, T]; L^2 (\T^2))$ for any $T > 0$. Moreover, under an additional assumption that the initial data are exchangeable (i.e.~all permutations of the sequence of initial data have the same joint probability distribution), the authors in \cite{SSZZ} established a convergence result in the $L^2 (\Omega)$ sense. We refer the readers to \cite[Theorem~4.1]{SSZZ} for more details.

Part (ii) of Theorem~\ref{THM:conv1} is precisely the Smoluchowski-Kramers approximation for the hyperbolic $O(N)$ linear sigma model. When $N = 1$, this corresponds to the Smoluchowski-Kramers approximation for the cubic SdNLW studied in \cite{Zine}. We remark that the convergence result obtained in \cite{Zine} holds locally in time, whereas in this paper, we establish global-in-time convergence of coupled SdNLW systems by using a continuity argument (see Remark~\ref{RMK:Zine} below for a further comment). See Section~\ref{SEC:regI} for details.

\begin{remark} \rm
\label{RMK:Zine}
In \cite[Remark~1.8]{Zine}, the author claimed that the Smoluchowski-Kramers approximation for the cubic SdNLW holds globally in time as long as one can show global well-posedness for the cubic SdNLW indexed by the second-order (mass) term $\eps$ (i.e. \eqref{NLWNeu} with $N = 1$) for any $0 < \eps \leq 1$.
Such global well-posedness can be established using the globalization argument in \cite{GKOT}.
However, the approximation does not immediately follow due to the lack of a uniform-in-$\eps$ global-in-time bound for the solutions. Indeed, we are not able to establish such uniform-in-$\eps$ bound even with the tools provided in \cite{GKOT}; see the comments before Lemma~\ref{LEM:B4} below for the subtlety.
\end{remark}

We now look at regime II. For the equation \eqref{NLWNev}, we fix $0 < \eps \leq 1$ and take $N \to \infty$ to formally obtain the following equation:
\begin{align}
(\eps^2 \dt^2 + \dt + 1 - \Dl) v_\eps^j = - 2 \E [\Psi_\eps^j v_\eps^j] \Psi_\eps^j - 2 \E [\Psi_\eps^j v_\eps^j] v_\eps^j - \E [(v_\eps^j)^2] \Psi_\eps^j - \E [(v_\eps^j)^2] v_\eps^j .
\label{NLWev}
\end{align}

\noi
By writing
\begin{align}
    u_\eps^j = \Psi_\eps^j + v_\eps^j,
\label{expe}
\end{align}

\noi
we can write out the following renormalized version of the mean-field \NLWe \eqref{NLWe0}:
\begin{align}
    (\eps^2 \dt^2 + \dt + 1 - \Dl) u_\eps^j = - \E [ (u_\eps^j)^2 - (\Psi_\eps^j)^2 ] u_\eps^j + \sqrt{2} \xi^j .
\label{NLWeu}
\end{align}

\noi
Again, we say that $u_\eps^j$ is a solution to the renormalized mean-field \NLWe \eqref{NLWeu} if $u_\eps^j$ is given by \eqref{expe} with $v_\eps^j$ being a solution to the perturbed mean-field \NLWe \eqref{NLWev}.

We are now ready to state our convergence result via regime II.

\begin{theorem}[Convergence via regime II]
\label{THM:conv2}
Let $\frac 45 < s < 1$. Given $N \in \N$ and $j \in \N$, let $(\uu_0^N, \uu_1^N)$ be random and belong to $\H^s (\T^2)^{\otimes N}$ almost surely and let $(u_0^j, u_1^j) \in L^2 ( \Omega; \H^s (\T^2))$. Assume that $\{ ( u_0^j, u_1^j ) \}_{j \in \N}$ are independent and identically distributed. Also, we assume that the following convergences hold in probability:
\begin{align*}
    &\text{For each $j \in \N$, } \big\| ( u_0^{N, j} - u_0^j, u_1^{N, j} - u_1^j ) \big\|_{\H^s} \longrightarrow 0 \quad \text{as } N \to \infty; \\
    &\big\| ( u_0^{N, j} - u_0^j, u_1^{N, j} - u_1^j ) \big\|_{\A_{N, j} \H^s} \longrightarrow 0 \quad \text{as } N \to \infty.
\end{align*}

\smallskip \noi
\textup{(i)~(Global well-posedness for the mean-field $\textup{SdNLW}_\eps$)} Fix $j \in \N$. For any $0 < \eps \leq 1$ and $T > 0$, there exists a unique solution $(u_\eps^j, \dt u_\eps^j)$ to the renormalized mean-field \NLWe \eqref{NLWeu} with initial data $(u_0^j, u_1^j)$ in the space
\begin{align*}
(\Psi_\eps^j, \dt \Psi_\eps^j) + L^2 (\Om; C([0, T]; \H^s (\T^2))) .
\end{align*}

\smallskip \noi
\textup{(ii)~(Convergence from \HLSe to the mean-field $\textup{SdNLW}_\eps$)} Let $0 < \eps \leq 1$ be fixed. Let $\uu_\eps^N = (u_\eps^{N, j})_{1 \leq j \leq N}$ be the solution to the renormalized \HLSe \eqref{NLWNeu} with initial data $(\uu_0^N, \uu_1^N)$ provided by Proposition~\ref{PROP:GWP_NLW}. Fix $j \in \N$ and let $u_\eps^j$ be given by part \textup{(i)}. Then, $\{ u_\eps^{N, j} \}_{N \in \N}$ converges in probability to $u_\eps^j$ in $C(\R_+; H^{- \sigma} (\T^2))$, endowed with the compact open topology in time, as $N \to \infty$ for any $\s > 0$.

\smallskip \noi
\textup{(iii)~(Convergence from the mean-field \NLWe to the mean-field SNLH)} Fix $j \in \N$. Let $u_\eps^j$ be given by part \textup{(i)}. Assuming further that $u_0^j \in L^4 (\Om; H^s (\T^2))$, we let $u^j$ be the solution to the renormalized mean-field SNLH \eqref{NLHu} with initial data $u_0^j$ provided by Proposition~\ref{PROP:GWP_NLH}. Then, for any $T > 0$, $\{ u_\eps^j \}_{\eps \in (0, 1]}$ converges to $u^j$ in $L^2 (\Om; C([0, T]; H^{- \sigma} (\T^2)))$ as $\eps \to 0$ for any $\sigma > 0$.
\end{theorem}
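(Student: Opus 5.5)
We work throughout with the first order expansions \eqref{expNe} and \eqref{expe}, which reduce all three parts to statements about the remainders $v_\eps^j$ and $v_\eps^{N,j}$. At that level the stochastic convolutions $\Psi_\eps^j$ are given objects.

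\emph{Part (i).} We run a contraction argument for the perturbed mean-field equation \eqref{NLWev} in $L^2(\Om; C([0,T];\H^s(\T^2)))$. We use the Duhamel operator of $\eps^2\dt^2+\dt+1-\Dl$, which gains one derivative, together with the moment bounds on $\Psi_\eps^j$ and $\wick{(\Psi_\eps^j)^2}$ from Lemma~\ref{LEM:sto}. The coefficients $\E[\Psi_\eps^j v_\eps^j]$ and $\E[(v_\eps^j)^2]$ are deterministic functions of $(t,x)$. By Cauchy--Schwarz in $\Om$ they are controlled in $L^\infty_T L^1_x$-type norms by $\|\Psi_\eps^j\|_{L^2_\om}$, which is a function since the mean-field coefficient is built from $v$. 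Combined with product estimates in $H^{s-1}$, this gives local existence with a time depending only on $\|v\|_{L^2_\om C_T\H^s}$.

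The extension to arbitrary $T$ is the key step. For it we use an averaged energy $\E[E_\eps^1]$-type functional, namely
\[
\tfrac12\E\!\int\big((\eps\dt v)^2+v^2+|\nb v|^2\big)dx+\tfrac14\!\int(\E v^2)^2dx .
\]
This is the formal $N\to\infty$ limit of $\frac1N E_\eps^N$. Because the nonlinearity is linear in $v$ given the deterministic coefficients, this functional is far easier to control than in \cite{GKOT}. The $I$-method is replaced by a smoothing of the $\Psi$-terms, and a Gronwall argument then gives a bound growing at most exponentially in $T$. Uniqueness follows from the same difference estimate.

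\emph{Part (ii).} Here we follow the strategy of \cite{SSZZ,LLO}. We set $w^{N,j}=v_\eps^{N,j}-v_\eps^j$ and estimate $\|w^{N,j}\|_{\A_{N,j}}$ in a weak energy norm. The error terms are of two kinds:
\begin{itemize}
\item fluctuations such as $\frac1N\sum_k(\Psi_\eps^k v_\eps^k-\E[\Psi_\eps^k v_\eps^k])$, which vanish in $L^2(\Om)$ by a law of large numbers, using that the $(u_0^k,u_1^k,\xi^k)$ are i.i.d.;
\item genuinely coupled terms, which Gronwall absorbs.
\end{itemize}
To localize, we stop at times where the pathwise bounds from Proposition~\ref{PROP:GWP_NLW} and the stochastic norms exceed a level $L$. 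This gives convergence in probability on $[0,T]$ for every $T$, hence in the compact-open topology. We pass from the energy norm to $H^{-\sigma}$ by interpolation with the $H^s$ bounds, which also handles the $\Psi$ parts.

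\emph{Part (iii).} This is the main new Smoluchowski--Kramers step. We write $v_\eps^j-v^j$ via the two Duhamel formulas. We then use:
\begin{itemize}
\item the known convergence $\Psi_\eps^j\to\Psi_0^j$ and of the Wick powers in $L^p(\Om;C_TH^{-\sigma})$;
\item the linear estimate that the damped wave propagator applied to a forcing $F$ converges to the heat propagator applied to $F$ as $\eps\to0$ with a rate $\eps^\theta$, at a loss of derivatives (as in \cite{Zine});
\item the fact that the initial velocity contributes $O(\eps^{\theta})$ after losing regularity.
\end{itemize}
The main obstacle is a bound on $\sup_{\eps}\|v_\eps^j\|_{L^2_\om C_T\H^s}$ that is uniform in $\eps$. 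Remark~\ref{RMK:Zine} warns this fails pathwise. Here, however, the averaged energy from part (i) has damping term $\E\int(\dt v)^2$ with an $\eps$-independent coefficient. The factor $\eps^2$ appears only on the kinetic part, so Gronwall yields an $\eps$-uniform bound on $\E\|v_\eps\|_{H^1}^2$-type quantities once the $\Psi_\eps$-forcing bounds are uniform. We then upgrade these to $\H^s$ by the parabolic-type smoothing of the damped propagator, which is uniform in $\eps$ for the position component. With this uniform bound, the difference equation is linear in $v_\eps-v$ with deterministic coefficients, and a Gronwall argument in $L^2(\Om;C_TH^{-\sigma})$ closes, giving convergence in $L^2(\Om)$.
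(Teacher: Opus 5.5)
Your reduction to the remainders and your plan for part (iii) match the paper: Lemma~\ref{LEM:PIconv}, the convergence of $\Psi_\eps$ from Lemma~\ref{LEM:sto}, then an absorbing argument for the difference. Your law-of-large-numbers strategy in part (ii), which the paper imports from \cite{LLO}, is also in line with it. The genuine gap is the step you yourself call the main obstacle: the $\eps$-uniform global bound on $\|v_\eps\|_{L^2_\om C_T H^s_x}$, which also underlies the globalization in part (i). You propose a direct Gronwall estimate for the averaged energy $E_\eps(v_\eps,\dt v_\eps)$, and this cannot work, for two reasons.

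First, this energy is infinite. The data are only in $\H^s$ with $s<1$. Even for smooth data, the forcing terms $\E[\Psi_\eps v_\eps]\Psi_\eps$ and $\E[v_\eps^2]\Psi_\eps$ have in general only regularity $-\s$, so $v_\eps(t)\notin H^1$ and $\dt v_\eps(t)\notin L^2$.

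Second, even formally the $\Psi$-terms in $\frac{d}{dt}E_\eps$ cannot be closed. Since the kinetic part carries $\eps^2$, you must (as you rightly observe) absorb $\dt v_\eps$ into the damping by Cauchy's inequality. This leaves quantities such as $\|\E[v_\eps^2]\Psi_\eps\|_{L^2_\om L^2_x}^2\le\|\E[v_\eps^2]\|_{L^2_x}^2\|\Psi_\eps\|_{L^\infty_xL^2_\om}^2$. But $\Psi_\eps(t,x)$ has infinite variance: the truncated variance $\s_{\eps,M}$ in \eqref{sigmaM} diverges like $\log M$. ``Smoothing the $\Psi$-terms'' only turns this divergence into a loss in the truncation parameter. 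It does not produce a bound growing exponentially in $T$.

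What is missing is the mechanism of Subsection~\ref{SUB:mfbdd}. One works with the modified energy $E_{\eps,M}=E_\eps(I_Mv_\eps,\dt I_Mv_\eps)$ and bounds each source term separately:
the commutator terms by negative powers of $M$ (Lemmas~\ref{LEM:B1} and~\ref{LEM:B24});
$A_3$ by $M^{2\gamma}$ times a sublinear power of $E_{\eps,M}$, via the independent-copy identity $\E[\Psi_\eps v_\eps]\Psi_\eps=\E[v_\eps'\Psi_\eps'\Psi_\eps\,|\,\Psi_\eps]$ (Lemma~\ref{LEM:B3});
and, crucially, $A_4$ with a loss of only $\log M$.
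This last bound is where the mean-field structure is used. After Cauchy's inequality one needs $\|I_M\Psi_\eps\|_{L^\infty_xL^2_\om}^2\les\log M$ uniformly in $\eps$ (Lemma~\ref{LEM:stoI}). A pathwise bound on $I_M\Psi_\eps$ loses more, which is why the paper cannot run this argument for the $N$-component system.

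Even with this $\log M$ loss, a single Gronwall argument at fixed $M$ does not reach time $T$. The commutator bounds are only usable while $E_{\eps,M}\le M^\al$ with $\al<3s-2$. One needs the iteration of \cite{Tol, GKOT}. Comparison with $G'=CG(1+\log G)$ shows that $E_{\eps,M}(t_0)\le M^\be$ implies $E_{\eps,M}\le M^\al$ on an interval of length $\tau_*$ independent of $M$ and $\eps$ (Lemma~\ref{LEM:ite}). One then passes to the truncations $M_k=M_0^{\ld^k}$ under the condition $M_{k+1}^{2(1-s)}M_k^\al+M_k^{2\al}\le cM_{k+1}^\be$. This forces $2(1-s)<\be<\al<3s-2$, which is exactly where $s>\frac45$ comes from.

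Two smaller points. In part (i), $\E[\Psi_\eps v_\eps]$ is not controlled by Cauchy--Schwarz in $\Om$, for the same infinite-variance reason; you again need the independent-copy identity and negative-regularity product estimates. In part (iii), the difference estimate must be run for $v_\eps-v$ in $H^{s'}$ with $s'<s$ close to $s$, not in $H^{-\s}$, since $v_\eps-v$ multiplies the $\Psi$-terms. The passage to $H^{-\s}$ only happens for $u_\eps-u$, through $\Psi_\eps-\Psi_0$.
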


The proof of part (i) of Theorem~\ref{THM:conv2}, global well-posedness for the mean-field $\text{SdNLW}_\eps$ \eqref{NLWeu}, follows from minor modifications of the proof for \cite[Theorem~1.2]{LLO}. Similar to Proposition~\ref{PROP:GWP_NLW} on pathwise global well-posedness for $\text{HLSM}_{\eps, N}$, we can first construct a unique local-in-time solution to the perturbed mean-field $\text{SdNLW}_\eps$ \eqref{NLWev} in $L^2 (\Om; C([0, T]; \H^s (\T^2)))$ for some $T > 0$ by using the one degree of smoothing from the linear damped wave dynamics from \eqref{defIe}. Then, we can prove an a priori bound for the solution and extend it to an arbitrarily large time interval by studying the following energy functional:
\begin{align}
E_\eps (w, \dt w) \deff \frac 12 \E \bigg[ \int_{\T^2} \big( (\eps \dt w)^2 + w^2 + |\nb w|^2 \big) dx \bigg] + \frac 14 \int_{\T^2} \big( \E [w^2] \big)^2 dx ,
\label{Eew}
\end{align}

\noi
which is conserved under the flow of the mean-field NLW:
\begin{align*}
(\eps^2 \dt^2 + 1 - \Dl) w = - \E [w^2] w .
\end{align*}

\noi
Once again, we need to use the hybrid approach from \cite{GKOT} combining the $I$-method \cite{CKSTT02} and a Gronwall-type argument \cite{BTz14}. 
While one can refer to \cite[Theorem~1.2]{LLO} for all the above steps that lead to global well-posedness for the mean-field $\text{SdNLW}_\eps$ \eqref{NLWeu}, we establish a stronger a priori bound for the solution to the perturbed mean-field $\text{SdNLW}_\eps$ \eqref{NLWev} in this paper. The main novelty here is that we are able to obtain an a priori bound uniformly in $0 < \eps \leq 1$. Such a uniform a priori bound plays a central role in proving convergence of the dynamics in part (iii) of Theorem~\ref{THM:conv2} to be mentioned below.

Part (ii) of Theorem~\ref{THM:conv2}, the mean-field convergence of $\text{HLSM}_{\eps, N}$, has essentially been established in \cite[Theorem~1.6]{LLO}.
To prove the mean-field convergence, we can establish law-of-large-numbers type lemmas using the second moment bound on the solution $v_\eps^j$ to the perturbed mean-field \NLWe \eqref{NLWev}. Since $\eps$ is fixed, we do not need to worry about dependence on $\eps$, and so we can proceed in exactly the same manner as the proof of \cite[Theorem~1.6]{LLO}. We omit details.

Part (iii) of Theorem~\ref{THM:conv2} corresponds to the Smoluchowski-Kramers approximation for the mean-field SdNLW. In contrast to the pathwise setting in part (ii) of Theorem~\ref{THM:conv1}, the solution to the mean-field equations are in the $L^2 (\Om)$ sense. This means that the continuity argument based on a pathwise solution theory used for proving the convergence via regime I is no longer suitable in proving part (iii) of Theorem~\ref{THM:conv2}. Nevertheless, as mentioned above, we can establish a uniform-in-$\eps$ bound for the solution to the perturbed mean-field \NLWe \eqref{NLWev}; see Proposition~\ref{PROP:NLW}. This uniform bound allows us to prove global-in-time convergence of the dynamics in a direct manner.  
Let us remark that in establishing the uniform-in-$\eps$ bound, we need to rely heavily on the fact that we are working with the mean-field nonlinearity. This is relevant in Lemma~\ref{LEM:B4} below. One can still prove a bound as in Lemma~\ref{LEM:B4} with the usual cubic nonlinearity but with more loss, which seems to be unacceptable in applying the Gronwall-type argument. 
See Section~\ref{SEC:regII} for more details. 
This also explains why we use two entirely different approaches in proving the two Smoluchowski-Kramers approximation results in Theorem~\ref{THM:conv1}~(ii) and Theorem~\ref{THM:conv2}~(iii).

\medskip
Before closing the introduction, we would like to remark that the coefficient $\sqrt{2}$ in front of the space-time white noises in the models \eqref{NLWNe0}, \eqref{NLH0}, \eqref{NLHN0}, and \eqref{NLWe0} does not play any role in the analysis, and one can replace it by any other constants. However, if one consider Gibbs measures and invariant Gibbs dynamics, then it is crucial to have the coefficient $\sqrt{2}$. 
We include a discussion on  invariant Gibbs dynamics in Appendix~\ref{APP:inv}, where we show the commutative diagram in Figure~\ref{fig:diagram} at Gibbs equilibrium in Theorem~\ref{THM:Gibbs}.
The proof follows essentially from same steps for proving Theorem~\ref{THM:conv1} and Theorem~\ref{THM:conv2} with only a few additional ingredients.

We end the introduction by stating several remarks.

\begin{remark} \rm
In this paper, we mainly work with the four equations \eqref{NLWNe0}, \eqref{NLH0}, \eqref{NLHN0}, and \eqref{NLWe0} in the defocusing case, i.e.~with a minus sign in front of the nonlinearity. In the focusing case, i.e.~with a plus sign in front of the nonlinearity, all the above global well-posedness results and global-in-time convergence results break down. The main issue comes from the fact that in the focusing setting, the potential energy terms (the quartic terms) in the energy functionals $E_\eps^N$ in \eqref{defEeN} and $E_\eps$ in \eqref{Eew} come with the minus signs. This means that the energy functionals cannot control the $H^1$-norm of the solution unless the initial data is sufficiently small. For general initial data, we expect that some finite-time blowup phenomenon may occur in this case. 

Nevertheless, all the well-posedness results and the convergence results, particularly the commutative diagram in Figure~\ref{fig:diagram}, hold locally in time in the focusing case, as we do not need to rely on the conservation of energy. The proof of the convergence results follow from minor modifications of local well-posedness arguments. See, for example, \cite[Theorem~1.5]{Zine} and \cite[Remark~1.12~(i)]{LLO}.
\end{remark}

\begin{remark} \rm
In this paper, we only consider the Smoluchowski-Kramers approximations and the mean-field limits on the two-dimensional torus $\T^2$. It would be of interest to investigate the convergence problems on the full space $\R^2$.
Indeed, the mean-field limit for the parabolic $O(N)$ linear sigma model \eqref{NLHN0} on $\R^2$ was proposed by Shen in \cite[Section~9]{Shen}, and the one for the hyperbolic $O(N)$ linear sigma model \eqref{NLWNe0} was proposed in \cite[Remark~1.12~(ii)]{LLO}. Moreover, the Smoluchowski-Kramers approximations for singular SPDEs on $\R^2$ are also open. The main difficulty in the infinite volume setting is the unboundedness (in $x$) of stochastic objects, even after frequency truncations. In the heat case, one needs the weighted function spaces as in \cite{MW17, GH19}; in the wave case, one may use the finite speed of propagation as in \cite{Tol, OTWZ}. However, as the Smoluchowski-Kramers approximations involve properties from both the heat and the wave equations, some new ingredients may be required.
\end{remark}

\begin{remark} \rm
It is known from \cite{ORTz} that the SdNLW \eqref{NLWNe0} with $N = 1$ and $\eps = 1$ is almost surely globally well-posed with respect to the Gibbsian initial data on a two-dimensional compact manifold. It would be of interest to study the Smoluchowski-Kramers approximations and the mean-field limits also on a compact manifold.
\end{remark}

\section{Function spaces and preliminary lemmas}

In this section, we introduce notations and recall some useful tools that will be applied in this paper. 

We denote by $C$ a constant that may vary from line to line. We also write $C (\cdot)$ to emphasize dependence of this constant on external parameters, such as $\om$, $N$, and $T$. For two positive quantities $a$ and $b$, we write $a \les b$ if $a \leq Cb$ for some constant $C > 0$ that is independent of the set where $a$ and $b$ are allowed to vary. We write $a \sim b$ if $a \les b$ and $b \les a$. We also use subscripts such as ``$\les_{s, t}$'' to denote dependence on external parameters. 

For any function $f$ on $\T^2$, we denote by $\ft f$ or $\F (f)$ the Fourier transform of $f$. For any function $g$ on $\Z^2$, we denote by $g^\vee$ the inverse Fourier transform of $g$. We will frequently use the Japanese bracket $\jb{\cdot} = (1 + |\cdot|^2)^{\frac 12}$.

For function spaces, we often use shorthand notations, such as $C_I H^s_x = C (I; H^s (\T^2))$ for some time interval $I \subset \R$ and $s \in \R$. If $I = [0, T]$ for some $T > 0$, we write $C_T H_x^s = C_{[0, T]} H_x^s$. Given a Banach space $X$ and $N \in \N$, we define the $\A_N X$-norm of $\mathbf{f} = (f^{j})_{1 \leq j \leq N} \in X^{\otimes N}$ as in \eqref{defANX}, namely the $\ell^2$-average of the $X$-norms of $f^{j}$'s. Similarly, we define the $\A_N^{(2)} X$-norm of $\mathbf{g} = (g^{j, k})_{1 \leq j, k \leq N} \in X^{\otimes N^2}$ by
\begin{align}
    \| \mathbf{g} \|_{\A_N^{(2)} X} = \big\| (g^{j, k})_{1 \leq j, k \leq N} \big\|_{\A_{N, j, k}^{(2)} X} \deff 
    \big\| (g^{j, k})_{1 \leq j, k \leq N} \big\|_{\A_{N, j} \A_{N, k} X} = \bigg( \frac{1}{N^2} \sum_{j, k = 1}^N \| g^{j, k} \|_X^2 \bigg)^{\frac 12} .
\label{defAN2X}
\end{align}

\subsection{Sobolev and Besov spaces}
\label{SUB:sob}
Given $s \in \R$ and $1 \leq p \leq \infty$, we denote by $W^{s, p} (\T^2)$  the $L^p$-based Sobolev space via the norm
\begin{align*}
    \| f \|_{W^{s, p}} \deff \| \jb{\nb}^s f \|_{L^p} = \| ( \jb{\cdot}^s \ft f )^\vee \|_{L^p} .
\end{align*}

\noi
When $p = 2$, we write $H^s (\T^2) = W^{s, 2} (\T^2)$. By Plancherel's identity, we have
\begin{align*}
    \| f \|_{H^s} = \| \jb{\nb}^s f \|_{L^2} = \| \jb{\cdot}^s \ft f (\cdot) \|_{\ell^2} .
\end{align*}

\noi
We also define the product space $\H^s (\T^2) \deff H^s (\T^2) \times H^{s - 1} (\T^2)$ via the norm
\begin{align*}
    \| (f, g) \|_{\H^s} \deff \| f \|_{H^s} + \| g \|_{H^{s - 1}} .
\end{align*}

\noi
For any $s_1 \leq s_2$ and $1 \leq p_1 \leq p_2 \leq \infty$, we have the embedding
\begin{align*}
    \| f \|_{W^{s_1, p_1}} \les \| f \|_{W^{s_2, p_2}} ,
\end{align*}

\noi
which we will use frequently without mentioning.

We record the following product estimates for Sobolev spaces. See \cite[Lemma~3.4]{GKO} and also \cite[Proposition~1 and Proposition~2]{BOZ} for the endpoint case.
\begin{lemma}
\label{LEM:prod}
Let $s > 0$.

\smallskip \noi
\textup{(i)} Let $1 < r, p_1, p_2, q_1, q_2 \leq \infty$ satisfying $\frac 1r = \frac{1}{p_1} + \frac{1}{q_1} = \frac{1}{p_2} + \frac{1}{q_2}$. Then, we have
\begin{align*}
    \| f g \|_{W^{s, r}} \les \| f \|_{W^{s, p_1}} \| g \|_{L^{q_1}} + \| f \|_{L^{p_2}} \| g \|_{W^{s, q_2}} .
\end{align*}

\smallskip \noi
\textup{(ii)} Let $1 < q, r < \infty$ and $1 < p \leq \infty$ satisfying $\frac{s}{2} \geq \frac{1}{p} + \frac{1}{q} - \frac{1}{r}$ and $q, r' \geq p'$, with $r'$ and $p'$ being the H\"older conjugates of $r$ and $p$, respectively. Then, we have
\begin{align*}
    \| f g \|_{W^{-s, r}} \les \| f \|_{W^{-s, p}} \| g \|_{W^{s, q}} .
\end{align*}
\end{lemma}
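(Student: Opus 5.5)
Both estimates in Lemma~\ref{LEM:prod} are classical: (i) is the fractional Leibniz rule and (ii) follows from (i) by duality together with Sobolev embedding. I would prove them in that order.

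For part (i), I will use a Littlewood--Paley paraproduct decomposition on $\T^2$. Write
\[
fg = \sum_{j} P_{<j-2} f \, P_j g + \sum_j P_j f \, P_{<j-2} g + \sum_{|j-k|\le 2} P_j f \, P_k g .
\]
The first sum is the high-frequency-in-$g$ paraproduct. Each summand is frequency-localized to an annulus of size $2^j$, so by the Littlewood--Paley square function characterization of $W^{s,r}$ (valid for $1<r<\infty$) its $W^{s,r}$ norm is bounded by
\[
\big\| \big( \textstyle\sum_j 2^{2sj} |P_{<j-2} f|^2 |P_j g|^2 \big)^{1/2} \big\|_{L^r} \le \| \sup_j |P_{<j-2} f| \|_{L^{p_2}} \| g\|_{W^{s,q_2}} \les \| f\|_{L^{p_2}} \| g\|_{W^{s,q_2}} .
\]
This uses Hölder's inequality and the Hardy--Littlewood maximal bound $\sup_j |P_{<j}f| \les Mf$, which is fine for $p_2>1$. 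The second sum is symmetric and gives $\|f\|_{W^{s,p_1}}\|g\|_{L^{q_1}}$.

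The resonant sum is supported on balls rather than annuli, and this is where $s>0$ is needed. For each $j$, the term $\sum_{|j-k|\le2} P_jf\,P_kg$ has frequencies $\les 2^j$. Summing $2^{s\ell}P_\ell$ of it over $\ell\le j+3$ gains a geometric factor, since $\sum_{j\ge \ell-3} 2^{s(\ell-j)}$ converges for $s>0$. This gives a bound by
\[
\Big\| \sum_j 2^{sj}|P_jf|\,|\widetilde P_j g| \Big\|_{L^r} \le \|f\|_{W^{s,p_1}}\|g\|_{L^{q_1}}
\]
via Cauchy--Schwarz in $j$ and the square function. The endpoint cases where some exponent equals $\infty$ need the maximal function or $\ell^\infty$ bounds placed on the $L^\infty$ factor. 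For $r=\infty$ I would simply cite \cite{BOZ} rather than redo it.

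For part (ii), I argue by duality. Pair $fg$ with a test function $h\in W^{s,r'}$, so that
\[
\langle fg,h\rangle=\langle f, gh\rangle \le \|f\|_{W^{-s,p}}\|gh\|_{W^{s,p'}} .
\]
Apply part (i) to $gh$ with target exponent $p'$:
\[
\|gh\|_{W^{s,p'}}\les \|g\|_{W^{s,q}}\|h\|_{L^{a}}+\|g\|_{L^{b}}\|h\|_{W^{s,r'}},
\]
where $\frac1{p'}=\frac1q+\frac1a=\frac1b+\frac1{r'}$. The conditions $q,r'\ge p'$ guarantee that $a,b$ are admissible exponents in $(1,\infty]$.

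It remains to control $\|h\|_{L^a}$ by $\|h\|_{W^{s,r'}}$ and $\|g\|_{L^b}$ by $\|g\|_{W^{s,q}}$ using Sobolev embedding in two dimensions. These require $\frac1{r'}-\frac1a\le\frac s2$ and $\frac1q-\frac1b\le \frac s2$. Both quantities equal $1-\frac1p-\frac1q-\frac1{r'}$ rewritten, i.e.\ $\frac1p+\frac1q-\frac1r$ up to sign conventions, and the hypothesis $\frac s2\ge \frac1p+\frac1q-\frac1r$ is exactly what is needed. Taking the supremum over $\|h\|_{W^{s,r'}}\le1$ finishes the proof.

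The main obstacle is the exponent bookkeeping at the endpoints, namely $p=\infty$ or Sobolev embeddings into $L^\infty$ with equality in the scaling condition, where the embedding fails. Strict inequality can be arranged by slightly perturbing the auxiliary exponents $a,b$. The genuinely endpoint statement I would defer to \cite[Proposition~1 and Proposition~2]{BOZ}, as the paper does.
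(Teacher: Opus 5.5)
Your overall route (paraproduct proof of the fractional Leibniz rule for (i), then duality, (i) and Sobolev embedding for (ii)) is the standard argument behind the results the paper cites; the paper gives no proof beyond \cite[Lemma~3.4]{GKO} and \cite{BOZ}. Your exponent bookkeeping in (ii) is right: with $\frac{1}{p'} = \frac1q + \frac1a = \frac1b + \frac1{r'}$, both embeddings require exactly $\frac1p + \frac1q - \frac1r \le \frac s2$, and $q, r' \ge p'$ is what makes $a, b$ admissible. Your endpoint paragraph, however, misdiagnoses two cases.

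First, $p = \infty$ is not a bookkeeping issue, and it is the case the paper uses constantly, e.g.\ \eqref{GWPh3}, \eqref{conveN2-1}, \eqref{B3-1}. There the duality step reads $\langle f, gh \rangle \le \| f \|_{W^{-s, \infty}} \| gh \|_{W^{s, 1}}$. So you need (i) with target exponent $1$. That lies outside the range $r > 1$ of your part (i), and outside the square-function characterization you rely on, since $W^{s,1}$ is not characterized by Littlewood--Paley square functions. You have two options. One is to import a target-exponent-one fractional Leibniz rule (Grafakos--Oh type) as a separate endpoint input. The other is to note that every application in the paper has $\frac1p + \frac1q - \frac1r = 0 < \frac s2$. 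Then you may use $\| f \|_{W^{-s, \tilde p}} \les \| f \|_{W^{-s, \infty}}$ with $\tilde p < \infty$ large, which puts you back in the range you actually proved.

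Second, ``perturbing the auxiliary exponents $a, b$'' is not possible, because H\"older fixes them. The condition $q = p'$ forces $a = \infty$, and $r = p$ forces $b = \infty$. With equality in the scaling condition, $W^{s, r'}$ (resp.\ $W^{s, q}$) does not embed into $L^\infty$.

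Nor can \cite{BOZ} rescue this corner, because the estimate is false there. The choice $s = 1$, $p = q = r = 2$ satisfies every hypothesis of (ii). By your own duality identity, $\| fg \|_{H^{-1}} \les \| f \|_{H^{-1}} \| g \|_{H^1}$ would give $\| gh \|_{H^1} \les \| g \|_{H^1} \| h \|_{H^1}$ on $\T^2$. This fails for $g = h$ a localized $(\log \frac{e}{|x|})^{\alpha}$ with $\frac14 \le \alpha < \frac12$. So that corner must be excluded (e.g.\ by requiring strict inequality) rather than proved; this costs the paper nothing.

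A smaller point in (i): for the resonant term keep the $\ell^2$ form $\big( \sum_j 2^{2sj} |R_j|^2 \big)^{1/2}$, where $R_j = \sum_{|k - j| \le 2} P_j f \, P_k g$. Its justification uses $s > 0$ and the Fefferman--Stein inequality for ball-supported pieces. Your $\ell^1$-in-$j$ bound followed by Cauchy--Schwarz breaks down at $q_1 = \infty$. The case $p_1 = \infty$ (derivatives on the $L^\infty$ factor) is not reachable by square functions at all and needs a Coifman--Meyer type argument.
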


We also need the Besov-H\"older spaces. Let $\varphi : \R \to [0, 1]$ be a smooth even cutoff function supported on $[- \frac 85, \frac 85]$ such that $\varphi \equiv 1$ on $[- \frac 54, \frac 54]$.
Given $\xi \in \R^2$, we set $\varphi_1 (\xi) = \varphi (|\xi|)$ and, given a dyadic number $N \geq 2$, 
\begin{align*}
\varphi_N (\xi) = \varphi (\tfrac{|\xi|}{N}) - \varphi (\tfrac{2 |\xi|}{N}).
\end{align*}

\noi
Given dyadic $N \geq 1$, we define the Littlewood-Paley projector $\mathbf{P}_N$ as the Fourier multiplier operator with symbol $\varphi_N$.
Given $s \in \R$, we denote by $\C^s (\T^2)$ the Besov-H\"older space via the norm
\begin{align*}
\| f \|_{\C^s} \deff \sup_{\substack{N \geq 1 \\ \text{dyadic}}} N^s \| \mathbf{P}_N f \|_{L^\infty} .
\end{align*}


We have the following embedding results between the Besov-H\"older spaces and the $L^p$-based Sobolev spaces. For a proof, see \cite[Lemma~2.1]{GOTW}.
\begin{lemma}
\label{LEM:emb}
\textup{(i)} Let $s_1, s_2 \in \R$ be such that $s_1 < s_2$ and $1 \leq p \leq \infty$. Then, we have
\begin{align*}
\| f \|_{W^{s_1, p}} \les \| f \|_{\C^{s_2}} .
\end{align*}

\smallskip \noi
\textup{(ii)} Let $s_1, s_2 \in \R$ be such that $s_2 \geq s_1 + 1$. Then, we have
\begin{align*}
\| f \|_{\C^{s_1}} \les \| f \|_{H^{s_2}} .
\end{align*}

\smallskip \noi
\textup{(iii)} Let $s \in \R$. Then, we have
\begin{align*}
\| f \|_{\C^{s}} \les \| f \|_{W^{s, \infty}} .
\end{align*}
\end{lemma}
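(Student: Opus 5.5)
The statement to prove is Lemma~\ref{LEM:emb}. I will handle all three parts on the Littlewood--Paley side. The main tools are Bernstein's inequality on $\T^2$ and the uniform $L^1$-boundedness of the kernels of $\mathbf{P}_N$ and $\jb{\nb}^{s}\mathbf{P}_N$ after rescaling. Writing $\widetilde{\mathbf{P}}_N = \mathbf{P}_{N/2} + \mathbf{P}_N + \mathbf{P}_{2N}$, we have $\mathbf{P}_N = \widetilde{\mathbf{P}}_N \mathbf{P}_N$. Moreover, $\jb{\nb}^{s} \widetilde{\mathbf{P}}_N$ is a Fourier multiplier whose symbol, rescaled to frequency $\sim 1$, is a smooth compactly supported function with derivatives bounded uniformly in $N$, up to the factor $N^s$. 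Hence its convolution kernel $K_N$ on $\T^2$ satisfies $\| K_N \|_{L^1} \les N^s$ uniformly in $N$, by periodization of a Schwartz function. Thus
\[
\| \jb{\nb}^s \mathbf{P}_N f \|_{L^p} \les N^s \| \mathbf{P}_N f \|_{L^p}
\]
for all $1 \leq p \leq \infty$, and similarly $\| \mathbf{P}_N f \|_{L^p} \les \| f \|_{L^p}$.

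For (i), I decompose $f = \sum_N \mathbf{P}_N f$ and use the triangle inequality:
\[
\| f \|_{W^{s_1, p}} \leq \sum_N \| \jb{\nb}^{s_1} \mathbf{P}_N f \|_{L^p} \les \sum_N N^{s_1} \| \mathbf{P}_N f \|_{L^\infty} \les \sum_N N^{s_1 - s_2} \| f \|_{\C^{s_2}} .
\]
Here I used $L^\infty \subset L^p$ on the compact torus. The dyadic sum converges since $s_1 < s_2$. This strict inequality is exactly what absorbs the loss from summing the blocks.

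For (ii), I use Bernstein in the form $\| \mathbf{P}_N f \|_{L^\infty} \le \sum_{|n| \lesssim N} |\ft f(n)|$ together with Cauchy--Schwarz:
\[
\| \mathbf{P}_N f \|_{L^\infty} \les \Big( \sum_{|n| \sim N} \jb{n}^{-2 s_2} \Big)^{1/2} \| \jb{n}^{s_2} \ft f \|_{\ell^2} \les N^{1 - s_2} \| f \|_{H^{s_2}} ,
\]
since there are $\sim N^2$ lattice points with $|n| \sim N$. Therefore $N^{s_1} \| \mathbf{P}_N f \|_{L^\infty} \les N^{s_1 + 1 - s_2} \| f \|_{H^{s_2}} \les \| f \|_{H^{s_2}}$, because $s_2 \geq s_1 + 1$. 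Taking the supremum in $N$ gives the claim. No summation is needed here, so the endpoint $s_2 = s_1 + 1$ is allowed.

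For (iii), I write $\mathbf{P}_N f = \big( \jb{\nb}^{-s} \widetilde{\mathbf{P}}_N \big) \mathbf{P}_N \jb{\nb}^s f$. The multiplier $\jb{\nb}^{-s}\widetilde{\mathbf{P}}_N$ has a kernel with $L^1$-norm $\les N^{-s}$, and $\mathbf{P}_N$ is bounded on $L^\infty$ uniformly in $N$. By Young's inequality,
\[
N^s \| \mathbf{P}_N f \|_{L^\infty} \les \| \jb{\nb}^s f \|_{L^\infty} = \| f \|_{W^{s,\infty}} .
\]
Taking the supremum over $N$ finishes the proof.

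The only genuinely technical step is the uniform kernel bound $\| \mathcal{F}^{-1}[\jb{\cdot}^{\pm s}\varphi_N] \|_{L^1(\T^2)} \les N^{\pm s}$ for $N \ge 2$, together with the trivial case $N=1$. I would justify it via Poisson summation: the periodic kernel is the periodization of the $\R^2$ kernel. The $\R^2$ kernel equals $N^{2}\, \check{m}_N(N \cdot)$, where $m_N(\xi) = \jb{N\xi}^{\pm s} \varphi_N(N\xi)$ lies in a bounded subset of $C^\infty_c$ after dividing by $N^{\pm s}$. Hence the $\R^2$ kernel has $L^1(\R^2)$-norm $\les N^{\pm s}$, and periodization does not increase the $L^1$-norm.
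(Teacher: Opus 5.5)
Your proof is correct. The uniform kernel bounds for $\jb{\nb}^{\pm s}\widetilde{\mathbf{P}}_N$, the dyadic summation using $s_1<s_2$ in (i), Bernstein plus Cauchy--Schwarz on the annulus $|n|\sim N$ in (ii) (where no summation is needed, so the endpoint $s_2=s_1+1$ is allowed), and the factorization $\mathbf{P}_N=\jb{\nb}^{-s}\widetilde{\mathbf{P}}_N\mathbf{P}_N\jb{\nb}^{s}$ in (iii) all check out, with the convention $\widetilde{\mathbf{P}}_1=\mathbf{P}_1+\mathbf{P}_2$. The paper gives no proof of its own and cites \cite[Lemma~2.1]{GOTW}, which records the standard Besov embeddings $\C^{s_2}=B^{s_2}_{\infty,\infty}\subset B^{s_1}_{\infty,1}\subset W^{s_1,\infty}\subset W^{s_1,p}$, $H^{s_2}\subset B^{s_2-1}_{\infty,\infty}$, and $W^{s,\infty}\subset B^{s}_{\infty,\infty}$; your argument is exactly this standard route written out by hand.
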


We also record the following product estimates for Besov-H\"older spaces. For a proof, see, for example, \cite[Lemma~2.1]{GOTW}.
\begin{lemma}
\label{LEM:prodC}
\textup{(i)} Let $s > 0$. Then, we have
\begin{align*}
\| f g \|_{\C^s} \les \| f \|_{\C^s} \| g \|_{\C^s} .
\end{align*}

\smallskip \noi
\textup{(ii)} Let $s_1, s_2 \in \R$ be such that $s_1 < 0 < s_2$ and $s_1 + s_2 > 0$. Then, we have
\begin{align*}
\| f g \|_{\C^{s_1}} \les \| f \|_{\C^{s_1}} \| g \|_{\C^{s_2}} .
\end{align*}
\end{lemma}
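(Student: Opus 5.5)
The two product estimates in Lemma~\ref{LEM:prodC} follow from Bony's paraproduct decomposition
\begin{align*}
fg = \sum_{M \ll K} \mathbf{P}_M f \, \mathbf{P}_K g + \sum_{K \ll M} \mathbf{P}_M f \, \mathbf{P}_K g + \sum_{M \sim K} \mathbf{P}_M f \, \mathbf{P}_K g ,
\end{align*}
where $M, K$ range over dyadic numbers. The argument rests on two facts about Littlewood--Paley pieces. First, the Fourier support of $\mathbf{P}_M f \, \mathbf{P}_K g$ with $M \ll K$ lies in an annulus $|\xi| \sim K$, so only $L \sim K$ survive after applying $\mathbf{P}_L$. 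Second, for the resonant terms $M \sim K$ the support is only a ball $|\xi| \les K$, so $\mathbf{P}_L$ sees all $K \ges L$. Since $\mathbf{P}_L$ is convolution with an $L^1$-bounded kernel, uniformly in $L$, we may bound $\|\mathbf{P}_L(\cdot)\|_{L^\infty} \les \|\cdot\|_{L^\infty}$ throughout. We also use $\|\mathbf{P}_M f\|_{L^\infty} \le M^{-s}\|f\|_{\C^s}$, directly from the definition.

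For (i), with $s > 0$, fix dyadic $L$ and estimate $L^s \|\mathbf{P}_L(fg)\|_{L^\infty}$ piece by piece. In the paraproduct $M \ll K$ we have $K \sim L$. Since $s > 0$, the low-frequency sum satisfies
\begin{align*}
\Big\|\sum_{M \ll K} \mathbf{P}_M f\Big\|_{L^\infty} \les \sum_M M^{-s}\|f\|_{\C^s} \les \|f\|_{\C^s} ,
\end{align*}
and hence this piece is bounded by $L^s \|f\|_{\C^s} L^{-s}\|g\|_{\C^s}$. The piece $K \ll M$ is symmetric. For the resonant piece,
\begin{align*}
L^s\sum_{K \ges L} K^{-2s}\|f\|_{\C^s}\|g\|_{\C^s} \les L^{-s}\|f\|_{\C^s}\|g\|_{\C^s} ,
\end{align*}
which converges because $s > 0$.

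For (ii), write $f \in \C^{s_1}$ with $s_1 < 0$ and $g \in \C^{s_2}$ with $s_2 > 0$.
\begin{itemize}
\item In the piece where $f$ is at high frequency $M \sim L$ and $g$ at low frequency, $\|\sum_{K\ll M} \mathbf{P}_K g\|_{L^\infty} \les \|g\|_{\C^{s_2}}$ because $s_2 > 0$. This yields $L^{s_1} M^{-s_1}\|f\|_{\C^{s_1}}\|g\|_{\C^{s_2}} \les \|f\|_{\C^{s_1}}\|g\|_{\C^{s_2}}$.
\item In the piece where $g$ is at high frequency $K \sim L$, the low-frequency sum $\sum_{M \ll K}\|\mathbf{P}_M f\|_{L^\infty} \les \sum_{M\les K} M^{-s_1}\|f\|_{\C^{s_1}} \les K^{-s_1}\|f\|_{\C^{s_1}}$ grows, since $s_1<0$. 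Multiplying by $L^{s_1} K^{-s_2}$ with $K \sim L$ gives $L^{-s_2}$, which is bounded.
\item For the resonant piece we get $L^{s_1}\sum_{K\ges L} K^{-s_1-s_2}$. This is summable exactly because $s_1+s_2>0$, and it gives $L^{s_1}L^{-s_1-s_2} \le 1$.
\end{itemize}

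The only delicate point is the resonant term. It is where the conditions $s>0$ in (i) and $s_1+s_2>0$ in (ii) are genuinely needed, and where the Fourier support claim for $\mathbf{P}_L$ of high-high products (support in a ball rather than an annulus) has to be used carefully. The rest is routine bookkeeping of dyadic sums, with constants depending on the cutoff $\varphi$ only.
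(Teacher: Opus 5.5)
Your argument is correct and is the standard Bony paraproduct proof. The hypotheses $s>0$, $s_2>0$, $s_1<0$ and $s_1+s_2>0$ enter exactly where you say, and the paper itself gives no proof: it cites \cite[Lemma~2.1]{GOTW}, which rests on the same paraproduct decomposition.
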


We now record the following estimate on the smoothing effect of the heat semigroup.  The proof follows from minor modifications of \cite[Lemma~A.5]{GIP}.
\begin{lemma}
\label{LEM:heat}
Let $s_1, s_2 \in \R$ be such that $s_1 \geq s_2$ and $t > 0$. Then, we have
\begin{align*}
\| e^{- t (1 - \Dl)} f \|_{H^{s_1}} \les t^{- \frac{s_1 - s_2}{2}} \| f \|_{H^{s_2}} 
\end{align*}

\noi
and 
\begin{align*}
\| e^{- t (1 - \Dl)} f \|_{\C^{s_1}} \les t^{- \frac{s_1 - s_2}{2}} \| f \|_{\C^{s_2}} .
\end{align*}
\end{lemma}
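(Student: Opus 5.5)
The statement to prove is Lemma~\ref{LEM:heat}, the smoothing estimates for the heat semigroup $e^{-t(1-\Dl)}$ in $H^s$ and in $\C^s$. For both parts I will first note that $e^{-t(1-\Dl)} = e^{-t} e^{t\Dl}$ and $e^{-t} \le 1$. It therefore suffices to treat $e^{t\Dl}$, or equivalently to keep the extra factor $e^{-t}$, which also handles large $t$ uniformly.

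For the Sobolev estimate I will work on the Fourier side with Plancherel. By definition,
\begin{align*}
\| e^{-t(1-\Dl)} f \|_{H^{s_1}} = \big\| \jb{n}^{s_1} e^{-t\jb{n}^2} \ft f(n) \big\|_{\ell^2_n}
\le \sup_{n \in \Z^2} \jb{n}^{s_1 - s_2} e^{-t \jb{n}^2} \, \| f \|_{H^{s_2}} .
\end{align*}
Set $\al = \frac{s_1 - s_2}{2} \ge 0$ and $\ld = \jb{n}^2 \ge 1$. Then $\ld^{\al} e^{-t\ld} = t^{-\al} (t\ld)^{\al} e^{-t\ld} \le C_\al t^{-\al}$, because $x \mapsto x^\al e^{-x}$ is bounded on $[0,\infty)$. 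This proves the first bound. The case $\al = 0$ is trivial.

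For the Besov-H\"older estimate I will use the Littlewood-Paley projections $\mathbf{P}_M$, which commute with the semigroup. The key step is a uniform $L^\infty$ bound on each dyadic block:
\begin{align*}
\| \mathbf{P}_M e^{-t(1-\Dl)} f \|_{L^\infty} \les (1 + tM^2)^{-\gamma} \, e^{-t} \, \| \widetilde{\mathbf{P}}_M \mathbf{P}_M f \|_{L^\infty}
\end{align*}
for any $\gamma \ge 0$, where $\widetilde{\mathbf{P}}_M$ is a fattened projector equal to $1$ on the support of $\varphi_M$.

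The main obstacle is exactly this step, namely bounding the $L^1$ norm of the kernel of $\widetilde{\mathbf{P}}_M e^{t\Dl}$ on $\T^2$. I plan to handle it in two cases.
\begin{itemize}
\item For $M = 1$, $\widetilde{\mathbf{P}}_1 e^{t\Dl}$ is a composition of $\widetilde{\mathbf{P}}_1$, whose kernel is bounded in $L^1$, and $e^{t\Dl}$, which is a contraction on $L^\infty$ because the heat kernel is positive with mass $1$.
\item For $M \ge 2$, the symbol is $\tilde\varphi(\xi/M) e^{-t|\xi|^2}$. I will rescale $\xi = M\eta$, giving $\tilde\varphi(\eta) e^{-tM^2 |\eta|^2}$ with $|\eta| \sim 1$. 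Integration by parts, i.e.\ bounding derivatives in $\eta$, gives an $L^1(\R^2)$ kernel bound of $C_\gamma (1+tM^2)^{-\gamma} e^{-ctM^2}$. Poisson summation transfers this bound to $\T^2$: the periodized kernel has $L^1(\T^2)$ norm at most the $L^1(\R^2)$ norm.
\end{itemize}
This is the argument in \cite[Lemma~A.5]{GIP}, adapted to the torus and with the mass term added.

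Given the block bound, the conclusion is a short computation:
\begin{align*}
M^{s_1} \| \mathbf{P}_M e^{-t(1-\Dl)} f \|_{L^\infty}
\les M^{s_1 - s_2} (1+tM^2)^{-\al} \, \| f \|_{\C^{s_2}} \les t^{-\al} \, \| f \|_{\C^{s_2}} ,
\end{align*}
since $M^{2\al} (1+tM^2)^{-\al} \le t^{-\al}$. Here the estimate $\| \mathbf{P}_M f \|_{L^\infty} \les M^{-s_2} \| f \|_{\C^{s_2}}$ is used, and the finitely many overlapping neighbouring blocks introduced by $\widetilde{\mathbf{P}}_M$ are absorbed into the constant. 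Taking the supremum over dyadic $M$ gives the $\C^s$ bound.
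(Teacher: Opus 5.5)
Your proposal is correct and is essentially the paper's argument: the paper only says the lemma follows from minor modifications of \cite[Lemma~A.5]{GIP}, and your two ingredients are exactly that modification, namely the Plancherel bound for the $H^s$ estimate and, for the $\C^s$ estimate, the Littlewood--Paley block estimate via uniform $L^1$ bounds on the localized heat kernel (rescaling plus Poisson summation), with the mass factor $e^{-t}$ controlling the block $M = 1$ for large $t$. Two cosmetic points: since $\widetilde{\mathbf{P}}_M \mathbf{P}_M = \mathbf{P}_M$, no neighbouring blocks actually appear, and for $M = 1$ your argument gives only the factor $e^{-t}$ rather than $(1+t)^{-\gamma} e^{-t}$, which is still enough because $e^{-t} \lesssim_\alpha t^{-\alpha}$.
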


\subsection{Convergence of linear propagators}

In this subsection, we recall some known results on the convergence of the linear propagators of the damped wave equation and the heat equation. We follow the notations in \cite{Zine}.

Given $0 < \eps \leq 1$, we define $\mathbf{P}_\eps^{\text{lo}}$ and $\mathbf{P}_\eps^{\text{hi}}$ as the sharp frequency projections onto $\{ n \in \Z^2: \jb{n} \leq (2 \eps)^{-1} \}$ and $\{ n \in \Z^2 : \jb{n} > (2 \eps)^{-1} \}$, respectively. 
For any $n \in \Z^2$, we also define
\begin{align*}
    \ld_\eps (n) \deff \frac{\sqrt{1 - 4 \eps^2 \jb{n}^2}}{2 \eps^2} \quad \text{and} \quad \zeta_\eps (n) \deff \frac{\sqrt{4 \eps^2 \jb{n}^2 - 1}}{2 \eps^2} .
\end{align*}

\noi
We then define
\begin{align}
    \mathcal{D}_\eps (t) \deff e^{- \frac{t}{2 \eps^2}} \frac{\sinh ( t \ld_\eps (\nb))}{\ld_\eps (\nb)} \mathbf{P}_\eps^{\text{lo}} 
    + e^{- \frac{t}{2 \eps^2}} \frac{\sin (t \zeta_\eps (\nb))}{\zeta_\eps (\nb)} \mathbf{P}_\eps^{\text{hi}} ,
\label{defDe}
\end{align}

\noi
whose associated Fourier multiplier is given by
\begin{align}
\ft{\mathcal{D}}_\eps (t, n) \deff e^{- \frac{t}{2 \eps^2}} \frac{\sinh ( t \ld_\eps (n))}{\ld_\eps (n)} \ind_{\{ \jb{n} \leq (2 \eps)^{-1} \}} + e^{- \frac{t}{2 \eps^2}} \frac{\sin (t \zeta_\eps (n))}{\zeta_\eps (n)} \ind_{\{ \jb{n} > (2 \eps)^{-1} \}} .
\label{Den}
\end{align}

Given $0 < \eps \leq 1$, the solution to the inhomogeneous linear damped wave equation
\begin{align*}
\begin{cases}
    (\eps^2 \dt^2 + \dt + 1 - \Dl) u = F \\
    (u, \dt u)|_{t = 0} = (\phi_0, \phi_1)
\end{cases}
\end{align*}

\noi
is given by
\begin{align*}
    u (t) = P_\eps (t) (\phi_0, \phi_1) + \mathcal{I}_\eps (F) (t) ,
\end{align*}

\noi
where
\begin{align}
    P_\eps (t) (\phi_0, \phi_1) \deff (\eps^{-2} + \dt) \D_\eps (t) \phi_0 + \D_\eps (t) \phi_1
\label{defPe}
\end{align}

\noi
and
\begin{align}
    \mathcal{I}_{\eps} (F) (t) \deff \int_{0}^t \eps^{-2} \D_\eps (t - t') F (t') dt' .  
\label{defIe}
\end{align}

\noi
We also consider the linear heat equation
\begin{align*}
\begin{cases}
    (\dt + 1 - \Dl) u = F \\
    u|_{t = 0} = \phi_0 ,
\end{cases}
\end{align*}

\noi
whose solution is given by
\begin{align*}
    u(t) = P_0 (t) \phi_0 + \mathcal{I}_0 (F) (t) ,
\end{align*}

\noi
where
\begin{align}
    P_0 (t) \phi_0 \deff e^{- t (1 - \Dl)} \phi_0
\label{defP0}
\end{align}

\noi
and
\begin{align}
    \mathcal{I}_{0} (F) (t) \deff \int_{0}^t P_0 (t - t') F (t') dt' .
\label{defI0}
\end{align}

\noi
For later convenience, we alse write
\begin{align}
    P_0 (t) (\phi_0, \phi_1) = P_0 (t) \phi_0 .
\label{P001}
\end{align}

The following lemma was proved in \cite[Proposition~3.1 and Lemma~3.7~(iii)]{Zine}.
\begin{lemma}
\label{LEM:PIconv}
Let $s \in \R$, $0 \leq \eps \leq 1$, and $T > 0$. Then, we have the following bounds:
\begin{align}
    &\| P_\eps (\cdot) (\phi_0, \phi_1) \|_{C_T H_x^s} \les \| (\phi_0, \eps \phi_1) \|_{\H^s} , \label{Pe1} \\
    &\| \mathcal{I}_{\eps} (F) (\cdot) \|_{C_T H_x^s} \les T^{\frac 12} \| F \|_{L_T^\infty H_x^{s - 1}} , \nonumber
\end{align}

\noi
where the underlying constants are independent of $\eps$.
Moreover, for any $0 < \ta \ll 1$ sufficiently small, we have the difference estimates:
\begin{align}
    &\| (P_\eps - P_0) (\cdot) (\phi_0, \phi_1) \|_{C_T H_x^s} \les \eps^{\ta} \| (\phi_0, \eps \phi_1) \|_{\H^{s + 2 \ta}}, \label{Pe2} \\
    &\| (\mathcal{I}_{\eps} - \mathcal{I}_{0}) (F) (\cdot) \|_{C_T H_x^s} \les T^{\frac 12} \eps^\ta \| F \|_{L_T^\infty H_x^{s - 1 + 2 \ta}} . \nonumber
\end{align}
\end{lemma}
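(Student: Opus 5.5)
The plan is to work entirely on the Fourier side, since $P_\eps$, $\mathcal{I}_\eps$, $P_0$ and $\mathcal{I}_0$ are all Fourier multipliers in $x$. All four bounds then reduce to pointwise-in-$n$ estimates on the multipliers, uniform in $t\in[0,T]$ and $\eps$. For $\eps>0$, set $m_\eps(t,n) = \eps^{-2}\ft{\D}_\eps(t,n)$. Then
\[
\widehat{P_\eps(t)(\phi_0,\phi_1)}(n)=\big(\ft{\D}_\eps(t,n)\eps^{-2}+\partial_t\ft{\D}_\eps(t,n)\big)\ft\phi_0(n)+\ft{\D}_\eps(t,n)\ft\phi_1(n),
\]
and $\widehat{\mathcal I_\eps F}(t,n)=\int_0^t m_\eps(t-t',n)\ft F(t',n)\,dt'$.

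\textbf{Step 1: uniform bounds.} I would prove $|\eps^{-2}\ft\D_\eps+\partial_t\ft\D_\eps|\les 1$ and $|\ft\D_\eps(t,n)|\les \eps\jb{n}^{-1}$. Together these give \eqref{Pe1}, since $\|\ft{\D}_\eps\ft\phi_1\|$ is controlled by $\eps\jb n^{-1}|\ft\phi_1|$, which is exactly the $\eps\phi_1\in H^{s-1}$ weight.

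\emph{Low frequencies} ($\jb n\le(2\eps)^{-1}$). The roots of $\eps^2\mu^2+\mu+\jb n^2=0$ are $\mu_\pm=-\frac1{2\eps^2}\pm\ld_\eps$, both negative, with $\mu_+\approx-\jb n^2$ and $\mu_-\approx-\eps^{-2}$. The multiplier $\ft\D_\eps=(e^{\mu_+t}-e^{\mu_-t})/(2\ld_\eps)$ is then a difference of decaying exponentials.

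\emph{High frequencies} ($\jb n>(2\eps)^{-1}$). Here we have oscillation with damping $e^{-t/(2\eps^2)}$ and $\zeta_\eps\sim\eps^{-1}\jb n$ away from the threshold. Near the threshold $\ld_\eps,\zeta_\eps\to0$, and I would use $|\sinh(x)/x|,|\sin x/x|$ bounds combined with the exponential damping: $te^{-t/(2\eps^2)}\les\eps^2$.

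For the Duhamel term, Cauchy--Schwarz in $t'$ gives
\[
|\widehat{\mathcal I_\eps F}(t,n)|\le \|m_\eps(\cdot,n)\|_{L^1_t}\sup_{t'}|\ft F(t',n)|.
\]
So it suffices to show $\|m_\eps(\cdot,n)\|_{L^1(0,T)}\les T^{1/2}\jb n^{-1}$, or rather $\min(T,\jb n^{-2})\le T^{1/2}\jb n^{-1}$. At low frequency, $\int_0^\infty \eps^{-2}|e^{\mu_+t}-e^{\mu_-t}|/(2\ld_\eps)\,dt = \eps^{-2}(1/|\mu_+|-1/|\mu_-|)/(2\ld_\eps)=1/(|\mu_+||\mu_-|\eps^2)=\jb n^{-2}$. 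A similar computation at high frequency uses the damping, giving $\eps^{-2}\cdot\eps^2\cdot\min(1,\eps\jb{n}^{-1}\cdot\ldots)$. The $\eps=0$ case (heat) is classical.

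\textbf{Step 2: difference estimates.} Split into $\jb n\le \eps^{-1/2}$ (say) and the rest.

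\emph{Large frequencies.} Each term is bounded separately by Step 1. The gain $\eps^\ta\jb n^{2\ta}\ge c$ holds once $\jb n\gtrsim\eps^{-1/2}$, so these are absorbed by the $H^{s+2\ta}$ norm.

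\emph{Small frequencies} ($\eps\jb n\ll1$). Expand $\mu_+=-\jb n^2+O(\eps^2\jb n^4)$, so $|e^{\mu_+t}-e^{-t\jb n^2}|\les \eps^2\jb n^4 t e^{-ct\jb n^2}\les\eps^2\jb n^2$. The fast mode $e^{\mu_-t}$ and the prefactor $\eps^{-2}/(2\ld_\eps)-1=O(\eps^2\jb n^2)$ contribute $O(\eps^2\jb n^2)$ pointwise for the $P$ part. The exception is the transient $e^{-t/\eps^2}$ term at $t\approx0$, which does not vanish in $L^\infty_t$ but appears with matching coefficients ($P_\eps(0)\phi_0=\phi_0$). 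I would check the combination $\eps^{-2}\ft\D_\eps+\partial_t\ft\D_\eps$ explicitly: it equals $(\mu_-e^{\mu_+t}\cdot(-1)+\ldots)$-type weights, where the coefficient of $e^{\mu_-t}$ is $O(\eps^2\jb n^2)$. Interpolating $\min(1,\eps^2\jb n^2)\le(\eps\jb n)^{2\ta}\le\eps^{\ta}\jb n^{2\ta}$ gives \eqref{Pe2}. For the Duhamel difference, bound $\|m_\eps(\cdot,n)-e^{-\cdot\jb n^2}\|_{L^1_t}\les \eps^2$ via the same expansions, then interpolate with the Step 1 bound $\jb n^{-2}$ to get $\eps^{\ta}\jb n^{2\ta-2}$.

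\textbf{Main obstacle.} I expect the hardest part to be the uniform control near the threshold $\jb n\approx(2\eps)^{-1}$, where $\ld_\eps,\zeta_\eps$ degenerate. There the low/high splitting must be handled through a unified representation such as $\sinh(t\ld)/\ld$, which is analytic in $\ld^2$. The delicate bookkeeping of the initial-layer term in $P_\eps-P_0$ is the other difficult point.
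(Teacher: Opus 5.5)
Your treatment of the free propagator is sound. The bounds $|(\eps^{-2}+\dt)\ft\D_\eps|\les 1$ and $|\ft\D_\eps|\les \eps\jb n^{-1}$ give \eqref{Pe1}. For \eqref{Pe2}, your small-frequency expansion works, in which the coefficient of $e^{\mu_- t}$ in $(\eps^{-2}+\dt)\ft\D_\eps$ is $O(\eps^2\jb n^2)$, and so does your large-frequency absorption. The paper itself only cites \cite{Zine} for this lemma.

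The gap is in both Duhamel estimates. The inequality $|\widehat{\mathcal I_\eps F}(t,n)|\le \|m_\eps(\cdot,n)\|_{L^1_t}\sup_{t'}|\ft F(t',n)|$ is H\"older in $L^1$--$L^\infty$, not Cauchy--Schwarz. After multiplying by $\jb n^{s}$ and summing in $n$, it only yields
\[
\|\mathcal I_\eps F(t)\|_{H^s}\le \sup_{n}\big(\jb n\,\|m_\eps(\cdot,n)\|_{L^1(0,T)}\big)\Big(\sum_{n}\jb n^{2s-2}\sup_{0\le t'\le T}|\ft F(t',n)|^2\Big)^{\frac12}.
\]
The last factor is an $\ell^2_nL^\infty_t$ norm. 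It dominates, but is not controlled by, $\|F\|_{L^\infty_TH^{s-1}_x}=\sup_{t'}\|\jb n^{s-1}\ft F(t',n)\|_{\ell^2_n}$, because the supremum in $t'$ can be attained at different times for different $n$. So the reduction to $\|m_\eps(\cdot,n)\|_{L^1(0,T)}\les T^{\frac12}\jb n^{-1}$ does not prove the stated bound. The same flaw affects your argument for $\mathcal I_\eps-\mathcal I_0$.

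To close the gap, keep the time integral inside the $\ell^2_n$ sum. There are two ways to do this.

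\emph{Cauchy--Schwarz route.} Apply a genuine Cauchy--Schwarz in $t'$,
\[
|\widehat{\mathcal I_\eps F}(t,n)|\le\|m_\eps(\cdot,n)\|_{L^2(\R_+)}\|\ft F(\cdot,n)\|_{L^2(0,T)},
\]
so that Fubini gives $\|\mathcal I_\eps F(t)\|_{H^s}\le \sup_n\big(\jb n\|m_\eps(\cdot,n)\|_{L^2(\R_+)}\big)\,T^{\frac12}\|F\|_{L^\infty_TH^{s-1}_x}$. The needed bound $\jb n\|m_\eps(\cdot,n)\|_{L^2}\les 1$ follows from $\|m\|_{L^2}^2\le\|m\|_{L^1}\|m\|_{L^\infty}$, with one caveat. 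For $\jb n\gg\eps^{-1}$ the oscillation only gives $\|m_\eps(\cdot,n)\|_{L^1(\R_+)}\sim\eps\jb n^{-1}$; your value $\jb n^{-2}$ is a low-frequency computation. This must be compensated by $\|m_\eps(\cdot,n)\|_{L^\infty}\les\eps^{-1}\jb n^{-1}$. At low frequencies you need $\|m_\eps(\cdot,n)\|_{L^\infty}\les 1$ instead. In other words, you need the two-sided bound $|\ft\D_\eps|\les\min(\eps^2,\eps\jb n^{-1})$, not only $\eps\jb n^{-1}$.

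\emph{Minkowski route.} Use Minkowski's integral inequality in $t'$ together with the pointwise-in-time bound $\sup_n\jb n|m_\eps(\tau,n)|\les\tau^{-\frac12}$. This is how the paper handles $\mathcal I_0$ in \eqref{I0C1} and \eqref{conve3h}.

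For the difference $\mathcal I_\eps-\mathcal I_0$, the analogous requirement is $\sup_n\jb n^{1-2\ta}\|m_\eps(\cdot,n)-e^{-\cdot\jb n^2}\|_{L^2_t}\les\eps^{\ta}$. When $\jb n\le\eps^{-1/2}$, this follows from your $L^1$ bound $\les\eps^2$ combined with the uniform $L^\infty$ bound, which gives $L^2\les\eps$. When $\jb n\ge\eps^{-1/2}$, it follows from the separate bounds $\les\jb n^{-1}$ on the $L^2_t$ norms of the two multipliers.
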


\begin{remark} \rm
Compared to \cite[Proposition~3.1]{Zine}, we have an extra $\eps$ factor on the right-hand side of the estimates \eqref{Pe1} and \eqref{Pe2}. This is obtained in \cite[Lemma~3.7~(iii)]{Zine} and plays a crucial role in establishing a uniform-in-$\eps$ global a priori bound for the mean-field \NLWe in Subsection~\ref{SUB:mfbdd} below.
\end{remark}

\subsection{$I$-method and $I$-operator}

As mentioned in the introduction, we will need the $I$-method to establish a global-in-time a priori bound for the solution of the mean-field $\text{SdNLW}_{\eps}$.

Let $0 < s < 1$. Given $M \in \N$, we define a smooth, radial, and non-increasing (in radial direction) function $m_{s, M} \in C^\infty (\R^2; [0, 1])$ satisfying
\begin{align*}
    m_{s, M} (\xi) =
    \begin{cases}
        1 & \text{if } |\xi| \leq M \\
        (\frac{M}{|\xi|})^{1 - s} & \text{if } |\xi| \geq 2 M .
    \end{cases}
\end{align*}

\noi
We then define $I_M$ (omitting the dependence on $s$) as the Fourier multiplier operator with symbol $m_{s, M}$:
\begin{align}
    \ft{I_M f} (n) = m_{s, M} (n) \ft f (n)
\label{defIop}
\end{align}

\noi
for any $n \in \Z^2$. Note that we can easily deduce from the definition that for any $\gamma \in \R$,
\begin{align}
    \| f \|_{H^{\gamma}} \les \| I_M f \|_{H^{\gamma + 1 - s}} \les M^{1 - s} \| f \|_{H^\gamma} . 
\label{Ibdd1}
\end{align}

\noi
Also, by using the Littlewood-Paley theorem, for any $s_0 \in \R$, $0 \leq s_1 \leq 1 - s$, and $1 < p < \infty$, we have
\begin{align}
    \| I_M f \|_{W^{s_0 + s_1, p}} \les M^{s_1} \| f \|_{W^{s_0, p}} . 
\label{Ibdd2}
\end{align}


Let us record some useful estimates for the $I$-operator. The following lemma was proved in \cite[Lemma~3.1]{GKOT}.
\begin{lemma}
\label{LEM:I_est1}
Let $\frac 23 \leq s < 1$ and $M \in \N$. Then, we have
\begin{align*}
    \big\| I_M (f^2 g) - (I_M f)^2 I_M g \big\|_{L^2} \les M^{- 3 s + 2} \| I_M f \|_{H^1}^2 \| I_M g \|_{H^1} 
\end{align*}

\noi
with the underlying constant independent of $M$.
\end{lemma}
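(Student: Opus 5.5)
The plan is to work on the Fourier side after a Littlewood--Paley decomposition, rewrite the commutator as a trilinear operator acting on $I_M f$, $I_M f$, $I_M g$, and then estimate each dyadic block by H\"older, Bernstein and Sobolev in two dimensions. Write $F = I_M f$ and $G = I_M g$, and set $m = m_{s, M}$. Then
\begin{align*}
\F\big( I_M (f^2 g) - (I_M f)^2 I_M g \big)(n) = \sum_{n_1 + n_2 + n_3 = n} \bigg( \frac{m(n)}{m(n_1) m(n_2) m(n_3)} - 1 \bigg) \ft F(n_1) \ft F(n_2) \ft G(n_3).
\end{align*}
Decompose each factor dyadically as $\mathbf{P}_{N_i}$, $i = 1, 2, 3$. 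By symmetry of the argument in the three entries (only $H^1$-norms enter), we may assume $N_1 \geq N_2 \geq N_3$.

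\emph{Step 1: the symbol.} If all $|n_i| \leq M/3$, then $|n| \leq M$ as well. Hence all four values of $m$ equal $1$, and the multiplier vanishes. So only blocks with $N_1 \gtrsim M$ contribute. Since $0 < m \leq 1$, the multiplier is bounded by $\prod_i m(n_i)^{-1} \lesssim \prod_{i : N_i \gtrsim M} (N_i / M)^{1 - s}$. I plan to use this crude bound; I do not expect a mean-value refinement to be needed, since the claimed decay $M^{-3s + 2}$ is weak, equal to $M^0$ at $s = \frac 23$. If the crude bound turned out insufficient, I would use $|m(n) - m(n_1)| \lesssim (N_2 / N_1)\, m(N_1)$ when $N_2 \ll N_1$.

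\emph{Step 2: block estimates.} To estimate a product of blocks in $L^2$, I will use H\"older together with Bernstein and $H^1$ control: $\|\mathbf{P}_N h\|_{L^p} \lesssim N^{1 - \frac 2p} \|\mathbf{P}_N h\|_{L^2} \lesssim N^{-\frac 2p} \|h\|_{H^1}$. Here $h$ stands for $F$ or $G$. There are three regimes.
\begin{itemize}
\item Only $N_1 \gtrsim M$. Put the high block in $L^{2+}$ and the two low blocks in $L^{\infty-}$. This gives roughly $(N_1/M)^{1-s} N_1^{-1+} \lesssim M^{-1+} N_1^{-s+}$ times $\|F\|_{H^1}^2 \|G\|_{H^1}$. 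The loss from the low blocks is only $N_2^{0+} N_3^{0+}$, which is absorbed by the decay in $N_1$.
\item Two or three blocks are high. Put all three blocks in $L^6$, so each gains $N_i^{-1/3}$. A high block then carries $(N_i/M)^{1-s} N_i^{-1/3} = M^{s-1} N_i^{\frac 23 - s}$, whose dyadic sum over $N_i \gtrsim M$ converges when $s > \frac 23$ and is $\lesssim M^{-\frac 13}$. A low block gives $N_i^{-1/3}$, which is summable.
\end{itemize}
This yields at least $M^{-\frac 13}$ in every configuration, and better in the one-high case. Comparing with the target exponent $2 - 3s \in (-1, 0]$, I will choose the H\"older exponents more carefully in each configuration: $L^2 \times L^\infty \times L^\infty$ (up to $\eps$-losses) when $N_2 \ll M$, and $L^6 \times L^6 \times L^6$ when several blocks are high. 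The all-high case gives exactly $M^{-3(1-s)} N^{3(1-s) - 1} \lesssim M^{-1} \cdot (N/M)^{2 - 3s}$, which is summable for $s \geq \frac 23$ up to a logarithm. This is where the threshold $s \geq \frac 23$ in the statement enters.

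\emph{Main obstacle.} I expect the hardest point to be the endpoint $s = \frac 23$. There the factor $(N/M)^{3(1-s)}$ exactly balances the $L^6$ gain $N^{-1}$, so the dyadic sums in the high-high-high regime are only logarithmically divergent. To handle this, I would exploit the target bound $M^{0}$ and replace the crude symbol bound by the sharper $|m(n)/\prod m(n_i) - 1| \lesssim \prod_i (N_i/M)^{1-s} \cdot \min(1, \cdot)$ bookkeeping. Alternatively, I would use $\|\cdot\|_{L^2} \leq \|\cdot\|_{L^{2}}$ orthogonality in the output frequency: the output is localized at $|n| \lesssim N_1$, so Cauchy--Schwarz over comparable $N_1 \sim N_2$ pairs removes the log. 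For $s > \frac 23$, everything sums geometrically with room to spare, and the constant is independent of $M$.
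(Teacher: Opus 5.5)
The paper does not prove this lemma; it quotes \cite[Lemma~3.1]{GKOT}, so your argument has to stand on its own. The skeleton is right. The symbol vanishes when all $|n_i| \leq M/3$, it is bounded by $2\prod_i m(n_i)^{-1}$, and you finish with H\"older and Bernstein/Sobolev. (To apply H\"older to a trilinear operator whose symbol is not a product, first replace each $\ft F$ by $|\ft F|$ on the Fourier side. This is harmless, since all your bounds start from $L^2$ norms of frequency-localized pieces.)

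The gap is in the exponent bookkeeping you commit to. Take exactly two high blocks, $N_1 \geq N_2 \gtrsim M \gg N_3$. Your choice $L^6 \times L^6 \times L^6$ gives $M^{s-1} N_1^{\frac 23 - s} \cdot M^{s-1} N_2^{\frac 23 - s} \cdot N_3^{-\frac 13}$. At $N_1 \sim N_2 \sim M$, $N_3 \sim 1$, this is of size $M^{-\frac 23}$. Since $2 - 3s < -\frac 23$ for $s > \frac 89$, this configuration is not bounded by $M^{-3s+2}$ on $(\frac 89, 1)$. You only ever compare the one-high and all-high cases with the target. The repair is to put the two high blocks in $L^{4+}$ and the low one in $L^{\infty-}$. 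Each high block then costs $(N_i/M)^{1-s} N_i^{-\frac 12 +}$, which sums to $M^{-\frac 12 +}$, so this case is $O(M^{-1+})$.

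The endpoint $s = \frac 23$ is also not handled. There each high block in $L^6$ contributes exactly $M^{-\frac 13} \| \mathbf{P}_{N_i} h \|_{H^1}$, with no decay in $N_i$. Summing blocks by the triangle inequality therefore produces $\sum_N \| \mathbf{P}_N h \|_{H^1}$, which $\| h \|_{H^1}$ does not control, and the two remedies you sketch are not carried out.

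You have also misidentified the difficulty. At $s = \frac 23$ the target is $M^0$, and H\"older, Sobolev and \eqref{Ibdd1} give directly
$\| I_M (f^2 g) \|_{L^2} \leq \| f \|_{L^6}^2 \| g \|_{L^6} \lesssim \| f \|_{H^{2/3}}^2 \| g \|_{H^{2/3}} \lesssim \| I_M f \|_{H^1}^2 \| I_M g \|_{H^1}$,
and likewise for $(I_M f)^2 I_M g$.

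In fact both problems disappear if you drop the dyadic decomposition. On the Fourier side the commutator has symbol $m(n) - m(n_1) m(n_2) m(n_3)$ acting on $\ft f, \ft f, \ft g$. This symbol has modulus at most $1$ and vanishes unless some $|n_i| > M/3$. Split each input only into a low part ($|n| \leq M/3$) and a high part. Then:
\begin{itemize}
\item for the high part, $\| \F^{-1} ( |\ft{f_{\textup{hi}}}| ) \|_{H^\theta} \lesssim M^{\theta - 1} \| I_M f \|_{H^1}$ for $0 \leq \theta \leq s$;
\item for the low part, $\| \F^{-1} ( |\ft{f_{\textup{lo}}}| ) \|_{L^\infty} \lesssim (1 + \log M)^{\frac 12} \| I_M f \|_{H^1}$.
\end{itemize}
Use $L^2 \times L^\infty \times L^\infty$, $L^4 \times L^4 \times L^\infty$ and $L^6 \times L^6 \times L^6$ for one, two and three high inputs respectively. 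The last case is where $\theta = \frac 23 \leq s$ is needed. This gives $M^{-1} (1 + \log M) \lesssim_s M^{-3s+2}$ for all $\frac 23 \leq s < 1$, endpoint included.
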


We also have the following lemma, whose proof follows from \cite[Lemma~3.3]{GKOT}.
\begin{lemma}
\label{LEM:I_est2}
Let $\frac 23 \leq s < 1$ and $M \in \N$ with $M \geq 10$. Then, given $0 < \nu \leq 1 - s$, there exist small $\s_0 = \s_0 (\nu) > 0$ and large $p_0 = p_0 (\nu) > 1$ such that
\begin{align}
    \big\| I_M (f g h) - I_M f \, I_M g \, I_M h \big\|_{L^2} \les M^{- s + \frac 12 + \nu} \| I_M f \|_{H^1} \| I_M g \|_{H^1} \| h \|_{W^{- \s_0, p_0}} ,
\label{I_est3}
\end{align}

\noi
where the underlying constant is independent of $M$.
\end{lemma}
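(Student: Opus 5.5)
The proof is a frequency-localized case analysis of the symbol of the trilinear commutator, in the spirit of \cite[Lemma~3.3]{GKOT}. We apply Littlewood--Paley decompositions $f = \sum_{N_1} \mathbf{P}_{N_1} f$, $g = \sum_{N_2} \mathbf{P}_{N_2} g$, $h = \sum_{N_3} \mathbf{P}_{N_3} h$. Each dyadic piece of $I_M (fgh) - I_M f \, I_M g \, I_M h$ is then a trilinear Fourier multiplier with symbol
\[
\mu (n_1, n_2, n_3) = m_{s, M} (n_1 + n_2 + n_3) - m_{s, M} (n_1) \, m_{s, M} (n_2) \, m_{s, M} (n_3)
\]
restricted to $|n_i| \sim N_i$.

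If $\max (N_1, N_2, N_3) \ll M$, then every argument of $m_{s, M}$ lies in $\{|\xi| \leq M\}$, so $\mu \equiv 0$. Hence it suffices to sum over the configurations with $\max(N_1, N_2, N_3) \gtrsim M$. We will always use three ingredients:
\begin{itemize}
\item the pieces of $f, g$ are converted into $I_M f, I_M g$ via $\| \mathbf{P}_N f \|_{H^1} \sim \max (1, (N/M)^{1 - s}) \| \mathbf{P}_N I_M f \|_{H^1}$;
\item Bernstein/Sobolev in two dimensions give $\| \mathbf{P}_N F \|_{L^q} \les N^{-\frac 2q} \| \mathbf{P}_N F \|_{H^1}$;
\item for the rough factor, $\| \mathbf{P}_{N_3} h \|_{L^{p_0}} \les N_3^{\s_0} \| h \|_{W^{- \s_0, p_0}}$, so $h$ only costs a tiny power $N_3^{\s_0}$.
\end{itemize}
In each case we bound $\mu$ either crudely by $2$ (estimating the two terms separately, using that $I_M$ is bounded on $L^2$) or by a mean value bound. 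We then apply H\"older in $L^2$ with exponents $\frac 12 = \frac 1{q_1} + \frac 1{q_2} + \frac 1{p_0}$.

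\textbf{Case A: $h$ dominates.} Here $N_3 \gtrsim M$ and $N_1, N_2 \ll N_3$. The output frequency is $\sim N_3$, and we use the mean value bound
\[
|m(n_1 + n_2 + n_3) - m(n_3)| \les \frac{N_1 + N_2}{N_3},
\]
together with $|1 - m(n_1) m(n_2)| \les \ind_{\max(N_1, N_2) \gtrsim M}$. The factor $(N_1 + N_2)/N_3 \les \max(N_1, N_2) / M$ beats the small losses $N_3^{\s_0}$ and $N_i^{\delta}$ from H\"older with $q_1, q_2$ large. This gives a bound of order $M^{-1 + \delta}$, which is summable in the dyadic scales.

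\textbf{Case B: exactly one of $f, g$ is high.} Say $N_1 \gtrsim M$ dominates. We put $\mathbf{P}_{N_1} f$ in $L^2$, which gives $N_1^{-1} (N_1/M)^{1 - s} = N_1^{-s} M^{s - 1}$. We put $g$ in $L^{q}$ with $q$ large and $h$ in $L^{p_0}$, losing only $N_1^{\s_0 + \delta}$. The result is again better than $M^{- s + \frac 12}$.

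\textbf{Case C: $f$ and $g$ both high.} This is $N_1 \sim N_2 \gtrsim M$, with $N_3$ arbitrary. I expect this to be the main obstacle and the source of the exponent $- s + \frac 12$. There is no cancellation to exploit in $\mu$, and both rough-looking factors must be placed in $L^q$ with $q$ near $4$:
\[
\| \mathbf{P}_N f \|_{L^{4}} \les N^{-\frac 12} (N/M)^{1 - s} \| I_M f \|_{H^1},
\]
and similarly for $g$. This leaves $h$ in $L^\infty$, which is not available. We therefore take $q_1 = q_2$ slightly larger than $4$ and $p_0 = p_0 (\nu) < \infty$ large, with $\s_0 = \s_0 (\nu)$ small. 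The resulting losses $N^{2\nu'}$ and $N_3^{\s_0}$ (with $N_3 \les N$, or $N_3 \gg N$ where Case A-type output localization applies) are absorbed into $M^{\nu}$. This produces
\[
\sum_{N \gtrsim M} N^{-1 + \nu} (N/M)^{2(1 - s)} \cdot (\text{one factor}) \;\les\; M^{- s + \frac 12 + \nu}.
\]
The precise bookkeeping requires care: one factor is taken in $L^2$-type and the other in $L^4$-type, and summability of the dyadic sums relies on $s \geq \frac 23$. This is where the constraint $\nu \leq 1 - s$ and the choice of $(\s_0, p_0)$ enter.

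Summing the three cases over dyadic blocks gives \eqref{I_est3} with a constant independent of $M \geq 10$.
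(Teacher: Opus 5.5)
The paper gives no argument of its own here; it defers to \cite[Lemma~3.3]{GKOT}. A dyadic case analysis like yours can prove the lemma, but as written it does not close in Case A. Case A, not Case C, is the only case where cancellation in the symbol is indispensable.

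Take the block $N_2 \sim 1$, $N_1 \sim M$, $N_3 \sim 2^{10} M$. The mean-value factor $(N_1+N_2)/N_3 \sim 2^{-10}$ gives no decay in $M$. Your inequality $(N_1+N_2)/N_3 \lesssim \max(N_1,N_2)/M$ helps only when $\max(N_1,N_2) \ll M$. So all the decay must come from Bernstein on the $f$-factor, $\| \mathbf{P}_{N_1} I_M f \|_{L^{q_1}} \lesssim N_1^{-2/q_1} \| \mathbf{P}_{N_1} I_M f \|_{H^1}$, and with $q_1$ large this is negligible. Summing your block bound $\frac{N_1}{N_3} N_1^{-2/q_1} N_3^{\s_0}$ over $N_1 \lesssim M$ and $N_3 \gtrsim M$ gives only about $M^{-2/q_1 + \s_0}$, not $M^{-1+\dl}$. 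This is not $\lesssim M^{-s+\frac12+\nu}$ for small $\nu$, since that exponent tends to $\frac12 - s \leq -\frac16$.

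The repair is to choose exponents so that Bernstein gains on the factor at the larger of $N_1, N_2$. Put that factor in $L^{q_1}$ with $q_1$ close to $2$, and the other factor and $h$ in large exponents ($h$ in $L^p$ with $p \leq p_0$). Then $\frac{N_1}{N_3} N_1^{-2/q_1}$ becomes $N_1^{0+} N_3^{-1}$, and the sums give $M^{-1+}$. Taking $q_1 = q_2$ slightly above $4$ gives $M^{-\frac12+}$, which also suffices.

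There is a second omission in Case A. In the subcase $\max(N_1,N_2) \gtrsim M$, the piece $m(n_3)\big(1 - m(n_1)m(n_2)\big)$ has no mean-value gain at all. The sum over $N_3 \gg \max(N_1,N_2)$ converges only because $\| I_M \mathbf{P}_{N_3} h \|_{L^p} \lesssim (M/N_3)^{1-s} N_3^{\s_0} \| h \|_{W^{-\s_0,p_0}}$, which requires $\s_0 < 1-s$. Your symbol bound drops this factor $m(n_3)$.

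A pointwise bound on a symbol that is not of product form does not let you apply H\"older directly. Here $h$ is controlled only in $L^{p_0}$, so you cannot pass absolute values onto its Fourier coefficients. In Case A, set $a = \mathbf{P}_{N_1} f \, \mathbf{P}_{N_2} g$ and $T = I_M \widetilde{\mathbf{P}}_{N_3}$, and write the piece as $[T,a]\,\mathbf{P}_{N_3} h + \big(a - I_M \mathbf{P}_{N_1} f \, I_M \mathbf{P}_{N_2} g\big) I_M \mathbf{P}_{N_3} h$. Estimate the commutator through its kernel:
$\| [T,a] b \|_{L^2} \leq \| |z| K_T \|_{L^1} \| \nb a \|_{L^r} \| b \|_{L^p}$, with $\frac12 = \frac1r + \frac1p$ and $\| |z| K_T \|_{L^1} \lesssim m(N_3) N_3^{-1}$. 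This is what rigorously produces your factor $(N_1+N_2)/N_3$.

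Two smaller points. In Case B, $L^2 \times L^q \times L^{p_0}$ does not land in $L^2$, so $\mathbf{P}_{N_1} f$ must go in $L^{2+}$, at a cost of $N_1^{0+}$. Also, Case C is not the source of the exponent $-s+\frac12$: with $q_1 = q_2$ slightly above $4$ it gives $M^{-1+}$ as soon as $s > \frac12$. The stated exponent is simply weaker than what a careful dyadic argument yields. It is the exponent produced by the cruder bound $\| \mathbf{P}_{\gtrsim M} f \|_{L^4} \lesssim M^{\frac12 - s} \| f \|_{H^s} \lesssim M^{\frac12 - s} \| I_M f \|_{H^1}$.
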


\subsection{On the stochastic convolution}
\label{SEC:sto}

In this subsection, we consider the stochastic convolutions $\Psi_\eps^j$ and $\Psi_0^j$ given by \eqref{Psije} and \eqref{Psij0}, respectively.

We recall that $\{\xi^j\}_{j \in \N}$ is a family of independent space-time white noises on $\R_+ \times \T^2$. From \eqref{Psije}, given any $0 < \eps \leq 1$, we may write
\begin{align*}
    \Psi_\eps^j (t) = \sqrt{2} \int_0^t \eps^{-2} \D_\eps (t - t') d W^j (t') ,
\end{align*}

\noi
where $\D_\eps (t)$ is defined in \eqref{defDe} and
\begin{align}
    W^j (t) \deff \frac{1}{2 \pi} \sum_{n \in \Z^2} B_n^j (t) e^{i n \cdot x}
\label{Wj}
\end{align}

\noi
is a cylindrical Wiener process on $L^2 (\T^2)$ with $B_n^j (t) = (2 \pi)^{-1} \langle \xi^j, \ind_{[0, t]} (t') e^{i n \cdot x} \rangle_{t', x}$, where $\langle \cdot, \cdot \rangle_{t, x}$ denotes the duality pairing on $\R_+ \times \T^2$. Note that the definition implies that $B_0^j$ is a standard real-valued Brownian motion and $\{ B_n^j \}_{n \in \Z^2 \setminus \{0\}}$ is a family of independent standard complex-valued Brownian motions conditioned such that $B_{- n}^j = \cj{B_n^j}$ for any $n \in \Z^2 \setminus \{0\}$. When $\eps = 0$, from \eqref{Psij0}, we write
\begin{align*}
    \Psi_0^j (t) = \sqrt{2} \int_0^t P_0 (t - t') d W^j (t') ,
\end{align*}

\noi
where $P_0$ is defined in \eqref{defP0}.

Given $M \in \N$, let $P_{\leq M}$ be the sharp frequency projection onto $\{ n \in \Z^2 : |n| \leq M \}$. Given $M \in \N$, $j \in \N$, and $0 \leq \eps \leq 1$, we define $\Psi_{\eps, M}^j \deff P_{\leq M} \Psi_\eps^j$. Note that for each $n \in \Z^2$, we have from \cite[(4.3)]{Zine} that
\begin{align}
    \E \Big[ | \ft{\Psi_\eps^j} (t, n) |^2 \Big] \les \jb{n}^{-2} 
\label{Psin}
\end{align}

\noi
with the underlying constant independent of $\eps$ and $t$. Also, a direct computation yields that when $0 < \eps \leq 1$, we have
\begin{align*}
\E \Big[ | \ft{\Psi_\eps^j} (t, n) |^2 \Big] 
&\geq \ind_{\{ \jb{n} > (2 \eps)^{-1} \}} \Big( \frac{1}{\jb{n}^2} - e^{- \frac{t}{\eps^2}} \frac{4 \eps^2}{4 \eps^2 \jb{n}^2 - 1} \\
&\quad - e^{- \frac{t}{\eps^2}} \frac{\sin (2 t \zeta_\eps (n))}{\jb{n}^2 \sqrt{4 \eps^2 \jb{n}^2 - 1}} + e^{- \frac{t}{\eps^2}} \frac{\cos (2 t \zeta_\eps (n))}{\jb{n}^2 (4 \eps^2 \jb{n}^2 - 1)} \Big) \\
&\ges_{\eps, t} \ind_{\{ \jb{n} \geq \eps^{-1} \}} \frac{1}{\jb{n}^2} 
\end{align*}

\noi
at least for $t \ges \eps^{2}$, and when $\eps = 0$, we have
\begin{align*}
\E \Big[ |\ft{\Psi_0^j} (t, n)|^2 \Big] = \frac{1 - e^{-2 t \jb{n}^2}}{\jb{n}^2} \ges_t \frac{1}{\jb{n}^2}
\end{align*}

\noi
for $t > 0$.
Thus, we have
\begin{align}
    \s_{\eps, M} (t) &\deff \E \big[ |\Psi_{\eps, M}^j (t) |^2 \big] \sim_{\eps, t} \log M ,
\label{sigmaM}
\end{align}

\noi
which diverges as $M \to \infty$. Given $k, j \in \N$, we define the Wick products
\begin{align}
\begin{split}
    \wick{ \Psi_{\eps, M}^k \Psi_{\eps, M}^j } &\deff
    \begin{cases}
        H_2 ( \Psi_{\eps, M}^j ; \s_{\eps, M} ) & \text{if } k = j \\
        \Psi_{\eps, M}^k \Psi_{\eps, M}^j & \text{if } k \neq j ,
    \end{cases} \\
    \wick{ (\Psi_{\eps, M}^k)^2 \Psi_{\eps, M}^j } &\deff
    \begin{cases}
        H_3 ( \Psi_{\eps, M}^j ; \s_{\eps, M} ) & \text{if } k = j \\
        H_2 ( \Psi_{\eps, M}^k ; \s_{\eps, M} ) \Psi_{\eps, M}^j & \text{if } k \neq j ,
    \end{cases}
\end{split}
\label{wick0}
\end{align}

\noi
where $H_2 (\cdot ; \s)$ and $H_3 (\cdot; \s)$ denote Hermite polynomials of degree 2 and 3, respectively, with variance parameter $\s > 0$. Then, the Wick products appearing earlier in \eqref{NLWNev} are defined by
\begin{align}
\begin{split}
    \wick{ \Psi_{\eps}^k \Psi_{\eps}^j } &\deff \lim_{M \to \infty} \wick{ \Psi_{\eps, M}^k \Psi_{\eps, M}^j } , \\
    \wick{ (\Psi_{\eps}^k)^2 \Psi_{\eps}^j } &\deff \lim_{M \to \infty} \wick{ (\Psi_{\eps, M}^k)^2 \Psi_{\eps, M}^j } .
\end{split}
\label{wick}
\end{align}

\noi
We will see in Lemma~\ref{LEM:sto} below that the limits in \eqref{wick} exist almost surely in the space $C(\R_+; W^{- \s, \infty})$ for any $\s > 0$.




\medskip
We now state and prove the following lemma regarding the regularity properties of the above stochastic objects.

\begin{lemma}
\label{LEM:sto}
Let $T \geq 1$ and $k, j \in \N$. Given $0 \leq \eps \leq 1$ and $M \in \N$, we consider the following stochastic terms $Z_{\eps, M}$ and $Z_\eps$ and their degree $r \in \N$:
\begin{itemize}
    \item[$\bullet$] $Z_{\eps, M} = \Psi_{\eps, M}^j$, $Z_\eps = \Psi^j_\eps$, $r = 1$;
    \item[$\bullet$] $Z_{\eps, M} = \,\wick{\Psi_{\eps, M}^k \Psi_{\eps, M}^j}$, $Z_\eps = \, \wick{ \Psi_\eps^k \Psi_\eps^j }$, $r = 2$;
    \item[$\bullet$] $Z_{\eps, M} = \, \wick{(\Psi_{\eps, M}^k)^2 \Psi_{\eps, M}^j}$, $Z_\eps = \, \wick{(\Psi_\eps^k)^2 \Psi_\eps^j}$, $r = 3$.
\end{itemize}

\noi
Then, the following properties hold.

\smallskip \noi
\textup{(i)} 
For any $0 \leq \eps \leq 1$ and $\sigma > 0$, $Z_{\eps, M}$ converges almost surely to some limit denoted by $Z_\eps$ in $C([0, T]; W^{- \sigma, \infty} (\T^2))$ as $M \to \infty$.

\smallskip \noi
\textup{(ii)}
For any $0 \leq \eps \leq 1$, $\s > 0$, and time interval $[t_0, t_0 + 1] \subset [0, T]$, we have the tail estimate
\begin{align*}
    \PP \Big( \| Z_\eps \|_{C_{[t_0, t_0 + 1]} W_x^{- \s, \infty}} > \ld \Big) \leq C \exp ( - c \ld^{\frac{2}{r}} )
\end{align*}

\noi
for any $\ld > 0$ and some constants $C, c > 0$ independent of $\eps$ and $t_0$, and also the moment bound
\begin{align*}
    \E \Big[ \| Z_\eps \|_{C_{[t_0, t_0 + 1]} W_x^{- \s, \infty}}^p \Big] \leq C (p, r)
\end{align*}

\noi
for any $1 \leq p < \infty$ and some constant $C(p, r) > 0$ independent of $\eps_1$, $\eps_2$, and $t_0$.

\smallskip \noi
\textup{(iii)} For any $0 \leq \eps_1 < \eps_2 \leq 1$, $0 < \gamma < \s$, and time interval $[t_0, t_0 + 1] \subset [0, T]$, we have the tail estimate
\begin{align*}
    \PP \Big( (\eps_2 - \eps_1)^{- \gamma} \| Z_{\eps_2} - Z_{\eps_1}  \|_{C_{[t_0, t_0 + 1]} W_x^{- \s, \infty}} > \ld \Big) \leq C \exp ( - c \ld^{\frac 2r} )
\end{align*}

\noi
for any $\ld > 0$ and some constants $C, c > 0$ independent of $\eps_1$, $\eps_2$, and $t_0$, and also the moment bound
\begin{align*}
    \E \Big[ (\eps_2 - \eps_1)^{- p \gamma} \| Z_{\eps_2} - Z_{\eps_1} \|_{C_{[t_0, t_0 + 1]} W_x^{- \s, \infty}}^p \Big] \leq C (p, r)
\end{align*}

\noi
for any $1 \leq p < \infty$ and some constant $C(p, r) > 0$ independent of $\eps$ and $t_0$.
Consequently, there exists $\al > 0$ small such that $\eps \mapsto Z_\eps$ is almost surely $\al$-H\"older continuous on $[0, 1]$ with values in $C ([0, T]; W^{- \s, \infty} (\T^2))$.
\end{lemma}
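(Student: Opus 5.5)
We will follow the standard route for Wick-ordered Gaussian objects: second moment computations on the Fourier side, then Wiener chaos hypercontractivity, then Kolmogorov-type continuity in time and in $\eps$. The parameters $M$, $\eps$ and $t_0$ enter only through covariances that are controlled uniformly via \eqref{Psin}.

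\emph{Covariance bounds.} First, for fixed $t,t'$ we compute the space-time covariance $\Gamma_\eps(t,t',n)=\E[\ft{\Psi_\eps^j}(t,n)\cj{\ft{\Psi_\eps^j}(t',n)}]$ from the Fourier multiplier \eqref{Den}. We will establish three bounds, each uniform in $\eps\in[0,1]$:
\begin{itemize}
\item[(a)] $|\Gamma_\eps(t,t',n)|\les\jb{n}^{-2}$;
\item[(b)] $\E|\ft{\Psi_\eps^j}(t,n)-\ft{\Psi_\eps^j}(t',n)|^2\les\jb{n}^{-2+2\ta}|t-t'|^{\ta}$;
\item[(c)] $\E|\ft{\Psi_{\eps_2}^j}(t,n)-\ft{\Psi_{\eps_1}^j}(t,n)|^2\les\jb{n}^{-2+4\ta}|\eps_2-\eps_1|^{2\ta}$ for small $\ta>0$.
\end{itemize}
Bound (a) is \eqref{Psin}. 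For (c) we use Itô's isometry, which gives $2\int_0^t|\eps_2^{-2}\ft\D_{\eps_2}(s,n)-\eps_1^{-2}\ft\D_{\eps_1}(s,n)|^2ds$. We bound this by interpolating between the trivial bound $\jb n^{-2}$ and a derivative-in-$\eps$ bound $\jb{n}^{-2}\cdot(\jb n^2|\eps_2-\eps_1|)^{2}$-type, which is essentially the difference estimate of Lemma~\ref{LEM:PIconv} in Fourier form (cf.\ \cite{Zine}); here the case $\eps_1=0$ is compared against the heat kernel. Independence across $j$ makes the covariances of products of different components factorize.

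\emph{Moments of Fourier modes of Wick products.} Next we handle $r=2,3$. For the Wick products, Wick's theorem gives
$$\E|\ft{Z_{\eps,M}}(t,n)|^2\les\sum_{n_1+\dots+n_r=n}\prod_i\jb{n_i}^{-2}\les\jb n^{-2+\delta}.$$
This holds whether $k=j$ (Hermite case) or $k\ne j$ (plain product of independent fields), with constants uniform in $M,\eps,t$. The difference bounds in $t$, in $\eps$, and in $M$ (the tail $|n_i|>M$ gives a factor $M^{-\delta}$) follow by telescoping the product, using the bound on one factor as in (b)/(c) and (a) on the others. By the Sobolev embedding $W^{-\s/2,p}\subset W^{-\s,\infty}$ for $p$ large, and the Gaussian hypercontractivity bound in the $r$-th chaos ($\|X\|_{L^p_\om}\le (p-1)^{r/2}\|X\|_{L^2_\om}$), we obtain for all $p$:
\begin{align*}
\E\|\jb\nb^{-\s/2}(Z_{\eps,M}(t)-Z_{\eps,M}(t'))\|_{L^p_x}^p&\les p^{rp/2}|t-t'|^{\ta p/2},\\
\E\|\jb\nb^{-\s/2}(Z_{\eps_2}(t)-Z_{\eps_1}(t))\|_{L^p_x}^p&\les p^{rp/2}|\eps_2-\eps_1|^{\ta p}.
\end{align*}
The first follows by computing $\E|\jb\nb^{-\s/2}Z(t,x)|^2=\sum_n\jb n^{-\s}\E|\ft Z(t,n)|^2<\infty$ pointwise in $x$ and then applying Minkowski.

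\emph{Conclusion.} Kolmogorov's continuity criterion on $[t_0,t_0+1]$ then yields the moment bounds in (ii). The Cauchy-in-$M$ estimate with factor $M^{-\delta}$, combined with Borel--Cantelli along $M$ (first dyadic, then all $M$ via the monotone structure of the sums), gives (i). The factor $p^{r/2}$ growth of the $p$-th moments is equivalent to the tail $\exp(-c\ld^{2/r})$ via Chebyshev optimized in $p$. For (iii), we apply the same argument to $Z_{\eps_2}-Z_{\eps_1}$, which still lies in the inhomogeneous chaos of order $\le r$. We take $\gamma<\ta$, where the choice of $\ta$ is tied to $\s$ (we need $4\ta<\s$). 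This gives the tail and moment bounds. A two-parameter Kolmogorov argument in $(\eps,t)\in[0,1]\times[0,T]$ then gives Hölder continuity of $\eps\mapsto Z_\eps$ in $C_TW^{-\s,\infty}_x$.

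\emph{Main obstacle.} The main obstacle is the uniform-in-$\eps$ difference bound (c), especially near $\eps=0$ and across the threshold $\jb n\sim(2\eps)^{-1}$, where $\ld_\eps$ and $\zeta_\eps$ degenerate. We plan to split into $\jb n\le(4\eps_2)^{-1}$, where an expansion gives $\eps^{-2}\ft\D_\eps(s,n)\approx e^{-s\jb n^2}$ up to $O(\eps^2\jb n^4 s)e^{-cs\jb n^2}$ plus a boundary layer $e^{-s/\eps^2}$, and high frequencies, where the crude bound $\jb n^{-2}\les \jb n^{-2+4\ta}(\eps_2)^{2\ta}\cdot C$ already suffices since $\jb n\eps_2\gtrsim1$. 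Differences between two positive $\eps$'s reduce to comparing each to $\eps=0$ when $|\eps_2-\eps_1|\gtrsim\eps_2$. When $|\eps_2-\eps_1|\ll\eps_2$, we differentiate in $\eps$ directly.
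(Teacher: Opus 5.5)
Your architecture is the one the paper uses; the paper just invokes Zine's Proposition~4.1. It consists of uniform second-moment bounds on $\ft{\Psi_\eps^j}(t,n)$ and on its increments in $t$ and in $\eps$, Wick's theorem for the products, hypercontractivity, a Kolmogorov/Garsia--Rodemich--Rumsey argument in time, Chebyshev for the tails, and Kolmogorov in $\eps$. Proving moments first and tails second is equivalent to the paper's order. This holds provided the supremum in time is taken via a GRR-type inequality with constants geometric in $p$, so that the $p^{r/2}$ growth survives. In (i) no monotonicity in $M$ is needed: bounds $\|Z_{\eps,M}-Z_\eps\|_{L^p(\Omega;C_{[t_0,t_0+1]}W^{-\s,\infty}_x)}\les p^{r/2}M^{-\dl}$ for all $p$, with $p>2/\dl$ in Chebyshev, let Borel--Cantelli run over all $M\in\N$ directly.

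The gap is the range of $\gamma$ in (iii). You obtain (c) by interpolating $\jb{n}^{-2}$ against a derivative bound of size $\jb{n}^{2}|\eps_2-\eps_1|^2$. You then spend $\s/2$ on $W^{-\s/2,p}\subset W^{-\s,\infty}$. As you note, this forces $\gamma<\ta<\s/4$, whereas the statement claims every $0<\gamma<\s$. Strictly, $\gamma\le 1$ is a hard ceiling. Already the zero mode has $\E|\ft{\Psi_\eps^j}(t,0)-\ft{\Psi_0^j}(t,0)|^2\sim\eps^2$ for $t\gg\eps^2$, from the boundary-layer part $\approx e^{-s/\eps^2}$ of the kernel. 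So the attainable range is $\gamma<\min(\s,1)$, and the paper's applications only need some $\gamma>0$. To reach this range you need the sharp estimate
\[
\E\big|\ft{\Psi_{\eps_2}^j}(t,n)-\ft{\Psi_{\eps_1}^j}(t,n)\big|^2\les\min\big(\jb{n}^{-2},\,|\eps_2-\eps_1|^{2}\big),
\]
uniformly in $t$, $n$ and $0\le\eps_1<\eps_2\le1$. You also need $W^{-\s+\dl,p}\subset W^{-\s,\infty}$ with small $\dl>0$. For the supremum in time, interpolate between the $\eps$-difference and $t$-increment bounds, putting almost all the weight on the former.

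This estimate also removes your ``main obstacle'' without case analysis. With $\phi$ as in \eqref{help1}, $\eps^{-2}\ft\D_\eps(s,n)=G(s/\eps^2,\eps\jb{n})$, where $G(u,a)=ue^{-u/2}\phi(u^2(\frac14-a^2))$ is analytic in $a$ across $a=\frac12$. For $a<\frac12$, $G=(1-4a^2)^{-1/2}(e^{-u\nu_-}-e^{-u\nu_+})$ with $\nu_\pm=\frac12(1\pm\sqrt{1-4a^2})$. By It\^o's isometry and Minkowski in $\eps$, the left-hand side is at most a constant times $\big(\int_{\eps_1}^{\eps_2}\|(2u\partial_u-a\partial_a)G(\cdot,a)\|_{L^2_u}\big|_{a=\eps\jb{n}}\,d\eps\big)^2$. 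The integrand is bounded uniformly in $a\ge0$. The operator $2u\partial_u-a\partial_a$ kills every function of $ua^2$, i.e.\ the $\eps$-independent heat kernel $e^{-s\jb{n}^2}$. Hence for small $a$ the slow mode $e^{-u\nu_-}$, with $\nu_-=a^2+a^4+O(a^6)$, contributes only $O(a)$, and the boundary layer $e^{-u\nu_+}$ contributes $O(1)$. For $a\ge\frac14$, a direct check using \eqref{help2} and $|\phi'(-x)|\les\min(1,x^{-1})$ suffices.

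This must be an $L^2_s$ bound uniform in $\eps$. Pointwise derivative bounds such as \eqref{Desth1} lose powers of $\eps^{-1}$, and with them your dichotomy $|\eps_2-\eps_1|\gtrsim\eps_2$ versus $|\eps_2-\eps_1|\ll\eps_2$ does not close. That is why the paper splits at $|h|\sim\eps^{10}$ in Lemma~\ref{LEM:Den}. So the uniform derivative bound you assert in (c) is exactly the point that needs proof.
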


\begin{proof}
The proof follows essentially from \cite[Proposition~4.1]{Zine}. The key estimates are
\begin{align*}
\E \Big[ | \ft{\Psi_\eps^j} (t, n) |^2 \Big] \les \jb{n}^{-2}
\end{align*}

\noi
and
\begin{align*}
\E \Big[ \big| \ft{\Psi_{\eps + h_1}^j} (t + h_2, n) - \ft{\Psi_\eps^j} (t, n) \big|^2 \Big] \les ( |h_1|^{\gamma_1} + |h_2|^{\gamma_1} ) \jb{n}^{-2 + \gamma_2}
\end{align*}

\noi
for any $0 \leq \eps \leq 1$, $t \geq 0$, $n \in \Z^2$, and $h_1, h_2 \in \R$ satisfying $0 \leq \eps + h_1 \leq 1$ and $t + h_2 \geq 0$, where $\gamma_1, \gamma_2 > 0$ are arbitrarily small. These estimates, along with the property of Wick products (see, for example, \cite[Lemma~2.5]{GKO}), the Wiener chaos estimate (see, for example, \cite[Lemma~2.3]{GKO}), a Kolmogorov continuity criterion argument (see, for example, \cite[Appendix]{OOTz}), Chebyshev's inequality (see \cite[Lemma~4.5]{Tz10}), and the Garsia-Rodemich-Rumsey inequality (see \cite[Lemma~2.2]{GKOT}), give the almost sure convergence in part (i) and the desired tail estimates in part (ii) and part (iii). The moment bounds in part (ii) and part (iii) then follow from the layer cake representation and the tail estimates. The H\"older continuity with respect to $\eps$ in part (iii) follows from the moment bound in part (iii) and the Kolmogorov continuity criterion (see \cite[Theorem~3.3]{DPZ}).
\end{proof}

We will also need the following lemma. Let us recall the $I$-operator in \eqref{defIop}. 
\begin{lemma}
\label{LEM:stoI}
Let $j \in \N$, $0 < \eps \leq 1$, $t \in \R_+$, $x \in \T^2$, $0 < s < 1$, and $M \in \N$ with $M \geq 2$. 
Then, $I_M \Psi_\eps^j (t, x)$ is a mean-zero Gaussian random variable with variance bounded by $C(s) \log M$ for some constant $C (s) > 0$ independent of $j$, $\eps$, $t$, $x$, and $M$.
\end{lemma}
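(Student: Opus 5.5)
Since $\Psi_\eps^j(t)=\sqrt 2\int_0^t \eps^{-2}\D_\eps(t-t')\,dW^j(t')$ is a Wiener integral of a deterministic operator-valued kernel against the cylindrical Wiener process $W^j$ in \eqref{Wj}, the plan is to compute its Fourier coefficients explicitly and reduce everything to the bound \eqref{Psin}. The operator $I_M$ is a deterministic Fourier multiplier with real symbol $m_{s,M}$, so we may write
\begin{align*}
I_M \Psi_\eps^j (t, x) = \frac{1}{2\pi} \sum_{n \in \Z^2} m_{s, M}(n)\, \ft{\Psi_\eps^j}(t, n)\, e^{i n \cdot x},
\qquad
\ft{\Psi_\eps^j}(t, n) = \sqrt 2 \int_0^t \eps^{-2} \ft{\D}_\eps(t - t', n)\, d B_n^j(t').
\end{align*}
Each $\ft{\Psi_\eps^j}(t,n)$ is a mean-zero Gaussian obtained by a Wiener integral of the deterministic real function $\eps^{-2}\ft{\D}_\eps(t-\cdot,n)$ from \eqref{Den}. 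The reality condition $B_{-n}^j=\cj{B_n^j}$, together with evenness of $\ft\D_\eps$ and $m_{s,M}$ in $n$, ensures the series is real-valued. The family $\{B_n^j\}$ is independent modulo this pairing, so the coefficients at $\pm n$ are the only correlated ones.

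First I will show that the series converges in $L^2(\Om)$ for fixed $(t,x)$. This gives that $I_M\Psi_\eps^j(t,x)$ is an $L^2(\Om)$-limit of mean-zero Gaussians lying in the first Wiener chaos, hence itself mean-zero Gaussian. Its variance is computed by pairing $n$ with $-n$: only the diagonal terms survive, using $\E[\ft{\Psi}(t,n)\cj{\ft\Psi(t,n')}]=0$ for $n'\ne n$. This gives
\begin{align*}
\E\big[|I_M\Psi_\eps^j(t,x)|^2\big] = \frac{1}{4\pi^2}\sum_{n\in\Z^2} m_{s,M}(n)^2\, \E\big[|\ft{\Psi_\eps^j}(t,n)|^2\big]
\les \sum_{n\in\Z^2} \frac{m_{s,M}(n)^2}{\jb{n}^2},
\end{align*}
where the last step uses \eqref{Psin}, uniformly in $\eps$ and $t$. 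The right-hand side is independent of $x$ (stationarity) and of $j$.

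It remains to bound the lattice sum, which I expect to be the only place where $s$ enters. Split the sum into three regions. For $|n|\le 2M$, use $m_{s,M}\le 1$, giving $\sum_{|n|\le 2M}\jb n^{-2}\les \log M$ for $M\ge2$. For $|n|>2M$, use $m_{s,M}(n)^2=(M/|n|)^{2(1-s)}$, giving
\begin{align*}
M^{2(1-s)}\sum_{|n|>2M}|n|^{-2-2(1-s)}\les M^{2(1-s)}\cdot\frac{M^{-2(1-s)}}{1-s}\les_s 1.
\end{align*}
This is finite precisely because $s<1$. Combining the two regions yields variance $\le C(s)\log M$. The only mild subtlety is the convergence/Gaussianity justification, which is routine since the $n$-sum is square-summable by the same computation.
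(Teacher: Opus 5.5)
Your proposal is correct and follows essentially the same route as the paper: diagonalize the variance in Fourier space, apply the uniform bound \eqref{Psin}, and split the lattice sum at $|n| \sim M$. This gives $\sum_{|n| \les M} \jb{n}^{-2} \les \log M$ and $M^{2-2s} \sum_{|n| \ges M} \jb{n}^{-4+2s} \les_s 1$. Your $L^2(\Om)$ first-chaos argument for Gaussianity, and the check that $\E\big[\ft{\Psi_\eps^j}(t,n)^2\big] = 0$ for $n \neq 0$ kills the off-diagonal terms, just make explicit what the paper calls ``not hard to see.''
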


\begin{proof}
It is not hard to see that $I_M \Psi_\eps^j (t, x)$ is a mean-zero Gaussian random variable. From \eqref{Psin} and the definition \eqref{defIop}, we have
\begin{align*}
    \E \big[ |I_M \Psi_\eps^j (t, x)|^2 \big] \les \sum_{\substack{n \in \Z^2 \\ |n| \les M}} \frac{1}{\jb{n}^{2}} + \sum_{\substack{n \in \Z^2 \\ |n| \ges M}} \frac{M^{2 - 2s}}{\jb{n}^{4 - 2 s}} \les_s \log M ,
\end{align*}

\noi
which gives the desired variance bound. 
\end{proof}

\section{Global well-posedness for the \PLS and the mean-field SNLH}

In this section, we prove global well-posedness of the \PLS \eqref{NLHNu} as stated in Theorem~\ref{THM:conv1}~(i) and the mean-field SNLH \eqref{NLHu} as stated in Proposition~\ref{PROP:GWP_NLH}.

\subsection{Global well-posedness for the $\text{PLSM}_N$}
\label{SUB:SNLHsys}

Let us consider the perturbed \PLS \eqref{NLHNv}, whose Duhamel formulation is given by
\begin{align}
\begin{split}
    v^{N, j} (t) = P_0 (t) u_0^{N, j} - \frac{1}{N} &\sum_{k = 1}^N \mathcal{I}_0  \big( \wick{ (\Psi_0^k)^2 \Psi_0^j } + \wick{ (\Psi_0^k)^2 } v^{N, j} + 2 v^{N, k} \wick{ \Psi_0^k \Psi_0^j } \\
    &+ 2 \Psi_0^k v^{N, k} v^{N, j} + (v^{N, k})^2 \Psi_0^j + (v^{N, k})^2 v^{N, j} \big) (t) , \quad j = 1, \dots, N,
\end{split}
\label{NLHNvDuh}
\end{align}

\noi
where $P_0$ is defined in \eqref{defP0}, $\mathcal{I}_0$ is defined in \eqref{defI0}, and $\uu_0^N = (u_0^{N, j})_{1 \leq j \leq N}$ is the initial data for \eqref{NLHNv}. The system \eqref{NLHNvDuh} is known to be globally well-posed by \cite[Lemma~2.2]{SSZZ}, but does not satisfy the regularity requirement in our setting. Our goal is to prove the following proposition on global well-posedness of the system \eqref{NLHNvDuh}, which implies Theorem~\ref{THM:conv1}~(i) on pathwise global well-posedness for the \PLS \eqref{NLHNu}.
\begin{proposition}
\label{PROP:NLHN}
Let $N \in \N$, $\frac 12 < s < 1$, and $T \geq 1$. Let $\uu_0^N = (u_0^{N, j})_{1 \leq j \leq N} \in H^s (\T^2)^{\otimes N}$. Then, almost surely, there exists a unique solution $\vv^N = ( v^{N, j} )_{1 \leq j \leq N} \in C ([0, T]; H^s (\T^2)^{\otimes N})$ to the perturbed \PLS \eqref{NLHNvDuh} with initial data $\uu_0^N$. Moreover, we have the bound
\begin{align}
    \| \vv^N \|_{\A_N C_T H_x^s} \leq C (\om, N, T, \| \uu_0^N \|_{\A_N H^s})
\label{vNTbdd}
\end{align}

\noi
for some constant $C(\om, N, T, \| \uu_0^N \|_{\A_N H^s}) > 0$.
\end{proposition}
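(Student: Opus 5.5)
Our plan is to combine a local well-posedness argument in $H^s$ with the global solution of \cite[Lemma~2.2]{SSZZ}, used as an a priori bound in weaker norms, and then bootstrap that bound to $H^s$ through parabolic smoothing.

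\emph{Step 1 (local theory).} We first run a contraction argument for \eqref{NLHNvDuh} in $C([0,\tau];H^s(\T^2)^{\otimes N})$, where $\tau>0$ is small. The heat propagator satisfies $\|\mathcal I_0(F)\|_{C_\tau H^s_x}\les \tau^{\theta}\|F\|_{L^\infty_\tau H^{s-2+2\theta}_x}$ by Lemma~\ref{LEM:heat}, so we may place each nonlinear term in $H^{-\sigma}$ with $\sigma$ small.
\begin{itemize}
\item The purely stochastic term $\wick{(\Psi_0^k)^2\Psi_0^j}$ lies in $W^{-\sigma,\infty}$ by Lemma~\ref{LEM:sto}.
\item The terms $\wick{(\Psi_0^k)^2}v^{N,j}$ and $v^{N,k}\wick{\Psi_0^k\Psi_0^j}$ are handled by Lemma~\ref{LEM:prod}~(ii), which needs $s>\sigma$.
\item For $\Psi_0^k v^{N,k}v^{N,j}$, we first note that $v^{N,k}v^{N,j}\in W^{s',p}$ by Lemma~\ref{LEM:prod}~(i) and Sobolev embedding, using $s>\tfrac12$. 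We then apply Lemma~\ref{LEM:prod}~(ii).
\item The cubic term $(v^{N,k})^2v^{N,j}$ lies in $L^2$ since $H^s\subset L^6$.
\end{itemize}
Because $N$ is fixed, the sum over $k$ costs only a factor depending on $N$. This gives existence and uniqueness in $C([0,\tau];H^s)^{\otimes N}$, with
\[
\tau=\tau\bigl(\|\vv^N(t_0)\|_{\A_NH^s},\ K_T(\om)\bigr),
\]
where $K_T(\om)$ collects the norms $\|Z\|_{C_TW^{-\sigma,\infty}_x}$ of all stochastic objects. By Lemma~\ref{LEM:sto}, $K_T(\om)$ is almost surely finite.

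\emph{Step 2 (a priori bound).} To iterate Step 1 up to time $T$, we need a bound on $\|\vv^N(t)\|_{\A_NH^s}$ that is uniform in $t\in[0,T]$. Since $H^s\subset L^2$, the result of \cite[Lemma~2.2]{SSZZ} provides a global solution $\tilde\vv^N$ in
\[
C_TL^2_x\cap L^4_TL^4_x\cap L^2_TH^1_x
\]
with a pathwise bound depending on $(\om,N,T,\|\uu^N_0\|_{L^2})$. By uniqueness in the weaker class, this solution coincides with our local $H^s$ solution wherever the latter exists. It therefore remains to upgrade the $L^2$-type bound to a $C_TH^s$ bound. This upgrade is the main obstacle, because the cubic term must be controlled using only the SSZZ norms.

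\emph{Step 3 (bootstrap).} We plan to do this in finitely many steps, each gaining regularity from the Duhamel formula on $[0,T]$.
\begin{itemize}
\item \emph{First gain.} With $v\in L^4_TL^4_x\cap C_TL^2_x$, the cubic term $(v^{N,k})^2v^{N,j}$ lies in $L^{4/3}_TL^{4/3}_x$. That gives regularity $H^{-1/2-}$ in space, with integrability $4/3$ in time. The Schauder estimate $\|\mathcal I_0F\|_{C_TH^{a}}\les\|F\|_{L^q_TH^{a-2+2/q+}}$ then yields $v\in C_TH^{\delta}$ for some $\delta>0$. The mixed terms are controlled similarly, with $H^{-\sigma}$ losses absorbed by the same smoothing.
\item \emph{Iteration.} Repeating the argument gives $v\in C_TL^p$ for every finite $p$ (via $H^\delta$ together with the $L^2_TH^1$ bound, which by interpolation gives $L^q_TW^{\theta,p}$), and then $v\in C_TH^s$.
\item \emph{Initial data.} The contribution of $P_0(t)u_0^{N,j}$ stays in $C_TH^s$ by Lemma~\ref{LEM:heat}.
\end{itemize}
Every constant depends only on $\om$, $N$, $T$ and $\|\uu_0^N\|_{\A_NH^s}$, which gives \eqref{vNTbdd}. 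Combining this bound with Step 1 extends the solution to all of $[0,T]$. Uniqueness in $C_TH^s$ follows from the local uniqueness of Step 1.
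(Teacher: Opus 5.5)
Your route is genuinely different from the paper's. The paper runs the contraction in $C_{T_1}H^s_x$ augmented by the weighted norm $\sup_t t^{3/4}\|v(t)\|_{\C^{s+1/2}}$, so that $\vv^N(T_0)\in\C^{s+1/2}$ with $s+\frac12>1$ at any positive time. It then hands the global problem on $[T_0,T]$ to (a variant of) the H\"older-space theory of Mourrat--Weber, and never has to upgrade the SSZZ energy bounds. Your Steps~1--2 are fine in substance, with one slip: $H^s\subset L^6$ needs $s\ge\frac23$. For $\frac12<s<\frac23$, put the cubic term in $L^{4/3}_x\subset H^{-1/2}_x$ as the paper does; this is still integrable under the heat flow since $s<\frac32$. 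The problem is Step~3.

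The ``first gain'' is false. In two dimensions, $L^\infty_tL^2_x$, $L^2_t\dot H^1_x$ and $L^4_{t,x}$ are all invariant under $v\mapsto\lambda v(\lambda^2t,\lambda x)$, which preserves the cubic heat equation. So the SSZZ class is exactly critical for $(v^{N,k})^2v^{N,j}$. Your own Schauder estimate shows this: with $q=\frac43$ and $F\in L^{4/3}_TH^{-1/2-}_x$, it requires $a-2+\frac2q=a-\frac12<-\frac12$, i.e.\ $a<0$. You therefore only recover $C_TH^{0-}_x$, not $C_TH^\delta_x$. Interpolating differently among the SSZZ norms cannot help, since they all have the same scaling; concentrating profiles $\lambda\phi(\lambda^2t,\lambda x)$ show that $\|\mathcal I_0(v^3)\|_{C_TH^\delta_x}$ is not controlled by any function of them. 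So the iteration never starts. Only the cubic term causes this; the stochastic terms are subcritical.

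What is missing is an argument that exploits the finiteness of $\int_0^T\|v(t)\|_{L^4}^4\,dt$, rather than plugging its size into Duhamel. For instance, since $\|\jb{\nabla}^s(v^3)\|_{L^{4/3}}\les\|v\|_{W^{s,4}}\|v\|_{L^4}^2$ and $\|v\|_{W^{s,4}}^2\les\|v\|_{H^s}\|v\|_{H^{s+1}}$, an $H^s$ energy estimate gives
\[
\frac{d}{dt}\|v\|_{H^s}^2+\|v\|_{H^{s+1}}^2\les\|v\|_{L^4}^4\|v\|_{H^s}^2+(\text{subcritical stochastic terms}),
\]
and Gronwall closes with a bound depending only on $\|v\|_{L^4_TL^4_x}$. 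Equivalently, split $[0,T]$ into at most $\eta^{-1}\|v\|_{L^4_TL^4_x}^4+1$ intervals $I$ with $\|v\|_{L^4_IL^4_x}^4\le\eta$, and run Duhamel in $C_IH^s_x\cap L^2_IH^{s+1}_x$ on each; there the cubic term is linear in the high norm with a small coefficient. With such a step your plan would give \eqref{vNTbdd} in a self-contained $L^2$-based way using only SSZZ, whereas the paper outsources the critical a priori control to Mourrat--Weber.
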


\begin{proof}
Let us define 
\begin{align*}
\mathbf{\Psi}_0^N = \big( \Psi_0^j, \wick{\Psi_0^k \Psi_0^j} , \wick{(\Psi_0^k)^2 \Psi_0^j} \big)_{1 \leq j, k \leq N}
\end{align*}

\noi
and, given $\s \in \R$, define the norm
\begin{align*}
    \| \mathbf{\Psi}_0^N \|_{\mathcal{Z}^{\s, N}_T} &\deff \| \Psi_0^j \|_{\A_{N, j} C_T W_x^{\s, \infty}} + \| \wick{(\Psi_0^j)^2} \|_{\A_{N, j} C_T W_x^{\s, \infty}} \\
    &\quad + \| \wick{\Psi_0^k \Psi_0^j} \|_{\A_{N, j, k}^{(2)} C_T W_x^{\s, \infty}} + \| \wick{(\Psi_0^k)^2 \Psi_0^j} \|_{\A_{N, j, k}^{(2)} C_T W_x^{\s, \infty}} ,
\end{align*}

\noi
where $\A_N$ and $\A_N^{(2)}$ are the $\ell^2$-averages defined in \eqref{defANX} and \eqref{defAN2X}, respectively. From Lemma~\ref{LEM:sto}, we know that for any $\ta > 0$,
\begin{align}
\| \mathbf{\Psi}_0^N \|_{\mathcal{Z}^{- \ta, N}_T} \leq C(\om, N, T)
\label{Psi0N_bdd}
\end{align}

\noi
for some constant $C(\om, N, T) > 0$.

Given $T_0 > 0$, we define the space $\mathcal{X}_{T_0}^{s}$ via the norm
\begin{align*}
\| u \|_{\mathcal{X}_{T_0}^{s}} \deff \| u \|_{C_{T_0} H_x^{s}} + \sup_{0 \leq t \leq T_0} t^{\frac 34} \| u (t) \|_{\C^{s + \frac 12}} .
\end{align*}
We first show that there exists $T_1 = T_1 (\om, N, T) > 0$ such that a unique solution $\vv^N$ to the equation \eqref{NLHNvDuh} exists in the space $C ([0, T_1]; H^{s} (\T^2)^{\otimes N}) \cap C ((0, T_1]; \C^{s + \frac 12} (\T^2)^{\otimes N})$ endowed with the norm $\A_N \mathcal{X}_{T_1}^s$. 
Note that by interpolation and the embedding in Lemma~\ref{LEM:emb}~(ii), we have
\begin{align}
\begin{split}
\sup_{0 \leq t \leq T_0} t^{\frac{3}{10}} \| u (t) \|_{\C^{s - \frac 25}} 
&= \sup_{0 \leq t \leq T_0} \| u (t) \|_{\C^{s - 1}}^{\frac 35} \big( t^{\frac 34} \| u (t) \|_{\C^{s + \frac 12}} \big)^{\frac 25} \\
&\les \sup_{0 \leq t \leq T_0} \| u (t) \|_{H^{s}}^{\frac 35} \big( t^{\frac 34} \| u (t) \|_{\C^{s + \frac 12}} \big)^{\frac 25} \\
&\leq \| u \|_{\mathcal{X}_{T_0}^s} .
\end{split}
\label{Cinterp}
\end{align}

\noi
Let $\ta > 0$. For any $T_0 > 0$, from Minkowski's integral inequality and Lemma~\ref{LEM:heat}, we have
\begin{align}
\begin{split}
\| \mathcal{I}_0 (F) (t) \|_{C_{T_0} H_x^s} 
&\leq \bigg\| \int_0^t \| P_0 (t - t') F (t') \|_{H_x^s} dt' \bigg\|_{C_{T_0}} \\
&\les \bigg\| \int_0^t (t - t')^{- \frac{s + \ta}{2}} dt' \bigg\|_{C_{T_0}} \| F \|_{C_{T_0} H_x^{- \ta}} \\
&\les T_0^{\frac{2 - s - \ta}{2}} \| F \|_{C_{T_0} H_x^{- \ta}}
\end{split}
\label{I0C1}
\end{align}

\noi
provided that $s + \ta < 2$ and
\begin{align}
\| \mathcal{I}_0 (F) (t) \|_{\C^{s + \frac 12}} \les \int_0^t (t - t')^{- \frac{2s + 2 \ta + 1}{4}} \| F (t') \|_{\C^{- \ta}} dt' .
\label{I0C2}
\end{align}

\noi
We define $\Gamma^{N, j} [\vv^N]$ to be the right-hand side of \eqref{NLHNvDuh}. Let us first consider the $H^s$-norm.
For any $0 < T_1 \leq 1$, by the product estimates in Lemma~\ref{LEM:prod}, H\"older's inequalities, and Sobolev's inequalities, we have
\begin{align}
\| \wick{ (\Psi_0^k)^2} v^{N, j} \|_{C_{T_1} H_x^{- \ta}} 
\les \| \wick{ (\Psi_0^k)^2} \|_{C_{T_1} W_x^{- \ta, \infty}} \| v^{N, j} \|_{C_{T_1} H_x^{\ta}} , 
\label{GWPh3}
\end{align}
\begin{align}
\| v^{N, k} \wick{ \Psi_0^k \Psi_0^j} \|_{C_{T_1} H_x^{- \ta}} 
\les \| v^{N, k} \|_{C_{T_1} H_x^{\ta}} \| \wick{ \Psi_0^k \Psi_0^j} \|_{C_{T_1} W_x^{- \ta, \infty}} ,
\label{GWPh4}
\end{align}
\begin{align}
\begin{split}
\| \Psi_0^k v^{N, k} v^{N, j} \|_{C_{T_1} H_x^{- \ta}} 
&\les \| \Psi_0^k \|_{C_{T_1} W_x^{- \ta, \infty}} \| v^{N, k} \|_{C_{T_1} W_x^{\ta, 4}} \| v^{N, j} \|_{C_{T_1} W_x^{\ta, 4}} \\
&\les \| \Psi_0^k \|_{C_{T_1} W_x^{- \ta, \infty}} \| v^{N, k} \|_{C_{T_1} H_x^{\ta + \frac 12}} \| v^{N, j} \|_{C_{T_1} H_x^{\ta + \frac 12}} ,
\end{split}
\label{GWPh5}
\end{align}
\begin{align}
\begin{split}
\| (v^{N, k})^2 \Psi_0^j \|_{C_{T_1} H_x^{- \ta}} 
&\les \| v^{N, k} \|_{C_{T_1} W_x^{\ta, 4}}^2 \| \Psi_0^j \|_{C_{T_1} W_x^{- \ta, \infty}} \\
&\les \| v^{N, k} \|_{C_{T_1} H_x^{\ta + \frac 12}}^2 \| \Psi_0^j \|_{C_{T_1} W_x^{- \ta, \infty}} ,
\end{split}
\label{GWPh6}
\end{align}
\begin{align}
\begin{split}
\| (v^{N, k})^2 v^{N, j} \|_{C_{T_1} H_x^{- \frac 12}} 
&\les \| (v^{N, k})^2 v^{N, j} \|_{C_{T_1} L_x^{\frac 43}} \\
&\les \| v^{N, k} \|_{C_{T_1} L_x^4}^2 \| v^{N, j} \|_{C_{T_1} L_x^4} \\
&\les \| v^{N, k} \|_{C_{T_1} H_x^{\frac 12}}^2 \| v^{N, j} \|_{C_{T_1} H_x^{\frac 12}} .  
\end{split}
\label{GWPh7}
\end{align}

\noi
Thus, using Lemma~\ref{LEM:heat}, combining \eqref{I0C1} (with $\ta = \frac 12$ for the $(v^{N, k})^2 v^{N, j}$ term and $\ta > 0$ sufficiently small for other terms), \eqref{GWPh3}, \eqref{GWPh4}, \eqref{GWPh5}, \eqref{GWPh6}, and \eqref{GWPh7} along with the fact that $\frac 12 < s < 1 < \frac 32$, and applying the Cauchy-Schwarz inequalities in $k$ and Young's inequalities, we obtain
\begin{align}
\big\| \Gamma^{N, j} [\vv^N] \big\|_{\A_{N, j} C_{T_1} H_x^s} \les \| \uu_0^N \|_{\A_N H^s} + T_1^{\frac{3 - 2 s}{4}} \Big( 1 + \| \mathbf{\Psi}_0^N \|_{\mathcal{Z}^{- \ta, N}_T}^3 + \| \vv^N \|_{\A_N C_{T_1} H_x^s}^3 \Big) .
\label{GWPh8}
\end{align}
 
\noi
For the $\C^{s + \frac 12}$-norm, by the product estimates in Lemma~\ref{LEM:prodC} along with Lemma~\ref{LEM:emb}~(iii), \eqref{Cinterp}, and the fact that $\frac 12 < s < 1$ with $\ta > 0$ being sufficiently small, we have
\begin{align}
\begin{split}
\| \wick{ (\Psi_0^k)^2} (t') v^{N, j} (t') \|_{\C^{- \ta}} 
&\les \| \wick{ (\Psi_0^k)^2} (t') \|_{\C^{- \ta}} \| v^{N, j} (t') \|_{\C^{2 \ta}} \\
&\leq (t')^{- \frac{3}{10}} \| \wick{ (\Psi_0^k)^2} \|_{C_{T_1} W_x^{- \ta, \infty}} \| v^{N, j} \|_{\mathcal{X}_{T_1}^s} , 
\end{split}
\label{GWPh11}
\end{align}
\begin{align}
\begin{split}
\| v^{N, k} (t') \wick{ \Psi_0^k \Psi_0^j} (t') \|_{\C^{- \ta}} 
&\les \| v^{N, k} (t') \|_{\C^{2 \ta}} \| \wick{ \Psi_0^k \Psi_0^j} (t') \|_{\C^{- \ta}} \\
&\leq (t')^{- \frac{3}{10}} \| v^{N, k} \|_{\mathcal{X}_{T_1}^s} \| \wick{ \Psi_0^k \Psi_0^j} \|_{C_{T_1} W_x^{- \ta, \infty}} ,
\end{split}
\label{GWPh12}
\end{align}
\begin{align}
\begin{split}
\| \Psi_0^k (t') v^{N, k} (t') v^{N, j} (t') \|_{\C^{- \ta}} 
&\les \| \Psi_0^k (t') \|_{\C^{- \ta}} \| v^{N, k} (t') \|_{\C^{2 \ta}} \| v^{N, j} (t') \|_{\C^{2 \ta}} \\
&\les (t')^{- \frac 35} \| \Psi_0^k \|_{C_{T_1} W_x^{- \ta, \infty}} \| v^{N, k} \|_{\mathcal{X}_{T_1}^s} \| v^{N, j} \|_{\mathcal{X}_{T_1}^s} ,
\end{split}
\label{GWPh13}
\end{align}
\begin{align}
\begin{split}
\| (v^{N, k} (t'))^2 \Psi_0^j (t') \|_{\C^{- \ta}} 
&\les \| v^{N, k} (t') \|_{\C^{2 \ta}}^2 \| \Psi_0^j (t') \|_{\C^{- \ta}} \\
&\les (t')^{- \frac 35} \| v^{N, k} \|_{\mathcal{X}_{T_1}^s}^2 \| \Psi_0^j \|_{C_{T_1} W_x^{- \ta, \infty}} ,
\end{split}
\label{GWPh14}
\end{align}
\begin{align}
\begin{split}
\| (v^{N, k} (t'))^2 v^{N, j} (t') \|_{\C^{- \ta}} 
&\les \| v^{N, k} (t') \|_{\C^{\ta}}^2 \| v^{N, j} (t') \|_{\C^{\ta}} \\
&\les (t')^{- \frac{9}{10}} \| v^{N, k} \|_{\mathcal{X}_{T_1}^s}^2 \| v^{N, j} \|_{\mathcal{X}_{T_1}^s}   
\end{split}
\label{GWPh15}
\end{align}

\noi
for any $0 \leq t' \leq T_1$. Thus, using Lemma~\ref{LEM:heat} and Lemma~\ref{LEM:emb}~(ii), combining  \eqref{I0C2}, \eqref{GWPh11}, \eqref{GWPh12}, \eqref{GWPh13}, \eqref{GWPh14}, and \eqref{GWPh15}, and applying the Cauchy-Schwarz inequalities in $k$ and Young's inequalities, we obtain
\begin{align}
\begin{split}
\Big\| &\sup_{0 \leq t \leq T_1} t^{\frac 34} \big\| \Gamma^{N, j} [\vv^N] (t) \big\|_{\C^{s + \frac 12}} \Big\|_{\A_{N, j}} \\
&\les \| \uu_0^N \|_{\A_N H^{s}} + T_1^{\frac{1}{10} + \frac{1 - s - \ta}{2}} \Big( 1 + \| \mathbf{\Psi}_0^N \|_{\mathcal{Z}_T^{- \ta, N}}^3 + \| \vv^N \|_{\A_N \mathcal{X}_{T_1}^s}^3 \Big) .
\end{split}
\label{GWPh16}
\end{align}

\noi
Here, we note that the power $\frac 34$ of $t$ in the definition of the $\mathcal{X}_{T_1}^s$-norm is chosen precisely to obtain the bound on $P_0 (\cdot) u_0^{N, j}$. Also, here we used the fact that $s + \ta < \frac 32$, which is satisfied under the assumptions that $s < 1$ and $\ta$ is sufficiently small. From \eqref{GWPh8} and \eqref{GWPh16}, we get
\begin{align*}
\big\| \Gamma^{N, j} [\vv^N] \big\|_{\A_{N, j} \mathcal{X}_{T_1}^s} 
\les \| \uu_0^N \|_{\A_N H^{s}} + T_1^{\frac{1}{10} + \frac{1 - s - \ta}{2}} \Big( 1 + \| \mathbf{\Psi}_0^N \|_{\mathcal{Z}_T^{- \ta, N}}^3 + \| \vv^N \|_{\A_N \mathcal{X}_{T_1}^s}^3 \Big) .
\end{align*}

\noi
Using similar steps, we obtain the following difference estimate:
\begin{align*}
\big\| &\Gamma^{N, j} [\vv_1^N] - \Gamma^{N, j} [\vv_2^N] \big\|_{\A_{N, j} \mathcal{X}_{T_1}^s} \\
&\les T_1^{\frac{1}{10} + \frac{1 - s - \ta}{2}} \| \vv_1^N - \vv_2^N \|_{\A_N \mathcal{X}_{T_1}^s} \Big( 1 + \| \mathbf{\Psi}_0^N \|_{\mathcal{Z}_T^{- \ta, N}}^2 + \| \vv_1^N \|_{\A_N \mathcal{X}_{T_1}^s}^2 + \| \vv_2^N \|_{\A_N \mathcal{X}_{T_1}^s}^2 \Big) .
\end{align*}

\noi
Therefore, thanks to \eqref{Psi0N_bdd}, we can use a standard contraction argument to obtain a unique solution $\vv^N$ to \eqref{NLHNvDuh} in $C ([0, T_1]; H^{s} (\T^2)^{\otimes N}) \cap C ((0, T_1]; \C^{s + \frac 12} (\T^2)^{\otimes N})$ for some $T_1 = T_1 (\om, N, T) > 0$ sufficiently small.

Given any $0 < T_0 < T$ and $\vv^N (T_0) \in \C^{s + \frac 12} (\T^2)^{\otimes N}$, by using a slight variant of \cite[Theorem~6.1]{MW17} (valid since $s + \frac 12 > 1$ given $s > \frac 12$), we know that there exists a unique solution $\vv^N$ to \eqref{NLHNvDuh} in the space $C ([T_0, T]; \C^{s + \frac 12} (\T^2)^{\otimes N})$. Thus, from the above local well-posedness result and the embedding in Lemma~\ref{LEM:emb}~(i), we know that a solution exists in the space $C ([0, T]; H^s (\T^2)^{\otimes N})$.

It remains to show that the solution $\vv^N$ constructed above is unique in $C ([0, T]; H^s (\T^2)^{\otimes N})$. Let $\vv_1^N$ and $\vv_2^N$ be two solutions to \eqref{NLHNvDuh} in $C([0, T]; H^s (\T^2)^{\otimes N})$ with the common initial data $\uu_0^N$. 
For any $T_0 \geq 0$ and $T' > 0$ such that $T_0 + T' \leq T$, by using similar steps that lead to \eqref{GWPh8}, we obtain
\begin{align}
\begin{split}
&\| \vv_1^N - \vv_2^N \|_{\A_N C_{[T_0, T_0 + T']} H_x^s} \\
&\les \| \vv_1^N (T_0) - \vv_2^N (T_0) \|_{\A_N H^s} + (T')^{\frac{3 - 2s}{4}} \| \vv_1^N - \vv_2^N \|_{\A_N C_{[T_0, T_0 + T']} H_x^s} \\
&\qquad  
\times \Big( 1 + \| \mathbf{\Psi}_0^N \|_{\mathcal{Z}_T^{- \ta, N}}^2 + \| \vv_1^N \|_{\A_N C_{T} H_x^s}^2 + \| \vv_2^N \|_{\A_N C_{T} H_x^s}^2 \Big) .
\end{split}
\label{vdiff}
\end{align}

\noi
Then, thanks to \eqref{Psi0N_bdd}, we get $\vv_1^N = \vv_2^N$ on each time interval $[T_0, T_0 + T']$ with sufficiently small  $T' > 0$ independent of $T_0$, which gives the uniqueness result.
Thus, we have finished the proof.
\end{proof}

\begin{remark} \rm
The condition $s < 1$ in Proposition~\ref{PROP:NLHN} is by no means to be sharp. One can obtain global well-posedness of \eqref{NLHNvDuh} for some range of $s \geq 1$ by introducing the helper norm
\begin{align*}
\| u \|_{\mathcal{Y}_{T_0}^s} \deff \| u \|_{C_{T_0} H_x^s} + \sup_{0 \leq t \leq T_0} t^{\frac{\gamma + 1}{2}} \| u (t) \|_{\C^{s + \gamma}}
\end{align*}

\noi
and optimizing the choice of $\gamma$. However, since our main goal is to establish Smoluchowski-Kramers approximations, we choose not to pursue this point.
\end{remark}

\subsection{Global well-posedness for the mean-field SNLH}
\label{SUB:mfSNLH}

Let us consider the perturbed mean-field SNLH \eqref{NLHv} and drop the superscript $j$ in this subsection for simplicity. The Duhamel formulation of \eqref{NLHv} is given by
\begin{align}
v (t) = P_0 (t) u_0 - \mathcal{I}_0 \big( 2 \E [\Psi_0 v] \Psi_0 + 2 \E [\Psi_0 v] v + \E [v^2] \Psi_0 + \E [v^2] v \big) (t) ,
\label{NLHvDuh}
\end{align}

\noi
where $P_0$ is defined in \eqref{defP0}, $\mathcal{I}_0$ is defined in \eqref{defI0}, and $u_0$ is the initial data for \eqref{NLHv}. The equation \eqref{NLHvDuh} is known to be globally well-posed by \cite[Theorem~3.6]{SSZZ}, but we need better regularity of the solution. Our goal is to prove the following proposition on global well-posedness of the equation \eqref{NLHvDuh}, which implies Proposition~\ref{PROP:GWP_NLH} on global well-posedness for the mean-field SNLH \eqref{NLHu}.
\begin{proposition}
\label{PROP:NLH}
Let $\frac 12 < s < 1$ and $T > 0$. Let $u_0 \in L^4 (\Om; H^s (\T^2))$. Then, there exists a unique solution $v \in L^2 (\Om; C ([0, T]; H^s (\T^2)))$ to the perturbed mean-field SNLH \eqref{NLHvDuh} with initial data $u_0$. Moreover, we have the bound
\begin{align}
    \| v \|_{L^2_\om C_T H^s_x} \leq C (T, \| u_0 \|_{L^4_\om H^s})
\label{vTbdd}
\end{align}

\noi
for some constant $C (T, \| u_0 \|_{L^4_\om H^s}) > 0$.
\end{proposition}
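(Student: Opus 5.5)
We mirror the proof of Proposition~\ref{PROP:NLHN}, with the pathwise $\A_N$-averages replaced by expectations, and import global existence from \cite[Theorem~3.6]{SSZZ}.

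\textbf{Step 1: local theory.} We run a contraction argument for \eqref{NLHvDuh} in the space $L^2(\Om; \mathcal{X}^s_{T_1})$, with $\mathcal{X}^s_{T_1}$ as in the proof of Proposition~\ref{PROP:NLHN}. The deterministic coefficients are handled as follows.
\begin{itemize}
\item By Cauchy--Schwarz in $\om$ and Lemma~\ref{LEM:sto}~(ii), $\E[\Psi_0 v](t)$ has $W^{-\ta,\infty}$-type size at most $\|\Psi_0\|_{L^2_\om C_T W^{-\ta,\infty}_x}\,\|v\|_{L^2_\om C_T H^{\ta}_x}$, after testing against $v \in H^\ta$ and using a duality/product estimate from Lemma~\ref{LEM:prod}.
\item Similarly, $\E[v^2] \in L^2_x$ (even in $\C^{2\ta}$ at positive times) with bound $\|v\|^2_{L^2_\om \mathcal{X}}$, via \eqref{Cinterp} and Lemma~\ref{LEM:prodC}.
\end{itemize}
The four nonlinear terms are then estimated exactly as in \eqref{GWPh3}--\eqref{GWPh7} and \eqref{GWPh11}--\eqref{GWPh15}. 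The random factor $\Psi_0$ multiplies a deterministic coefficient, so taking $L^2_\om$ norms only costs moments of $\Psi_0$, which Lemma~\ref{LEM:sto}~(ii) supplies uniformly. This gives a local solution on $[0,T_1]$ with $T_1$ depending on $\|u_0\|_{L^2_\om H^s}$, together with a difference estimate. We use $L^2_\om$ for the fixed point even though the data lie in $L^4_\om$.

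\textbf{Step 2: globalization.} This is the main obstacle, since the local time depends on the size of $v$ and a global bound is needed. We take the global solution $\tilde v$ from \cite[Theorem~3.6]{SSZZ}, which requires only $u_0 \in L^4_\om L^4_x$ (embedded from $H^s$ with $s > \frac12$). It satisfies
\[
\tilde v \in L^2(\Om; C((0,T];\C^\be) \cap C([0,T]; L^4)),
\]
with bounds depending on $T$ and $\|u_0\|_{L^4_\om L^4_x}$. We then treat \eqref{NLHvDuh} as a \emph{linear} equation for $v$ with coefficients $\E[\Psi_0\tilde v]$ and $\E[\tilde v^2]$, which are known deterministic functions. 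These coefficients obey
\[
\E[\tilde v^2] \in C_T L^2_x, \qquad \E[\Psi_0 \tilde v] \in C_T W^{-\ta, p},
\]
which we expect from the $L^2_\om C_T L^4_x$ bound together with Gaussian moments of $\Psi_0$. For the coefficient $\E[\Psi_0 \tilde v]$ we use $\tilde v \in C^\be$ at positive times; near $t=0$ we use the $L^4$ bound plus Lemma~\ref{LEM:prod}~(ii), which should give a $W^{-\ta,2}$-type estimate. In this linear equation the smoothing estimate \eqref{I0C1} yields a Gronwall-type bound in $L^2_\om C_T H^s_x$ with no smallness requirement, so iteration over intervals of fixed length closes. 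This gives a solution in $L^2_\om C_T H^s_x$ together with \eqref{vTbdd}. By uniqueness in the weaker class of \cite{SSZZ}, it coincides with $\tilde v$, so it solves the nonlinear equation.

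\textbf{Step 3: uniqueness.} Uniqueness in $L^2_\om C_T H^s_x$ follows as in \eqref{vdiff}. Given two solutions $v_1$ and $v_2$, we take differences and bound the coefficient differences, e.g. $\E[v_1^2 - v_2^2]$, by $\|v_1 - v_2\|_{L^2_\om C H^s}\,(\|v_1\| + \|v_2\|)$. We then iterate over short intervals whose length depends only on the a priori norms of $v_1$ and $v_2$.
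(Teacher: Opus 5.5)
The step that fails is your handling of $2\E[\Psi_0 v]\Psi_0$, which is exactly the term that makes the mean-field equation delicate.

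In Step~1 you treat $\E[\Psi_0 v]$ as a deterministic coefficient, and in Step~2 you do the same with $\E[\Psi_0 \tilde v]$. In both cases you control it only in a space of regularity $-\ta$: ``$W^{-\ta,\infty}$-type'', $C_T W^{-\ta,p}$, or ``$W^{-\ta,2}$-type'' near $t=0$. The forcing $\E[\Psi_0 \tilde v]\,\Psi_0$ is then a product of two distributions of negative regularity. Neither Lemma~\ref{LEM:prod} nor Lemma~\ref{LEM:prodC} covers this, since both need one factor of positive regularity to compensate the negative one. So it is not true that this term ``only costs moments of $\Psi_0$''. The same obstruction reappears in Step~3 for $\E[\Psi_0(v_1-v_2)]\Psi_0$.

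The missing idea is the identity the paper imports from \cite[Lemma~3.1]{SSZZ]}:
$\E[\Psi_0 v]\Psi_0 = \E[v'\Psi_0'\Psi_0 \,|\, \Psi_0]$, where $(\Psi_0',v')$ is an independent copy of $(\Psi_0,v)$. By independence, $\Psi_0'\Psi_0$ needs no renormalization and satisfies the moment bounds of Lemma~\ref{LEM:sto}. One then only needs $v' \in H^{\ta}$, exactly as in \eqref{heat2}. Alternatively, a second-moment computation using the Gaussianity of $\Psi_0$ shows that $\E[\Psi_0 f]$ inherits almost all the positive regularity of $f \in L^2_\om H^{\gamma}$ for $0<\gamma<1$. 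Either way, some probabilistic input of this kind is indispensable; deterministic product estimates cannot supply it.

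A milder version of the same problem affects the other terms:
\begin{itemize}
\item For $\E[\tilde v^2]\Psi_0$, the bound $\E[\tilde v^2]\in C_T L^2_x$ is not enough. You need $\E[\tilde v^2](t) \in H^{\ta}$, obtained as in \eqref{heat4} from $\E\|\tilde v^2\|_{H^\ta} \les \|\tilde v\|_{L^2_\om L^4_x}\|\tilde v\|_{L^2_\om W^{\ta,4}_x}$ and interpolation with the time-weighted $\C^{4\ta}$ bound of \cite{SSZZ}.
\item Near $t=0$, the $L^4$ bound alone cannot be fed into Lemma~\ref{LEM:prod}~(ii). You must use the weight $t^{2\ta+\frac14}$ from \eqref{L4bdd} and check its integrability against $(t-t')^{-\frac{s+\ta}{2}}$.
\end{itemize}

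Apart from this, your architecture is heavier than needed, and its identification step is circular as stated. Step~1 is never used. In Step~2, identifying $v$ with $\tilde v$ ``by uniqueness in the weaker class of \cite{SSZZ}'' presupposes that your $v$ already solves the nonlinear equation. What you would actually need is uniqueness for the frozen-coefficient linear problem in a class containing both $v$ and $\tilde v$.

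The paper bypasses all of this. Since the \cite{SSZZ} solution already satisfies \eqref{NLHvDuh}, it bounds each term on the right-hand side directly in $L^2_\om C_T H^s_x$ by the quantities in \eqref{L4bdd}. The cubic term is handled via $L^4$; the other three via \eqref{cond0}, Lemma~\ref{LEM:prod}, and interpolation with the weighted $\C^{4\ta}$ norm. This gives $v \in L^2_\om C_T H^s_x$ and \eqref{vTbdd} with no smallness, no Gronwall argument, and no identification step. Your Step~3 then matches the paper's uniqueness argument, once \eqref{cond0} is used for the difference of the $\E[\Psi_0 v]\Psi_0$ terms.
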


\begin{proof}
By Sobolev's inequality, we know that $u_0 \in L^4 (\Om; L^4 (\T^2))$. Let $\ta > 0$ be small. From \cite[Theorem~3.6]{SSZZ}, we know that there exists a unique solution $v$ to the equation \eqref{NLHvDuh} in the space 
\begin{align*}
L^2 (\Om; C ((0, T]; \C^{4 \ta} (\T^2)) \cap C([0, T]; L^4 (\T^2)) ) .
\end{align*}

\noi
In particular, we have
\begin{align}
\Big\| \sup_{0 \leq t \leq T} t^{2 \ta + \frac 14} \| v (t) \|_{\C_x^{4 \ta}} \Big\|_{L_\om^2} + \| v \|_{L_\om^2 C_T L_x^4} \leq C (T, \| u_0 \|_{L^4_\om H^s}) 
\label{L4bdd}
\end{align}

\noi
for some constant $C (T, \| u_0 \|_{L^4_\om H^s}) > 0$.

We now show that the solution $v$ lies in $L^2 (\Om; C([0, T]; H^s (\T^2)))$. 
By using \eqref{NLHvDuh}, Lemma~\ref{LEM:heat}, \eqref{I0C1}, and Minkowski's integral inequality, we have
\begin{align}
\begin{split}
\| v \|_{L^2_\om C_{T} H_x^s} 
&\les \| u_0 \|_{L_\om^2 H^s} + T^{\frac{3 - 2s}{4}} \big\| \E [v^2] v \big\|_{L_\om^2 C_{T} H_x^{- \frac 12}} \\
&\quad + \bigg\| \int_0^t (t - t')^{- \frac{s + \ta}{2}} \big\| \E [\Psi_0 (t') v (t')] \Psi_0 (t') \big\|_{H_x^{- \ta}} dt' \bigg\|_{L_\om^2 C_{T}} \\
&\quad + \bigg\| \int_0^t (t - t')^{- \frac{s + \ta}{2}} \big\| \E [\Psi_0 (t') v (t')] v (t') \big\|_{H_x^{- \ta}} dt' \bigg\|_{L_\om^2 C_{T}} \\
&\quad + \bigg\| \int_0^t (t - t')^{- \frac{s + \ta}{2}}  \big\| \E [v (t')^2] \Psi_0 (t') \big\|_{H_x^{- \ta}}  dt' \bigg\|_{L_\om^2 C_{T}} .
\end{split}
\label{heat0}
\end{align}

\noi
By Sobolev's inequality, H\"older's inequality, Minkowski's integral inequality, and Sobolev's inequality, we get
\begin{align}
\begin{split}
\big\| \E [v^2] v \big\|_{L_\om^2 C_{T} H_x^{- \frac 12}}
&\les \big\| \E [v^2] v \big\|_{L_\om^2 C_{T} L_x^{\frac 43}} \\
&\leq \E \big[ \| v^2 \|_{C_{T} L_x^2} \big] \| v \|_{L_\om^2 C_{T} L_x^4} \\
&\leq \| v \|_{L_\om^2 C_{T} L_x^4}^3 .
\end{split}
\label{heat1}
\end{align}

\noi
By proceeding as in \cite[Lemma~3.1]{SSZZ}, we write
\begin{align}
\E [\Psi_0 v] \Psi_0 = \E [ v' \Psi_0' \Psi_0 | \Psi_0 ] ,
\label{cond0}
\end{align}

\noi
where $(\Psi_0', v')$ is an independent copy of $(\Psi_0, v)$.
By \eqref{cond0}, Jensen's inequality, the product estimate in Lemma~\ref{LEM:prod}~(ii), and the embedding in Lemma~\ref{LEM:emb}~(i), we get
\begin{align}
\begin{split}
\big\| \E [\Psi_0 (t') v (t')] \Psi_0 (t') \big\|_{H_x^{- \ta}} 
&\leq \E \big[ \| v' (t') \Psi_0' (t') \Psi_0 (t') \|_{H_x^{- \ta}} | \Psi_0 \big]  \\
&\les \E \big[ \| v' (t') \|_{H_x^{\ta}} \| \Psi_0' (t') \Psi_0 (t') \|_{W_x^{- \ta, \infty}} | \Psi_0 \big]  \\
&\leq \E \big[ \| v' (t') \|_{\C_x^{4 \ta}} \| \Psi_0' (t') \Psi_0 (t') \|_{W_x^{- \ta, \infty}} | \Psi_0 \big] .
\end{split}
\label{heat2}
\end{align}

\noi
By the product estimate in Lemma~\ref{LEM:prod}~(ii), Minkowski's integral inequality, Lemma~\ref{LEM:prod}~(ii) again, the Cauchy-Schwarz inequality in $\om$, interpolation, and the embedding in Lemma~\ref{LEM:emb}~(i), we get
\begin{align}
\begin{split}
\big\| &\E [\Psi_0 (t') v (t')] v (t') \big\|_{H_x^{- \ta}} \\
&\les \E \big[ \| \Psi_0 (t') v (t') \|_{W_x^{- \ta, 4}} \big] \| v (t') \|_{W_x^{\ta, 4}} \\
&\les \E \big[ \| \Psi_0 (t') \|_{W_x^{- \ta, \infty}} \| v (t') \|_{W_x^{\ta, 4}} \big] \| v (t') \|_{W_x^{\ta, 4}} \\
&\les \| \Psi_0 (t') \|_{L_\om^2 W_x^{- \ta, \infty}} \| v (t') \|_{L_\om^2  W_x^{\ta, 4}} \| v (t') \|_{W_x^{\ta, 4}} \\
&\les \| \Psi_0 (t') \|_{L_\om^2 W_x^{- \ta, \infty}} \| v (t') \|_{L_\om^2 L_x^4}^{\frac 12} \| v (t') \|_{L_\om^2 \C_x^{4 \ta}}^{\frac 12} \| v (t') \|_{L_x^{4}}^{\frac 12} \| v (t') \|_{\C_x^{4 \ta}}^{\frac 12}  .
\end{split}
\label{heat3}
\end{align}

\noi
By the product estimate in Lemma~\ref{LEM:prod}~(ii), Minkowski's integral inequality, the product estimate in Lemma~\ref{LEM:prod}~(i), interpolation, and the embedding in Lemma~\ref{LEM:emb}~(i), we get
\begin{align}
\begin{split}
\big\| \E [v (t')^2] \Psi_0 (t') \big\|_{H_x^{- \ta}}
&\les \E \big[ \| v (t')^2 \|_{H_x^\ta} \big] \| \Psi_0 (t') \|_{W_x^{- \ta, \infty}}  \\
&\les \| v (t') \|_{L_\om^2 W_x^{\ta, 4}}^2 \| \Psi_0 (t') \|_{W_x^{- \ta, \infty}} \\
&\les \| v (t') \|_{L_\om^2 L_x^4} \| v (t') \|_{L_\om^2 \C_x^{4 \ta}} \| \Psi_0  (t')\|_{W_x^{- \ta, \infty}} .
\end{split}
\label{heat4}
\end{align}

\noi
Combining \eqref{heat0}, \eqref{heat1}, \eqref{heat2} (for which we need to apply conditional H\"older's inequality), \eqref{heat3}, and \eqref{heat4} along with Minkowski's integral inequalities and applying the moment bound in Lemma~\ref{LEM:sto} (which also applies to $\Psi_0' \Psi_0$), we obtain
\begin{align*}
\| v \|_{L_\om^2 C_{T} H_x^s} 
&\les C \| u_0 \|_{L_\om^2 H_x^s} + T^{\frac{3 - 2s}{4}} \| v \|_{L_\om^2 C_T L_x^4}^3 \\
&\quad + T^{\frac 34 - \frac{s + 5 \ta}{2}} \Big\| \sup_{0 \leq t \leq T} t^{2 \ta + \frac 14} \| v (t) \|_{\C_x^{4 \ta}} \Big\|_{L_\om^2} \big( 1 + \| v \|_{L_\om^2 C_T L_x^4} \big) .
\end{align*}

\noi
Thus, by using \eqref{L4bdd}, we know that $v \in L^2 (\Om; C ([0, T]; H^s (\T^2)))$ and we obtain the desired bound \eqref{vTbdd}.

It remains to show that the solution $v$ is unique in $L^2 (\Om; C([0, T]; H^s (\T^2)))$. Let $v_1$ and $v_2$ be two solutions to \eqref{NLHvDuh} in $L^2 (\Om; C([0, T]; H^s (\T^2)))$ with the common initial data $u_0$. Similar to \eqref{vdiff} in the proof of Proposition~\ref{PROP:NLHN}, for any $T_0 \geq 0$ and $T' > 0$ such that $T_0 + T' \leq T$, we obtain
\begin{align*}
&\| v_1 - v_2 \|_{L_\om^2 C_{[T_0, T_0 + T']} H_x^s} \\
&\les \| v_1 (T_0) - v_2 (T_0) \|_{L_\om^2 H_x^s} \\
&\quad + (T')^{\frac{3 - 2s}{4}} \| v_1 - v_2 \|_{L_\om^2 C_{[T_0, T_0 + T']} H_x^s} \Big(1 + \| v_1 \|_{L_\om^2 C_{T} H_x^s}^2 + \| v_2 \|_{L_\om^2 C_{T} H_x^s}^2 \Big) ,
\end{align*}

\noi
where the only modifications we need to do is to replace the $\A_N$-norm by the $L_\om^2$-norm, use the identity \eqref{cond0}, and apply the moment bound in Lemma~\ref{LEM:sto}.
\end{proof}

\section{Convergence via regime I}
\label{SEC:regI}

In this section, we prove Theorem~\ref{THM:conv1}, the convergence of the \HLSe \eqref{NLWNeu} to the mean-field SNLH \eqref{NLHu} via regime I.

We first note that part (i) of Theorem~\ref{THM:conv1}, pathwise global well-posedness for \PLS \eqref{NLHNu}, has already been established via Proposition~\ref{PROP:NLHN}. Also, as mentioned in the introduction, part (iii) of Theorem~\ref{THM:conv1}, the convergence from \PLS \eqref{NLHNu} to the mean-field SNLH \eqref{NLHu}, has been covered by \cite[Theorem~4.1]{SSZZ}. Thus, we focus on proving part (ii) of Theorem~\ref{THM:conv1}, the convergence of the \HLSe \eqref{NLWNeu} to the \PLS \eqref{NLHNu} as $\eps \to 0$.

To prove the theorem, we need to show the convergence of $\vv_\eps^N = (v_\eps^{N, j})_{1 \leq j \leq N}$ satisfying the perturbed \HLSe
\begin{align}
\begin{split}
    v_\eps^{N, j} (t) &= P_\eps (t) (u_0^{N, j}, u_1^{N, j}) - \frac{1}{N} \sum_{k = 1}^N \mathcal{I}_\eps \big( \wick{ (\Psi_\eps^k)^2 \Psi_\eps^j } + \wick{ (\Psi_\eps^k)^2 } v_\eps^{N, j} \\
    &\qquad + 2 v_\eps^{N, k} \wick{ \Psi_\eps^k \Psi_\eps^j } + 2 \Psi_\eps^k v_\eps^{N, k} v_\eps^{N, j} + (v_\eps^{N, k})^2 \Psi_\eps^j + (v_\eps^{N, k})^2 v_\eps^{N, j} \big) (t)
\end{split}
\label{NLWNevDuh2}
\end{align}

\noi
to $\vv^N = (v^{N, j})_{1 \leq j \leq N}$ satisfying the perturbed \PLS
\begin{align}
\begin{split}
    v^{N, j} (t) &= P_0 (t) (u_0^{N, j}, u_1^{N, j}) - \frac{1}{N} \sum_{k = 1}^N \mathcal{I}_0 \big( \wick{ (\Psi_0^k)^2 \Psi_0^j } + \wick{ (\Psi_0^k)^2 } v^{N, j} \\
    &\qquad + 2 v^{N, k} \wick{ \Psi_0^k \Psi_0^j } + 2 \Psi_0^k v^{N, k} v^{N, j} + (v^{N, k})^2 \Psi_0^j + (v^{N, k})^2 v^{N, j} \big) (t) ,
\end{split}
\label{NLHNvDuh2}
\end{align}

\noi
where $P_\eps$ is defined in \eqref{defPe}, $\mathcal{I}_\eps$ is defined in \eqref{defIe}, $P_0$ is defined in \eqref{defP0} (see also \eqref{P001}), $\mathcal{I}_0$ is defined in \eqref{defI0}, and $(\uu_0^N, \uu_1^N) = ((u_0^{N, j}, u_1^{N, j}))_{1 \leq j \leq N}$ is the initial data for both systems. 
Our goal is to prove the following proposition, which, together with \eqref{expNe}, \eqref{expN}, and the convergence of $\Psi_\eps^j$ to $\Psi_0^j$ from Lemma~\ref{LEM:sto}~(iii), implies Theorem~\ref{THM:conv1}~(ii).
\begin{proposition}
\label{PROP:convv1}
Let $N \in \N$, $\frac 45 < s < 1$, $T \geq 1$, and $(\uu_0^N, \uu_1^N) = ((u_0^{N, j}, u_1^{N, j}))_{1 \leq j \leq N}$ be random and belong to $\H^s (\T^2)^{\otimes N}$. Given any $0 < \eps \leq 1$, let $\vv_\eps^N = (v_\eps^{N, j})_{1 \leq j \leq N} \in C ([0, T]; H^s (\T^2)^{\otimes N})$ be the solution to the perturbed \HLSe \eqref{NLWNevDuh2} with initial data $(\uu_0^N, \uu_1^N)$ guaranteed by Proposition~\ref{PROP:GWP_NLW}. Let $\vv^N = (v^{N, j})_{1 \leq j \leq N} \in C ([0, T]; H^s (\T^2)^{\otimes N})$ be the solution to the perturbed \PLS \eqref{NLHNvDuh2} guaranteed by Theorem~\ref{THM:conv1}~\textup{(i)}. Then, for any $s' < s$, we have almost surely
\begin{align*}
    \vv_\eps^N \longrightarrow \vv^N \quad \text{in } C ([0, T]; H^{s'} (\T^2)^{\otimes N})
\end{align*}

\noi
as $\eps \to 0$.
\end{proposition}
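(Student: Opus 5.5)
We argue by a continuity (bootstrap) argument, pathwise in $\om$, exploiting that the limiting solution $\vv^N$ is already known to exist globally with the bound \eqref{vNTbdd}. Fix $\om$ in the full-measure set on which Lemma~\ref{LEM:sto}~(iii) gives $\al$-H\"older continuity of $\eps \mapsto \mathbf{\Psi}_\eps^N$ in $C([0,T];W^{-\ta,\infty})$. On this set, $\sup_{\eps\in[0,1]}\|\mathbf{\Psi}_\eps^N\|_{\mathcal Z_T^{-\ta,N}} \le C(\om,N,T)$ and $\|\mathbf{\Psi}_\eps^N-\mathbf{\Psi}_0^N\|_{\mathcal Z_T^{-\ta,N}}\les C(\om)\eps^{\al}$. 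Let $R \deff 1+\|\vv^N\|_{\A_N C_T H^s_x}$, which is finite almost surely by Proposition~\ref{PROP:NLHN}.

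The first step is to prove convergence in a slightly weaker norm $H^{s_1}$ with $s'<s_1<s$, chosen so that the loss $2\ta$ in Lemma~\ref{LEM:PIconv} is affordable. Set $\ww_\eps=\vv_\eps^N-\vv^N$ and subtract \eqref{NLHNvDuh2} from \eqref{NLWNevDuh2}. This gives three kinds of terms:
\begin{itemize}
\item[(a)] $(P_\eps-P_0)(\uu_0^N,\uu_1^N)$, which by \eqref{Pe2} is $\les\eps^{\ta}\|(\uu_0^N,\eps\uu_1^N)\|_{\A_N\H^{s_1+2\ta}}\to0$ as long as $s_1+2\ta\le s$;
\item[(b)] $(\mathcal I_\eps-\mathcal I_0)(F_0)$, with $F_0$ the nonlinearity evaluated at $(\mathbf\Psi_0^N,\vv^N)$, which is $\les \eps^\ta T^{1/2}\|F_0\|_{L^\infty_TH^{s_1-1+2\ta}}$;
\item[(c)] $\mathcal I_\eps(F_\eps-F_0)$, which splits into differences of the stochastic objects (controlled by $\eps^\al$) and terms multilinear in $\ww_\eps$.
\end{itemize}
For (b) and (c), the product estimates \eqref{GWPh3}--\eqref{GWPh7} apply with $H^{-\ta}$ replaced by $H^{s_1-1+2\ta}$. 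Since $s_1-1+2\ta\ge-\ta$ is not needed, only the weaker requirement $s_1-1+2\ta\le -\ta$ is used, and this holds because $s_1<1$ and $\ta$ is small. The cubic term $(v^{N,k})^2v^{N,j}$ needs $H^{-1/2}$ together with the Lemma~\ref{LEM:PIconv} bound into $H^{1/2}$, which is fine since $s>\frac12$.

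Using the uniform-in-$\eps$ bound $\|\mathcal I_\eps F\|_{C_TH^{\gamma}}\les T^{1/2}\|F\|_{L^\infty_T H^{\gamma-1}}$ on short intervals $[t_0,t_0+\tau]$, we obtain
\[
\|\ww_\eps\|_{\A_NC_{[t_0,t_0+\tau]}H^{s_1}}\les \|(\ww_\eps,\eps\dt\ww_\eps)(t_0)\|_{\A_N\H^{s_1}}+C(\om,R)\eps^{\min(\ta,\al)}+\tau^{1/2}C(\om,R)\big(\|\ww_\eps\|+\|\ww_\eps\|^3\big),
\]
valid as long as $\|\vv_\eps^N\|_{\A_NC_{[0,t_0+\tau]}H^{s}}\le 2R$. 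Restarting at $t_0$ requires the Duhamel formulas from time $t_0$. In these, the data of the wave flow includes $\eps\dt v_\eps^{N,j}(t_0)$ in $H^{s_1-1}$, and the heat flow ignores it; the extra factor $\eps$ in \eqref{Pe1}--\eqref{Pe2} is exactly what makes this harmless. We therefore track the quantity $\|(\ww_\eps,\eps\dt v_\eps^N)\|_{\A_N\H^{s_1}}$, and bound $\eps\dt v_\eps^N$ using the derivative of the Duhamel formula with the same estimates.

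The main obstacle is the bootstrap, because there is no uniform-in-$\eps$ global bound on $\vv_\eps^N$ (Remark~\ref{RMK:Zine}). Here the plan is as follows. Define
\[
T_\eps^*=\sup\{t\le T:\|\vv_\eps^N-\vv^N\|_{\A_NC_tH^{s}}\le1\}.
\]
On $[0,T_\eps^*]$ we have $\|\vv_\eps^N\|\le R+1$, so with $\tau=\tau(\om,R)$ fixed, iterating the estimate above over $\lceil T/\tau\rceil$ intervals gives $\|\ww_\eps\|_{C_{T_\eps^*}H^{s_1}}\le C^{T/\tau}\eps^{c}$. This is only in $H^{s_1}$; to close at level $s$ we need the $H^s$-difference to be small as well. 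I would get this by interpolating between the $H^{s_1}$-smallness and a uniform $H^{s+\delta}$ bound on $\vv^N$ and $\vv_\eps^N$ on $[0,T_\eps^*]$. Alternatively, and more robustly, the bootstrap can be run in $H^{s_2}$ for some $s_2\in(\frac45,s)$ with $s_2>s'$ in place of $H^s$: the local theory of Proposition~\ref{PROP:GWP_NLW} works at any regularity in $(\frac45,1)$, and the $\eps$-loss is taken between $s_2$ and $s$, where data and $\vv^N$ live. This yields $T_\eps^*=T$ for $\eps$ small, by continuity of $t\mapsto\|\ww_\eps\|_{C_tH^{s_2}}$. Since $s'$ is arbitrary below $s$, the proposition follows after relabelling $s_2$ as $s'$.
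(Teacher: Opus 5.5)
Your final argument, the second option (bootstrap on $\|\vv_\eps^N-\vv^N\|_{\A_N C_tH^{s_2}_x}$, with the $\eps$-loss paid between $s$ and $s_2$), is viable. Its skeleton is the paper's: the paper also works entirely at a level $s'<s$, assumes the bound \eqref{veNTbdd1} on $\vv_\eps^N$ in $\A_N C_T H^{s'}_x$, and removes it by continuity.

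The two routes differ in how the Duhamel difference is split. Write $F_\eps$, $F_0$ for the wave and heat nonlinearities.
\begin{itemize}
\item The paper uses $(\mathcal I_\eps-\mathcal I_0)F_\eps+\mathcal I_0(F_\eps-F_0)$. Every term linear in the difference then carries the heat operator and is absorbed on all of $[0,T]$ at once through the weighted norm $S_T^{s',\lambda}$ and \eqref{conve3h}. The bootstrap is needed only because $(\mathcal I_\eps-\mathcal I_0)F_\eps$ involves $\vv_\eps^N$.
\item Your splitting $(\mathcal I_\eps-\mathcal I_0)F_0+\mathcal I_\eps(F_\eps-F_0)$ makes (b) depend only on the known $\vv^N$. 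The absorption, however, then goes through $\mathcal I_\eps$.
\end{itemize}
Restarting at $t_0$ then obliges you to prove $\|\eps\dt v_\eps^{N,j}(t_0)\|_{\A_{N,j}H^{s_2-1}}\to0$. Boundedness is not enough, and the factor $\eps$ in \eqref{Pe1} does not by itself give smallness.

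This is not covered by the estimates you cite. The data part, $\eps\dt P_\eps(t)(\phi_0,\phi_1)=-\eps^{-1}(1-\Dl)\D_\eps(t)\phi_0+\eps\dt\D_\eps(t)\phi_1$, is fine via \eqref{Dest1-3} with $\gamma>0$ and \eqref{Dest1-2}, using $s_2<s$. The Duhamel part has kernel $\eps^{-1}\dt\ft{\D}_\eps$, for which \eqref{Dest1-2} only gives $T\eps^{-1}$. You need $\int_0^\infty|\eps^{-1}\dt\ft{\D}_\eps(t,n)|\,dt\les\eps$ uniformly in $n$; this does follow from \eqref{Den}, but it must be proved. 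It is simpler not to restart: keep the Duhamel formula from time $0$ and split the $\mathcal I_\eps$-integral at $t_0$, or adopt the paper's splitting.

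Two further corrections are needed.
\begin{itemize}
\item The cubic term cannot be routed through $H^{-1/2}$. Uniformly in $\eps$ (already for $\eps=1$), $\mathcal I_\eps$ gains only one derivative, so it maps $H^{-1/2}$ into $H^{1/2}$, below your working regularity $>\frac45$. The use of \eqref{GWPh7} in Proposition~\ref{PROP:NLHN} relies on the parabolic gain of more than one derivative in \eqref{I0C1}. Use instead $\|(v^k)^2v^j\|_{H^{s_2-1}}\le\|v^k\|_{L^6}^2\|v^j\|_{L^6}\les\|v^k\|_{H^{s_2}}^2\|v^j\|_{H^{s_2}}$, valid for $s_2\ge\frac23$, as in \eqref{conveN2-5}.
\item Your first option for closing the bootstrap is unavailable. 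With $\H^s$ data, neither $\vv_\eps^N$ nor $\vv^N$ is in $H^{s+\delta}$ near $t=0$. Moreover, no uniform-in-$\eps$ $H^s$ bound on $\vv_\eps^N$ is known (Remark~\ref{RMK:Zine}). Running the bootstrap below $s$ is therefore necessary, not merely more robust.
\end{itemize}
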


\begin{proof}
We may assume that $\frac 45 < s' < s < 1$. Let us define 
\begin{align*}
    \mathbf{\Psi}_\eps^N = \big( \Psi_\eps^j, \wick{\Psi_\eps^k \Psi_\eps^j}, \wick{(\Psi_\eps^k)^2 \Psi_\eps^j} \big)_{1 \leq j, k \leq N}    
\end{align*}

\noi
and, given $\s \in \R$, recall the norm
\begin{align*}
    \| \mathbf{\Psi}_\eps^N \|_{\mathcal{Z}^{\s, N}_T} &\deff \| \Psi_\eps^j \|_{\A_{N, j} C_T W_x^{\s, \infty}} + \| \wick{(\Psi_\eps^j)^2} \|_{\A_{N, j} C_T W_x^{\s, \infty}} \\
    &\quad + \| \wick{\Psi_\eps^k \Psi_\eps^j} \|_{\A_{N, j, k}^{(2)} C_T W_x^{\s, \infty}} + \| \wick{(\Psi_\eps^k)^2 \Psi_\eps^j} \|_{\A_{N, j, k}^{(2)} C_T W_x^{\s, \infty}} ,
\end{align*}

\noi
where $\A_N$ and $\A_N^{(2)}$ are the $\ell^2$-averages defined in \eqref{defANX} and \eqref{defAN2X}, respectively. From Lemma~\ref{LEM:sto}, we know that
\begin{align}
    \| \mathbf{\Psi}_\eps^N \|_{\mathcal{Z}^{s - 1, N}_T} \leq C(\om, N, T)
\label{PsiN_bdd}
\end{align}

\noi
and
\begin{align}
    \| \mathbf{\Psi}_\eps^N - \mathbf{\Psi}_0^N \|_{\mathcal{Z}^{s - 1, N}_T} \leq C(\om, N, T) \eps^{\al}
\label{PsiN_diff}
\end{align}

\noi
for some $\al > 0$ and constant $C(\om, N, T) \geq 1$ independent of $\eps$.

Following \cite{OOT1, OOT2}, we let $\ld \geq 1$ be a large number to be chosen later and introduce the following norm with an exponentially decaying time weight given any $\s \in \R$:
\begin{align}
    \| u \|_{S_T^{\s, \ld}} \deff \| e^{- \ld t} u \|_{C_T H_x^\s} .
\label{STsl}
\end{align}

\noi
Note that we have
\begin{align}
    \| u \|_{S_T^{\s, \ld}} \leq \| u \|_{C_T H_x^\s} \leq e^{\ld T} \| u \|_{S_T^{\s, \ld}} .
\label{SlT_bdd}
\end{align}

\noi
Also, given any $\s \in \R$, from Minkowski's integral inequality and Lemma~\ref{LEM:heat}, we have
\begin{align}
\begin{split}
    \| \mathcal{I}_0 (F) (t) \|_{S_T^{\s, \ld}} &= \bigg\| \int_0^t e^{- \ld (t - t')} P_0 (t - t') ( e^{- \ld t'} F (t') ) dt' \bigg\|_{C_T H_x^\s} \\
    &\leq \bigg\| \int_0^t e^{- \ld (t - t')} \big\| P_0 (t - t') ( e^{- \ld t'} F (t') ) \big\|_{H_x^\s} dt' \bigg\|_{C_T} \\
    &\les \bigg\| \int_0^t e^{- \ld (t - t')} |t - t'|^{- \frac 12} dt' \bigg\|_{C_T} \| F \|_{S_T^{\s - 1, \ld}} \\
    &\les \ld^{- \frac 12} \| F \|_{S_T^{\s - 1, \ld}} .
\end{split}
\label{conve3h}
\end{align}

From \eqref{NLWNevDuh2} and \eqref{NLHNvDuh2}, we have
\begin{align}
    \| v_\eps^{N, j} - v^{N, j} \|_{S_T^{s', \ld}} \leq \1^j + \II^j + \III^j_1 + \III^j_2 + \III^j_3 + \III^j_4 + \III^j_5 + \III^j_6 ,
\label{conveN0}
\end{align}

\noi
where
\begin{align*}
    \1^j &\deff \big\| (P_\eps - P_0) (\cdot) ( u_0^{N, j}, u_1^{N, j} ) \big\|_{S_T^{s', \ld}} , \\
    \II^j &\deff \frac 1N \sum_{k = 1}^N \Big\| (\mathcal{I}_{\eps} - \mathcal{I}_{0}) \big( \wick{ (\Psi_\eps^k)^2 \Psi_\eps^j } + \wick{ (\Psi_\eps^k)^2 } v_\eps^{N, j}  \\
    &\quad + 2 v_\eps^{N, k} \wick{ \Psi_\eps^k \Psi_\eps^j } + 2 \Psi_\eps^k v_\eps^{N, k} v_\eps^{N, j} + (v_\eps^{N, k})^2 \Psi_\eps^j + (v_\eps^{N, k})^2 v_\eps^{N, j} \big) \Big\|_{S_T^{s', \ld}} , \\
    \III^j_1 &\deff \frac 1N \sum_{k = 1}^N \big\| \mathcal{I}_{0} \big( \wick{ (\Psi_\eps^k)^2 \Psi_\eps^j } - \wick{ (\Psi_0^k)^2 \Psi_0^j } \big) \big\|_{S_T^{s', \ld}} , \\
    \III^j_2 &\deff \frac 1N \sum_{k = 1}^N \big\| \mathcal{I}_{0} \big( \wick{ (\Psi_\eps^k)^2 } v_\eps^{N, j} - \wick{ (\Psi_0^k)^2 } v^{N, j} \big) \big\|_{S_T^{s', \ld}} , \\
    \III^j_3 &\deff \frac 2N \sum_{k = 1}^N \big\| \mathcal{I}_{0} \big( v_\eps^{N, k} \wick{ \Psi_\eps^k \Psi_\eps^j } - \, v^{N, k} \wick{ \Psi_0^k \Psi_0^j } \big) \big\|_{S_T^{s', \ld}} , \\
    \III^j_4 &\deff \frac 2N \sum_{k = 1}^N \big\| \mathcal{I}_{0} \big( \Psi_\eps^k v_\eps^{N, k} v_\eps^{N, j} - \Psi_0^k v^{N, k} v^{N, j} \big) \big\|_{S_T^{s', \ld}} , \\
    \III^j_5 &\deff \frac 1N \sum_{k = 1}^N \big\| \mathcal{I}_{0} \big( (v_\eps^{N, k})^2 \Psi_\eps^j - (v^{N, k})^2 \Psi_0^j \big) \big\|_{S_T^{s', \ld}} , \\
    \III^j_6 &\deff \frac 1N \sum_{k = 1}^N \big\| \mathcal{I}_{0} \big( (v_\eps^{N, k})^2 v_\eps^{N, j} - (v^{N, k})^2 v^{N, j} \big) \big\|_{S_T^{s', \ld}} .
\end{align*}

\noi
For $\1^j$, we use \eqref{SlT_bdd} and Lemma~\ref{LEM:PIconv} to obtain
\begin{align}
    \1^j \leq \big\| (P_\eps - P_0) (\cdot) (u_0^{N, j}, u_1^{N, j}) \big\|_{C_T H^{s'}} \les  \eps^{\frac{s - s'}{2}} \big\| (u_0^{N, j}, u_1^{N, j}) \big\|_{\H^s} . 
\label{conveN1}
\end{align}

\noi
For $\II^j$, by using \eqref{SlT_bdd} and Lemma~\ref{LEM:PIconv}, we have
\begin{align}
\begin{split}
    \II^j &\les \eps^{\frac{s - s'}{2}} \frac{T^{\frac 12}}{N} \sum_{k = 1}^N \Big( \| \wick{ (\Psi_\eps^k)^2 \Psi_\eps^j } \|_{C_T H_x^{s - 1}} + \| \wick{ (\Psi_\eps^k)^2 } v_\eps^{N, j} \|_{C_T H_x^{s - 1}} \\
    &\qquad \qquad + \| v_\eps^{N, k} \wick{ \Psi_\eps^k \Psi_\eps^j } \|_{C_T H_x^{s - 1}} + \| \Psi_\eps^k v_\eps^{N, k} v_\eps^{N, j} \|_{C_T H_x^{s - 1}} \\
    &\qquad \qquad + \| (v_\eps^{N, k})^2 \Psi_\eps^j \|_{C_T H_x^{s - 1}} + \| (v_\eps^{N, k})^2 v_\eps^{N, j} \|_{C_T H_x^{s - 1}} \Big) .
\end{split}
\label{conveN2-0}
\end{align}

\noi
By the product estimates in Lemma~\ref{LEM:prod}, H\"older's inequalities, and Sobolev's inequalities, we get
\begin{align}
\begin{split}
    \| \wick{ (\Psi_\eps^k)^2 } v_\eps^{N, j} \|_{C_T H_x^{s - 1}} &\les \| \wick{ (\Psi_\eps^k)^2 } \|_{C_T W_x^{s - 1, \infty}} \| v_\eps^{N, j} \|_{C_T H_x^{1 - s}} \\
    &\leq \| \wick{ (\Psi_\eps^k)^2 } \|_{C_T W_x^{s - 1, \infty}} \| v_\eps^{N, j} \|_{C_T H_x^{s'}} ,
\end{split}
\label{conveN2-1}
\end{align}
\begin{align}
\begin{split}
    \| v_\eps^{N, k} \wick{ \Psi_\eps^k \Psi_\eps^j } \|_{C_T H_x^{s - 1}} &\les \| v_\eps^{N, k} \|_{C_T H_x^{1 - s}} \| \wick{ \Psi_\eps^k \Psi_\eps^j } \|_{C_T W_x^{s - 1, \infty}} \\
    &\leq \| v_\eps^{N, k} \|_{C_T H_x^{s'}} \| \wick{ \Psi_\eps^k \Psi_\eps^j } \|_{C_T W_x^{s - 1, \infty}} ,
\end{split}
\label{conveN2-2}
\end{align}
\begin{align}
\begin{split}
    \| \Psi_\eps^k v_\eps^{N, k} v_\eps^{N, j} \|_{C_T H_x^{s - 1}} &\les \| \Psi_\eps^k \|_{C_T W_x^{s - 1, \infty}} \| v_\eps^{N, k} \|_{C_T W_x^{1 - s, 4}} \| v_\eps^{N, j} \|_{C_T W_x^{1 - s, 4}} \\
    &\les \| \Psi_\eps^k \|_{C_T W_x^{s - 1, \infty}} \| v_\eps^{N, k} \|_{C_T H_x^{s'}} \| v_\eps^{N, j} \|_{C_T H_x^{s'}} ,
\end{split}
\label{conveN2-3}
\end{align}
\begin{align}
\begin{split}
    \| (v_\eps^{N, k})^2 \Psi_\eps^{j} \|_{C_T H_x^{s - 1}} &\les \| v_\eps^{N, k} \|_{C_T W_x^{1 - s, 4}}^2 \| \Psi_\eps^j \|_{C_T W_x^{s - 1, \infty}} \\
    &\les \| v_\eps^{N, k} \|_{C_T H_x^{s'}}^2 \| \Psi_\eps^j \|_{C_T W_x^{s - 1, \infty}} ,
\end{split}
\label{conveN2-4}
\end{align}
\begin{align}
\begin{split}
    \| (v_\eps^{N, k})^2 v_\eps^{N, j} \|_{C_T H_x^{s - 1}} &\leq \| v_\eps^{N, k} \|_{C_T L_x^6}^2 \| v_\eps^{N, j} \|_{C_T L_x^6} \\
    &\les \| v_\eps^{N, k} \|_{C_T H_x^{s'}}^2 \| v_\eps^{N, j} \|_{C_T H_x^{s'}} ,
\end{split}
\label{conveN2-5}
\end{align}

\noi
where we used $1 - s \leq s'$, $\frac{s' - (1 - s)}{2} \geq \frac 14$, and $\frac{s'}{2} \geq \frac 13$ given $\frac 45 < s' < s < 1$. Combining \eqref{conveN2-0}, \eqref{conveN2-1}, \eqref{conveN2-2}, \eqref{conveN2-3}, \eqref{conveN2-4}, and \eqref{conveN2-5} and using the Cauchy-Schwarz inequalities in $k$ followed by Young's inequalities, we obtain
\begin{align}
    \frac 1N \sum_{j = 1}^N (\II^j)^2 \les T \eps^{s - s'} \Big( 1 + \| \mathbf{\Psi}_\eps^N \|_{\mathcal{Z}_T^{s - 1, N}}^6 + \| \vv_\eps^{N} \|_{\A_N C_T H_x^{s'}}^6 \Big) .
\label{conveN2}
\end{align}

We now estimate $\III^j_1$, $\III^j_2$, $\III^j_3$, $\III^j_4$, $\III^j_5$, and $\III^j_6$. 
By using \eqref{conve3h}, \eqref{SlT_bdd}, and similar steps to those in \eqref{conveN2-1}, \eqref{conveN2-2}, \eqref{conveN2-3}, \eqref{conveN2-4}, and \eqref{conveN2-5}, we obtain
\begin{align*}
    \III^j_1 &\les \frac{1}{N} \sum_{k = 1}^N \big\| \wick{(\Psi_\eps^k)^2 \Psi_\eps^j} -  \wick{(\Psi_0^k)^2 \Psi_0^j} \big\|_{C_T W_x^{s - 1, \infty}} , \\
    \III^j_2 &\les \frac{1}{N} \sum_{k = 1}^N \Big( \ld^{- \frac 12} \| v_\eps^{N, j} - v^{N, j} \|_{S_T^{s', \ld}} \| \wick{ (\Psi_\eps^k)^2 } \|_{C_T W_x^{s - 1, \infty}} \\
    &\qquad + \| v^{N, j} \|_{C_T H_x^{s'}} \| \wick{(\Psi_\eps^k)^2} -  \wick{(\Psi_0^k)^2} \|_{C_T W_x^{s - 1, \infty}} \Big) , \\
    \III^j_3 &\les \frac{1}{N} \sum_{k = 1}^N \Big( \ld^{- \frac 12} \| v_\eps^{N, k} - v^{N, k} \|_{S_T^{s', \ld}} \| \wick{\Psi_\eps^k \Psi_\eps^j} \|_{C_T W_x^{s - 1, \infty}} \\
    &\qquad + \| v^{N, k} \|_{C_T H_x^{s'}} \| \wick{\Psi_\eps^k \Psi_\eps^j} - \wick{\Psi_0^k \Psi_0^j} \|_{C_T W_x^{s - 1, \infty}} \Big) , \\
    \III^j_4 &\les \frac{1}{N} \sum_{k = 1}^N \Big( \| \Psi_\eps^k - \Psi_0^k \|_{C_T W_x^{s - 1, \infty}} \| v_\eps^{N, k} \|_{C_T H_x^{s'}} \| v_\eps^{N, j} \|_{C_T H_x^{s'}} \\
    &\qquad + \ld^{- \frac 12} \| \Psi_0^k \|_{C_T W_x^{s - 1, \infty}} \| v_\eps^{N, k} - v^{N, k} \|_{S_T^{s', \ld}}  \| v_\eps^{N, j} \|_{C_T H_x^{s'}} \\
    &\qquad + \ld^{- \frac 12} \| \Psi_0^k \|_{C_T W_x^{s - 1, \infty}} \| v^{N, k} \|_{C_T H_x^{s'}} \| v_\eps^{N, j} - v^{N, j} \|_{S_T^{s', \ld}} \Big) , \\
    \III^j_5 &\les \frac{1}{N} \sum_{k = 1}^N \Big( \| \Psi_\eps^j - \Psi_0^j \|_{C_T W_x^{s - 1, \infty}} \| v_\eps^{N, k} \|_{C_T H_x^{s'}}^2  \\
    &\qquad + \ld^{- \frac 12} \| \Psi_0^j \|_{C_T W_x^{s - 1, \infty}} \| v_\eps^{N, k} - v^{N, k} \|_{S_T^{s', \ld}} \big( \| v_\eps^{N, k} \|_{C_T H_x^{s'}} + \| v^{N, k} \|_{C_T H_x^{s'}} \big) \Big) , \\
    \III^j_6 &\les \frac{\ld^{-\frac 12}}{N} \sum_{k = 1}^N \Big( \| v_\eps^{N, k} - v^{N, k} \|_{S_T^{s', \ld}} \big( \| v_\eps^{N, k} \|_{C_T H_x^{s'}} + \| v^{N, k} \|_{C_T H_x^{s'}} \big) \| v_\eps^{N, j} \|_{C_T H_x^{s'}} \\
    &\qquad + \| v^{N, k} \|_{C_T H_x^{s'}}^2 \| v_\eps^{N, j} - v^{N, j} \|_{S_T^{s', \ld}}  \Big) ,
\end{align*}

\noi
provided that $1 - s' \leq s'$, $\frac{s' - (1 - s')}{2} \geq \frac 14$, and $\frac{s'}{2} \geq \frac 13$ which are satisfied under the assumption $\frac 45 < s' < s < 1$. 
Thus, from the Cauchy-Schwarz inequalities in $k$ and Young's inequalities, we obtain
\begin{align}
\begin{split}
    &\frac{1}{N} \sum_{j = 1}^N \big( (\III^j_1)^2 + (\III^j_2)^2 + (\III^j_3)^2 + (\III^j_4)^2 + (\III^j_5)^2 + (\III^j_6)^2 \big) \\
    &\les \Big( \ld^{- 1} \| \vv_\eps^{N} - \vv^N \|_{\A_N S_T^{s', \ld}}^2 + \| \mathbf{\Psi}_\eps^N - \mathbf{\Psi}_0^N \|_{\mathcal{Z}_T^{s - 1, N}}^2 \Big) \\
    &\quad \times \Big( 1 + \| \mathbf{\Psi}_\eps^N \|_{\mathcal{Z}_T^{s - 1, N}}^4 + \| \mathbf{\Psi}_0^N \|_{\mathcal{Z}_T^{s - 1, N}}^4 + \| \vv_\eps^N \|_{\A_N C_T H_x^{s'}}^4 + \| \vv^N \|_{\A_N C_T H_x^{s'}}^4 \Big) .
\end{split}
\label{conveN3}
\end{align}

\noi
Combining \eqref{conveN0}, \eqref{conveN1}, \eqref{conveN2}, and \eqref{conveN3} and using \eqref{PsiN_bdd} and \eqref{PsiN_diff}, we obtain
\begin{align}
\begin{split}
    \| &\vv_\eps^{N} - \vv^N \|_{\A_N S_T^{s', \ld}}^2 \\
    &\leq C \eps^{s - s'} \big\| (\uu_0^{N}, \uu_1^{N}) \big\|_{\A_N \H^s}^2 + C C(\om, N, T)^6 \eps^{s - s'} \Big( 1 + \| \vv_\eps^N \|_{\A_N C_T H_x^{s'}}^6 \Big) \\
    &\quad + C C(\om, N, T)^6 \eps^{2 \al} \Big( 1 + \| \vv_\eps^N \|_{\A_N C_T H_x^{s'}}^4 + \| \vv^N \|_{\A_N C_T H_x^{s'}}^4 \Big) \\
    &\quad + C C(\om, N, T)^4 \ld^{- 1} \| \vv_\eps^{N} - \vv^N \|_{\A_N S_T^{s', \ld}}^2 \Big( 1 + \| \vv_\eps^N \|_{\A_N C_T H_x^{s'}}^4 + \| \vv^N \|_{\A_N C_T H_x^{s'}}^4 \Big) 
\end{split}
\label{veN_diff1}
\end{align}

\noi
for some constant $C \geq 1$ that varies from line to line.

From the bound \eqref{vNTbdd} in Proposition~\ref{PROP:NLHN}, we know that
\begin{align}
    \| \vv^N \|_{\A_N C_T H_x^{s'}} \leq C(\om, N, T, \| \uu_0^N \|_{\A_N H^s}) 
\label{vNTbdd1}
\end{align}

\noi
for some constant $C(\om, N, T, \| \uu_0^N \|_{\A_N H^s}) > 0$.
Let us assume that 
\begin{align}
    \| \vv_\eps^N \|_{\A_N C_T H_x^{s'}} \leq 4 C(\om, N, T, \| ( \uu_0^N, \uu_1^N ) \|_{\A_N \H^s} ) ,
\label{veNTbdd1}
\end{align}

\noi
where the constant satisfies $C(\om, N, T, \| ( \uu_0^N, \uu_1^N ) \|_{\A_N \H^s} ) \geq C(\om, N, T, \| \uu_0^N \|_{\A_N H^s})$.
Then, from \eqref{veN_diff1}, \eqref{vNTbdd1}, and \eqref{veNTbdd1}, we obtain
\begin{align}
\begin{split}
    \| \vv_\eps^{N} - \vv^N \|_{\A_N S_T^{s', \ld}}^2 &\leq C C(\om, N, T, \| ( \uu_0^N, \uu_1^N ) \|_{\A_N \H^s} )^{6} \eps^{2 \be} \\
    &\quad + C C(\om, N, T, \| ( \uu_0^N, \uu_1^N ) \|_{\A_N \H^s} )^{4} \ld^{- 1} \| \vv_\eps^{N} - \vv^N \|_{\A_N S_T^{s', \ld}}^2 
\end{split}
\label{veN_diff2}
\end{align}

\noi
for $\be = \min (\frac{s - s'}{2}, \al) > 0$. Using \eqref{SlT_bdd}, \eqref{veN_diff2}, and taking $\ld = \ld (\om, N, T, \| ( \uu_0^N, \uu_1^N ) \|_{\A_N \H^s}) > 0$ to be sufficiently large, we get
\begin{align}
\begin{split}
    \| \vv_\eps^{N} - \vv^N \|_{\A_N C_T H_x^{s'}} &\leq e^{\ld T} \| \vv_\eps^{N} - \vv^N \|_{\A_N S_T^{s', \ld}} \\
    &\leq 2 C e^{\ld T} C(\om, N, T, \| ( \uu_0^N, \uu_1^N) \|_{\A_N \H^s} )^{3}  \eps^{\be} .
\end{split}
\label{veN_diff3}
\end{align}

In view of \eqref{veN_diff3}, we know that the desired almost sure convergence of $\vv_\eps^N$ to $\vv^N$ follows once we show the bound \eqref{veNTbdd1} for all $0 < \eps \leq \eps_0$ for some small $\eps_0 > 0$. We assume for the sake of contradiction that there exists $0 < T_* < T$ such that $T_*$ is the largest number such that
\begin{align*}
\| \vv_\eps^N \|_{\A_N C_{T_*} H_x^{s'}} \leq 2 C(\om, N, T, \| ( \uu_0^N, \uu_1^N ) \|_{\A_N \H^s}) .
\end{align*}

\noi
Then, by continuity, we know that there exists $T_0 > 0$ such that $0 < T_* + T_0 \leq T$ and
\begin{align*}
    \| \vv_\eps^N \|_{\A_N C_{T_* + T_0} H_x^{s'}} \leq 4 C(\om, N, T, \| ( \uu_0^N, \uu_1^N ) \|_{\A_N \H^s} ) .
\end{align*}

\noi
This satisfies the assumption \eqref{veNTbdd1}, so that the above argument that leads to \eqref{veN_diff3} gives
\begin{align}
    \| \vv_\eps^N - \vv^N \|_{\A_N C_{T_* + T_0} H_x^{s'}} \leq 2 C e^{\ld T} C(\om, N, T, \| ( \uu_0^N, \uu_1^N ) \|_{\A_N \H^s})^3 \eps^{\be} .
\label{veN_diff4}
\end{align}

\noi
Thus, by setting $\eps_0 = \eps_0 (\om, N, T, s, s', \al, \| ( \uu_0^N, \uu_1^N ) \|_{\A_N \H^s}) > 0$ to be sufficiently small, we obtain from \eqref{veN_diff4} and \eqref{vNTbdd1} that for any $0 < \eps \leq \eps_0$,
\begin{align*}
    \| \vv_\eps^N \|_{\A_N C_{T_* + T_0} H_x^{s'}} &\leq \| \vv_\eps^N - \vv^N \|_{\A_N C_{T_* + T_0} H_x^{s'}} + \| \vv^N \|_{\A_N C_T H_x^{s'}} \\
    &\leq 2 C(\om, N, T, \| ( \uu_0^N, \uu_1^N ) \|_{\A_N \H^s} ) ,
\end{align*}

\noi
which contradicts the assumption on $T_*$. Thus, we must have $T_* = T$, so that the assumption \eqref{veNTbdd1} must hold. This finishes the proof of the convergence.
\end{proof}

\section{Convergence via regime II}
\label{SEC:regII}

In this section, we prove Theorem~\ref{THM:conv2}, convergence of \HLSe \eqref{NLWNeu} to the mean-field SNLH \eqref{NLHu} via regime II.

As mentioned earlier, part (i) and part (ii) of Theorem~\ref{THM:conv2}, global well-posedness for the mean-field \NLWe and mean-field convergence of the $\text{HLSM}_{\eps, N}$, have been essentially established by \cite[Theorem~1.2]{LLO} and \cite[Theorem~1.6]{LLO}, respectively. Thus, we focus on proving part (iii) of Theorem~\ref{THM:conv2}, convergence of the mean-field \NLWe \eqref{NLWeu} to the mean-field SNLH \eqref{NLHu}.
As mentioned in the introduction, we first establish a uniform-in-$\eps$ bound for the solution in Subsection~\ref{SUB:mfbdd} and then show the global-in-time convergence of the dynamics in Subsection~\ref{SUB:conv2}.

\subsection{Uniform a priori bound for the mean-field $\text{SdNLW}_\eps$}
\label{SUB:mfbdd}

In this section, we establish a uniform-in-$\eps$ a priori bound for $v_\eps^j$ satisfying the perturbed mean-field \NLWe \eqref{NLWev}.
Let us drop the superscript $j$ in this subsection for simplicity. The Duhamel formulation of \eqref{NLWev} is given by
\begin{align}
    v_\eps (t) = P_\eps (t) (u_0, u_1) - \mathcal{I}_\eps \big( 2 \E [ \Psi_\eps v_\eps ] \Psi_\eps + 2 \E [\Psi_\eps v_\eps] v_\eps + \E [(v_\eps)^2] \Psi_\eps + \E [(v_\eps)^2] v_\eps \big) (t) ,
\label{NLWveDuh}
\end{align}

\noi
where $P_\eps$ is defined in \eqref{defPe}, $\mathcal{I}_\eps$ is defined in \eqref{defIe}, and $(u_0, u_1)$ is the initial data for \eqref{NLWev}. Our goal is to prove the following global-in-time a priori bound for $v_\eps$ uniformly in $\eps$. 
\begin{proposition}
\label{PROP:NLW}
Let $0 < \eps \leq 1$, $\frac 45 < s < 1$, and $T \geq 1$. Let $(u_0, u_1) \in L^2 ( \Omega; \H^s (\T^2) )$ and let $v_\eps$ be the unique global-in-time solution to the equation \eqref{NLWveDuh} in $L^2 (\Om; C([0, T]; H^s (\T^2)))$ with initial data $(u_0, u_1)$. Then, we have the bound
\begin{align}
    \| v_\eps \|_{L^2_\om C_T H_x^s} \leq C (T, s, \| ( u_0, u_1 ) \|_{L^2_\om \H_x^s} ) 
\label{vebdd_goal}
\end{align}

\noi
for some constant $C (T, s, \| ( u_0, u_1 ) \|_{L^2_\om \H_x^s})$ independent of $\eps$.
\end{proposition}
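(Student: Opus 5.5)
We follow the hybrid $I$-method/Gronwall strategy of \cite{GKOT} as adapted to the mean-field setting in \cite[Theorem~1.2]{LLO}, keeping track of every appearance of $\eps$. Fix $M \geq 10$ large, to be chosen depending on $T$ and the data, and set $w_\eps = I_M v_\eps$. We study the modified energy
\begin{align*}
\mathcal{E}_\eps (t) \deff E_\eps (I_M v_\eps, \dt I_M v_\eps) (t),
\end{align*}
with $E_\eps$ as in \eqref{Eew}. By \eqref{Ibdd1}, $\E \| I_M v_\eps \|_{H^1}^2 \les \mathcal{E}_\eps$ controls $\| v_\eps \|_{L^2_\om H^s}^2$. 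The initial value satisfies $\mathcal{E}_\eps (0) \les M^{2(1-s)} \| (u_0, \eps u_1) \|_{L^2_\om \H^s}^2 + (\text{quartic term})$. The factor $\eps$ in front of $u_1$ is harmless, since the kinetic part of $E_\eps$ carries $\eps^2 (\dt w)^2$. This is exactly why \eqref{Pe1} has $\eps \phi_1$ rather than $\phi_1$.

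First we differentiate $\mathcal{E}_\eps$ in time. Since $\eps^2 \dt^2 w_\eps = -\dt w_\eps - (1 - \Dl) w_\eps - I_M(\text{nonlinearity})$, we get
\begin{align*}
\frac{d}{dt} \mathcal{E}_\eps = - \E \int (\dt w_\eps)^2 dx - \E \int \dt w_\eps \big( I_M \mathcal{N}(v_\eps) - \E[w_\eps^2] w_\eps \big) dx,
\end{align*}
where $\mathcal{N}(v) = 2\E[\Psi_\eps v]\Psi_\eps + 2 \E[\Psi_\eps v] v + \E[v^2]\Psi_\eps + \E[v^2]v$. The damping term $-\E\|\dt w_\eps\|_{L^2}^2$ is $\eps$-independent. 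We use it, via Cauchy--Schwarz, to absorb $\dt w_\eps$ against every error term. This is where uniformity in $\eps$ comes from: we never need $\|\eps\dt w_\eps\|$ to control $\dt w_\eps$. Each error is then bounded by $\E\|\text{error}\|_{L^2}^2$, which splits into two groups.

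The commutator $I_M(\E[v^2]v) - \E[w^2]w$ is handled by a mean-field analogue of Lemma~\ref{LEM:I_est1}, costing $M^{-3s+2}$ times powers of $\mathcal{E}_\eps$. The terms containing $\Psi_\eps$ are handled by Lemma~\ref{LEM:I_est2} and the moment bounds of Lemma~\ref{LEM:sto}~(ii), which are uniform in $\eps$. For the pieces with $I_M\Psi_\eps$ we use Lemma~\ref{LEM:stoI} and the Gaussian variance $\les \log M$. This yields a Gronwall-type bound
\begin{align*}
\frac{d}{dt}\mathcal{E}_\eps \le C\big(1 + \log M\big)\,\mathcal{E}_\eps + C M^{-\kappa}\,\mathcal{E}_\eps^{\,p} + C(\log M)^{c},
\end{align*}
for some $\kappa > 0$ and $p > 1$, in the spirit of \cite{BTz14}. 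The logarithmic coefficient on the linear term arises because the pairing of $\E[v^2]$ with the Gaussian $I_M\Psi_\eps$ is controlled, after conditioning as in \eqref{cond0}, by $\E[(I_M\Psi_\eps)^2] \les \log M$ times $\mathcal{E}_\eps$.

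The main obstacle is this mean-field bound (the role of Lemma~\ref{LEM:B4}). For the pathwise cubic nonlinearity, a term such as $w^2 I_M\Psi_\eps \dt w$ would force an exponential moment of $|I_M\Psi|^2$ with a power loss. Here the expectation $\E[\cdot]$ factorizes through independent copies, so only second moments of $I_M\Psi_\eps$ appear. I would first try to prove, for the worst term $\E[w_\eps^2] I_M\Psi_\eps$, the estimate
\begin{align*}
\big|\E\!\int \dt w_\eps \,\E[w_\eps^2] I_M\Psi_\eps\big| \le \tfrac14\E\|\dt w_\eps\|_{L^2}^2 + C\log M\,\mathcal{E}_\eps.
\end{align*}
This uses the quartic part $\int(\E[w^2])^2$ of $\mathcal{E}_\eps$ together with Hölder in $x$ and the $\log M$ variance bound.

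Once this inequality holds, Gronwall on $[0,T]$ gives $\mathcal{E}_\eps(t) \les M^{C T}\mathcal{E}_\eps(0)$, provided the superlinear term stays perturbative. We ensure this with a continuity argument in time: choose $M$ large so that $M^{-\kappa}(M^{CT + 2(1-s)})^{p-1} \ll 1$, which is possible because $\kappa$ can be made larger than the accumulated polynomial loss for $s > \frac45$. This gives $\sup_{t\le T}\mathcal{E}_\eps(t) \le C(T,s,\|(u_0,u_1)\|)$ uniformly in $\eps$. To upgrade from $\sup_t \E$ to $\E\sup_t$, i.e.\ the $L^2_\om C_T H^s_x$ norm in \eqref{vebdd_goal}, we reinsert this bound into the Duhamel formula \eqref{NLWveDuh}. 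Lemma~\ref{LEM:PIconv} (with constants independent of $\eps$) and the product estimates \eqref{conveN2-1}--\eqref{conveN2-5}, now with $\E$ in place of the $\A_N$-average, then close the estimate on short intervals, which we iterate over $[0,T]$.
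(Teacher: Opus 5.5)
Your local ingredients match the paper: the modified energy $E_{\eps,M}$, absorbing every error term into the $\eps$-independent damping $\E\int(\dt I_M v_\eps)^2\,dx$, the commutator losses from Lemmas~\ref{LEM:I_est1} and~\ref{LEM:I_est2}, and the $\log M$ bound for the mean-field term $A_4$ via Cauchy--Schwarz in $\om$ and Lemma~\ref{LEM:stoI}.

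\textbf{The gap is in the globalization step.} With one fixed $M$, Gronwall with coefficient $C\log M$ gives $\mathcal{E}_\eps(t)\les M^{Ct+2(1-s)}$. The superlinear errors, e.g.\ $M^{-6s+4}\mathcal{E}_\eps^3$ and $M^{-3s+2}\mathcal{E}_\eps^2$, stay perturbative only while $\mathcal{E}_\eps\les M^{3s-2}$. That requires $Ct+2(1-s)<3s-2$, i.e.\ $Ct<5s-4<1$. This is a comparison of exponents of $M$, so taking $M$ large cannot enforce it. Your condition $M^{-\kappa}(M^{CT+2(1-s)})^{p-1}\ll1$ holds for large $M$ only if $(p-1)(CT+2(1-s))<\kappa$, and $\kappa$ is fixed by the commutator estimates. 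So your continuity argument reaches only a time of order one (depending on $s$), not arbitrary $T\ge1$. The claim that ``$\kappa$ can be made larger than the accumulated polynomial loss for $s>\frac45$'' is false.

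\textbf{What is missing is the iteration over an increasing sequence of cutoffs}, which is what the paper does (following Tolomeo and GKOT).
\begin{itemize}
\item Lemma~\ref{LEM:ite} uses a log-Gronwall argument with $\log F_{\eps,M}\sim\log M$. It shows that $E_{\eps,M}(t_0)\le M^\be$ implies $E_{\eps,M}\le M^\al$ on $[t_0,t_0+\tau_*]$, with $\tau_*=\tau_*(s,\al,\be)$ independent of $M$.
\item One then switches to $M_{k+1}=M_k^{\ld}$. Since $E_{\eps,M_{k+1}}\les M_{k+1}^{2(1-s)}E_{\eps,M_k}+E_{\eps,M_k}^2$, the choice \eqref{Mk_cond} resets the energy to at most $M_{k+1}^\be$.
\item This is exactly where $s>\frac45$ enters, through the requirement $2(1-s)<\be<\al<3s-2$.
\item After $[T/\tau_*]+1$ steps one obtains the uniform-in-$\eps$ bound $M_0^{\al\ld^\ell}$, which is double exponential in $T$.
\end{itemize}
Your final Duhamel step, upgrading $\sup_t\E$ to $\E\sup_t$, is fine once this bound is in hand.

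\textbf{A smaller point} concerns the term $\E[\Psi_\eps v_\eps]\Psi_\eps$. It contains two rough factors, so Lemma~\ref{LEM:I_est2} does not apply to it. The paper uses the independent-copy representation \eqref{cond0} to bound it by $M^{2\gamma}E_{\eps,M}^{1-\frac s2+\frac\gamma2}$ (Lemma~\ref{LEM:B3}). It then absorbs this using $F_{\eps,M}\ge M^\be$ with $\gamma\le\be/6$. Your additive $(\log M)^c$ term is not justified as stated, though an additive $M^{\delta}$ with $\delta\le\be$ would also be harmless in the iterative scheme.
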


We recall the $I_M$-operator defined in \eqref{defIop}. Recalling the energy functional $E_\eps$ in \eqref{Eew}, in view of \eqref{Ibdd1}, we see that it suffices to provide an upper bound for the following energy:
\begin{align}
\begin{split}
    E_{\eps, M} (t) &= E_\eps ( I_M v_\eps (t), \dt I_M v_\eps (t) ) \\
    &=\frac 12 \E \bigg[ \int_{\T^2} (\eps \dt I_M v_\eps (t))^2 + (I_M v_\eps (t))^2 + |\nb I_M v_\eps (t) |^2 dx \bigg] \\
    &\quad + \frac 14 \int_{\T^2} \E [ (I_M v_\eps (t))^2 ]^2 dx .
\end{split}
\label{defEe}
\end{align}

From \eqref{NLWev}, given $0 \leq t_0 \leq t \leq T$, we compute that
\begin{align}
\begin{split}
E_{\eps, M} &(t) - E_{\eps, M} (t_0) + \E \bigg[ \int_{t_0}^t \int_{\T^2} (\dt I_M v_\eps)^2 dx dt' \bigg] \\
&= \E \bigg[ \int_{t_0}^t \int_{\T^2} \dt I_M v_\eps \Big( \E [ (I_M v_\eps)^2 ] I_M v_\eps - I_M \big( \E [v_\eps^2] v_\eps \big) \Big) dx dt' \bigg] \\
&\quad - \E \bigg[ \int_{t_0}^t \int_{\T^2} \dt I_M v_\eps \Big( I_M \big( \E [v_\eps^2] \Psi_\eps \big) + 2 I_M \big( \E [\Psi_\eps v_\eps] v_\eps \big) \Big) dx dt' \bigg] \\
&\quad - 2 \E \bigg[ \int_{t_0}^t \int_{\T^2} (\dt I_M v_\eps) I_M \big( \E [ \Psi_\eps v_\eps ] \Psi_\eps \big) dx dt' \bigg] \\
&\deff A_1 - A_2 - A_3 .
\end{split}
\label{defB123}
\end{align}

\noi
We also define
\begin{align}
    A_4 \deff \E \bigg[ \int_{t_0}^t \int_{\T^2} \dt I_M v_\eps \Big( \E [ (I_M v_\eps)^2 ] I_M \Psi_\eps + 2 \E [ I_M \Psi_\eps I_M v_\eps ] I_M v_\eps \Big) dx dt' \bigg] .
\label{defB4}
\end{align}

In the following series of lemmas, we estimate the terms $A_1$, $A_2$, $A_3$, and $A_4$.

\begin{lemma}
\label{LEM:B1}
Let $\frac 23 \leq s < 1$, $M \in \N$, and $0 \leq t_0 \leq t \leq T$. Then, for $A_1$ defined in \eqref{defB123}, we have
\begin{align*}
    |A_1| \leq \frac 14 \E \bigg[ \int_{t_0}^t \int_{\T^2} (\dt I_M v_\eps)^2 dx dt' \bigg] + C(s) M^{- 6 s + 4} \int_{t_0}^t E_{\eps, M} (t')^3 dt' 
\end{align*}

\noi
for some constant $C(s) > 0$ independent of $\eps$.
\end{lemma}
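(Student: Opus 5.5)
The plan is to rewrite the integrand of $A_1$ pointwise in $\om$ as a commutator of $I_M$ with a trilinear expression and apply Lemma~\ref{LEM:I_est1}. Since the expectation $\E[\cdot]$ in the mean-field nonlinearity is taken over an independent copy, we write $\E[v_\eps^2]\, v_\eps = \E'[(v_\eps')^2 v_\eps]$, where $v_\eps'$ is an independent copy of $v_\eps$ and $\E'$ integrates only over the copy, as in \eqref{cond0}. Since $I_M$ acts only in $x$, it commutes with $\E'$. The inner difference then becomes
\begin{align*}
\E [ (I_M v_\eps)^2 ] I_M v_\eps - I_M \big( \E [v_\eps^2] v_\eps \big) = \E' \Big[ (I_M v_\eps')^2 I_M v_\eps - I_M \big( (v_\eps')^2 v_\eps \big) \Big] .
\end{align*}

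We first apply Cauchy--Schwarz in $(\om, x, t')$ and then Young's inequality to bound
\begin{align*}
|A_1| \leq \frac 14 \E \bigg[ \int_{t_0}^t \int_{\T^2} (\dt I_M v_\eps)^2 dx dt' \bigg] + \int_{t_0}^t \E \Big[ \big\| \E[(I_M v_\eps)^2] I_M v_\eps - I_M ( \E [v_\eps^2] v_\eps ) \big\|_{L^2_x}^2 \Big] dt' .
\end{align*}
Inside the second term, Minkowski's inequality moves $\E'$ outside the $L^2_x$-norm, and Lemma~\ref{LEM:I_est1} with $f = v_\eps'$ and $g = v_\eps$ gives a pointwise-in-$(\om,\om')$ bound of $M^{-3s+2}\|I_M v_\eps'\|_{H^1}^2\|I_M v_\eps\|_{H^1}$. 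Taking $\E'$ and squaring, the second term is bounded by
\begin{align*}
C M^{-6s+4} \, \E\big[\|I_M v_\eps\|_{H^1}^2\big]^2 \, \E\big[\|I_M v_\eps\|_{H^1}^2\big] = C M^{-6s+4}\, \E\big[\|I_M v_\eps\|_{H^1}^2\big]^3 .
\end{align*}
Here the independence of the copy makes the expectations factorize exactly.

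Finally, $\E[\|I_M v_\eps(t')\|_{H^1}^2] \leq 2 E_{\eps,M}(t')$, because the quadratic part of the energy \eqref{defEe} contains $\frac12\E\|I_M v_\eps\|_{H^1}^2$ and the remaining terms are nonnegative. Substituting this bound gives the claim with a constant depending only on $s$, through Lemma~\ref{LEM:I_est1}, and not on $\eps$. The $\eps\dt$ term in the energy is never used.

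The main subtlety is rigor in the independent-copy representation and in exchanging $I_M$ with $\E'$ on the regularity class of the solution. To justify the required integrability, one needs $\E[\|I_M v_\eps\|_{H^1}^2] < \infty$, which follows from \eqref{Ibdd1} since $v_\eps \in L^2_\om C_T H^s_x$. Everything else is Cauchy--Schwarz and Young's inequality.
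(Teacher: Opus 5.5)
Your proposal is correct and follows essentially the same route as the paper. The paper likewise uses Cauchy's inequality to split off $\frac14\E\int\int(\dt I_M v_\eps)^2$, then Minkowski's integral inequality in an auxiliary variable $\om'$; your independent copy is just the paper's $\int_\Om \cdots \, d\om'$ over the same probability space. It then applies Lemma~\ref{LEM:I_est1} pointwise in $(\om,\om')$ and factorizes into $M^{-6s+4}\|I_M v_\eps\|_{L^2_\om H^1_x}^6 \les M^{-6s+4} E_{\eps,M}^3$, exactly as you do.
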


\begin{proof}
By Minkowski's integral inequality and Lemma~\ref{LEM:I_est1}, we have
\begin{align}
\begin{split}
    \E &\Big[ \big\| \E [ (I_M v_\eps)^2 ] I_M v_\eps - I_M \big( \E [v_\eps^2] v_\eps \big) \big\|_{L_x^2}^2 \Big] \\
    &\leq \int_\Om \bigg( \int_\Om \big\| (I_M v_\eps (\om'))^2 I_M v_\eps (\om) - I_M \big( v_\eps (\om')^2 v_\eps (\om) \big) \big\|_{L^2_x} d \om' \bigg)^2 d \om \\
    &\les_s \int_\Om \bigg( \int_\Om M^{-3s + 2} \| I_M v_\eps (\om') \|_{H^1_x}^2 \| I_M v_\eps (\om) \|_{H^1_x} d \om' \bigg)^2 d \om \\
    &= M^{-6s + 4} \| I_M v_\eps \|_{L_\om^2 H_x^1}^6 .
\end{split}
\label{B1-1}
\end{align}

\noi
Thus, by Cauchy's inequality, \eqref{B1-1}, and \eqref{defEe}, we obtain
\begin{align*}
    |A_1| &\leq \frac 14 \E \bigg[ \int_{t_0}^t \int_{\T^2} (\dt I_M v_\eps)^2 dx dt' \bigg] + \int_{t_0}^t \E \Big[ \big\| \E [ (I_M v_\eps)^2 ] I_M v_\eps - I_M \big( \E [v_\eps^2] v_\eps \big) \big\|_{L_x^2}^2 \Big] dt' \\
    &\leq \frac 14 \E \bigg[ \int_{t_0}^t \int_{\T^2} (\dt I_M v_\eps)^2 dx dt' \bigg] + C (s) M^{- 6 s + 4} \int_{t_0}^t E_{\eps, M} (t')^3 dt' 
\end{align*}

\noi
for some constant $C (s) > 0$, as desired.
\end{proof}

\begin{lemma}
\label{LEM:B24}
Let $\frac 23 \leq s < 1$, $M \in \N$ with $M \geq 10$, $0 < \nu \leq 1 - s$, and $0 \leq t_0 \leq t \leq T$. Let $\s_0 = \s_0 (\nu) > 0$ and $p_0 = p_0 (\nu) > 1$ be given as in Lemma~\ref{LEM:I_est2}. Then, for $A_2$ defined in \eqref{defB123} and $A_4$ defined in \eqref{defB4}, we have
\begin{align*}
    |A_2 - A_4| \leq \frac 14 \E \bigg[ \int_{t_0}^t \int_{\T^2} (\dt I_M v_\eps)^2 dx dt' \bigg] + C(\nu, s) M^{- 2s + 1 + 2 \nu} \int_{t_0}^t E_{\eps, M} (t')^2 dt' 
\end{align*}

\noi
for some constant $C (\nu, s) > 0$ independent of $\eps$.
\end{lemma}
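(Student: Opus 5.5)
We mirror the proof of Lemma~\ref{LEM:B1}, now using the commutator estimate of Lemma~\ref{LEM:I_est2} in place of Lemma~\ref{LEM:I_est1}. Writing out $A_2 - A_4$, we have
\begin{align*}
A_2 - A_4 = \E \bigg[ \int_{t_0}^t \int_{\T^2} \dt I_M v_\eps \, \mathcal{R} \, dx dt' \bigg] ,
\end{align*}
where
\begin{align*}
\mathcal{R} \deff \Big( I_M \big( \E [v_\eps^2] \Psi_\eps \big) - \E [ (I_M v_\eps)^2 ] I_M \Psi_\eps \Big) + 2 \Big( I_M \big( \E [\Psi_\eps v_\eps] v_\eps \big) - \E [ I_M \Psi_\eps I_M v_\eps ] I_M v_\eps \Big) .
\end{align*}
By Cauchy's inequality,
\begin{align*}
|A_2 - A_4| \leq \frac 14 \E \bigg[ \int_{t_0}^t \int_{\T^2} (\dt I_M v_\eps)^2 dx dt' \bigg] + \int_{t_0}^t \E \big[ \| \mathcal{R} (t') \|_{L^2_x}^2 \big] dt' .
\end{align*}
It therefore remains to bound $\E [\| \mathcal{R} \|_{L_x^2}^2]$ by a constant times $M^{-2s + 1 + 2\nu} E_{\eps, M}^2$, uniformly in $\eps$.

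We realize the expectations as integrals over an independent copy $\om'$, as in \eqref{B1-1}. For the first difference,
\begin{align*}
\mathcal{R}_1 (\om) = \int_\Om \Big( I_M \big( v_\eps (\om')^2 \Psi_\eps (\om) \big) - (I_M v_\eps (\om'))^2 I_M \Psi_\eps (\om) \Big) d\om' .
\end{align*}
We apply \eqref{I_est3} with $f = g = v_\eps (\om')$ and $h = \Psi_\eps (\om)$, and then use Minkowski's integral inequality:
\begin{align*}
\| \mathcal{R}_1 (\om) \|_{L^2_x} \les M^{-s + \frac 12 + \nu} \| I_M v_\eps \|_{L^2_\om H^1_x}^2 \| \Psi_\eps (\om) \|_{W^{-\s_0, p_0}_x} .
\end{align*}
The second difference is handled the same way, with $f = v_\eps (\om')$, $g = v_\eps (\om)$, and $h = \Psi_\eps (\om')$. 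Here the noise sits in the integrated variable, so after Cauchy--Schwarz in $\om'$ we get
\begin{align*}
\| \mathcal{R}_2 (\om) \|_{L^2_x} \les M^{-s + \frac 12 + \nu} \| I_M v_\eps \|_{L^2_\om H^1_x} \| \Psi_\eps \|_{L^2_\om W^{-\s_0, p_0}_x} \| I_M v_\eps (\om) \|_{H^1_x} .
\end{align*}
Squaring both bounds and taking $\E$, we use the moment bound of Lemma~\ref{LEM:sto}~(ii). It gives $\E [\| \Psi_\eps (t') \|_{W^{-\s_0, p_0}}^2] \leq C$, uniformly in $\eps$ and $t'$, since $W^{-\s_0, \infty} \subset W^{-\s_0, p_0}$. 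Together with $\| I_M v_\eps \|_{L^2_\om H^1_x}^2 \les E_{\eps, M}$, this yields
\begin{align*}
\E [\| \mathcal{R} \|_{L^2_x}^2] \leq C(\nu, s) M^{-2s + 1 + 2\nu} E_{\eps, M}^2 ,
\end{align*}
which is the claimed bound.

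The main obstacle is the first term $\mathcal{R}_1$, where both copies of $v$ are integrated out while $\Psi_\eps (\om)$ stays outside the $\om'$-integral. The order matters: we must apply the pathwise commutator estimate \emph{inside} the $\om'$-integral, before taking $L^2_\om$. Otherwise we would need a mixed $L^4_\om$ bound on $v$ that the energy does not control. Since $h = \Psi_\eps (\om)$ does not depend on $\om'$, it factors out cleanly. This is exactly where the mean-field structure is used: a genuine cubic term $v^2 \Psi$ would couple $\| I_M v (\om) \|_{H^1}^2$ with the noise at the same $\om$, and the resulting quantity is not bounded by $E_{\eps, M}^2$.

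The second, minor point is uniformity in $\eps$, which only enters through the $\eps$-independent constants in Lemma~\ref{LEM:sto}. The $\eps$-weighted kinetic term in $E_{\eps, M}$ plays no role here, because we only need $H^1$ control of $I_M v_\eps$.
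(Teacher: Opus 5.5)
Your proposal is correct and follows the paper's proof essentially step by step. Both arguments use Cauchy's inequality, realize the expectations through an independent copy $\om'$, apply \eqref{I_est3} inside the $\om'$-integral via Minkowski's inequality, use Cauchy--Schwarz in $\om'$ for the term with the noise in the integrated variable, and close with the $\eps$-uniform moment bound of Lemma~\ref{LEM:sto}. The only differences are cosmetic: you merge the two commutators into one $\mathcal{R}$ before applying Cauchy's inequality, and you use the moment bound pointwise in $t'$ rather than over $C_{[t_0,t]}$.
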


\begin{proof}
By Minkowski's integral inequality, \eqref{I_est3} in Lemma~\ref{LEM:I_est2},
and the Cauchy-Schwarz inequality in $\om'$, we have
\begin{align}
\begin{split}
    \E &\Big[ \big\| I_M \big( \E [ \Psi_\eps v_\eps ] v_\eps \big) - \E [ I_M \Psi_\eps  I_M v_\eps ] I_M v_\eps \big\|_{L_x^2}^2 \Big] \\
    &\leq \int_\Om \bigg( \int_\Om \big\| I_M \big( \Psi_\eps (\om') v_\eps (\om') v_\eps (\om) \big) - I_M \Psi_\eps (\om') I_M v_\eps (\om') I_M v_\eps (\om) \big\|_{L_x^2} d \om' \bigg)^2 d \om \\
    &\les M^{- 2s + 1 + 2 \nu} \int_\Om \bigg( \int_\Om \| \Psi_\eps (\om') \|_{W_x^{- \s_0, p_0}} \| I_M v_\eps (\om') \|_{H_x^1} \| I_M v_\eps (\om) \|_{H_x^1} d \om' \bigg)^2 d \om \\
    &\les M^{- 2s + 1 + 2 \nu} \| I_M v_\eps \|_{L_\om^2 H_x^1}^4 \| \Psi_\eps \|_{L_\om^2 W_x^{- \s_0, p_0}}^2 .
\end{split}
\label{B24-1}
\end{align}

\noi
Using similar steps, we obtain
\begin{align}
    \E \Big[ \big\| I_M \big( \E [ v_\eps^2 ] \Psi_\eps \big) - \E [ ( I_M v_\eps )^2 ] I_M \Psi_\eps \big\|_{L_x^2}^2 \Big] \les M^{- 2s + 1 + 2 \nu} \| I_M v_\eps \|_{L_\om^2 H_x^1}^4 \| \Psi_\eps \|_{L_\om^2 W_x^{- \s_0, p_0}}^2 .
\label{B24-2}
\end{align}

\noi
Thus, by Cauchy's inequality, \eqref{B24-2}, \eqref{B24-1}, \eqref{defEe}, and the moment bound in Lemma~\ref{LEM:sto}, we get
\begin{align*}
    |&A_2 - A_4| \\
    &\leq \frac 18 \E \bigg[ \int_{t_0}^t \int_{\T^2} (\dt I_M v_\eps)^2 dx dt' \bigg] 
    + 2 \int_{t_0}^t \E \Big[ \big\| I_M \big( \E [ v_\eps^2 ] \Psi_\eps \big) - \E [ ( I_M v_\eps )^2 ] I_M \Psi_\eps \big\|_{L_x^2}^2 \Big] dt' \\
    &\quad + \frac 18 \E \bigg[ \int_{t_0}^t \int_{\T^2} (\dt I_M v_\eps)^2 dx dt' \bigg] 
    + 8 \int_{t_0}^t \E \Big[ \big\| I_M \big( \E [ \Psi_\eps v_\eps ] v_\eps \big) - \E [ I_M \Psi_\eps I_M v_\eps ] I_M v_\eps \big\|_{L_x^2}^2 \Big] dt' \\
    &\leq \frac 14 \E \bigg[ \int_{t_0}^t \int_{\T^2} (\dt I_M v_\eps)^2 dx dt' \bigg] + C_1(\nu, s) M^{- 2s + 1 + 2 \nu} \| \Psi_\eps \|_{L_\om^2 C_{[t_0, t]} W_x^{- \s_0, p_0}}^2 \int_{t_0}^t E_{\eps, M} (t')^2 dt' \\
    &\leq \frac 14 \E \bigg[ \int_{t_0}^t \int_{\T^2} (\dt I_M v_\eps)^2 dx dt' \bigg] + C_2(\nu, s) M^{- 2s + 1 + 2 \nu} \int_{t_0}^t E_{\eps, M} (t')^2 dt'
\end{align*}

\noi
for some constants $C_1 (\nu, s), C_2 (\nu, s) > 0$, as desired.
\end{proof}

\begin{lemma}
\label{LEM:B3}
Let $\frac 23 \leq s < 1$, $0 < \gamma \leq 1 - s$, $M \in \N$, and $0 \leq t_0 \leq t \leq T$. Then, for $A_3$ defined in \eqref{defB123}, we have
\begin{align*}
    |A_3| \leq \frac 14 \E \bigg[ \int_{t_0}^t \int_{\T^2} (\dt I_M v_\eps)^2 dx dt' \bigg] + C(\gamma) M^{2 \gamma} \int_{t_0}^t E_{\eps, M} (t')^{1 - \frac{s}{2} + \frac{\gamma}{2}} dt'
\end{align*}

\noi
for some constant $C (\gamma) > 0$ independent of $\eps$.
\end{lemma}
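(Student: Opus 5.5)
The plan is to put all of the $\dt I_M v_\eps$ dependence into the dissipative term with Cauchy's inequality. Then I would bound the remaining deterministic-in-$x$ coefficient $\E[\Psi_\eps v_\eps]$ using only a low Sobolev norm of $I_M v_\eps$. That norm will be interpolated between the two pieces of information the energy $E_{\eps, M}$ in \eqref{defEe} provides. The first step is Cauchy's inequality:
\begin{align*}
|A_3| \leq \frac 14 \E \bigg[ \int_{t_0}^t \int_{\T^2} (\dt I_M v_\eps)^2 dx dt' \bigg] + 4 \int_{t_0}^t \E \Big[ \big\| I_M \big( \E [\Psi_\eps v_\eps] \Psi_\eps \big) \big\|_{L^2_x}^2 \Big] dt' .
\end{align*}
It then suffices to show, for each fixed $t'$, that $\E \| I_M ( \E [\Psi_\eps v_\eps] \Psi_\eps ) \|_{L^2_x}^2 \les_\gamma M^{2\gamma} E_{\eps, M}(t')^{1 - \frac s2 + \frac \gamma2}$.

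For the coefficient, I would first use \eqref{Ibdd2} with $s_1 = \gamma \leq 1 - s$ and $s_0 = -\gamma$, which gives $\| I_M F \|_{L^2} \les M^{\gamma} \| F \|_{H^{-\gamma}}$. Next, as in \eqref{cond0}, I would write $\E[\Psi_\eps v_\eps] \Psi_\eps = \E[ v_\eps' \Psi_\eps' \Psi_\eps \,|\, \Psi_\eps]$, where $(\Psi_\eps', v_\eps')$ is an independent copy of $(\Psi_\eps, v_\eps)$. Jensen's inequality and Lemma~\ref{LEM:prod}~(ii), applied with $p = \infty$ and $q = r = 2$, then give
\begin{align*}
\| \E[\Psi_\eps v_\eps] \Psi_\eps \|_{H^{-\gamma}} \les \E \big[ \| v_\eps' \|_{H^{\gamma}} \| \Psi_\eps' \Psi_\eps \|_{W^{-\gamma, \infty}} \,\big|\, \Psi_\eps \big] .
\end{align*}
After taking $\E$ of the square, the conditional Cauchy--Schwarz inequality yields the bound $\| v_\eps \|_{L^2_\om H^\gamma_x}^2 \, \E \| \Psi_\eps' \Psi_\eps \|_{W^{-\gamma,\infty}}^2$. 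The last factor is bounded uniformly in $\eps$ and $t'$ by the moment bound of Lemma~\ref{LEM:sto}~(ii). That lemma applies to $\Psi_\eps'\Psi_\eps$ because it is a product of independent copies, exactly as in the case $k \neq j$ there.

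The key step is the energy interpolation. By \eqref{Ibdd1} with $\gamma$ in place of the exponent there, I have $\| v_\eps \|_{H^\gamma} \les \| I_M v_\eps \|_{H^{\theta}}$ with $\theta = \gamma + 1 - s \in (0, 1)$. The energy gives two bounds. The gradient part gives $\| I_M v_\eps \|_{L^2_\om H^1_x}^2 \les E_{\eps, M}$. The mean-field quartic term gives $\| I_M v_\eps \|_{L^2_\om L^2_x}^2 = \int_{\T^2} \E[(I_M v_\eps)^2] dx \les \| \E[(I_M v_\eps)^2] \|_{L^2_x} \les E_{\eps, M}^{1/2}$. Interpolating, with Hölder in $\om$, gives
\begin{align*}
\| I_M v_\eps \|_{L^2_\om H^\theta_x}^2 \les \| I_M v_\eps \|_{L^2_\om L^2_x}^{2(1-\theta)} \| I_M v_\eps \|_{L^2_\om H^1_x}^{2\theta} \les E_{\eps, M}^{\frac{1-\theta}{2} + \theta} = E_{\eps, M}^{1 - \frac s2 + \frac \gamma2} .
\end{align*}
Combining this with the previous paragraph gives the claimed integrand bound, with a constant independent of $\eps$.

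The main obstacle is obtaining the sublinear power of the energy, rather than the trivial power $1$. This power is what makes the term harmless in the subsequent Gronwall-type argument. It relies essentially on the mean-field structure: the quartic term controls $\| I_M v_\eps \|_{L^2_\om L^2_x}$ only through the $\om$-averaged quantity $\E[(I_M v_\eps)^2]$, and this is precisely the norm produced by the independent-copy representation. I would also check carefully that the product estimate in Lemma~\ref{LEM:prod}~(ii) is valid with $\Psi_\eps'\Psi_\eps$ measured in $W^{-\gamma,\infty}$ and $v_\eps'$ measured in $H^{\gamma}$ at the same regularity $\gamma$.
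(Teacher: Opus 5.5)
Your proposal is correct and follows the same route as the paper. The paper's steps are Cauchy's inequality, the bound $\| I_M F \|_{L^2} \les M^{\gamma} \| F \|_{H^{-\gamma}}$ from \eqref{Ibdd2}, and the independent-copy representation combined with Jensen's inequality, Lemma~\ref{LEM:prod}~(ii), conditional H\"older, and the moment bound of Lemma~\ref{LEM:sto}; it then finishes with \eqref{Ibdd1} and interpolation between $\| I_M v_\eps \|_{L^2_\om H^1_x}$ and the mean-field quartic term, which it writes as $\| I_M v_\eps \|_{L^4_x L^2_\om}$. The only cosmetic difference is that you bound the stochastic factor pointwise in $t'$ rather than in $L^2_\om C_{[t_0, t]} W^{-\gamma, \infty}_x$, which is equally valid since the moment bound is uniform in the time window.
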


\begin{proof}
We proceed as in \cite[Lemma~3.1]{SSZZ} by writing
\begin{align*}
    \E [\Psi_\eps v_\eps] \Psi_\eps = \E [ v_\eps' \Psi_\eps' \Psi_\eps | \Psi_\eps ] ,
\end{align*}

\noi
where $(\Psi_\eps', v_\eps')$ is an independent copy of $(\Psi_\eps, v_\eps)$. In particular, the product $\Psi_\eps' \Psi_\eps$ makes sense without renormalization and Lemma~\ref{LEM:sto} applies to $\Psi_\eps' \Psi_\eps$. Thus, by \eqref{Ibdd2}, Jensen's inequality, the product estimate in Lemma~\ref{LEM:prod}~(ii), and conditional H\"older's inequality, we have
\begin{align}
\begin{split}
    \big\| I_M \big( \E [\Psi_\eps v_\eps] \Psi_\eps \big) \big\|_{L_\om^2 L_x^2} &= \big\| I_M \big( \E [v_\eps' \Psi_\eps' \Psi_\eps | \Psi_\eps ] \big) \big\|_{L_\om^2 L_x^2} \\
    &\les M^\gamma \big\| \E \big[ \| v_\eps' \Psi_\eps' \Psi_\eps \|_{H_x^{- \gamma}} | \Psi_\eps \big] \big\|_{L_\om^2} \\
    &\les_\gamma M^\gamma \big\| \E \big[ \| v_\eps' \|_{H_x^\gamma} \| \Psi'_\eps \Psi_\eps \|_{W_x^{- \gamma, \infty}} | \Psi_\eps \big] \big\|_{L_\om^2} \\
    &\leq M^\gamma \| v_\eps \|_{L_\om^2 H_x^\gamma} \| \Psi_\eps' \Psi_\eps \|_{L_\om^2 W_x^{- \gamma, \infty}} .
\end{split}
\label{B3-1}
\end{align}

\noi
For the term $\| v_\eps \|_{L_\om^2 H_x^\gamma}$, we use \eqref{Ibdd1}, interpolation, H\"older's inequality in $x$, and \eqref{defEe} to obtain
\begin{align}
\begin{split}
    \| v_\eps \|_{L_\om^2 H_x^\gamma} &\les \| I_M v_\eps \|_{L_\om^2 H_x^{\gamma + 1 - s}} \\
    &\les \| I_M v_\eps \|_{L_\om^2 H_x^1}^{\gamma + 1 - s} \| I_M v_\eps \|_{L_\om^2 L_x^2}^{s - \gamma} \\
    &\leq \| I_M v_\eps \|_{L_\om^2 H_x^1}^{\gamma + 1 - s} \| I_M v_\eps \|_{L_x^4 L_\om^2}^{s - \gamma} \\
    &\les E_{\eps, M}^{\frac 12 - \frac s4 + \frac{\gamma}{4}} .
\end{split}
\label{B3-2}
\end{align}

\noi
Thus, by Cauchy's inequality, \eqref{B3-1}, \eqref{B3-2}, and the moment bound in Lemma~\ref{LEM:sto}, we get
\begin{align*}
    |A_3| &\leq \frac 14 \E \bigg[ \int_{t_0}^t \int_{\T^2} (\dt I_M v_\eps)^2 dx dt' \bigg] + 4 \int_{t_0}^t \big\| I_M \big( \E [\Psi_\eps v_\eps] \Psi_\eps \big) \big\|_{L_\om^2 L_x^2}^2 dt' \\
    &\leq \frac 14 \E \bigg[ \int_{t_0}^t \int_{\T^2} (\dt I_M v_\eps)^2 dx dt' \bigg] + C_1 (\gamma) M^{2 \gamma} \| \Psi_\eps' \Psi_\eps \|_{L_\om^2 C_{[t_0, t]} W_x^{- \gamma, \infty}}^2 \int_{t_0}^t E_{\eps, M} (t')^{1 - \frac s2 + \frac{\gamma}{2}} dt' \\
    &\leq \frac 14 \E \bigg[ \int_{t_0}^t \int_{\T^2} (\dt I_M v_\eps)^2 dx dt' \bigg] + C_2 (\gamma) M^{2 \gamma}  \int_{t_0}^t E_{\eps, M} (t')^{1 - \frac s2 + \frac{\gamma}{2}} dt'
\end{align*}

\noi
for some constants $C_1 (\gamma), C_2 (\gamma) > 0$, as desired.
\end{proof}

In the following lemma, we rely crucially on the fact that we are working with the mean-field nonlinearity. Specifically, we ensure a loss not worse than $\log M$ in our estimates.
If the bound were to grow faster than $\log M$, the subsequent Gronwall-type argument would fail, yielding a singular estimate.
Fortunately, the mean-field nonlinearity guarantees that the norm of $I_M \Psi_\eps$ remains well-behaved, growing at most like $\log M$. 
One can compare this situation to \cite{GKOT}, where the authors also ensure a loss of $\log M$ when working with the usual cubic nonlinearity; however, in our setting, we must apply an additional Cauchy-Schwarz inequality in order to guarantee uniformity in $\eps$.
Because of this additional estimate, we are unable to obtain a $\log M$ bound in the $N$-component setting. Consequently, for the $N$-component system, the subsequent proof breaks down, preventing us from finding a uniform-in-$\eps$ bound for the solution.

\begin{lemma}
\label{LEM:B4}
Let $0 < s < 1$, $M \in \N$ with $M \geq 2$, and $0 \leq t_0 \leq t \leq T$. Then, for $A_4$ defined in \eqref{defB4}, we have
\begin{align*}
    |A_4| \leq \frac 14 \E \bigg[ \int_{t_0}^t \int_{\T^2} (\dt I_M v_\eps)^2 dx dt' \bigg] + C(s) \log M \int_{t_0}^t E_{\eps, M} (t') dt'
\end{align*}

\noi
for some constant $C(s) > 0$ independent of $\eps$.
\end{lemma}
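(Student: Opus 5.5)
The plan is to treat $A_4$ exactly like $A_1$, $A_2 - A_4$, and $A_3$: absorb the time derivative with Cauchy's inequality, and then use the mean-field structure to factor out the stochastic convolution pointwise in $x$. Concretely, Cauchy's inequality gives
\begin{align*}
|A_4| \leq \frac 14 \E \bigg[ \int_{t_0}^t \int_{\T^2} (\dt I_M v_\eps)^2 dx dt' \bigg] + C \int_{t_0}^t \E \Big[ \big\| \E [ (I_M v_\eps)^2 ] I_M \Psi_\eps \big\|_{L_x^2}^2 + \big\| \E [ I_M \Psi_\eps I_M v_\eps ] I_M v_\eps \big\|_{L_x^2}^2 \Big] dt' .
\end{align*}
It therefore suffices to bound each of the two expectations at a fixed time $t'$ by $C(s) \log M \, E_{\eps, M}(t')$.

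The key observation is that $\E[(I_M v_\eps)^2](t', x)$ and $\E[I_M \Psi_\eps I_M v_\eps](t', x)$ are deterministic functions of $x$. For the first term we can then use Fubini to move the outer expectation inside the $x$-integral:
\begin{align*}
\E \Big[ \big\| \E [ (I_M v_\eps)^2 ] I_M \Psi_\eps \big\|_{L_x^2}^2 \Big] = \int_{\T^2} \big( \E [ (I_M v_\eps)^2 ](x) \big)^2 \, \E \big[ (I_M \Psi_\eps (x))^2 \big] dx .
\end{align*}
Lemma~\ref{LEM:stoI} bounds $\E[(I_M \Psi_\eps(t', x))^2]$ by $C(s) \log M$ uniformly in $x$, $t'$, and $\eps$. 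The remaining integral $\int_{\T^2} \E[(I_M v_\eps)^2]^2 dx$ is at most $4 E_{\eps, M}(t')$ by the definition \eqref{defEe}.

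For the second term we first apply the Cauchy--Schwarz inequality in $\om$ pointwise in $x$:
\begin{align*}
\big( \E [ I_M \Psi_\eps I_M v_\eps ](x) \big)^2 \leq \E [ (I_M \Psi_\eps(x))^2 ] \, \E [ (I_M v_\eps(x))^2 ] \leq C(s) \log M \, \E [ (I_M v_\eps(x))^2 ] .
\end{align*}
Fubini then gives
\begin{align*}
\E \Big[ \big\| \E [ I_M \Psi_\eps I_M v_\eps ] I_M v_\eps \big\|_{L_x^2}^2 \Big] = \int_{\T^2} \big( \E [ I_M \Psi_\eps I_M v_\eps ](x) \big)^2 \, \E [ (I_M v_\eps(x))^2 ] dx ,
\end{align*}
which is bounded by $C(s) \log M \int_{\T^2} \E[(I_M v_\eps)^2]^2 dx \les \log M \, E_{\eps, M}(t')$. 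Integrating in $t'$ then yields the claimed bound.

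I expect no serious obstacle here. The only delicate point is to keep the $\om$-integration pointwise in $x$, rather than first taking spatial norms of $I_M \Psi_\eps$. Taking spatial norms first would cost a power of $M$, or require the $L^\infty$ norm of $I_M \Psi_\eps$, which grows and depends on $\om$. It is exactly the deterministic coefficient produced by the mean-field nonlinearity that makes the pointwise factorization, and hence the uniform-in-$\eps$ $\log M$ loss, possible.
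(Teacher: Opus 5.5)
Your proof is correct and follows essentially the same route as the paper. Both arguments use that the mean-field coefficients $\E[(I_M v_\eps)^2]$ and $\E[I_M \Psi_\eps I_M v_\eps]$ are deterministic, so the Cauchy--Schwarz inequality in $\om$ can be applied pointwise in $x$, and then combine Lemma~\ref{LEM:stoI} (giving $\sup_x \E[(I_M \Psi_\eps(t',x))^2] \les_s \log M$) with $\int_{\T^2} \E[(I_M v_\eps)^2]^2 dx \leq 4 E_{\eps, M}$; the only difference is cosmetic, since the paper applies Cauchy--Schwarz in $\om$ and H\"older in $x$ before Cauchy's inequality, whereas you apply Cauchy's inequality first and then use Fubini.
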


\begin{proof}
By the Cauchy-Schwarz inequality in $\om$, H\"older's inequality in $x$, Cauchy's inequality, and Lemma~\ref{LEM:stoI}, we obtain
\begin{align*}
    |A_4| &\leq 3 \int_{t_0}^t \int_{\T^2} \| \dt I_M v_\eps \|_{L_\om^2} \| I_M v_\eps \|_{L_\om^2}^2 \| I_M \Psi_\eps \|_{L_\om^2} dx dt' \\
    &\leq 3 \int_{t_0}^t \| \dt I_M v_\eps \|_{L_x^2 L_\om^2} \big\| \E [(I_M v_\eps)^2] \big\|_{L_x^{2}} \| I_M \Psi_\eps \|_{L_x^{\infty} L_\om^2} dt' \\
    &\leq \frac 14 \E \bigg[ \int_{t_0}^t \int_{\T^2} (\dt I_M v_\eps)^2 dx dt' \bigg] + C (s) \log M \int_{t_0}^t E_{\eps, M} (t') dt'
\end{align*}

\noi
for some constant $C (s) > 0$, as desired.
\end{proof}

Gathering \eqref{defB123}, Lemma~\ref{LEM:B1}, Lemma~\ref{LEM:B24}, Lemma~\ref{LEM:B3}, and Lemma~\ref{LEM:B4}, we obtain that given $0 \leq t_0 \leq t \leq T$ with $t - t_0 \leq 1$,
\begin{align}
\begin{split}
    &E_{\eps, M} (t) - E_{\eps, M} (t_0) \\
    &\leq C(s) M^{- 6 s + 4} \int_{t_0}^t E_{\eps, M} (t')^3 dt' + C (\nu, s) M^{- 2 s + 1 + 2 \nu} \int_{t_0}^t E_{\eps, M} (t')^2 dt' \\
    &\quad + C (s) \log M \int_{t_0}^t E_{\eps, M} (t') dt' + C (\gamma) M^{2 \gamma} \int_{t_0}^t E_{\eps, M} (t')^{1 - \frac{s}{2} + \frac{\gamma}{2}} dt'
\end{split}
\label{Ediff}
\end{align}

\noi
for any $\frac 23 \leq s < 1$, $M \in \N$ with $M \geq 10$, $0 < \nu \leq 1 - s$, and $0 < \gamma \leq 1 - s$ and some constants $C (s) > 0$, $C(\nu, s) > 0$, $C(\gamma) > 0$ independent of $\eps$.

Let us now show the following iterative lemma, for which the idea goes back to \cite{Tol, GKOT}.

\begin{lemma}
\label{LEM:ite}
Let $0 < \eps \leq 1$, $\frac 23 < s < 1$, $0 < \be < \al < -2 + 3 s$, $M \in \N$ with $M \geq 10$, and $T \geq 1$. Then, there exists $0 < \tau_* = \tau_* (s, \al, \be) \leq 1$ such that if 
\begin{align}
    E_{\eps, M} (t_0) \leq M^\be
\label{Ee_cond}
\end{align}

\noi
for some $0 \leq t_0 < T$, then we have
\begin{align}
    E_{\eps, M} (t) \leq M^\al
\label{Ee_goal}
\end{align}

\noi
for any $t$ satisfying $t_0 \leq t \leq \min (T, t_0 + \tau_*)$.
\end{lemma}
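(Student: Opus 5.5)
We prove Lemma~\ref{LEM:ite} by a continuity (bootstrap) argument based on the integral inequality \eqref{Ediff}.

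First, fix the choice of parameters. Pick $\gamma>0$ small with $\gamma\le 1-s$ and $\nu>0$ small with $\nu\le 1-s$, both depending only on $s,\al,\be$; the precise constraints appear in the next step. We will use that $t\mapsto E_{\eps,M}(t)$ is continuous. This holds because $v_\eps\in L^2(\Om;C([0,T];\H^s))$, which is given by Theorem~\ref{THM:conv2}~(i). Since $I_M$ maps $H^s$ into $H^1$ and $H^{s-1}$ into $L^2$ with constant $M^{1-s}$, the quantity $E_{\eps,M}$ is finite and continuous in $t$, with $M$ fixed.

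Next, set up the bootstrap. Let $t^*$ be the supremum of the times $t\in[t_0,\min(T,t_0+\tau_*)]$ such that $E_{\eps,M}(t')\le M^\al$ on $[t_0,t]$. By continuity and \eqref{Ee_cond}, since $M^\be<M^\al$, we have $t^*>t_0$. On $[t_0,t^*]$ we plug $E_{\eps,M}\le M^\al$ into the right-hand side of \eqref{Ediff}, using $t-t_0\le\tau_*\le1$, to get
\begin{align*}
E_{\eps,M}(t)\le M^\be + C\tau_*\Big( M^{-6s+4+3\al} + M^{-2s+1+2\nu+2\al} + M^{\al}\log M + M^{2\gamma+\al(1-\frac s2+\frac\gamma2)}\Big).
\end{align*}
Each term must be bounded by $\frac18 M^\al$ uniformly in $M\ge10$.
\begin{itemize}
\item The first term: $-6s+4+3\al<\al$ is equivalent to $\al<3s-2$, which is exactly the hypothesis.
\item The second term: we need $-2s+1+2\nu+\al<0$. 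Since $\al<3s-2$, it suffices that $3s-2\le 2s-1-2\nu$, i.e.\ $s\le 1-2\nu$, which holds once $\nu\le\frac{1-s}{2}$.
\item The fourth term: we need $2\gamma+\al(1-\frac s2+\frac\gamma2)<\al$, i.e.\ $2\gamma<\al(\frac s2-\frac\gamma2)$, which holds for $\gamma$ small depending on $\al,s$.
\item The third term is the critical one, discussed next.
\end{itemize}

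The main obstacle is the term $C\tau_*M^\al\log M$, which does not have a power gain. A direct bootstrap on the bound $M^\al$ therefore fails, and we should instead use Gronwall on the linear part, in the spirit of \cite{GKOT}. Write \eqref{Ediff} as $E(t)\le E(t_0)+R+C\log M\int_{t_0}^tE$, where $R$ collects the three power terms, already shown to be $\le C\tau_* M^{\al-\delta}$ for some $\delta=\delta(s,\al)>0$. Gronwall then gives
\begin{align*}
E(t)\le \big(M^\be + C\tau_*M^{\al-\delta}\big)\, e^{C\tau_*\log M} = \big(M^\be + C\tau_*M^{\al-\delta}\big)\, M^{C\tau_*}.
\end{align*}
Choose $\tau_*$ so small that $C\tau_*<\min(\al-\be,\delta)/2$. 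Then $E(t)\le 2M^{\al-\eta}$ for some $\eta>0$, which is strictly below $M^\al$ for $M\ge10$, possibly after shrinking $\tau_*$ further or enlarging the constants harmlessly. This contradicts maximality of $t^*$ unless $t^*=\min(T,t_0+\tau_*)$, and \eqref{Ee_goal} follows.

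Finally, $\tau_*$ depends only on $s,\al,\be$ and on constants independent of $\eps$. Here $\gamma$ and $\nu$ were chosen as functions of $(s,\al,\be)$, and the $\eps$-independence of the constants comes from Lemmas~\ref{LEM:B1}--\ref{LEM:B4}.
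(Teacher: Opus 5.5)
Your argument is correct, but it closes the estimate differently from the paper. The paper replaces $E_{\eps,M}$ by $E_{\eps,M}+1$ and introduces the auxiliary quantity $F_{\eps,M}(t)=\max_{t_0\le\tau\le t}E_{\eps,M}(\tau)-E_{\eps,M}(t_0)+M^\be$. It then uses the window $M^\be\le F_{\eps,M}\le 3M^\al$ to bound \emph{every} term on the right of \eqref{Ediff} by a constant times $F_{\eps,M}$. For example, $M^{-6s+4}F^3\le M^{-2\al}F^3\le 9F$, and $M^{\be/3}F^{2/3}\le F$ with the $\be$-dependent choice $\gamma=\min(\frac\be6,s-\frac23,1-s)$. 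Since $\log F\sim\log M$, this gives $F(t)-F(t_0)\les\int_{t_0}^t F(1+\log F)\,dt'$. The paper closes this by comparison with the explicit solution $G(t)=\exp\big(e^{Ct}(1+\log G(0))-1\big)$ of the log-Gronwall ODE, following \cite{Tol, GKOT}.

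You instead treat the three superlinear terms as a forcing of size $O(\tau_*M^{\al-\delta})$, and apply linear Gronwall only to the $\log M$ term. This uses the strict inequality $\al<3s-2$ and a $\gamma$ chosen small in terms of $(\al,s)$. The logarithmic loss then appears directly as a factor $M^{C\tau_*}$. Your route is more elementary: there is no auxiliary $F$ and no ODE comparison, and it shows transparently why a $\log M$ loss is harmless on time scales of order $\tau_*$. The paper's route stays inside the standard template of \cite{GKOT}, and for the power terms it needs only the non-strict bound $\al\le 3s-2$.

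One small correction in your last step. The claim that $2M^{\al-\eta}<M^\al$ for all $M\ge 10$ fails when $\eta<\log_{10}2$, and $\eta$ can be tiny because $\al-\be$ can be. Keep the sharper bound instead. Write the Gronwall output as $(M^\be+C_1\tau_*M^{\al-\delta})M^{C_2\tau_*}$. The condition $C_2\tau_*\le\frac12\min(\al-\be,\delta)$ then gives $E_{\eps,M}(t)\le M^\al\big(10^{-(\al-\be)/2}+C_1\tau_*\big)$. This is strictly below $M^\al$ once also $C_1\tau_*<1-10^{-(\al-\be)/2}$, which is what your ``shrinking $\tau_*$ further'' should amount to.
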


\begin{proof}
By replacing $E_{\eps, M}$ with $E_{\eps, M} + 1$, we may assume that $E_{\eps, M} (t) \geq 1$ for any $t \geq 0$. From \eqref{Ee_cond} and continuity, we know that there exists $t_1$ with $t_0 < t_1 \leq T$ such that
\begin{align}
    E_{\eps, M} (t) \leq 2 M^\al
\label{Ee1}
\end{align}

\noi
for any $t_0 \leq t \leq t_1$. We also assume that $t_1 - t_0 \leq 1$.
From \eqref{Ee1} and \eqref{Ediff} with $\nu = \frac{1 - s}{2}$ and $\gamma = \min(\frac{\be}{6}, -\frac 23 + s, 1 - s)$, we get
\begin{align}
\begin{split}
    E_{\eps, M} (t) - E_{\eps, M} (t_0) &\les_{s, \al, \be} M^{- 6 s + 4} \int_{t_0}^t E_{\eps, M} (t')^3 dt' + M^{- 3 s + 2} \int_{t_0}^t E_{\eps, M} (t')^2 dt' \\
    &\quad + \log M \int_{t_0}^t E_{\eps, M} (t') dt' + M^{\frac{\be}{3}} \int_{t_0}^t E_{\eps, M} (t')^{\frac 23} dt'
\end{split}
\label{Ee2}
\end{align}

\noi
for any $t_0 \leq t \leq t_1$. 

Given $t_0 \leq t \leq t_1$, we now define
\begin{align}
    F_{\eps, M} (t) = \max_{t_0 \leq \tau \leq t} E_{\eps, M} (\tau) - E_{\eps, M} (t_0) + M^\be .
\label{defFt}
\end{align}

\noi
From the definition and the fact that $\be < \al$, we have
\begin{align}
\begin{split}
    &E_{\eps, M} (t) \leq F_{\eps, M} (t), \\
    &M^\be \leq F_{\eps, M} (t) \leq 3 M^\al, \\ 
    &\log F_{\eps, M} (t) \sim \log M ,
\end{split}
\label{Ft1}
\end{align}

\noi
which together with $0 < \be < \al \leq - 2 + 3 s$ implies that
\begin{align}
\begin{split}
    &M^{- 6 s + 4} F_{\eps, M} (t)^3 \leq M^{-2 \al} F_{\eps, M} (t)^3 \leq 9 F_{\eps, M} (t) , \\
    &M^{- 3 s + 2} F_{\eps, M} (t)^2 \leq M^{- \al} F_{\eps, M} (t)^2 \leq 3 F_{\eps, M} (t), \\
    &M^{\frac{\be}{3}} F_{\eps, M} (t)^{\frac 23} \leq F_{\eps, M} (t) .
\end{split}
\label{Ft2}
\end{align}

\noi
Thus, from \eqref{Ee2}, \eqref{defFt}, \eqref{Ft1}, and \eqref{Ft2}, we obtain
\begin{align}
\begin{split}
    F_{\eps, M} (t) - F_{\eps, M} (t_0) &= \max_{t_0 \leq \tau \leq t} \big( E_{\eps, M} (\tau) - E_{\eps, M} (t_0) \big) \\
    &\les_{s, \al, \be} \int_{t_0}^t F_{\eps, M} (t') (1 + \log M) dt' \\
    &\les \int_{t_0}^t F_{\eps, M} (t') (1 + \log F_{\eps, M} (t')) dt' .
\end{split}
\label{Ft3}
\end{align}

\noi
Note that the ODE
\begin{align*}
    G (t) = G(0) + C \int_0^t G (t') (1 + \log G (t')) dt'
\end{align*}

\noi
for some constant $C > 0$ has an explicit solution
\begin{align*}
    G (t) = \exp \big( e^{C t} (1 + \log G (0)) - 1 \big) .
\end{align*}

\noi
Thus, from \eqref{Ft3} and comparison, we get
\begin{align}
    F_{\eps, M} (t) \leq \exp \big( e^{C(s, \al, \be) (t - t_0)} (1 + \log F_{\eps, M} (t_0)) - 1 \big)
\label{Ft4}
\end{align}

\noi
for some constant $C (s, \al, \be) > 0$.

Since $F_{\eps, M} (t_0) = M^\be$, we know from \eqref{Ft1} and \eqref{Ft4} that 
\begin{align}
    E_{\eps, M} (t) \leq F_{\eps, M} (t) \leq \exp \big( e^{C (s, \al, \be) (t - t_0)} (1 + \be \log M) - 1 \big) .
\label{Ft5}
\end{align}

\noi
Since $\be < \al$, we can take $\tau_* = \tau_* (s, \al, \be) > 0$ sufficiently small such that for all $t_0 \leq t \leq \min (t_1, t_0 + \tau_*)$, we have
\begin{align*}
    e^{C (s, \al, \be) (t - t_0)} (1 + \be \log M) - 1 \leq \al \log M ,
\end{align*}

\noi
so that we get from \eqref{Ft5} that
\begin{align*}
    E_{\eps, M} (t) \leq M^\al ,
\end{align*}

\noi
which improves the bound \eqref{Ee1}. By using a standard continuity argument, we conclude the desired bound \eqref{Ee_goal} for all $t_0 \leq t \leq \min (T, t_0 + \tau_*)$.
\end{proof}

We are now ready to prove the a priori bound in Proposition~\ref{PROP:NLW}. The proof is similar to that in \cite{Tol, GKOT}, but for completeness we write out details.

\begin{proof}[Proof of Proposition~\ref{PROP:NLW}]
Let $M_0 \in \N$ be a large number to be chosen later. Given $0 < \be < \al < -2 + 3s$ and an integer $k \geq 0$, we define
\begin{align}
    M_k \deff M_0^{\ld^k}
\label{defMk}
\end{align}

\noi
for $\ld > 1$ in such a way that
\begin{align}
    M_{k + 1}^{2 (1 - s)} M_k^\al + M_k^{2 \al} \leq c M_{k + 1}^\be 
\label{Mk_cond}
\end{align}

\noi
for some small $c > 0$ to be chosen later. Note that for \eqref{Mk_cond} to hold, we require $\be > 2 (1 - s)$, and so the parameters $\al$ and $\be$ satisfying the above conditions exist given $\frac 45 < s < 1$. Suppose that for some $t \geq 0$ and some integer $k \geq 0$, we have
\begin{align}
    E_{\eps, M_k} (t) \leq M_k^\al .
\label{Eek_cond}
\end{align}

\noi
Then, by \eqref{defEe}, Minkowski's integral inequality, Sobolev's inequality, \eqref{Ibdd1}, and \eqref{defEe} again, we have
\begin{align*}
    E_{\eps, M_{k + 1}} (t) 
    &\leq \frac 12 \| I_{M_{k + 1}} v_\eps (t) \|_{L^2_\om H^1_x}^2 + \frac 12 \eps^2 \| \dt I_{M_{k + 1}} v_\eps (t) \|_{L^2_\om L^2_x}^2 + \frac 14 \| I_{M_{k + 1}} v_\eps (t) \|_{L^2_\om L_x^4}^4 \\
    &\leq \frac{C}{2} M_{k + 1}^{2 (1 - s)} \| v_\eps (t) \|_{L^2_\om H_x^{s}}^2 + \frac{C}{2} M_{k + 1}^{2 (1 - s)} \eps^2 \| \dt v_\eps (t) \|_{L^2_\om H_x^{s - 1}}^2 + \frac C4 \| v_\eps (t) \|_{L_\om^2 H_x^{s}}^4 \\
    &\leq \frac{C^2}{2} M_{k + 1}^{2 (1 - s)} \| I_{M_k} v_\eps (t) \|_{L^2_\om H_x^{1}}^2 + \frac{C^2}{2} M_{k + 1}^{2 (1 - s)} \eps^2 \| \dt I_{M_k} v_\eps (t) \|_{L^2_\om L_x^2}^2 \\
    &\quad + \frac{C^2}{4} \| I_{M_k} v_\eps (t) \|_{L_\om^2 H_x^{1}}^4 \\
    &\leq C^2 M_{k + 1}^{2 (1 - s)} E_{\eps, M_k} (t) + C^2 E_{\eps, M_k} (t)^2
\end{align*}

\noi
for some constant $C > 0$, so that from \eqref{Eek_cond} and \eqref{Mk_cond} with $c > 0$ sufficiently small such that $C^2 c \leq 1$, we get
\begin{align}
    E_{\eps, M_{k + 1}} (t) \leq M_{k + 1}^\be .
\label{Eek_cond2}
\end{align}

We now perform an iterative argument. Given initial data $(u_0, u_1) \in L^2 (\Om; \H^s (\T^2))$, from \eqref{defEe}, Minkowski's integral inequality, and Sobolev's inequality and the fact that $\be > 2 (1 - s)$, we can let $M_0 = M_0 (s, \| (u_0, u_1) \|_{L^2_\om \H_x^s}) \in \N$ be large enough such that
\begin{align}
\begin{split}
    E_{\eps, M_0} (0) &\leq \frac 12 \| I_{M_{0}} u_0 \|_{L^2_\om H^1_x}^2 + \frac 12 \eps^2 \| I_{M_{0}} u_1 \|_{L^2_\om L^2_x}^2 + \frac 14 \| I_{M_{0}} u_0 \|_{L^2_\om L_x^4}^4 \\
    &\leq \frac{C}{2} M_{0}^{2 (1 - s)} \| u_0 \|_{L^2_\om H_x^{s}}^2 + \frac{C}{2} M_{0}^{2 (1 - s)} \eps^2 \| u_1 \|_{L^2_\om H_x^{s - 1}}^2 + \frac C4 \| u_0 \|_{L_\om^2 H_x^{s}}^4 \\
    &\leq M_0^\be .
\end{split}
\label{Eet0}
\end{align}

\noi
From \eqref{Eet0} and Lemma~\ref{LEM:ite}, there exists $0 < \tau_* = \tau_* (s) \leq 1$ independent of $M_0$ such that
\begin{align*}
E_{\eps, M_0} (t) \leq M_0^\al 
\end{align*}

\noi
for all $0 \leq t \leq \tau_*$. Then, from \eqref{Eek_cond} and \eqref{Eek_cond2}, we get
\begin{align}
    E_{\eps, M_1} (t) \leq M_1^\be
\label{Eet2}
\end{align}

\noi
for all $0 \leq t \leq \tau_*$. From \eqref{Eet2} and Lemma~\ref{LEM:ite} again, we get
\begin{align*}
E_{\eps, M_1} (t) \leq M_1^{\al}
\end{align*}

\noi
for all $0 \leq t \leq 2 \tau_*$. We now iterate the above argument $\ell = [\frac{T}{\tau_*}] + 1$ times, so that we obtain
\begin{align*}
E_{\eps, M_\ell} (t) \leq M_\ell^\al 
\end{align*}

\noi
for all $0 \leq t \leq T$. Thus, from \eqref{Ibdd1}, \eqref{defEe}, and \eqref{defMk}, we obtain
\begin{align}
    \sup_{0 \leq t \leq T} \big\| \big( v_\eps (t), \eps \dt v_\eps (t) \big) \big\|_{L_\om^2 \H_x^s}^2 \leq 2 \sup_{0 \leq t \leq T} E_{\eps, M_\ell} (t) \leq 2 M_\ell^\al = 2 M_0^{\al \ld^\ell} .
\label{vebdd1}
\end{align}

\noi
The desired bound \eqref{vebdd_goal} then follows from \eqref{vebdd1} and a contraction argument for \eqref{NLWveDuh} similar to that in \cite[Proposition~4.1]{LLO} along with the linear estimates in Lemma~\ref{LEM:PIconv} and the uniform-in-$\eps$ moment bound in Lemma~\ref{LEM:sto}.
\end{proof}



\subsection{Proof of the convergence}
\label{SUB:conv2}

In this subsection, we prove part (iii) in Theorem~\ref{THM:conv2}, convergence of the mean-field \NLWe \eqref{NLWeu} to the mean-field SNLH \eqref{NLHu}.
As in the previous subsection, we drop the superscript $j$. To prove it, we need to show the convergence of $v_\eps$ satisfying 
\begin{align}
    v_\eps (t) = P_\eps (t) (u_0, u_1) - \mathcal{I}_\eps \big( 2 \E [ \Psi_\eps v_\eps ] \Psi_\eps + 2 \E [\Psi_\eps v_\eps] v_\eps + \E [(v_\eps)^2] \Psi_\eps + \E [(v_\eps)^2] v_\eps \big) (t)
\label{NLWveDuh2}
\end{align}

\noi
to $v$ satisfying
\begin{align}
    v (t) = P_0 (t) (u_0, u_1) - \mathcal{I}_0 \big( 2 \E [ \Psi_0 v ] \Psi_0 + 2 \E [\Psi_0 v] v + \E [v^2] \Psi_0 + \E [v^2] v \big) (t) ,
\label{NLHvDuh2}
\end{align}

\noi
where $P_\eps$ is defined in \eqref{defPe}, $\mathcal{I}_\eps$ is defined in \eqref{defIe}, $P_0$ is defined in \eqref{defP0} (see also \eqref{P001}), $\mathcal{I}_0$ is defined in \eqref{defI0}, and $(u_0, u_1)$ is the initial data for both equations. Our goal is to prove the following proposition, which, together with \eqref{expe}, \eqref{exp}, and the convergence $\Psi_\eps$ to $\Psi_0$ from Lemma~\ref{LEM:sto}, implies Theorem~\ref{THM:conv2}~(iii).
\begin{proposition}
Let $\frac 45 < s < 1$, $T > 0$, and $(u_0, u_1) \in L^4 (\Om; H^s (\T^2)) \times L^2 (\Om; H^{s - 1} (\T^2))$. Given any $0 < \eps \leq 1$, let $v_\eps \in L^2 (\Om; C ([0, T]; H^s (\T^2)))$ be the solution to \eqref{NLWveDuh2} with initial data $(u_0, u_1)$ guaranteed by Proposition~\ref{PROP:NLW}. Let $v \in L^2 (\Om; C([0, T]; H^s (\T^2)))$ be the solution to \eqref{NLHvDuh2} with initial data $u_0$ guaranteed by Proposition~\ref{PROP:NLH}. Then, for any $s' < s$, we have
\begin{align*}
    v_\eps \longrightarrow v \quad \text{in } L^2 (\Om; C ([0, T]; H^{s'} (\T^2)))
\end{align*}

\noi
as $\eps \to 0$.
\end{proposition}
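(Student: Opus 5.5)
The approach mirrors Proposition~\ref{PROP:convv1}, with the $\A_N$-average replaced by the $L^2_\om$-norm. The key difference is that no continuity argument is needed. Proposition~\ref{PROP:NLW} gives $\| v_\eps \|_{L^2_\om C_T H^s_x} \le C(T, s, \|(u_0,u_1)\|_{L^2_\om \H^s})$ uniformly in $\eps$, and Proposition~\ref{PROP:NLH} gives $\| v \|_{L^2_\om C_T H^s_x} \le C(T, \|u_0\|_{L^4_\om H^s})$. With both bounds available, I would work directly with the difference in the weighted norm $\| e^{-\ld t} \cdot \|_{L^2_\om C_T H^{s'}_x}$, the $L^2_\om$ analogue of \eqref{STsl}. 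I may assume $\frac45 < s' < s < 1$.

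\textbf{Decomposition.} Subtracting \eqref{NLHvDuh2} from \eqref{NLWveDuh2}, I would split the difference into three groups, exactly as in \eqref{conveN0}.
\begin{enumerate}
\item The linear term $(P_\eps - P_0)(u_0,u_1)$. By Lemma~\ref{LEM:PIconv} it is bounded by $\eps^{(s-s')/2}\|(u_0,\eps u_1)\|_{L^2_\om \H^s}$.
\item The term $(\mathcal I_\eps - \mathcal I_0)(\mathcal N_\eps)$, where $\mathcal N_\eps$ denotes the mean-field nonlinearity. Lemma~\ref{LEM:PIconv} bounds it by $T^{1/2}\eps^{(s-s')/2}\|\mathcal N_\eps\|_{L^2_\om C_T H^{s-1}_x}$.
\item The terms $\mathcal I_0(\mathcal N_\eps - \mathcal N_0)$, handled with the weighted bound \eqref{conve3h}, which produces a factor $\ld^{-1/2}$.
\end{enumerate}

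\textbf{Estimating the nonlinear terms in $L^2_\om$.} For $\E[v_\eps^2]v_\eps$ and $\E[v_\eps^2]\Psi_\eps$, I would use Minkowski's inequality to pull the expectation outside the norm and then apply the product estimates \eqref{conveN2-3}--\eqref{conveN2-5}. This gives, for instance, $\|\E[v^2]\Psi\|_{H^{s-1}} \les \|v\|_{L^2_\om H^{s'}}^2\|\Psi\|_{W^{s-1,\infty}}$, after which I take the $L^2_\om$-norm using the moment bounds of Lemma~\ref{LEM:sto}. For $\E[\Psi_\eps v_\eps]\Psi_\eps$, I would use the independent-copy identity \eqref{cond0} together with the moment bounds for $\Psi_\eps'\Psi_\eps$. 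For $\E[\Psi v]v$, I would follow \eqref{heat3}.

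Differences are handled by telescoping. For example,
\[
\E[v_\eps^2]\Psi_\eps - \E[v^2]\Psi_0 = \E[(v_\eps - v)(v_\eps + v)]\Psi_\eps + \E[v^2](\Psi_\eps - \Psi_0).
\]
Terms containing $\Psi_\eps - \Psi_0$ and $\Psi_\eps'\Psi_\eps - \Psi_0'\Psi_0$ gain $\eps^{\al}$ by the moment bound in Lemma~\ref{LEM:sto}~(iii). Terms containing $v_\eps - v$ are bounded by $\ld^{-1/2}$ times the weighted difference norm, times a polynomial in the uniform bounds.

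\textbf{Main obstacle: integrability in $\om$.} Products such as $\|v_\eps\|\,\|v_\eps - v\|\,\|\Psi\|$ must be estimated in $L^2_\om$, but only second moments of $v_\eps$ are controlled. Mean-field factors are deterministic, so they cause no difficulty: $\E[v_\eps^2]$ is controlled by $\|v_\eps\|_{L^2_\om}^2$, and $\E[(v_\eps - v)(v_\eps+v)]$ by Cauchy--Schwarz in $\om$. The genuinely random products are the ones to watch: in $\E[\Psi v]v$ only one random factor of $v$ survives, and the cubic term $\E[v^2]v$ has a single random factor.

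One remaining concern is $\sup_t$ inside $L^2_\om$ combined with the time weight. I would take the supremum after applying Minkowski's inequality, as in \eqref{heat0}. If the purely random factor $\|\Psi_\eps\|_{C_TW^{s-1,\infty}}$ multiplies $\|v_\eps - v\|$ pointwise in $\om$, I would avoid needing higher moments of $v_\eps - v$ by placing the deterministic expectation on that factor whenever possible.

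\textbf{Closing the argument.} Collecting the estimates, I expect
\[
\|v_\eps - v\|^2 \le C\eps^{2\be} + C\ld^{-1}\|v_\eps - v\|^2,
\]
where $C$ depends only on the uniform bounds and $\|\cdot\|$ is the weighted $L^2_\om C_T H^{s'}_x$-norm. Choosing $\ld$ large absorbs the second term. Then \eqref{SlT_bdd} gives $\|v_\eps - v\|_{L^2_\om C_T H^{s'}_x} \le C e^{\ld T}\eps^{\be} \to 0$.
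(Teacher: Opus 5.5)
Your proposal is correct and follows the paper's proof essentially step by step. You use the same ingredients: the uniform bounds from Propositions~\ref{PROP:NLW} and~\ref{PROP:NLH}; the same splitting of $v_\eps - v$ in the weighted $L^2_\om S_T^{s',\ld}$-norm into the linear difference, the $(\mathcal{I}_\eps - \mathcal{I}_0)$ term, and the $\mathcal{I}_0$-difference terms; the independent-copy trick for $\E[\Psi_\eps v_\eps]\Psi_\eps$; the moment bounds of Lemma~\ref{LEM:sto}; and absorption by taking $\ld$ large. Your observation that the mean-field structure leaves only one genuinely random factor in each product, so that second moments suffice, is exactly why the paper's estimates close directly without a continuity argument.
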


\begin{proof}
We may assume that $\frac 45 < s' < s < 1$. From \eqref{vTbdd} in Proposition~\ref{PROP:NLH} and Proposition~\ref{PROP:NLW}, we have the following bound:
\begin{align}
    \| v (t) \|_{L^2_\om C_T H_x^s} + \sup_{\eps \in (0, 1]} \| v_\eps (t) \|_{L_\om^2 C_T H_x^s} \leq C (T, s, \| u_0 \|_{L_\om^4 H_x^{s}}, \| u_1 \|_{L_\om^2 H_x^{s - 1}})
\label{vvebdd}
\end{align}

\noi
for some constant $C (T, s, \| u_0 \|_{L_\om^4 H_x^{s}}, \| u_1 \|_{L_\om^2 H_x^{s - 1}}) > 0$ which also varies from line to line below.

We recall the $S_T^{\s, \ld}$-norm defined in \eqref{STsl}. From \eqref{NLWveDuh2}, \eqref{NLHvDuh2}, and \eqref{SlT_bdd}, we have
\begin{align}
\begin{split}
    \| &v_\eps - v \|_{L_\om^2 S_T^{s', \ld}} \\
    &\leq \| (P_\eps - P_0) (\cdot) (u_0, u_1) \|_{L_\om^2 C_T H_x^{s'}} \\
    &\quad + \big\| (\mathcal{I}_\eps - \mathcal{I}_0) \big( 2 \E [ \Psi_\eps v_\eps ] \Psi_\eps + 2 \E [\Psi_\eps v_\eps] v_\eps + \E [v_\eps^2] \Psi_\eps + \E [v_\eps^2] v_\eps \big) \big\|_{L_\om^2 C_T H_x^{s'}} \\
    &\quad + 2 \big\| \mathcal{I}_0 \big( \E [ \Psi_\eps v_\eps ] \Psi_\eps - \E [\Psi_0 v] \Psi_0 \big) \big\|_{L_\om^2 S_T^{s', \ld}} + 2 \big\| \mathcal{I}_0  \big( \E [\Psi_\eps v_\eps] v_\eps - \E [\Psi_0 v] v \big) \big\|_{L_\om^2 S_T^{s', \ld}} \\
    &\quad + \big\| \mathcal{I}_0 \big( \E [ (v_\eps)^2 ] \Psi_\eps - \E [v^2] \Psi_0 \big) \big\|_{L_\om^2 S_T^{s', \ld}} + \big\| \mathcal{I}_0  \big( \E [ (v_\eps)^2 ] v_\eps - \E [ v^2 ] v \big) \big\|_{L_\om^2 S_T^{s', \ld}} \\
    &\deff \1 + \II + \III_1 + \III_2 + \III_3 + \III_4 .
\end{split}
\label{conve0}
\end{align}

\noi
For $\1$, we use Lemma~\ref{LEM:PIconv} to obtain
\begin{align}
    \1 \les \eps^{\frac{s - s'}{2}} \| (u_0, u_1) \|_{L_\om^2 \H_x^{s}} .
\label{conve1}
\end{align}

\noi
For $\II$, by using Lemma~\ref{LEM:PIconv}, we have
\begin{align}
\begin{split}
    \II &\les T^{\frac 12} \eps^{\frac{s - s'}{2}} \big( \big\| \E [\Psi_\eps v_\eps] \Psi_\eps \big\|_{L_\om^2 C_T H_x^{s - 1}} + \big\| \E [\Psi_\eps v_\eps] v_\eps \big\|_{L_\om^2 C_T H_x^{s - 1}} \\
    &\qquad \qquad + \big\| \E [v_\eps^2] \Psi_\eps \big\|_{L_\om^2 C_T H_x^{s - 1}} + \big\| \E [v_\eps^2] v_\eps \big\|_{L_\om^2 C_T H_x^{s - 1}} \big) .
\end{split}
\label{conve2-0}
\end{align}

\noi
We need to estimate the four nonlinear terms on the right-hand side of \eqref{conve2-0}. As in \cite[Lemma~3.1]{SSZZ}, we write
\begin{align}
    \E [\Psi_\eps v_\eps] \Psi_\eps = \E [v_\eps' \Psi'_\eps \Psi_\eps | \Psi_\eps] ,
\label{conve2h}
\end{align}

\noi
where $(\Psi_\eps', v_\eps')$ is an independent copy of $(\Psi_\eps, v_\eps)$. By \eqref{conve2h}, Jensen's inequality, the product estimate in Lemma~\ref{LEM:prod}~(ii), and conditional H\"older's inequality, we get
\begin{align}
\begin{split}
    \big\| \E [\Psi_\eps v_\eps] \Psi_\eps \big\|_{L_\om^2 C_T H_x^{s - 1}} &\leq \big\| \E \big[ \| v_\eps' \Psi_\eps' \Psi_\eps \|_{C_T H_x^{s - 1}} | \Psi_\eps \big] \big\|_{L_\om^2} \\
    &\les \big\| \E \big[ \| v_\eps' \|_{C_T H_x^{1 - s}} \| \Psi_\eps' \Psi_\eps \|_{C_T W_x^{s - 1, \infty}} | \Psi_\eps \big] \big\|_{L_\om^2} \\
    &\leq \| v_\eps \|_{L_\om^2 C_T H_x^{s'}} \| \Psi_\eps' \Psi_\eps \|_{L_\om^2 C_T W_x^{s - 1, \infty}} ,
\end{split}
\label{conve2-1}
\end{align}

\noi
where we used $1 - s \leq s'$ given $\frac 45 < s' < s < 1$. By the product estimate in Lemma~\ref{LEM:prod}~(ii), Minkowski's integral inequality, Lemma~\ref{LEM:prod}~(ii) again, the Cauchy-Schwarz inequality in $\om$, and Sobolev's inequality, we get
\begin{align}
\begin{split}
    \big\| \E [\Psi_\eps v_\eps] v_\eps \big\|_{L_\om^2 C_T H_x^{s - 1}} &\les \big\| \E \big[ \| \Psi_\eps v_\eps \|_{C_T W_x^{s - 1, 4}} \big] \| v_\eps \|_{C_T W_x^{1 - s, 4}} \big\|_{L_\om^2} \\
    &\les \E \big[ \| \Psi_\eps \|_{C_T W_x^{s - 1, \infty}} \| v_\eps \|_{C_T W_x^{1 - s, 4}} \big] \| v_\eps \|_{L_\om^2 C_T W_x^{1 - s, 4}} \\
    &\les \| \Psi_\eps \|_{L_\om^2 C_T W_x^{s - 1, \infty}} \| v_\eps \|_{L_\om^2 C_T H_x^{s'}}^2 ,
\end{split}
\label{conve2-2}
\end{align}

\noi
where we used $\frac{s' - (1 - s)}{2} \geq \frac 14$ given $\frac 45 < s' < s < 1$. By the product estimate in Lemma~\ref{LEM:prod}~(ii), Minkowski's integral inequality, the product estimate in Lemma~\ref{LEM:prod}~(i), the Cauchy-Schwarz inequality in $\om$, and Sobolev's inequalities, we get
\begin{align}
\begin{split}
    \big\| \E [ v_\eps^2 ] \Psi_\eps \big\|_{L_\om^2 C_T H_x^{s - 1}} &\les \big\| \E \big[ \| v_\eps^2 \|_{C_T H_x^{1 - s}} \big] \| \Psi_\eps \|_{C_T W_x^{s - 1, \infty}} \big\|_{L_\om^2} \\
    &\les \E \big[ \| v_\eps \|_{C_T L_x^4} \| v_\eps \|_{C_T W_x^{1 - s, 4}} \big] \| \Psi_\eps \|_{L_\om^2 C_T W_x^{s - 1, \infty}} \\
    &\les \| v_\eps \|_{L_\om^2 C_T H_x^{s'}}^2 \| \Psi_\eps \|_{L_\om^2 C_T W_x^{s - 1, \infty}} ,
\end{split}
\label{conve2-3}
\end{align}

\noi
where we used $\frac{s' - (1 - s)}{2} \geq \frac 14$ given $\frac 45 < s' < s < 1$. By H\"older's inequality, Minkowski's integral inequality, and Sobolev's inequality, we get
\begin{align}
\begin{split}
    \big\| \E [v_\eps^2] v_\eps \big\|_{L_\om^2 C_T H_x^{s - 1}} &\leq \big\| \E [v_\eps^2] v_\eps \big\|_{L_\om^2 C_T L_x^2} \\
    &\leq \E \big[ \| v_\eps^2 \|_{C_T L_x^3} \big] \| v_\eps \|_{L_\om^2 C_T L_x^6} \\
    &\les \| v_\eps \|_{L_\om^2 C_T L_x^6}^3 \\
    &\les \| v_\eps \|_{L_\om^2 C_T H_x^{s'}}^3 ,
\end{split}
\label{conve2-4}
\end{align}

\noi
where we used $\frac{s'}{2} \geq \frac 13$ given $\frac 45 < s' < s < 1$. Combining \eqref{conve2-0}, \eqref{conve2-1}, \eqref{conve2-2}, \eqref{conve2-3}, and \eqref{conve2-4} and applying \eqref{vvebdd} and the moment bound in Lemma~\ref{LEM:sto} (which also applies to $\Psi_\eps' \Psi_\eps$), we obtain
\begin{align}
    \II \leq \eps^{\frac{s - s'}{2}} C (T, s, \| u_0 \|_{L_\om^4 H_x^{s}}, \| u_1 \|_{L_\om^2 H_x^{s - 1}}) .
\label{conve2}
\end{align}

We now estimate $\III_1$, $\III_2$, $\III_3$, and $\III_4$. 
By using \eqref{conve3h}, \eqref{SlT_bdd}, and similar steps in \eqref{conve2-1}, \eqref{conve2-2}, \eqref{conve2-3}, and \eqref{conve2-4}, we obtain
\begin{align}
\begin{split}
    \III_1 &\les \ld^{- \frac 12} \Big( \| v_\eps - v \|_{L_\om^2 S_T^{s', \ld}} \| \Psi_\eps' \Psi_\eps \|_{L_\om^2 C_T W_x^{s - 1, \infty}} \\
    &\qquad + \| v \|_{L_\om^2 C_T H_x^{s'}} \| \Psi_\eps' \Psi_\eps - \Psi_0' \Psi_0 \|_{L_\om^2 C_T W_x^{s' - 1, \infty}} \Big) , \\
    \III_2 + \III_3 &\les \ld^{- \frac 12} \Big( \| \Psi_\eps - \Psi_0 \|_{L_\om^2 C_T W_x^{s' - 1, \infty}} \| v_\eps \|_{L_\om^2 C_T H_x^{s'}}^2 \\
    &\qquad + \| \Psi_0 \|_{L_\om^2 C_T W_x^{s - 1, \infty}} \| v_\eps - v \|_{L_\om^2 S_T^{s', \ld}} \big( \| v_\eps \|_{L_\om^2 C_T H_x^{s'}} + \| v \|_{L_\om^2 C_T H_x^{s'}} \big) \Big) , \\
    \III_4 &\les \ld^{- \frac 12} \Big( \| v_\eps - v \|_{L_\om^2 S_T^{s', \ld}} \big( \| v_\eps \|_{L_\om^2 C_T H_x^{s'}}^2 + \| v \|_{L_\om^2 C_T H_x^{s'}}^2 \big)  \Big) 
\end{split}
\label{conve3-1}
\end{align}

\noi
provided that $1 - s' \leq s'$, $\frac{s' - (1 - s')}{2} \geq \frac 14$, and $\frac{s'}{2} \geq \frac 13$ which are satisfied given $\frac 45 < s' < 1$, where $\Psi_0'$ is an independent copy of $\Psi_0$. From \eqref{conve3-1}, \eqref{vvebdd}, and the moment bounds in Lemma~\ref{LEM:sto}, we obtain
\begin{align}
\begin{split}
    \III_1 + \III_2 + \III_3 + \III_4 
    &\leq C (T, s, \| u_0 \|_{L_\om^4 H_x^{s}}, \| u_1 \|_{L_\om^2 H_x^{s - 1}}) \Big( \eps^{\frac{1 - s}{2}} + \ld^{- \frac 12} \| v_\eps - v \|_{L_\om^2 S_T^{s', \ld}} \Big) .
\end{split}
\label{conve3}
\end{align}

Combining \eqref{conve0}, \eqref{conve1}, \eqref{conve2}, and \eqref{conve3}, we get
\begin{align*}
    \| v_\eps - v \|_{L_\om^2 S_T^{s', \ld}} &\leq C (T, s, \| u_0 \|_{L_\om^4 H_x^{s}}, \| u_1 \|_{L_\om^2 H_x^{s - 1}}) \Big( \eps^{\frac{s - s'}{2}} + \eps^{\frac{1 - s}{2}} + \ld^{- \frac 12} \| v_\eps - v \|_{L_\om^2 S_T^{s', \ld}} \Big)  .
\end{align*}

\noi
By choosing $\ld = \ld (T, s, \| (u_0, u_1) \|_{L_\om^2 H_x^s})$ to be sufficiently large and using \eqref{SlT_bdd}, we obtain
\begin{align*}
    \| v_\eps - v \|_{L_\om^2 C_T H_x^{s'}} \leq e^{\ld T} \| v_\eps - v \|_{L_\om^2 S_T^{s', \ld}} \leq e^{\ld T} C (T, s, \| u_0 \|_{L_\om^4 H_x^{s}}, \| u_1 \|_{L_\om^2 H_x^{s - 1}}) \big( \eps^{\frac{s - s'}{2}} + \eps^{\frac{1 - s}{2}} \big) ,
\end{align*}

\noi
which gives the desired convergence.
\end{proof}

\appendix

\section{Smoluchowski-Kramers approximation for invariant Gibbs dynamics}
\label{APP:inv}

In the appendix, we consider the commutative diagram in Figure~\ref{fig:diagram} for global-in-time dynamics at Gibbs equilibrium.

\subsection{Gibbs measures}

Let us first keep our discussion at a formal level (i.e. we disregard the renormalization for now) and start with the Gibbs measure $\vec \rho_\eps^N$ for the \HLSe \eqref{NLWNe0}, which is formally given by
\begin{align*}
    d \vec \rho_\eps^N (\uu^N, \dt \uu^N) = Z_{\eps, N}^{-1} e^{- E_\eps^N (\uu^N, \dt \uu^N)} d \uu^N d (\dt \uu^N) ,
\end{align*}

\noi
where $(\uu^N, \dt \uu^N) = ((u^{N, j}, \dt u^{N, j}))_{1 \leq j \leq N}$, $E_\eps^N$ is as defined in \eqref{defEeN}, and $Z_{\eps, N}$ (along with all the ``$Z$'''s below) is a normalizing factor. To be more precise, we write $\vec \rho_\eps^N$ as a weighted Gaussian measure as follows. Given $s \in \R$ and $0 < \eps \leq 1$, we define the fractional Gaussian field $\mu_{s, \eps}$ with scaling parameter $\eps$ as
\begin{align}
    d \mu_{s, \eps} (u) = Z_{s, \eps}^{-1} e^{- \frac 12 \eps^2 \| u \|_{H^s (\T^2)}^2} du \deff \prod_{n \in \Z^2} Z_{s, \eps, n}^{-1} e^{- \frac 12 \eps^2 \jb{n}^{2 s} |\ft u (n)|^2} d \ft u (n) .
\label{defmu}
\end{align}

\noi
From \cite[Lemma~B.1]{BTz08}, we see that $\mu_{s, \eps}$ is a Gaussian probability measure on $H^{s - 1 - \dl} (\T^2) \setminus H^{s - 1} (\T^2)$ for any $\dl > 0$. When $s = 1$ and $\eps = 1$, this measure is the massive Gaussian free field on $\T^2$, in which case we write $\mu_1 = \mu_{1, 1}$ for simplicity; when $s = 0$ and $\eps = 1$, it is the white noise measure on $\T^2$, in which case we write $\mu_0 = \mu_{0, 1}$ for simplicity. Given a measure $\mu$, we denote by $\mu^{\otimes N}$ the $N$-fold product measure $\mu \otimes \cdots \otimes \mu$. Then, defining the measure
\begin{align}
    d \rho^N (\uu^N) = Z_N^{-1} \exp \bigg( - \frac{1}{4N} \int_{\T^2} \Big( \sum_{j = 1}^N (u^{N, j})^2 \Big)^2 dx \bigg) d \mu_1^{\otimes N} (\uu^N) ,
\label{rhoN0}
\end{align}

\noi
we can write the joint measure as
\begin{align}
    d \vec \rho_\eps^N (\uu^N, \dt \uu^N) 
    = d \rho^N (\uu^N) \otimes d \mu_{0, \eps}^{\otimes N} (\dt \uu^N) .
\label{rhoeN0}
\end{align}

\noi
As mentioned in the introduction, the measure $\rho^N$ is called the $O(N)$ linear sigma model and was studied by Wilson in \cite{Wil}.
Using the notations from \eqref{Hamil}, we can write the \HLSe \eqref{NLWNe0} as
\begin{align*}
    \eps^2 \dt \begin{pmatrix}
        \uu^N \\ \dt \uu^N
    \end{pmatrix} 
    = \begin{pmatrix}
        0 & \textbf{Id}^N \\
        - \textbf{Id}^N & 0
    \end{pmatrix}
    \nb_{(L^2 (\T^2))^{\otimes 2N}} E_\eps^N (\uu^N, \dt \uu^N) 
    + \begin{pmatrix}
        0 \\ - \dt \uu^N + \sqrt{2} \pmb{\xi}^N
    \end{pmatrix} 
\end{align*}

\noi
with $\pmb{\xi}^N = (\xi^j)_{1 \leq j \leq N}$, which can be viewed as a superposition of the coupled NLW system \eqref{NLWsys} and an Ornstein-Uhlenbeck process (for the component $\dt \uu^N$). In particular, the coupled NLW system \eqref{NLWsys} formally preserves the Gibbs measure $\vec \rho_\eps^N$ and the Ornstein-Uhlenbeck process (for the component $\dt \uu^N$) preserves the scaled white noise measure $\mu_{0, \eps}^{\otimes N}$. Thus, in view of \eqref{rhoeN0}, we see that the \HLSe \eqref{NLWNe0} formally preserves the Gibbs measure $\vec \rho_\eps^N$. 
This invariance is also expected from the stochastic quantization point of view, as the \HLSe \eqref{NLWNe0} is the so-called canonical stochastic quantization or the Hamiltonian stochastic quantization for the Gibbs measure $\vec \rho_\eps^N$; see \cite{RSS}.

Let us now take into account the renormalization, which is required to make sense of the interaction potential in \eqref{rhoN0} due to the fact that the Gaussian free field $\mu_1$ is only supported on distributions of negative regularity. For the $O(N)$ linear sigma model $\rho^N$, we have
\begin{align}
    d \rho^N (\uu^N) = Z_N^{-1} \exp \bigg( - \frac{1}{4N} \int_{\T^2} \wick{\Big( \sum_{j = 1}^N (u^{N, j})^2 \Big)^2} dx \bigg) d \mu_1^{\otimes N} (\uu^N) .
\label{rhoN}
\end{align}

\noi
Here, the renormalized interaction potential is defined by
\begin{align*}
    \wick{\Big( \sum_{j = 1}^N (u^{N, j})^2 \Big)^2} \, = \sum_{k = 1}^N \sum_{j = 1}^N \wick{(u^{N, k})^2 (u^{N, j})^2}
\end{align*}

\noi
with
\begin{align*}
    \wick{(u^{N, k})^2 (u^{N, j})^2} \, 
    = \begin{cases}
        \wick{(u^{N, j})^4} & \text{if } k = j \\
        \wick{(u^{N, k})^2} \wick{(u^{N, j})^2} & \text{if } k \neq j ,
    \end{cases}
\end{align*}

\noi
where $\wick{(u^{N, j})^2}$ and $\wick{(u^{N, j})^4}$ denote the standard Wick powers. For a rigorous construction of $\rho^N$, one can adapt the procedure from the scalar case (see \cite{Nel, DPT, OTh18}) to the vector-valued setting here (see also \cite[Remark~5.8]{SSZZ}). Then, from \eqref{rhoeN0} and \eqref{rhoN}, the renormalized Gibbs measure $\vec \rho_\eps^N$ is given by
\begin{align}
\begin{split}
    d \vec \rho_\eps^N (\uu^N, \dt \uu^N) &= d \rho^N (\uu^N) \otimes d \mu_{0, \eps}^{\otimes N} (\dt \uu^N) \\
    &= Z_N^{-1} \exp \bigg( - \frac{1}{4N} \int_{\T^2} \wick{\Big( \sum_{j = 1}^N (u^{N, j})^2 \Big)^2} dx \bigg) d \mu_1^{\otimes N} (\uu^N) \otimes d \mu_{0, \eps}^{\otimes N} (\dt \uu^N) .
\end{split}
\label{rhoeN}
\end{align}

\subsection{On stochastic objects}

Before moving on to the dynamical problem, we need to first discuss the solutions to the linear equations just as the stochastic convolutions $\Psi_\eps^j$ satisfying \eqref{Psije} and $\Psi_0^j$ satisfying \eqref{Psij0}. However, in the setting of Gibbs dynamics, we need to incorporate the random initial data.

Let $\{ \phi_0^j, \phi_1^j \}_{j \in \N}$ be a set of independent Gaussian random distributions such that
\begin{align*}
\Law (\phi_0^j) = \mu_1 \quad \text{and} \quad \Law (\phi_1^j) = \mu_0
\end{align*}

\noi
for all $j \in \N$, where $\mu_1$ is the massive Gaussian free field on $\T^2$ and $\mu_0$ is the white noise measure on $\T^2$.
In fact, we can write
\begin{align}
\phi_0^j = \frac{1}{2 \pi} \sum_{n \in \Z^2} \frac{g_n^j}{\jb{n}} e^{in \cdot x} \quad \text{and} \quad \phi_1^j = \frac{1}{2 \pi} \sum_{n \in \Z^2} h_n^j e^{in \cdot x} ,
\label{rand_init}
\end{align}

\noi
where $\{ g_n^j, h_n^j \}_{n \in \Z^2, j \in \N}$ are independent standard complex-valued Gaussian random variables conditioned such that $g_{-n}^j = \cj{g_n^j}$ and $h_{-n}^j = \cj{h_n^j}$ for any $n \in \Z^2$ and $j \in \N$.
By scaling, we note that given $0 < \eps \leq 1$, the family $\{ \eps^{-1} \phi_1^j \}_{j \in \N}$ satisfies
\begin{align*}
    \Law (\eps^{-1} \phi_1^j) = \mu_{0, \eps} \quad \text{for all } j \in \N ,
\end{align*}

\noi
where the Gaussian measure $\mu_{0, \eps}$ is defined in \eqref{defmu}.

Given $j \in \N$ and $0 < \eps \leq 1$, we define $\Phi_\eps^j$ as the stochastic object satisfying the following linear stochastic damped wave equation with a parameter $\eps$ and Gaussian initial data:
\begin{align}
\begin{cases}
    (\eps^2 \dt^2 + \dt + 1 - \Dl) \Phi_\eps^j = \sqrt{2} \xi^j \\
    (\Phi_\eps^j, \dt \Phi_\eps^j) |_{t = 0} = (\phi_0^j, \eps^{-1} \phi_1^j) .
\end{cases}
\label{Phije}
\end{align}

\noi
Here, the family of Gaussian initial data $\{ \phi_0^j, \phi_1^j \}_{j \in \N}$ is assumed to be independent of the family of space-time white noises $\{ \xi^j \}_{j \in \N}$. 
When $\eps = 0$, we define $\Phi_0^j$ as the stochastic convolution satisfying the following linear stochastic heat equation with Gaussian initial data:
\begin{align}
\begin{cases}
(\dt + 1 - \Dl) \Phi_0^j = \sqrt{2} \xi^j \\
\Phi_0^j |_{t = 0} = \phi_0^j .
\end{cases}
\label{Phij0}
\end{align}

\noi
One can compare $\Phi_\eps^j$ with $\Psi_\eps^j$ satisfying \eqref{Psije} and $\Phi_0^j$ with $\Psi_0^j$ satisfying \eqref{Psij0}. 

From \eqref{Phije} and \eqref{Phij0}, given $0 < \eps \leq 1$, we may write
\begin{align*}
    \Phi_\eps^j (t) = P_\eps (t) (\phi_0^j, \eps^{-1} \phi_1^j) + \sqrt{2} \int_0^t \eps^{-2} \D_\eps (t - t') d W^j (t')
\end{align*}

\noi
and
\begin{align*}
    \Phi_0^j (t) = P_0 (t) \phi_0^j + \sqrt{2} \int_0^t P_0 (t - t') d W^j (t') ,
\end{align*}

\noi
where $P_\eps (t)$ is defined in \eqref{defPe}, $\D_\eps (t)$ is defined in \eqref{defDe}, $P_0 (t)$ is defined in \eqref{defP0}, and $W^j$'s are the cylindrical Wiener processes defined in \eqref{Wj}.

Given $M \in \N$, $j \in \N$, and $0 \leq \eps \leq 1$, we define $\Phi_{\eps, M}^j \deff P_{\leq M} \Phi_{\eps}^j$. Let us compute the variance of $\Phi_{\eps, M}^j$.
\begin{lemma}
\label{LEM:var}
Let $0 \leq \eps \leq 1$. Then, for each $n \in \Z^2$, we have
\begin{align*}
\E \Big[ | \ft{\Phi_\eps^j} (t, n) |^2 \Big] = \frac{1}{\jb{n}^2} 
\end{align*}

\noi
for any $t \geq 0$.
\end{lemma}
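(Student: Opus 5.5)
The plan is to verify stationarity of the Gaussian law mode by mode. Each Fourier coefficient of $\Phi_\eps^j$ solves a finite-dimensional linear SDE, so its second moments solve a closed linear (Lyapunov-type) ODE. I will check that the second moments of the Gibbsian initial data in \eqref{rand_init} form a stationary point of that ODE. Uniqueness for linear ODEs then gives the claim for every $t \geq 0$.

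\emph{Case $0 < \eps \leq 1$.} Fix $n \in \Z^2$ and set $X(t) = \ft{\Phi_\eps^j}(t, n)$ and $Y(t) = \dt \ft{\Phi_\eps^j}(t, n)$. From \eqref{Phije} and \eqref{Wj}, these satisfy
\begin{align*}
dX = Y \, dt, \qquad dY = -\eps^{-2} \big( \jb{n}^2 X + Y \big) dt + \sqrt 2 \, \eps^{-2} dB_n^j ,
\end{align*}
with $(X(0), Y(0)) = (g_n^j / \jb{n}, \eps^{-1} h_n^j)$, which is independent of $B_n^j$. Let $\Sigma(t)$ be the $2 \times 2$ matrix of second moments, with entries $\E [X \cj X]$, $\E [X \cj Y]$, $\E [Y \cj X]$, $\E [Y \cj Y]$. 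Let $A$ be the drift matrix, with rows $(0, 1)$ and $(- \eps^{-2} \jb{n}^2, - \eps^{-2})$. By It\^o's formula, using $\E |B_n^j(t)|^2 = t$,
\begin{align*}
\tfrac{d}{dt} \Sigma = A \Sigma + \Sigma A^{T} + Q, \qquad Q = \mathrm{diag} (0, 2 \eps^{-4}) .
\end{align*}
Cross terms with the noise vanish because of independence and the martingale property. The initial data give $\Sigma(0) = \mathrm{diag}(\jb{n}^{-2}, \eps^{-2})$. A short computation shows $A \Sigma(0)$ has rows $(0, \eps^{-2})$ and $(-\eps^{-2}, -\eps^{-4})$. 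Hence $A \Sigma(0) + \Sigma(0) A^T$ has zero $(1,1)$ entry, zero off-diagonal entries, and $(2,2)$ entry $-2 \eps^{-4}$, which $Q$ exactly cancels. So $\Sigma(0)$ is an equilibrium, and uniqueness for this linear ODE gives $\Sigma(t) = \Sigma(0)$ for all $t$. In particular, $\E |X(t)|^2 = \jb{n}^{-2}$.

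\emph{Case $\eps = 0$.} The computation is simpler. Here $dX = -\jb{n}^2 X \, dt + \sqrt 2 \, dB_n^j$, so $f(t) = \E |X(t)|^2$ solves $f' = -2 \jb{n}^2 f + 2$ with $f(0) = \jb{n}^{-2}$, the equilibrium. Alternatively, one can write the explicit formula $e^{-2t\jb{n}^2} \jb{n}^{-2} + (1 - e^{-2t \jb{n}^2}) \jb{n}^{-2}$.

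The only point needing care is bookkeeping of normalizations. The frequency $n = 0$, where $B_0^j$ and $g_0^j, h_0^j$ are real, and the complex modes with the conjugacy constraint must both have $\E |B_n^j(t)|^2 = t$ and $\E |g_n^j|^2 = \E |h_n^j|^2 = 1$. This ensures the identical equations hold. I will treat both in the same way, working with Hermitian second moments so the complex case produces no extra factors. I do not expect any genuine obstacle beyond this.
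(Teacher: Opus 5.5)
Your argument is correct, but it takes a different route from the paper's proof.

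\textbf{The paper's route.} The paper proves the lemma by brute force. It splits into the regimes $\jb{n} \le (2\eps)^{-1}$ and $\jb{n} > (2\eps)^{-1}$. In each regime it uses the explicit $\sinh/\cosh$ (resp.\ $\sin/\cos$) form of $\ft\D_\eps$ and the It\^o isometry to compute separately the variance of the free evolution $P_\eps(t)(\phi_0^j,\eps^{-1}\phi_1^j)$ and of the stochastic convolution. It then checks by hand that all time-dependent terms cancel when the two are added.

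\textbf{Your route.} You never use the explicit propagator. You observe that the Hermitian covariance of $\bigl(\ft{\Phi_\eps^j}(t,n),\dt\ft{\Phi_\eps^j}(t,n)\bigr)$ solves the Lyapunov equation $\tfrac{d}{dt}\Sigma = A\Sigma+\Sigma A^T+Q$, and that $\mathrm{diag}(\jb{n}^{-2},\eps^{-2})$ is a stationary point. I checked this: $A\Sigma(0)+\Sigma(0)A^T=\mathrm{diag}(0,-2\eps^{-4})$, which $Q$ cancels. Your normalizations ($\E|B_n^j(t)|^2=t$, $\E|g_n^j|^2=\E|h_n^j|^2=1$) also match the paper's conventions.

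\textbf{What your route buys.}
\begin{itemize}
\item It avoids the frequency case split. This includes the degenerate point $\jb{n}=(2\eps)^{-1}$, where $\ld_\eps(n)=0$ and the paper's formulas (with $\ld_\eps(n)^2$ in denominators) must be read as limits.
\item It gives, at no extra cost, $\E|\dt\ft{\Phi_\eps^j}(t,n)|^2=\eps^{-2}$ and $\E\bigl[\ft{\Phi_\eps^j}(t,n)\,\cj{\dt\ft{\Phi_\eps^j}(t,n)}\bigr]=0$. This is precisely the claim the paper relegates to a footnote as a ``direct but tedious computation''.
\item It turns into an actual proof the paper's heuristic remark that the lemma reflects invariance of $\mu_1\otimes\mu_{0,\eps}$ under the linear dynamics.
\end{itemize}

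\textbf{What the paper's route buys.} The explicit computation exhibits the individual contributions of the initial data and of the stochastic convolution $\Psi_\eps^j$. Your argument does not isolate these, and they are not needed for the lemma itself.

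\textbf{Minor point.} It is worth stating in one line that the Duhamel formula defining $\ft{\Phi_\eps^j}(t,n)$ is the unique strong solution of your two-dimensional linear SDE. This is what lets you apply It\^o's formula to $(X,Y)$ with $\dot X=Y$.
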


\begin{proof}
The case $\eps = 0$ is easier, and so we only focus on the case when $0 < \eps \leq 1$. In the low frequency case $\jb{n} \leq (2 \eps)^{-1}$, we use \eqref{rand_init}, independence, \eqref{defPe}, and \eqref{defDe} to compute that
\begin{align*}
\E &\big[ \big| \F \big( P_\eps (t) (\phi_0^j, \eps^{-1} \phi_1^j) \big) (n) \big|^2 \big] \\
&= e^{- \frac{t}{\eps^2}} \Big( \frac{e^{t \ld_\eps (n)} - e^{-t \ld_\eps (n)}}{4 \eps^2 \ld_\eps (n)} + \frac{e^{t \ld_\eps (n)} + e^{- t \ld_\eps (n)}}{2} \Big)^2 \frac{1}{\jb{n}^2} + e^{- \frac{t}{\eps^2}} \Big( \frac{e^{t \ld_\eps (n)} - e^{- t \ld_\eps (n)}}{2 \eps \ld_\eps (n)} \Big)^2 \\
&= e^{- \frac{t}{\eps^2}} \frac{e^{2 t \ld_\eps (n)} + e^{- 2 t \ld_\eps (n)} - 2}{16 \eps^4 \ld_\eps (n)^2 \jb{n}^2} + e^{- \frac{t}{\eps^2}} \frac{e^{2 t \ld_\eps (n)} - e^{- 2 t \ld_\eps (n)}}{4 \eps^2 \ld_\eps (n) \jb{n}^2} 
+ e^{- \frac{t}{\eps^2}} \frac{e^{2 t \ld_\eps (n)} + e^{- 2 t \ld_\eps (n)} + 2}{4 \jb{n}^2} \\
&\quad + e^{- \frac{t}{\eps^2}} \frac{e^{2 t \ld_\eps (n)} + e^{- 2 t \ld_\eps (n)} - 2}{4 \eps^2 \ld_\eps (n)^2}
\end{align*}

\noi
and
\begin{align*}
\E &\bigg[ \bigg| \F \bigg( \sqrt{2} \int_0^t \eps^{-2} \D_\eps (t - t') d W^j (t') \bigg) (n) \bigg|^2 \bigg] \\
&= \frac{1}{2 \eps^4 \ld_\eps (n)^2} \int_0^t  \big( e^{(2 \ld_\eps (n) - \frac{1}{\eps^2}) t'} + e^{(- 2 \ld_\eps (n) - \frac{1}{\eps^2}) t'} - 2 e^{- \frac{t'}{\eps^2}} \big) dt' \\
&= - e^{- \frac{t}{\eps^2}} \frac{e^{2 t \ld_\eps (n)} - e^{-2 t \ld_\eps (n)}}{4 \eps^2 \ld_\eps (n) \jb{n}^2} - e^{- \frac{t}{\eps^2}} \frac{e^{2 t \ld_\eps (n)} + e^{- 2 t \ld_\eps (n)}}{8 \eps^4 \ld_\eps (n)^2 \jb{n}^2} + e^{- \frac{t}{\eps^2}} \frac{1}{\eps^2 \ld_\eps (n)^2} + \frac{1}{\jb{n}^2} ,
\end{align*}

\noi
so that, together with the independence of $\{ \phi_0^j, \phi_1^j \}_{j \in \N}$ and $\{ \xi^j \}_{j \in \N}$, we get
\begin{align*}
\E \Big[ | \ft{\Phi_\eps^j} (t, n) |^2 \Big] &=  
\E \big[ \big| \F \big( P_\eps (t) (\phi_0^j, \eps^{-1} \phi_1^j) \big) (n) \big|^2 \big]
+ \E \bigg[ \bigg| \F \bigg( \sqrt{2} \int_0^t \eps^{-2} \D_\eps (t - t') d W^j (t') \bigg) (n) \bigg|^2 \bigg] \\
&= e^{- \frac{t}{\eps^2}} (e^{2 t \ld_\eps (n)} + e^{- 2 t \ld_\eps (n)}) \Big( - \frac{1}{16 \eps^4 \ld_\eps (n)^2 \jb{n}^2} + \frac{1}{4 \jb{n}^2} + \frac{1}{4 \eps^2 \ld_\eps (n)^2}  \Big) \\
&\quad + e^{- \frac{t}{\eps^2}} \Big( - \frac{1}{8 \eps^4 \ld_\eps (n)^2 \jb{n}^2} + \frac{1}{2 \jb{n}^2} + \frac{1}{2 \eps^2 \ld_\eps (n)^2} \Big) + \frac{1}{\jb{n}^2} \\
&= \frac{1}{\jb{n}^2} .
\end{align*}

\noi
Similarly, in the high frequency case $\jb{n} > (2 \eps)^{-1}$, we compute that
\begin{align*}
\E &\big[ \big| \F \big( P_\eps (t) (\phi_0^j, \eps^{-1} \phi_1^j) \big) (n) \big|^2 \big] \\
&= e^{- \frac{t}{\eps^2}} \Big( \frac{e^{i t \zeta_\eps (n)} - e^{-i t \zeta_\eps (n)}}{4 i \eps^2 \zeta_\eps (n)} + \frac{e^{i t \zeta_\eps (n)} + e^{- i t \zeta_\eps (n)}}{2} \Big)^2 \frac{1}{\jb{n}^2} + e^{- \frac{t}{\eps^2}} \Big( \frac{e^{i t \zeta_\eps (n)} - e^{- i t \zeta_\eps (n)}}{2 i \eps \zeta_\eps (n)} \Big)^2 \\
&= - e^{- \frac{t}{\eps^2}} \frac{e^{2 i t \zeta_\eps (n)} + e^{- 2 i t \zeta_\eps (n)} - 2}{16 \eps^4 \zeta_\eps (n)^2 \jb{n}^2} + e^{- \frac{t}{\eps^2}} \frac{e^{2 i t \zeta_\eps (n)} - e^{- 2 i t \zeta_\eps (n)}}{4 i \eps^2 \zeta_\eps (n) \jb{n}^2} 
+ e^{- \frac{t}{\eps^2}} \frac{e^{2 i t \zeta_\eps (n)} + e^{- 2 i t \zeta_\eps (n)} + 2}{4 \jb{n}^2} \\
&\quad - e^{- \frac{t}{\eps^2}} \frac{e^{2 i t \zeta_\eps (n)} + e^{- 2 i t \zeta_\eps (n)} - 2}{4 \eps^2 \zeta_\eps (n)^2}
\end{align*}

\noi
and
\begin{align*}
\E &\bigg[ \bigg| \F \bigg( \sqrt{2} \int_0^t \eps^{-2} \D_\eps (t - t') d W^j (t') \bigg) (n) \bigg|^2 \bigg] \\
&= - \frac{1}{2 \eps^4 \zeta_\eps (n)^2} \int_0^t  \big( e^{(2 i \zeta_\eps (n) - \frac{1}{\eps^2}) t'} + e^{(- 2 i \zeta_\eps (n) - \frac{1}{\eps^2}) t'} - 2 e^{- \frac{t'}{\eps^2}} \big) dt' \\
&= - e^{- \frac{t}{\eps^2}} \frac{e^{2 i t \zeta_\eps (n)} - e^{-2 i t \zeta_\eps (n)}}{4 i \eps^2 \zeta_\eps (n) \jb{n}^2} + e^{- \frac{t}{\eps^2}} \frac{e^{2 i t \zeta_\eps (n)} + e^{- 2 i t \zeta_\eps (n)}}{8 \eps^4 \zeta_\eps (n)^2 \jb{n}^2} - e^{- \frac{t}{\eps^2}} \frac{1}{\eps^2 \zeta_\eps (n)^2} + \frac{1}{\jb{n}^2} ,
\end{align*}

\noi
so that, together with the independence of $\{ \phi_0^j, \phi_1^j \}_{j \in \N}$ and $\{ \xi^j \}_{j \in \N}$, we get
\begin{align*}
\E \Big[ | \ft{\Phi_\eps^j} (t, n) |^2 \Big] &=  
\E \big[ \big| \F \big( P_\eps (t) (\phi_0^j, \eps^{-1} \phi_1^j) \big) (n) \big|^2 \big]
+ \E \bigg[ \bigg| \F \bigg( \sqrt{2} \int_0^t \eps^{-2} \D_\eps (t - t') d W^j (t') \bigg) (n) \bigg|^2 \bigg] \\
&= e^{- \frac{t}{\eps^2}} (e^{2 i t \zeta_\eps (n)} + e^{- 2 i t \zeta_\eps (n)}) \Big(  \frac{1}{16 \eps^4 \zeta_\eps (n)^2 \jb{n}^2} + \frac{1}{4 \jb{n}^2} - \frac{1}{4 \eps^2 \zeta_\eps (n)^2} \Big) \\
&\quad + e^{- \frac{t}{\eps^2}} \Big( \frac{1}{8 \eps^4 \zeta_\eps (n)^2 \jb{n}^2} + \frac{1}{2 \jb{n}^2} - \frac{1}{2 \eps^2 \zeta_\eps (n)^2} \Big) + \frac{1}{\jb{n}^2} \\
&= \frac{1}{\jb{n}^2} .
\end{align*}

\noi
This finishes the proof.
\end{proof}

From Lemma~\ref{LEM:var}, we have
\begin{align*}
    \al_M \deff \E \big[ |\Phi_{\eps, M}^j (t)|^2 \big] = \sum_{\substack{n \in \Z^2 \\ |n| \leq M}} \frac{1}{\jb{n}^2} \sim \log M ,
\end{align*}

\noi
which diverges as $M \to \infty$. We see that unlike $\sigma_{\eps, M}$ in \eqref{sigmaM}, $\al_M$ is independent of $\eps$ and $t$. 
This is due to the fact that the Gaussian measure $\mu_1 \otimes \mu_{0, \eps}$ (see \eqref{defmu}) is invariant under the dynamics of the linear stochastic damped wave equation \eqref{Phije} for $0 < \eps \leq 1$ and $\mu_1$ is invariant under the dynamics of the linear stochastic heat equation \eqref{Phij0} for $\eps = 0$.

Similar to \eqref{wick0}, given $k, j \in \N$, we define the Wick products 
\begin{align*}
\begin{split}
    \wick{ \Phi_{\eps, M}^k \Phi_{\eps, M}^j } &\deff
    \begin{cases}
        H_2 ( \Phi_{\eps, M}^j ; \al_{M} ) & \text{if } k = j \\
        \Phi_{\eps, M}^k \Phi_{\eps, M}^j & \text{if } k \neq j ,
    \end{cases} \\
    \wick{ (\Phi_{\eps, M}^k)^2 \Phi_{\eps, M}^j } &\deff
    \begin{cases}
        H_3 ( \Phi_{\eps, M}^j ; \al_{M} ) & \text{if } k = j \\
        H_2 ( \Phi_{\eps, M}^k ; \al_{M} ) \Phi_{\eps, M}^j & \text{if } k \neq j ,
    \end{cases}
\end{split}
\end{align*}

\noi
where $H_2 (\cdot ; \s)$ and $H_3 (\cdot; \s)$ denote Hermite polynomials of degree 2 and 3, respectively, with variance parameter $\s > 0$. Then, we define
\begin{align}
\begin{split}
    \wick{ \Phi_{\eps}^k \Phi_{\eps}^j } &\deff \lim_{M \to \infty} \wick{ \Phi_{\eps, M}^k \Phi_{\eps, M}^j } , \\
    \wick{ (\Phi_{\eps}^k)^2 \Phi_{\eps}^j } &\deff \lim_{M \to \infty} \wick{ (\Phi_{\eps, M}^k)^2 \Phi_{\eps, M}^j } .
\end{split}
\label{wick2}
\end{align}

\noi
We will see in Lemma~\ref{LEM:sto2} below that the limits in \eqref{wick2} exist almost surely in the space $C(\R_+; W^{- \s, \infty})$ for any $\s > 0$. Before showing the regularity properties of $\Phi_\eps^j$'s and their Wick products, we first prove the following useful estimates for $\ft \D_\eps (t, n)$ defined in \eqref{Den}.

\begin{lemma}
\label{LEM:Den}
Let $t \geq 0$ and $n \in \Z^2$.

\smallskip \noi
\textup{(i)} For any $0 < \eps \leq 1$ and $0 \leq \gamma \leq 1$, we have
\begin{align}
&| \eps^{-2} \ft \D_\eps (t, n) | \les 1 , \label{Dest1-1} \\
&| \dt \ft \D_\eps (t, n) | \les 1 , \label{Dest1-2} \\
&| \eps^{-1} \ft \D_\eps (t, n) | \les \eps^{\gamma} \jb{n}^{-1 + \gamma} , \label{Dest1-3}
\end{align}

\noi
where the implicit constants are independent of $t$ and $\eps$.

\smallskip \noi
\textup{(ii)} For any $0 < \eps \leq 1$ and $h \in \R$ such that $0 < \eps + h \leq 1$, we have
\begin{align}
&\big| (\eps^{-2} + \dt) \ft \D_\eps (t, n) - e^{- t \jb{n}^2} \big| \les \eps^{\gamma_1} \jb{n}^{\gamma_2} , \label{Dest2-1} \\
&\big| \big( (\eps + h)^{-2} + \dt \big) \ft{\mathcal{D}}_{\eps + h} (t, n) - ( \eps^{-2} + \dt ) \ft{\mathcal{D}}_\eps (t, n) \big| \les |h|^{\gamma_1} \jb{n}^{\gamma_2}, \label{Dest2-2} \\
&\big| (\eps + h)^{-1} \ft{\mathcal{D}}_{\eps + h} (t, n) - \eps^{-1} \ft{\mathcal{D}}_\eps (t, n) \big| \les |h|^{\gamma_1} \jb{n}^{- 1 + \gamma_2} \label{Dest2-3} 
\end{align}

\noi
for some $\gamma_1, \gamma_2 > 0$ arbitrarily small, where the implicit constants are independent of $t$ and $\eps$.

\smallskip \noi
\textup{(iii)} For any $0 < \eps \leq 1$ and $h \in \R$ such that $t + h \geq 0$, we have
\begin{align}
&\big| e^{- (t + h) \jb{n}^2} - e^{- t \jb{n}^2} \big| \les |h|^{\gamma_1} \jb{n}^{\gamma_2},  \label{Dest3-1} \\
&\big| (\eps^{-2} + \dt) \ft{\mathcal{D}}_{\eps} (t + h, n) - (\eps^{-2} + \dt) \ft{\mathcal{D}}_\eps (t, n) \big| \les |h|^{\gamma_1} \jb{n}^{\gamma_2}, \label{Dest3-2} \\
&\big| \eps^{-1} \ft{\mathcal{D}}_{\eps} (t + h, n) - \eps^{-1} \ft{\mathcal{D}}_\eps (t, n) \big| \les |h|^{\gamma_1} \jb{n}^{- 1 + \gamma_2}  \label{Dest3-3} 
\end{align}

\noi
for some $\gamma_1, \gamma_2 > 0$ arbitrarily small, where the implicit constants are independent of $t$ and $\eps$.
\end{lemma}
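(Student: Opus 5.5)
The plan is a direct computation with the explicit multiplier \eqref{Den}, split into the low-frequency regime $\jb{n} \leq (2\eps)^{-1}$ (hyperbolic functions with $\ld_\eps(n)$) and the high-frequency regime $\jb{n} > (2\eps)^{-1}$ (trigonometric functions with $\zeta_\eps(n)$).

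\emph{Low frequencies.} Write
\[
\eps^{-2} \ft\D_\eps = \frac{e^{-t\mu_-} - e^{-t\mu_+}}{\eps^2(\mu_+ - \mu_-)}, \qquad \mu_\pm = \frac{1}{2\eps^2} \pm \ld_\eps(n),
\]
so that $\mu_+ \mu_- = \eps^{-2}\jb{n}^2$, $\mu_- \sim \jb{n}^2$ and $\mu_+ \sim \eps^{-2}$.
\begin{itemize}
\item \eqref{Dest1-1}: when $\eps^2 \ld_\eps \gtrsim 1$, i.e.\ away from the threshold, this is immediate. Near $\jb{n} \sim (2\eps)^{-1}$, use the mean value theorem: $|e^{-t\mu_-} - e^{-t\mu_+}| \le t(\mu_+ - \mu_-)e^{-t\mu_-}$, and $t e^{-t\mu_-}/\eps^2 \lesssim 1/(\eps^2 \mu_-) \sim 1$ there.
\item \eqref{Dest1-2}: $\dt \ft\D_\eps = (\mu_+ e^{-t\mu_+} - \mu_- e^{-t\mu_-})/(\mu_+ - \mu_-)$. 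In the regime $\mu_+ - \mu_- \gtrsim \mu_+$ this is bounded. Near the threshold, the mean value theorem applied to $\mu \mapsto \mu e^{-t\mu}$ gives a derivative $(1 - t\mu)e^{-t\mu}$, which is bounded.
\item \eqref{Dest1-3}: combine $|\ft\D_\eps| \lesssim \eps^2$ from \eqref{Dest1-1} with $|\ft\D_\eps| \lesssim \jb{n}^{-1}\eps$. The latter follows from $\sinh(t\ld)/\ld \le t e^{t\ld}$ together with $\sup_t t e^{-t\mu_-} \lesssim \jb{n}^{-2}$, or directly from $(\mu_+ - \mu_-)^{-1}$ when away from the threshold. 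Interpolating, $\eps^{-1}|\ft\D_\eps| \lesssim \min(\eps, \jb{n}^{-1}) \le \eps^\gamma \jb{n}^{-1+\gamma}$.
\end{itemize}

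\emph{High frequencies.} Here $\jb{n} \gtrsim \eps^{-1}$ and $|\ft\D_\eps| \le \min(t, \zeta_\eps^{-1}) e^{-t/(2\eps^2)}$.
\begin{itemize}
\item \eqref{Dest1-1}: the bound $t e^{-t/(2\eps^2)} \lesssim \eps^2$ suffices.
\item \eqref{Dest1-2}: the derivative produces $e^{-t/2\eps^2}(\cos - \tfrac{1}{2\eps^2}\sin/\zeta)$, which is bounded by the same argument.
\item \eqref{Dest1-3}: use $\eps^{-1}|\ft\D_\eps| \lesssim \eps$ and $\eps \lesssim \jb{n}^{-1}$.
\end{itemize}

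\emph{Part (ii).} First establish crude bounds for the $\eps$-derivative: show
\[
|\partial_\eps[(\eps^{-2} + \dt)\ft\D_\eps]| \lesssim \eps^{-1}\jb{n}^{C}
\quad\text{or}\quad
\lesssim \jb{n}^{C}\,\text{poly}(\eps^{-1}),
\]
away from the threshold. Then interpolate this with the uniform bound $\lesssim 1$ coming from (i). Writing $|a - b| \le |a - b|^{\gamma}(|a| + |b|)^{1-\gamma}$ produces $|h|^{\gamma_1}\jb{n}^{\gamma_2}$ with arbitrarily small exponents. The same scheme applies to \eqref{Dest2-3}, starting from $\jb{n}^{-1}$ times \eqref{Dest1-3}.

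For \eqref{Dest2-1}, compare directly with the heat multiplier:
\[
(\eps^{-2} + \dt)\ft\D_\eps = \frac{\mu_+ e^{-t\mu_-} - \mu_- e^{-t\mu_+}}{\mu_+ - \mu_-}.
\]
In the low-frequency region with $\eps\jb{n} \ll 1$, one has $\mu_- = \jb{n}^2 + O(\eps^2\jb{n}^4)$, which gives an error $\lesssim \eps^2\jb{n}^4$ uniformly in $t$, since $t\jb{n}^4 e^{-t\jb{n}^2} \lesssim \jb{n}^2$. Elsewhere, i.e.\ for $\jb{n} \gtrsim \eps^{-1/2}$, both quantities are $O(1)$. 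Interpolating these two pieces of information yields $\eps^{\gamma_1}\jb{n}^{\gamma_2}$. This is essentially \cite{Zine}.

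\emph{Part (iii).} The bound \eqref{Dest3-1} follows from $|e^{-a} - e^{-b}| \le \min(1, |a - b|)$ with $|a - b| = |h|\jb{n}^2$. For \eqref{Dest3-2} and \eqref{Dest3-3}, use the same interpolation between the bound from (i) and the $t$-derivative bound
\[
|\dt^2 \ft\D_\eps| \lesssim \eps^{-2} + \jb{n}^2 \lesssim \eps^{-2}\jb{n}^2 .
\]
The factor $\eps^{-2}$ must be removed for uniformity. I plan to do this by first treating $|h| \ge \eps^2$ crudely via part (ii) compared to the heat multiplier, which is controlled by \eqref{Dest3-1} plus two applications of \eqref{Dest2-1}. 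For $|h| < \eps^2$, the bound $|h|\eps^{-2} \le (|h|\eps^{-2})^{\gamma_1}$ must still be traded for $|h|^{\gamma_1}$, which requires care.

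\emph{Main obstacle.} I expect the hardest point to be uniformity near the transition $\jb{n} \approx (2\eps)^{-1}$, where $\ld_\eps$ and $\zeta_\eps$ vanish. There the plan is to express everything through entire functions of $\ld^2$, namely $\sinh(t\ld)/\ld$ and $\cosh(t\ld)$, which avoids the artificial singularity.
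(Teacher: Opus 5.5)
\textbf{There is a gap in part (ii) and part (iii).} Your part (i), your \eqref{Dest2-1}, \eqref{Dest3-1}, and your interpolation for the $\eps^{-1}\ft\D_\eps$ differences \eqref{Dest2-3} and \eqref{Dest3-3} are sound. (For \eqref{Dest1-3}, $\sup_t t e^{-t\mu_-}$ gives $|\ft\D_\eps|\les\jb{n}^{-2}$; the bound $\eps\jb{n}^{-1}$ comes from its geometric mean with $|\ft\D_\eps|\les\eps^2$.) The last two close by interpolation only because \eqref{Dest1-3} carries a free factor $\eps^{\gamma}$ that absorbs the negative powers of $\eps$ from the mean value theorem. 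When $|h|>\min(\eps,\eps+h)$, bound the two terms separately instead.

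The problem is \eqref{Dest2-2} and \eqref{Dest3-2}. There the only a priori bound is $O(1)$ from \eqref{Dest1-1}--\eqref{Dest1-2}, with no power of $\eps$. Suppose the mean value theorem gives $|h|\,\eps_*^{-K}\jb{n}^{C}$ with $\eps_*=\min(\eps,\eps+h)$ and $K\ge 1$. This is what you propose for $\partial_\eps$, and what you have for $\partial_t$ with $K=2$. Interpolating against $O(1)$ gives only $(|h|\eps_*^{-K}\jb{n}^{C})^{\gamma}$. For $|h|=\eps_*^{K}$ this equals $\jb{n}^{C\gamma}$, whereas $|h|^{\gamma_1}\jb{n}^{\gamma_2}\to 0$ as $\eps_*\to 0$. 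So the bound is not uniform in $\eps$.

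In part (ii) you claim this interpolation alone suffices, which is false as stated. In part (iii) you see the issue, but splitting at $|h|=\eps^2$ sits exactly at the scale of the $\eps^{-2}$ loss, which is why $|h|<\eps^2$ stays open. The real crux is therefore uniformity in $\eps$ of these H\"older differences, not the threshold $\jb{n}\approx(2\eps)^{-1}$, which analyticity handles.

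\textbf{The fix (the paper's argument) is to split at a strictly higher power of $\eps$ than the loss.}
\begin{itemize}
\item If $|h|<\eps_*^{K+1}$, then $|h|\eps_*^{-K}\le |h|^{1/(K+1)}$, and interpolation gives $|h|^{\gamma/(K+1)}\jb{n}^{C\gamma}$.
\item If $|h|\ge \eps_*^{K+1}$, then $\eps$ and $\eps+h$ are both $\les |h|^{1/(K+1)}$. Pass through the heat multiplier $e^{-t\jb{n}^2}$, using \eqref{Dest2-1} at $\eps$ and at $\eps+h$, and also \eqref{Dest3-1} for the time shift in \eqref{Dest3-2}. This gives $\les |h|^{\gamma_1'/(K+1)}\jb{n}^{\gamma_2'}$.
\end{itemize}
The paper does exactly this. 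It uses the crude bounds $|\partial_\eps \dt\ft\D_\eps|\les \eps^{-9}\jb{n}^4$ and $|\dt^2\ft\D_\eps|\les\eps^{-8}\jb{n}^4$ with threshold $|h|=\eps^{10}$. These bounds come from writing $\ft\D_\eps(t,n)=e^{-t/(2\eps^2)}\,t\,\phi(t^2\ld_\eps(n)^2)$ with $\phi$ entire, which is your ``entire functions of $\ld^2$'' device.

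For \eqref{Dest3-2} you can avoid the split entirely. Since $\ft\D_\eps$ solves $\eps^2\ddot y+\dot y+\jb{n}^2 y=0$, one has $\dt\big[(\eps^{-2}+\dt)\ft\D_\eps\big]=-\eps^{-2}\jb{n}^2\ft\D_\eps$, which is $O(\jb{n}^2)$ by \eqref{Dest1-1}. Bounding $\eps^{-2}\dt\ft\D_\eps$ and $\dt^2\ft\D_\eps$ separately, as you do, misses this cancellation and creates the spurious $\eps^{-2}$.
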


\begin{proof}
(i) All estimates \eqref{Dest1-1}, \eqref{Dest1-2}, and \eqref{Dest1-3} follow directly from \cite[Lemma~3.5]{Zine}. 

\smallskip \noi
(ii) 
For \eqref{Dest2-1}, we see from \cite[Lemma~3.6]{Zine} that the estimate holds if $\jb{n} \les \eps^{-1 + \ta}$ for some $\ta > 0$ small. If $\jb{n} \ges \eps^{-1 + \ta}$, the estimate follows directly from \eqref{Dest1-1}, \eqref{Dest1-2}, and the fact that $1 \les \eps^{1 - \ta} \jb{n}$.

We now look at \eqref{Dest2-2}, where we only consider $h > 0$. 
Following the proof of \cite[Lemma~2.1.5]{Zine23}, we write $\ft \D_\eps (t, n) = e^{- \frac{t}{2 \eps^2}} S_\eps (t, n)$, where we have
\begin{align}
S_\eps (t, n) = t \phi \big( t^2 (4 \eps^4)^{-1} - t^2 \eps^{-2} \jb{n}^2 \big)
\label{help1}
\end{align}

\noi
with $\phi$ being an analytic function on $\R$ given by 
\begin{align*}
\phi (x) = 
\begin{cases}
\frac{\sinh (\sqrt{x})}{\sqrt{x}} & \text{if } x \geq 0 \\
\frac{\sin (\sqrt{-x})}{\sqrt{-x}} & \text{if } x < 0 
\end{cases}
\end{align*}

\noi
whose derivatives satisfy
\begin{align}
\phi^{(k)} (x) \les_k e^{\sqrt{x}} \ind_{\{x \geq 0\}} +  \ind_{\{x < 0\}}
\label{help2}
\end{align}

\noi
for any $k \in \N \cup \{0\}$. 
Then, following the steps in \cite[Lemma~2.1.5]{Zine23} with \eqref{help1} and \eqref{help2}, we compute that
\begin{align}
| \partial_\eps \ft \D_\eps (t, n) | \les \eps^{-5} \jb{n}^2
\label{Desth1}
\end{align}

\noi
and
\begin{align}
| \partial_\eps \dt \ft \D_\eps (t, n) | \les \eps^{-9} \jb{n}^4 .
\label{Desth2}
\end{align}

\noi
Thus, from the mean value theorem, \eqref{Desth1}, and \eqref{Desth2}, we get
\begin{align}
\big| \big( (\eps + h)^{-2} + \dt \big) \ft{\mathcal{D}}_{\eps + h} (t, n) - ( \eps^{-2} + \dt ) \ft{\mathcal{D}}_\eps (t, n) \big| \les h \eps^{-9} \jb{n}^4 .
\label{Dest2-2-1}
\end{align}

\noi
If $h < \eps^{10}$, we interpolate \eqref{Dest2-2-1} and the bounds in \eqref{Dest1-1} and \eqref{Dest1-2} to obtain the desired estimate.
If $h \geq \eps^{10}$, we have $\eps \leq h^{\frac{1}{10}}$, and so from \eqref{Dest2-1}, we have
\begin{align*}
\big| \big( (\eps + h)^{-2} + \dt \big) \ft{\mathcal{D}}_{\eps + h} (t, n) - ( \eps^{-2} + \dt ) \ft{\mathcal{D}}_\eps (t, n) \big| \les (\eps + h)^{\gamma_1'} \jb{n}^{\gamma_2'} \les h^{\frac{\gamma_1'}{10}} \jb{n}^{\gamma_2'}
\end{align*}

\noi
for some $\gamma_1', \gamma_2' > 0$, giving the desired estimate.

For \eqref{Dest2-3}, we use the mean value theorem and \eqref{Desth1} to get
\begin{align}
\big| (\eps + h)^{-1} \ft{\mathcal{D}}_{\eps + h} (t, n) - \eps^{-1} \ft{\mathcal{D}}_\eps (t, n) \big| \les h \eps^{-6} \jb{n}^2 .
\label{Dest2-3-1}
\end{align}

\noi
We then interpolate \eqref{Dest2-3-1} and \eqref{Dest1-3} to obtain the desired estimate.

\smallskip \noi
(iii) 
The estimate \eqref{Dest3-1} follows directly from the mean value theorem.

We now look at \eqref{Dest3-2}, where we only consider $h > 0$. Following the steps in \cite[Lemma~2.1.5]{Zine23} with \eqref{help1} and \eqref{help2}, we compute that
\begin{align}
|\dt^2 \ft \D_\eps (t, n)| \les \eps^{-8} \jb{n}^4 .
\label{Desth3}
\end{align}

\noi
Thus, from the mean value theorem, \eqref{Dest1-2}, and \eqref{Desth3}, we get
\begin{align}
\big| (\eps^{-2} + \dt) \ft{\mathcal{D}}_{\eps} (t + h, n) - (\eps^{-2} + \dt) \ft{\mathcal{D}}_\eps (t, n) \big| \les h \eps^{-8} \jb{n}^4 .
\label{Dest3-2-1}
\end{align}

\noi
If $h < \eps^{10}$, we interpolate \eqref{Dest3-2-1} and the bounds in \eqref{Dest1-1} and \eqref{Dest1-2} to obtain the desired estimate.
If $h \geq \eps^{10}$, we have $\eps \leq h^{\frac{1}{10}}$, and so from \eqref{Dest2-1} and \eqref{Dest3-1}, we have
\begin{align*}
\big| (\eps^{-2} + \dt) \ft{\mathcal{D}}_{\eps} (t + h, n) - (\eps^{-2} + \dt) \ft{\mathcal{D}}_\eps (t, n) \big| \les \eps^{\gamma_1'} \jb{n}^{\gamma_2'} + h^{\gamma_1'} \jb{n}^{\gamma_2'} \les h^{\frac{\gamma_1'}{10}} \jb{n}^{\gamma_2'}
\end{align*}

\noi
for some $\gamma_1', \gamma_2' > 0$, giving the desired estimate.

For \eqref{Dest3-3}, we use the mean value theorem and \eqref{Dest1-2} to get
\begin{align}
\big| \eps^{-1} \ft{\mathcal{D}}_{\eps} (t + h, n) - \eps^{-1} \ft{\mathcal{D}}_\eps (t, n) \big| \les h \eps^{-1} .
\label{Dest3-3-1}
\end{align}

\noi
We then interpolate \eqref{Dest3-3-1} and \eqref{Dest1-3} to obtain the desired estimate.
\end{proof}

We are now ready to show that $\Phi_\eps^j$ and its Wick products and powers share the same regularity properties with those of $\Psi_\eps^j$. 

\begin{lemma}
\label{LEM:sto2}
Lemma~\ref{LEM:sto} holds with $\Psi_{\eps, M}^j$ replaced by $\Phi_{\eps, M}^j$ and $\Psi_\eps^j$ replaced by $\Phi_\eps^j$.
\end{lemma}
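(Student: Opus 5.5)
We will follow the structure of the proof of Lemma~\ref{LEM:sto}, i.e.~the argument of \cite[Proposition~4.1]{Zine}. Everything there rests on two second-moment estimates for the Fourier coefficients. Once these are available for $\Phi_\eps^j$, the rest carries over verbatim. That rest consists of the product formula for Wick powers of jointly Gaussian fields (\cite[Lemma~2.5]{GKO}), the Wiener chaos estimate, the Kolmogorov continuity criterion, Chebyshev's inequality, the Garsia-Rodemich-Rumsey inequality, the layer cake formula, and the Kolmogorov criterion in $\eps$. The reason is that $\Phi_{\eps}^j(t,x)$ is still a mean-zero Gaussian process jointly in $(\eps,t,x)$: it is a linear functional of the independent Gaussian data $(\phi_0^j,\phi_1^j)$ and of the noise $\xi^j$. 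Independence across $j$ is also preserved. So the task reduces to proving
\begin{align*}
\E\big[|\ft{\Phi_\eps^j}(t,n)|^2\big]\les \jb{n}^{-2}, \qquad
\E\big[|\ft{\Phi_{\eps+h_1}^j}(t+h_2,n)-\ft{\Phi_\eps^j}(t,n)|^2\big]\les (|h_1|^{\gamma_1}+|h_2|^{\gamma_1})\jb{n}^{-2+\gamma_2},
\end{align*}
uniformly in $\eps\in[0,1]$ and $t\ge 0$.

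The first bound is immediate from Lemma~\ref{LEM:var}, which even gives the identity $\jb{n}^{-2}$. The same lemma shows that the renormalization constant $\al_M$ is independent of $(\eps,t)$, so the Wick products are defined exactly as in Lemma~\ref{LEM:sto}. For the difference bound, we split $\Phi_\eps^j=P_\eps(t)(\phi_0^j,\eps^{-1}\phi_1^j)+\Psi_\eps^j$. The $\Psi_\eps^j$ part is already covered by the key estimate in the proof of Lemma~\ref{LEM:sto}. The initial-data part is, on the Fourier side,
\[
(\eps^{-2}+\dt)\ft\D_\eps(t,n)\,\frac{g_n^j}{2\pi\jb{n}}+\eps^{-1}\ft\D_\eps(t,n)\,\frac{h_n^j}{2\pi},
\]
with the convention that for $\eps=0$ the first multiplier is $e^{-t\jb{n}^2}$ and the second vanishes. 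Its increment in $(\eps,t)$ has second moment controlled by the squared increments of these two multipliers, weighted by $\jb{n}^{-2}$ and $1$ respectively.

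These increments are exactly what Lemma~\ref{LEM:Den} provides. We treat the increment in $\eps$ and in $t$ separately via the triangle inequality.
\begin{itemize}
\item The $\eps$-increment of the first multiplier uses \eqref{Dest2-2}, and \eqref{Dest2-1} when one endpoint is $\eps=0$. This gives a contribution $|h_1|^{2\gamma_1}\jb{n}^{-2+2\gamma_2}$.
\item The $\eps$-increment of the second multiplier uses \eqref{Dest2-3}, giving $|h_1|^{2\gamma_1}\jb{n}^{-2+2\gamma_2}$. The decay $\jb{n}^{-1}$ in \eqref{Dest2-3} is what compensates for the missing $\jb{n}^{-1}$ weight on $h_n^j$.
\item When one endpoint is $\eps=0$, the second multiplier is bounded by \eqref{Dest1-3} with $\gamma$ small: $\eps^{2\gamma}\jb{n}^{-2+2\gamma}$.
\item The $t$-increments use \eqref{Dest3-1}, \eqref{Dest3-2} and \eqref{Dest3-3} in the same way.
\end{itemize}
Summing over $n$ with these bounds and renaming $\gamma_1,\gamma_2$ yields the difference estimate.

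The main point needing care is uniformity near $\eps=0$. The data $\eps^{-1}\phi_1^j$ blow up as $\eps\to0$, so one must check that every multiplier bound involving $\eps^{-1}\ft\D_\eps$ carries a genuine $\jb{n}^{-1}$ gain with constants independent of $\eps$ and $t$. This is precisely the content of \eqref{Dest1-3} and \eqref{Dest2-3}, so we expect no new obstruction. With the two moment bounds in hand, the conclusions (i)–(iii) of Lemma~\ref{LEM:sto} follow by repeating that proof word for word, including the H\"older continuity in $\eps$.
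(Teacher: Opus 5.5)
Your proposal is correct and follows the paper's proof essentially verbatim. The paper likewise reduces, via the proof of Lemma~\ref{LEM:sto}, to second-moment bounds on the Fourier coefficients of the initial-data parts $P_\eps(t)(\phi_0^j,\eps^{-1}\phi_1^j)$ and $P_0(t)\phi_0^j$, and then obtains these from the multiplier estimates of Lemma~\ref{LEM:Den} together with the independence of the Gaussians $g_n^j, h_n^j$. The only cosmetic difference is that you take the uniform bound from the exact variance identity in Lemma~\ref{LEM:var}, whereas the paper gets it from Lemma~\ref{LEM:Den}~(i).
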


\begin{proof}
From Lemma~\ref{LEM:sto}, we know that we only need to study $P_\eps (t) (\phi_0^j, \eps^{-1} \phi_1^j)$ for $0 < \eps \leq 1$ and $P_0 (t) (\phi_0^j)$ with $P_\eps (t)$ defined in \eqref{defPe}, $P_0 (t)$ defined in \eqref{defP0}, and $\phi_0^j$ and $\phi_1^j$ defined in \eqref{rand_init}. From the proof of Lemma~\ref{LEM:sto}, we know that the desired properties for $\Phi_\eps^j$'s and their Wick products follow once we prove that for any $0 < \eps \leq 1$, $t \geq 0$, $n \in \Z^2$, and $h_1, h_2 \in \R \setminus \{0\}$ satisfying $0 < \eps + h_1 \leq 1$ and $t + h_2 \geq 0$,
\begin{align}
\begin{split}
&\E \Big[ \big| \F \big( P_\eps (t) (\phi_0^j, \eps^{-1} \phi_1^j) \big) (n) \big|^2 \Big] \les \jb{n}^{-2} , \\
&\E \Big[ \big| \F \big( P_{\eps + h_1} (t + h_2) (\phi_0^j, (\eps + h_1)^{-1} \phi_1^j) \big) (n) - \F \big( P_\eps (t) (\phi_0^j, \eps^{-1} \phi_1^j) \big) (n) \big|^2 \Big] \\ 
&\quad \les (|h_1|^{\gamma_1} + |h_2|^{\gamma_1}) \jb{n}^{-2 + \gamma_2} , \\
&\E \Big[ \big| \F \big( P_0 (t) (\phi_0^j) \big) (n) \big|^2 \Big] \les \jb{n}^{-2} , \\
&\E \Big[ \big| \F \big( P_{|h_1|} (t + h_2) (\phi_0^j, |h_1|^{-1} \phi_1^j) \big) (n) - \F \big( P_0 (t) (\phi_0^j) \big) (n) \big|^2 \Big]  
\les (|h_1|^{\gamma_1} + |h_2|^{\gamma_1}) \jb{n}^{-2 + \gamma_2} 
\end{split}
\label{Phin} 
\end{align}

\noi
for some $\gamma_1, \gamma_2 > 0$ arbitrarily small. 
By using \eqref{defPe}, \eqref{rand_init}, and the independence of Gaussian random variables, we see that the estimates in \eqref{Phin} are reduced to the estimates in Lemma~\ref{LEM:Den}. This finishes the proof.
\end{proof}

\subsection{Convergence of invariant Gibbs dynamics}

We now consider the Gibbs dynamics for $\text{HLSM}_{\eps, N}$:
\begin{align}
(\eps^2 \dt^2 + \dt + 1 - \Dl) u_\eps^{N, j} = - \frac{1}{N} \sum_{k = 1}^N \wick{ (u_\eps^{N, k})^2 u_\eps^{N, j} } + \, \sqrt{2} \xi^j, \qquad j = 1, \dots, N ,
\label{HLSMu}
\end{align}

\noi
and the mean-field SNLH:
\begin{align}
(\dt + 1 - \Dl) u^j = - \E [ (u^j)^2 - (\Phi_0^j)^2 ] u^j + \sqrt{2} \xi^j  ,
\label{mfSNLHu}
\end{align}

\noi
where $\Phi_0^j$ is as defined in \eqref{Phij0}.
We also write out the two intermediate equations in the two convergence regimes, namely $\text{PLSM}_N$:
\begin{align}
(\dt + 1 - \Dl) u^{N, j} = - \frac{1}{N} \sum_{k = 1}^N \wick{ (u^{N, k})^2 u^{N, j} } + \, \sqrt{2} \xi^j, \qquad j = 1, \dots, N ,
\label{PLSMu}
\end{align}

\noi
and the mean-field $\text{SdNLW}_\eps$:
\begin{align}
(\eps^2 \dt^2 + \dt + 1 - \Dl) u_\eps^j = - \E [ (u_\eps^j)^2 - (\Phi_\eps^j)^2 ] u_\eps^j + \sqrt{2} \xi^j  ,
\label{mfSdNLWu}
\end{align}

\noi
where $\Phi_\eps^j$ is as defined in \eqref{Phije}.
In the above four equations, we have already renormalized the nonlinearities, and as in the previous sections, we define the solutions to the above four equations as follows. By using the first order expansions:
\begin{align*}
\begin{split}
u_\eps^{N, j} &= \Phi_\eps^j + v_\eps^{N, j}, \qquad j = 1, \dots, N , \\
u^j &= \Phi_0^j + v^j, \qquad j \in \N , \\
u^{N, j} &= \Phi_0^j + v^{N, j}, \qquad j = 1, \dots, N , \\
u_\eps^j &= \Phi_\eps^j + v_\eps^j, \qquad j \in \N ,
\end{split}
\end{align*}

\noi
we say that $\uu_\eps^N = (u_\eps^{N, j})_{1 \leq j \leq N}$, $u^j$, $\uu^N = (u^{N, j})_{1 \leq j \leq N}$, or $u_\eps^j$ is a solution to \HLSe \eqref{HLSMu}, the mean-field SNLH \eqref{mfSNLHu}, \PLS \eqref{PLSMu}, or the mean-field \NLWe \eqref{mfSdNLWu}, respectively, if $\vv_\eps^N = (v_\eps^{N, j})_{1 \leq j \leq N}$, $v^j$, $\vv^N = (v^{N, j})_{1 \leq j \leq N}$, or $v_\eps^j$ satisfies the perturbed $\text{HLSM}_{\eps, N}$
\begin{align}
\begin{split}
(\eps^2 \dt^2 + \dt + 1 - \Dl) v_\eps^{N, j} &= - \frac 1N \sum_{k = 1}^N \big( \wick{ (\Phi_\eps^k)^2 \Phi_\eps^j } + \wick{ (\Phi_\eps^k)^2 } v_\eps^{N, j} + 2 v_\eps^{N, k} \wick{ \Phi_\eps^k \Phi_\eps^j } \\
    &\,\,\,\, + 2 \Phi_\eps^k v_\eps^{N, k} v_\eps^{N, j} + (v_\eps^{N, k})^2 \Phi_\eps^j + (v_\eps^{N, k})^2 v_\eps^{N, j} \big) , \qquad j = 1, \dots, N ,
\end{split}
\label{HLSMv}
\end{align}

\noi
the perturbed mean-field SNLH
\begin{align}
(\dt + 1 - \Dl) v^j = - 2 \E [\Phi_0^j v^j] \Phi_0^j - 2 \E [\Phi_0^j v^j] v^j - \E [(v^j)^2] \Phi_0^j - \E [(v^j)^2] v^j ,
\label{mfSNLHv}
\end{align}

\noi
the perturbed $\text{PLSM}_N$
\begin{align}
\begin{split}
(\dt + 1 - \Dl) v^{N, j} 
= - \frac 1N &\sum_{k = 1}^N \big( \wick{ (\Phi_0^k)^2 \Phi_0^j } + \wick{ (\Phi_0^k)^2 } v^{N, j} + 2 v^{N, k} \wick{ \Phi_0^k \Phi_0^j } \\
    &\, + 2 \Phi_0^k v^{N, k} v^{N, j} + (v^{N, k})^2 \Phi_0^j + (v^{N, k})^2 v^{N, j} \big) , \qquad j = 1, \dots, N ,
\end{split}
\label{PLSMv}
\end{align}

\noi
or the perturbed mean-field \NLWe
\begin{align}
(\eps^2 \dt^2 + \dt + 1 - \Dl) v_\eps^j 
= - 2 \E [\Phi_\eps^j v_\eps^j] \Phi_\eps^j - 2 \E [\Phi_\eps^j v_\eps^j] v_\eps^j - \E [(v_\eps^j)^2] \Phi_\eps^j - \E [(v_\eps^j)^2] v_\eps^j ,
\label{mfSdNLWv}
\end{align}

\noi
respectively, where the Wick products and powers of $\Phi_\eps^j$'s are defined in \eqref{wick2}.

From Lemma~\ref{LEM:sto2}, we know that $\Phi_\eps^j$'s and their Wick products and powers share the same regularity properties with those from the stochastic convolutions $\Psi_\eps^j$'s given by \eqref{Psije} and \eqref{Psij0}. Consequently, with $\Psi_\eps^j$'s replaced by $\Phi_\eps^j$'s, all the global well-posedness results stated in Subsection~\ref{SUB:intro} apply to the above four equations: 
Proposition~\ref{PROP:GWP_NLW} applies to the renormalized \HLSe \eqref{HLSMu}, 
Proposition~\ref{PROP:GWP_NLH} applies to the renormalized mean-field SNLH \eqref{mfSNLHu}, 
Theorem~\ref{THM:conv1}~(i) applies to the renormalized \PLS \eqref{PLSMu}, 
and Theorem~\ref{THM:conv2}~(i) applies to the renormalized mean-field \NLWe \eqref{mfSdNLWu}.

Let us also mention the following invariance results for the above four equations. For each $0 < \eps \leq 1$, the Gibbs measure $\vec \rho_\eps^N$ defined in \eqref{rhoeN} is invariant under the dynamics of the renormalized \HLSe \eqref{HLSMu} in the sense that for each $t \in \R_+$, we have
\begin{align*}
\Law \big( \uu_\eps^N (t), \dt \uu_\eps^N (t) \big) = \vec \rho_\eps^N .
\end{align*}

\noi
This follows directly from the invariance of the frequency-truncated Gibbs measure $\vec \rho_\eps^N$ under the flow of the frequency-truncated \HLSe \eqref{HLSMu} (see, for example, \cite{GKOT, ORTz}) and a PDE approximation argument. In particular, Bourgain's invariant measure argument in \cite{Bour94, Bour96} is not needed.
Also, the Gibbs measure $\rho^N$ defined in \eqref{rhoN} is invariant under the dynamics of the renormalized \PLS \eqref{PLSMu} in the sense that for each $t \in \R_+$, we have
\begin{align*}
\Law \big( \uu^N (t) \big) = \rho^N .
\end{align*}

\noi
This follows from \cite[Lemma~5.7 and Remark~5.8]{SSZZ}, where we note that the system \eqref{PLSMu} is the stochastic quantization of the Gibbs measure $\rho^N$ in \eqref{rhoN}.
Moreover, in view of Lemma~\ref{LEM:var}, we see that $u^j = \Phi_0^j$ is a solution to the mean-field SNLH \eqref{mfSNLHu} that preserves the Gaussian free field $\mu_1$ and that, for each $0 < \eps \leq 1$, $(u_\eps^j, \dt u_\eps^j) = (\Phi_\eps^j, \dt \Phi_\eps^j)$ is a solution to the mean-field \NLWe \eqref{mfSdNLWu} that preserves the Gaussian measure $\mu_1 \otimes \mu_{0, \eps}$.\footnote{A direct but tedious computation similar to that in Lemma~\ref{LEM:var} shows that $\E [ |\ft{\dt \Phi_\eps^j} (t, n)|^2 ] = \eps^{-2}$ for each $n \in \Z^2$ and $t \geq 0$, which shows that $\Law (\dt \Phi_\eps^j (t)) = \mu_{0, \eps}$. Note that each $\ft{\dt \Phi_\eps^j} (t, n)$ diverges as $\eps \to 0$, which is compatible with the fact that $\dt \Phi_0^j = - (1 - \Dl) \Phi_0^j + \sqrt{2} \xi^j$ is almost surely merely a distribution in time.}

\medskip
In order to discuss convergence of dynamics at Gibbs equilibrium, we need to prepare Gibbsian initial data corresponding to the Gibbs measure $\rho^N$. The construction of such Gibbsian initial data is provided by the following proposition, which was proved in \cite[Proposition~6.1]{LLO} based on the study of $\rho^N$ in \cite{DS}.
\begin{proposition}[Gibbsian initial data]
\label{PROP:Gibbs}
There exist a probability space $(\Om_1, \F_1, \PP_1)$ and random distributions $\{ \phi_{0}^j
\}_{j \in \N}$, $\{ \phi_{1}^j \}_{j \in \N}$, and $\vv_0^N = (v_0^{N, j})_{1 \leq j \leq N}$, $N \in \N$, on $(\Om_1, \F_1, \PP_1)$ such that the family $\{ \phi_0^j, \phi_1^j \}_{j \in \N}$ is independent, the random variables $v_0^{N, j}$'s satisfy
\begin{align}
    \sup_{1 \leq j \leq N} \big\| \| v_0^{N, j} \|_{H_x^1} \big\|_{L^2 (\Om_1)} \les N^{- \frac 12} ,
\label{v0init}
\end{align}

\noi
and we have
\begin{align*}
    &\Law (\phi_0^j) = \mu_1 \quad \text{and} \quad \Law (\phi_1^j) = \mu_0 \quad \text{for all } j \in \N , \\
    &\Law \big( (\phi_0^j + v_0^{N, j})_{1 \leq j \leq N} \big) = \rho^N \quad \text{for all } N \in \N ,
\end{align*}

\noi
where $\mu_1$ is the massive Gaussian free field on $\T^2$, $\mu_0$ is the white noise measure on $\T^2$, and $\rho^N$ is the $O(N)$ linear sigma model in \eqref{rhoN}. 
\end{proposition}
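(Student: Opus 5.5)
The plan is to build the coupling dynamically, using the parabolic $O(N)$ linear sigma model started from Gaussian free field data, and then send time to infinity. On a common probability space we take independent $\{\phi_0^j,\phi_1^j\}_{j\in\N}$ with $\Law(\phi_0^j)=\mu_1$ and $\Law(\phi_1^j)=\mu_0$, together with independent white noises $\{\xi^j\}$. We let $\Phi_0^j$ solve \eqref{Phij0}. By Lemma~\ref{LEM:var}, $\Law(\Phi_0^j(t))=\mu_1$ for every $t\ge0$, so $\{\Phi_0^j(t)\}_j$ has the required marginals at each time. We then solve the perturbed \PLS \eqref{PLSMv} with zero initial data, which is globally well-posed by the analogue of Proposition~\ref{PROP:NLHN} (via Lemma~\ref{LEM:sto2}). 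Then $\uu^N(t)=(\Phi_0^j(t)+v^{N,j}(t))_j$ solves \eqref{PLSMu}, and $\rho^N$ is invariant for it by \cite[Lemma~5.7 and Remark~5.8]{SSZZ}. Combined with exponential ergodicity of the \PLS (the strong Feller and irreducibility arguments of \cite{TW18}, adapted to $N$ components), this gives $\Law(\uu^N(t))\to\rho^N$ as $t\to\infty$.

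The core estimate I would aim for is uniform in time and quantitative in $N$:
\begin{align*}
\sup_{t\ge 1}\ \sup_{1\le j\le N}\big\|\|v^{N,j}(t)\|_{H^1_x}\big\|_{L^2_\om}\les N^{-\frac12}.
\end{align*}
The source of this smallness is the forcing term $\frac1N\sum_k\wick{(\Phi_0^k)^2\Phi_0^j}$ in \eqref{PLSMv}. It splits into $\frac1N\wick{(\Phi_0^j)^3}$ plus $\frac1N\sum_{k\neq j}\wick{(\Phi_0^k)^2}\Phi_0^j$. The second piece is a sum of $N-1$ orthogonal mean-zero terms, so by Wiener chaos estimates (as in Lemma~\ref{LEM:sto}) it has $L^p_\om C_{[t_0,t_0+1]}W^{-\s,\infty}_x$ norm $\les N^{-1/2}$, uniformly in $t_0$. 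I would first prove an $L^2$-type bound $\E\|v^{N,j}(t)\|_{L^2}^2\les N^{-1}$. This comes from testing the equation for $v^{N,j}$ against $v^{N,j}$, summing in $j$, and using the defocusing quartic term $\frac1N\int(\sum_k(v^{N,k})^2)^2$ and the mass term for coercivity. The quadratic and cubic cross terms in $\Phi_0$ are absorbed by Young's inequality, with the $\mathcal Z^{-\ta,N}$-type norms of the stochastic objects controlled in expectation uniformly in time thanks to stationarity. The $L^2$ bound is then upgraded to $H^1$ (indeed to $\C^{2-\ta}$) by the parabolic smoothing of Lemma~\ref{LEM:heat} applied to the Duhamel formula on unit time intervals, as in the proof of Proposition~\ref{PROP:NLHN}. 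Exchangeability in $j$ lets one pass from the averaged $\A_N$ bound to the supremum over $j$.

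Given this uniform bound, the triples $(\{\phi_0^j(t),\phi_1^j\}_j,\vv^N(t))$ are tight, with the first components stationary and $\vv^N(t)$ bounded in $L^2_\om H^1$. I would take a subsequence $t_m\to\infty$, pass to a limit in law jointly for all $N$ by a diagonal argument, and realize the limit on a single space $(\Om_1,\F_1,\PP_1)$ via Skorokhod's theorem. The limit has the stated Gaussian marginals and independence, satisfies \eqref{v0init} by lower semicontinuity, and has $\Law(\phi_0^j+v_0^{N,j})_j=\rho^N$ by the convergence in law of $\uu^N(t)$. Here $\phi_0^j$ is the limit of $\Phi_0^j(t_m)$, not the original initial datum.

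The main obstacle is the uniform-in-time $N^{-1/2}$ bound. The linear terms $\frac1N\sum_k\wick{(\Phi_0^k)^2}v^{N,j}$ and $\frac2N\sum_k v^{N,k}\wick{\Phi_0^k\Phi_0^j}$ have coefficients of order one, not small in $N$, and they have no sign. My first attempt would be to exploit the mean-field structure: $\frac1N\sum_k\wick{(\Phi_0^k)^2}$ is itself $O(N^{-1/2})$ by the law of large numbers for Wick squares. The problematic coupling should then reduce to terms estimated by $\E[\Phi_0 v]$-type averages, as in Lemma~\ref{LEM:B3}. If the direct energy argument loses too much, the fallback is the approach of \cite{DS}: identify $\rho^N$ as a perturbation of $\mu_1^{\otimes N}$ at scale $N^{-1/2}$ through a Boué--Dupuis variational representation, and read off the drift as $v_0^{N,j}$.
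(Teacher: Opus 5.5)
Your outer architecture is sound. You couple through the $\textup{PLSM}_N$, driven by the same noises as the stationary linear flows $\Phi_0^j$, and then extract a joint limit in law over the countable family indexed by $N$. This would indeed give the same $\phi_0^j$ for every $N$, the bound \eqref{v0init} by lower semicontinuity, and $\phi_1^j$ independent of everything else.

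The gap is that the entire content of the proposition sits in your core estimate $\sup_{t\ge 1}\sup_j\|\|v^{N,j}(t)\|_{H^1_x}\|_{L^2_\om}\les N^{-1/2}$, and the energy argument you sketch for it does not close. Test \eqref{PLSMv} against $v^{N,j}$ and sum in $j$. The forcing and the term carrying $\frac1N\sum_k\wick{(\Phi_0^k)^2}$ are indeed small. The problem is the term
\[
\frac 2N\int_{\T^2}\sum_{j,k=1}^N v^{N,j}\,v^{N,k}\,\wick{\Phi_0^j\Phi_0^k}\,dx .
\]
With the tools you indicate, it can only be bounded by $C\,\|\wick{\Phi_0^k\Phi_0^j}\|_{\A^{(2)}_{N,j,k}W^{-\ta,\infty}_x}\sum_j\|v^{N,j}\|_{H^{2\ta}}^2$. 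That is a random coefficient of order one, not small in $N$, and the term has no sign. Indeed, at frequency level $M$ it equals $\frac2N\int_{\T^2}\big((\sum_j v^{N,j}\Phi^j_{0,M})^2-\al_M\sum_j(v^{N,j})^2\big)dx$, so its nonnegative part is eaten by the divergent Wick counterterm.

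After interpolation this term has to be absorbed by the mass term $\sum_j\|v^{N,j}\|_{L^2}^2$, which works only if the mass is large. The quartic term does not help: it only gives $\sum_j\|v^{N,j}\|_{L^2}^2\les N$, i.e.\ $O(1)$ per component. That is tightness, but no decay in $N$. This is why the $N^{-1/2}$ convergence of $\rho^N$ in \cite{SSZZ} is established only for large mass. It is also why the paper, with mass one, does not prove the proposition itself. It takes it from \cite[Proposition~6.1]{LLO}, whose analytic input is the study of $\rho^N$ in \cite{DS}.

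Your suggested remedies do not fill the gap.
The law of large numbers for $\frac1N\sum_k\wick{(\Phi_0^k)^2}$ only handles the other linear term.
Replacing $\frac1N\sum_k v^{N,k}\wick{\Phi_0^k\Phi_0^j}$ by an ``$\E[\Phi_0 v]$-type'' average presupposes a uniform-in-time law of large numbers for the dependent family $\{v^{N,k}\}_k$. That is exactly what has to be proved.
The analogy with Lemma~\ref{LEM:B3} is also misplaced. In the mean-field equation, the term $2\E[\Phi_0 v]\Phi_0$ tested against $v$ in $L^2_{\om,x}$ gives $2\int_{\T^2}(\E[\Phi_0 v])^2dx\ge 0$. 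The Wick-renormalized finite-$N$ term has lost this dissipative sign. Moreover, Lemma~\ref{LEM:B3} only provides a growth bound on bounded time intervals. A bound for all $t\ge1$ needs uniform-in-time dissipativity of the linearization around $v=0$, which that lemma does not give.

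Falling back on \cite{DS} is therefore a deferral to the same external input the paper uses, not a proof step.

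Two less serious points. Upgrading from $L^2$ to $H^1$, and taking expectations of terms with random coefficients, needs higher moments of $v^{N,j}$, not only $L^2_\om$ bounds. Also, the convergence $\Law(\uu^N(t))\to\rho^N$ rests on an $N$-component ergodicity result that you only assert. Uniqueness of the invariant measure plus a Krylov--Bogoliubov argument for the joint process $(\Phi_0,\uu^N)$ would suffice there.
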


Here, we slightly abused notations, since $\phi_0^j$ and $\phi_1^j$ are already used as initial data for the stochastic convolutions $\Phi_\eps^j$ defined in \eqref{Phije} and $\Phi_0^j$ in \eqref{Phij0}. However, with the new initial data $\{\phi_0^j, \phi_1^j\}_{j \in \N}$ provided in Proposition~\ref{PROP:Gibbs}, the statistical properties of the stochastic convolutions $\Phi_\eps^j$ and $\Phi_0^j$ remain unchanged. In particular, the family of initial data $\{\phi_0^j, \phi_1^j\}_{j \in \N}$ provided in Proposition~\ref{PROP:Gibbs} is again assumed to be independent of the family of space-time white noises $\{\xi^j\}_{j \in \N}$, and so Lemma~\ref{LEM:var} and Lemma~\ref{LEM:sto2} remain true with the new data.

In view of Proposition~\ref{PROP:Gibbs}, we can equip the renormalized \HLSe \eqref{HLSMu} with initial data $((\phi_0^j + v_0^{N, j}, \eps^{-1} \phi_1^j))_{1 \leq j \leq N}$ and the renormalized \PLS \eqref{PLSMu} with initial data $(\phi_0^j + v_0^{N, j})_{1 \leq j \leq N}$.
Then, since each $\Phi_\eps^j$ in \eqref{Phije} has Gaussian initial data $(\phi_0^j, \phi_1^j)$, we see that the perturbed \HLSe \eqref{HLSMv} should be equipped with initial data $(\vv_0^N, \mathbf{0}) = ((v_0^{N, j}, 0))_{1 \leq j \leq N}$. Similarly, given that each $\Phi_0^j$ in \eqref{Phij0} has Gaussian initial data $\phi_0^j$, we equip the perturbed \PLS \eqref{PLSMv} with initial data $\vv_0^N = (v_0^{N, j})_{1 \leq j \leq N}$.
In view of the decay of $v_0^{N, j}$ in \eqref{v0init} as $N \to \infty$, we see that the solution $\vv_\eps^N = (v_\eps^{N, j})_{1 \leq j \leq N}$ to the perturbed \HLSe \eqref{HLSMv} and the solution $\vv^N = (v^{N, j})_{1 \leq j \leq N}$ to the perturbed \PLS \eqref{PLSMv} will converge to zero as $N \to \infty$. This is coherent with the mean-field convergence of these two systems, since $v^j \equiv 0$ is the unique solution to the perturbed mean-field SNLH \eqref{mfSNLHv} and $v^j_\eps \equiv 0$ is the unique solution to the perturbed mean-field \NLWe \eqref{mfSdNLWv}, both with zero initial data.

We are now ready to state the convergence results in Figure~\ref{fig:diagram} for invariant Gibbs dynamics. 
In view of the initial data set from Proposition~\ref{PROP:Gibbs} and the regularity properties of the stochastic objects in Lemma~\ref{LEM:sto2}, the proof follows from essentially the same steps for the convergence of general dynamics in Theorem~\ref{THM:conv1}~(ii) and (iii) and Theorem~\ref{THM:conv2}~(ii) and (iii).
\begin{theorem}[Convergence of invariant Gibbs dynamics]
\label{THM:Gibbs}
Let $\{\phi_0^j\}_{j \in \N}$, $\{\phi_1^j\}_{j \in \N}$, and $\vv_0^N = (v_0^{N, j})_{1 \leq j \leq N}$, $N \in \N$, be random distributions on a probability space $(\Om_1, \F_1, \PP_1)$ given by Proposition~\ref{PROP:Gibbs}. Given $N \in \N$ and $0 < \eps \leq 1$, let $\uu_\eps^N = (u_\eps^{N, j})_{1 \leq j \leq N}$ be the solution to the renormalized \HLSe \eqref{HLSMu} with initial data $((\phi_0^j + v_0^{N, j}, \eps^{-1} \phi_1^j))_{1 \leq j \leq N}$ and let $\uu^N = (u^{N, j})_{1 \leq j \leq N}$ be the solution to the renormalized \PLS \eqref{PLSMu} with initial data $(\phi_0^j + v_0^{N, j})_{1 \leq j \leq N}$.

\smallskip \noi
\textup{(i) (Convergence via regime I)} For each fixed $N \in \N$, $\{ \uu_\eps^N \}_{\eps \in (0, 1]}$ converges almost surely to $\uu^N$ in $C (\R_+; H^{- \s} (\T^2)^{\otimes N})$, endowed with the compact open topology in time, as $\eps \to 0$ for any $\s > 0$. 
Moreover, for each fixed $j \in \N$, $\{ u^{N, j} \}_{N \in \N}$ converges in probability to $\Phi_0^j$ in $C (\R_+; H^{- \s} (\T^2))$, endowed with the compact open topology in time, as $N \to \infty$ for any $\s > 0$.

\smallskip \noi
\textup{(ii) (Convergence via regime II)} For each fixed $0 < \eps \leq 1$ and $j \in \N$, $\{ u_\eps^{N, j} \}_{N \in \N}$ converges in probability to $\Phi_\eps^j$ in $C(\R_+; H^{- \s} (\T^2))$, endowed with the compact open topology in time, as $N \to \infty$ for any $\s > 0$.
Moreover, for each fixed $j \in \N$, $\{\Phi_\eps^j\}_{\eps \in (0, 1]}$ converges to $\Phi_0^j$ almost surely in $C(\R_+; H^{- \s} (\T^2))$, endowed with the compact open topology in time, as $\eps \to 0$ for any $\s > 0$.
\end{theorem}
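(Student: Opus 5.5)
The proof reduces Theorem~\ref{THM:Gibbs} to the arguments already given, with $\Psi_\eps^j$ replaced by $\Phi_\eps^j$. Lemma~\ref{LEM:sto2} supplies the same regularity, moment and H\"older-in-$\eps$ estimates for $\Phi_\eps^j$ and its Wick products as Lemma~\ref{LEM:sto} does for $\Psi_\eps^j$. We work on the product space $\Om_1 \times \Om$ carrying the Gibbsian data of Proposition~\ref{PROP:Gibbs} and the independent noises $\{\xi^j\}$. We write $u_\eps^{N,j} = \Phi_\eps^j + v_\eps^{N,j}$ and $u^{N,j} = \Phi_0^j + v^{N,j}$. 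The remainders then solve \eqref{HLSMv} with data $(\vv_0^N, \mathbf 0)$ and \eqref{PLSMv} with data $\vv_0^N$. By \eqref{v0init} and Sobolev embedding, $\vv_0^N \in H^1(\T^2)^{\otimes N} \subset H^s(\T^2)^{\otimes N}$ almost surely for every $\frac45<s<1$.

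\textbf{Regime I, $\eps \to 0$.} Fix $N$. I rerun the proof of Proposition~\ref{PROP:convv1} verbatim with $\mathbf\Psi_\eps^N$ replaced by $\mathbf\Phi_\eps^N$. The bounds \eqref{PsiN_bdd} and \eqref{PsiN_diff} hold pathwise by Lemma~\ref{LEM:sto2}~(ii)--(iii), the latter through H\"older continuity in $\eps$. The term $\1^j$ is controlled through \eqref{Pe2} with $u_1 = 0$. The a priori bound \eqref{vNTbdd} for $\vv^N$ comes from Proposition~\ref{PROP:NLHN} with $\Phi_0$ in place of $\Psi_0$. The weighted-norm Gronwall estimate together with the continuity argument then gives $\vv_\eps^N \to \vv^N$ almost surely in $C([0,T];H^{s'})$. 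Adding $\Phi_\eps^j \to \Phi_0^j$ in $C_TW^{-\s,\infty}$ (Lemma~\ref{LEM:sto2}~(iii)) gives the first claim on every $[0,T]$, hence in the compact-open topology.

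\textbf{Regime I, $N\to\infty$.} This is \cite[Theorem~4.1]{SSZZ} applied with $\Phi_0^j$. The limiting equation \eqref{mfSNLHv} with zero data has the solution $v^j \equiv 0$, which is unique by Proposition~\ref{PROP:NLH}. It follows that $u^{N,j} \to \Phi_0^j$ in probability. One must check that the hypotheses of Theorem~\ref{THM:conv1}~(iii) hold with $u_0^j=0$. This follows from \eqref{v0init} and Chebyshev: $\PP(\|v_0^{N,j}\|_{H^s}>\delta)\les N^{-1}\delta^{-2}$, and $\E\|\vv_0^N\|_{\A_N H^s}^2 \les N^{-1}$. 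The only new point is that $\Phi_0$ is not stationary from zero; rather, its law is invariant, which is harmless since only moment bounds are used.

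\textbf{Regime II.} For fixed $\eps$, the $N\to\infty$ limit follows from \cite[Theorem~1.6]{LLO} in the form of Theorem~\ref{THM:conv2}~(ii), with data converging to $(0,0)$ in the sense checked above. The limit of $v_\eps^{N,j}$ is the unique solution of \eqref{mfSdNLWv} with zero data, namely $v_\eps^j\equiv0$, so $u_\eps^{N,j}\to\Phi_\eps^j$ in probability. For the second claim no Gronwall or $I$-method argument is needed, since the limits are the linear objects themselves. Lemma~\ref{LEM:sto2}~(iii) gives almost-sure $\al$-H\"older continuity of $\eps\mapsto\Phi_\eps^j$ on $[0,1]$ with values in $C([0,T];W^{-\s,\infty})$ for each $T$. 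Taking $T\in\N$ and intersecting countably many full-measure events yields almost sure convergence in $C(\R_+;H^{-\s})$.

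\textbf{Main obstacle.} I expect the main obstacle to be verifying Lemma~\ref{LEM:sto2}, specifically the uniform-in-$\eps$ bounds on the initial-data part $P_\eps(t)(\phi_0^j,\eps^{-1}\phi_1^j)$. The velocity here is of size $\eps^{-1}$, so one needs the refined estimate \eqref{Dest1-3} and the difference bounds \eqref{Dest2-3} and \eqref{Dest3-3} in Lemma~\ref{LEM:Den}. Since that lemma is already established, the remaining work is routine bookkeeping: confirming that every estimate in Sections~\ref{SEC:regI}--\ref{SEC:regII} used only moment and regularity properties of the stochastic objects, never their zero initial data.
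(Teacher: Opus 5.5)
Your proposal is correct and follows the paper's route. You use the decomposition $u = \Phi + v$ with remainder data $(\vv_0^N, \mathbf 0)$, which tends to zero by \eqref{v0init}, to reduce everything to the arguments behind Theorem~\ref{THM:conv1}~(ii)--(iii) and Theorem~\ref{THM:conv2}~(ii)--(iii), with Lemma~\ref{LEM:sto2} replacing Lemma~\ref{LEM:sto}. Obtaining $\Phi_\eps^j \to \Phi_0^j$ directly from the almost sure H\"older continuity in $\eps$ of Lemma~\ref{LEM:sto2}~(iii) is exactly what gives the almost sure (rather than $L^2(\Om)$) convergence in that last claim.
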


\begin{ackno} \rm
The authors would like to thank Tadahiro Oh for suggesting this problem. 
R.L.~was funded by the Deutsche Forschungsgemeinschaft (DFG, German Research Foundation) -- Project-ID
211504053 -- SFB 1060.
R.L. and S.L. acknowledge support from the Deutsche Forschungsgemeinschaft (DFG, German Research Foundation) under Germany's Excellence Strategy -- EXC-2047/1 -- 390685813.
R.L. and S.L. were also funded by the Deutsche Forschungsgemeinschaft (DFG, German Research 
Foundation) -- Project-ID 539309657 -- SFB 1720.
\end{ackno}


\begin{thebibliography}{99}

\bibitem{BKKLSW}
G.~G.~Batrouni, G.~R.~Katz, A.~S.~Kronfeld, G.~P.~Lepage, B.~Svetitsky, and K.~G.~Wilson,
{\it Langevin simulations of lattice field theories}.
Phys. Rev. D 32 (1985), no. 10, 2736--2747.


\bibitem{BOZ}
\'A.~B\'enyi, T.~Oh, T.~Zhao,
{\it Fractional Leibniz rule on the torus}.
Proc. Amer. Math. Soc. 153 (2025), no. 1, 207--221.


\bibitem{Bla26}
N.~Blassel,
{\it Overdamped limits for Langevin dynamics with position-dependent coefficients via $L^2$-hypocoercivity}.
arXiv:2602.16924 [math.PR].


\bibitem{Bour94}
J.~Bourgain,
{\it Periodic nonlinear Schr\"odinger equation and invariant measures}.
Comm. Math. Phys. 166 (1994), no. 1, 1--26.


\bibitem{Bour96}
J.~Bourgain,
{\it Invariant measures for the 2D-defocusing nonlinear Schr\"odinger equation}.
Comm. Math. Phys. 176 (1996), no. 2, 421--445.


\bibitem{BL26}
C.-E.~Br\'ehier, Z.~Lei,
{\it Strong and weak rates of convergence in the Smoluchowski--Kramers approximation for stochastic partial differential equations}.
arXiv:2604.14752 [math.PR].


\bibitem{BTz08}
N.~Burq, N.~Tzvetkov,
{\it Random data Cauchy theory for supercritical wave equations. I. Local theory}.
Invent. Math. 173 (2008), no. 3, 449--475.


\bibitem{BTz14}
N.~Burq, N.~Tzvetkov,
{\it Probabilistic well-posedness for the cubic wave equation}.
J. Eur. Math. Soc. (JEMS) 16 (2014), no. 1, 1--30.


\bibitem{CB25}
S.~Cerrai, Z.~Brze\'zniak,
{\it Stochastic wave equations with constraints: well-posedness and Smoluchowski-Kramers diffusion approximation}.
Comm. Math. Phys. 406 (2025), no. 9, Paper No. 223, 59 pp.


\bibitem{CD25}
S.~Cerrai, A.~Debussche,
{\it Smoluchowski-Kramers diffusion approximation for systems of stochastic damped wave equations with nonconstant friction}.
Ann. Appl. Probab. 35 (2025), no. 6, 4106--4171.


\bibitem{CF06-1}
S.~Cerrai, M.~Freidlin,
{\it On the Smoluchowski-Kramers approximation for a system with an infinite number of degrees of freedom}.
Probab. Theory Related Fields 135 (2006), no. 3, 363--394.


\bibitem{CF06-2}
S.~Cerrai, M.~Freidlin,
{\it Smoluchowski-Kramers approximation for a general class of SPDEs}.
J. Evol. Equ. 6 (2006), no. 4, 657--689.


\bibitem{CFS17}
S.~Cerrai, M.~Freidlin, M.~Salins,
{\it On the Smoluchowski-Kramers approximation for SPDEs and its interplay with large deviations and long time behavior}.
Discrete Contin. Dyn. Syst. 37 (2017), no. 1, 33--76.


\bibitem{CG20}
S.~Cerrai, N.~Glatt-Holtz,
{\it On the convergence of stationary solutions in the Smoluchowski-Kramers approximation of infinite dimensional systems}.
J. Funct. Anal. 278 (2020), no. 8, 108421, 38 pp.


\bibitem{CS14}
S.~Cerrai, M.~Salins,
{\it Smoluchowski-Kramers approximation and large deviations for infinite dimensional gradient systems}.
Asymptot. Anal. 88 (2014), no. 4, 201--215.


\bibitem{CS16}
S.~Cerrai, M.~Salins,
{\it Smoluchowski-Kramers approximation and large deviations for infinite-dimensional nongradient systems with applications to the exit problem}.
Ann. Probab. 44 (2016), no. 4, 2591--2642.


\bibitem{CS17}
S.~Cerrai, M.~Salins,
{\it On the Smoluchowski-Kramers approximation for a system with infinite degrees of freedom exposed to a magnetic field}.
Stochastic Process. Appl. 127 (2017), no. 1, 273--303.


\bibitem{CX22}
S.~Cerrai, G.~Xi,
{\it A Smoluchowski-Kramers approximation for an infinite dimensional system with state-dependent damping}.
Ann. Probab. 50 (2022), no. 3, 874--904.


\bibitem{CX23}
S.~Cerrai, M.~Xie,
{\it On the small noise limit in the Smoluchowski-Kramers approximation of nonlinear wave equations with variable friction}.
Trans. Amer. Math. Soc. 376 (2023), no. 11, 7651--7689.


\bibitem{CX24}
S.~Cerrai, M.~Xie,
{\it On the small-mass limit for stationary solutions of stochastic wave equations with state dependent friction}.
Appl. Math. Optim. 90 (2024), no. 1, Paper No. 7, 48 pp.


\bibitem{CX25}
S.~Cerrai, M.~Xie,
{\it The small-mass limit for some constrained wave equations with nonlinear conservative noise}.
Electron. J. Probab. 30 (2025), Paper No. 25, 27 pp.


\bibitem{CKSTT02}
J.~Colliander, M.~Keel, G.~Staffilani, H.~Takaoka, T.~Tao,
{\it Almost conservation laws and global rough solutions to a nonlinear Schrödinger equation}.
Math. Res. Lett. 9 (2002), no. 5-6, 659--682.


\bibitem{DPD}
G.~Da Prato, A.~Debussche,
{\it Strong solutions to the stochastic quantization equations}.
Ann. Probab. 31 (2003), no. 4, 1900--1916.


\bibitem{DPT}
G.~Da Prato, L.~Tubaro,
{\it Wick powers in stochastic PDEs: an introduction}.
Quantum and stochastic mathematical physics, 1–15.
Springer Proc. Math. Stat., 377.


\bibitem{DPZ}
G.~Da Prato, J.~Zabczyk,
{\it Stochastic equations in infinite dimensions}.
Second edition.
Encyclopedia Math. Appl., 152.
Cambridge University Press, Cambridge, 2014. xviii+493 pp.


\bibitem{DS}
M.~G.~Delgadino, S.~A.~Smith,
{\it Mass generation for the two dimensional O(N) Linear Sigma Model in the large N limit}.
arXiv:2601.19630 [math.PR].


\bibitem{DKB}
S.~Duane, A.~D.~Kennedy, B.~J.~Pendleton, D.~Roweth,
{\it Hybrid Monte Carlo}.
Phys. Lett. B 195 (1987), no. 2, 216--222.


\bibitem{FHI}
R.~Fukuizumi, M.~Hoshino, T.~Inui,
{\it Non relativistic and ultra relativistic limits in 2D stochastic nonlinear damped Klein-Gordon equation}.
Nonlinearity 35 (2022), no. 6, 2878--2919.
{\it Corrigendum: Non relativistic and ultra relativistic limits in 2D stochastic nonlinear damped Klein-Gordon equation (2022 Nonlinearity 35 2878)}.
Nonlinearity 35 (2022), no. 10, C17--C19.


\bibitem{GJ}
J.~Glimm, A.~Jaffe,
{\it Quantum physics.
A functional integral point of view}.
Springer-Verlag, New York-Berlin, 1981. xx+417 pp.


\bibitem{GH19}
M.~Gubinelli, M.~Hofmanov\'a,
{\it Global solutions to elliptic and parabolic $\Phi^4$ models in Euclidean space}.
Comm. Math. Phys. 368 (2019), no. 3, 1201--1266.


\bibitem{GIP}
M.~Gubinelli, P.~Imkeller, N.~Perkowski,
{\it Paracontrolled distributions and singular PDEs}.
Forum Math. Pi 3 (2015), e6, 75 pp.


\bibitem{GKO}
M.~Gubinelli, H.~Koch, T.~Oh,
{\it Renormalization of the two-dimensional stochastic nonlinear wave equations}.
Trans. Amer. Math. Soc. 370 (2018), no. 10, 7335--7359.


\bibitem{GKOT}
M.~Gubinelli, H.~Koch, T.~Oh, L.~Tolomeo,
{\it Global dynamics for the two-dimensional stochastic nonlinear wave equations}.
Int. Math. Res. Not. IMRN. (2022), no. 21, 16954--16999. 


\bibitem{GV25}
B.~Guelmame, J.~Vovelle,
{\it A Smoluchowski-Kramers approximation for the stochastic variational wave equation}.
arXiv:2511.13567 [math.AP].


\bibitem{GOTW}
T.~Gunaratnam, T.~Oh, N.~Tzvetkov, H.~Weber,
{\it Quasi-invariant Gaussian measures for the nonlinear wave equation in three dimensions}.
Probab. Math. Phys. 3 (2022), no. 2, 343--379.


\bibitem{Kra}
H.~A.~Kramers,
{\it Brownian motion in a field of force and the diffusion model of chemical reactions}.
Physica 7 (1940), 284--304.


\bibitem{LLO}
R.~Liu, S.~Liu, T.~Oh,
{\it Hyperbolic $O(N)$ linear sigma model and its mean-field limit}.
arXiv:2511.21950 [math.AP].


\bibitem{LLX26}
S.~Liu, W.~Liu, L.~Xu,
{\it Long term convergence rate of Smoluchowski-Kramers approximation by Stein's method}.
arXiv:2602.00875 [math.PR].


\bibitem{McK}
H.~P.~McKean,
{\it Statistical mechanics of nonlinear wave equations. IV. Cubic Schr\"odinger}.
Comm. Math. Phys. 168 (1995), no. 3, 479--491.
{\it Erratum: ``Statistical mechanics of nonlinear wave equations. IV. Cubic Schr\"odinger''}.
Comm. Math. Phys. 173 (1995), no. 3, 675.


\bibitem{MW17}
J.-C.~Mourrat, H.~Weber,
{\it Global well-posedness of the dynamic $\Phi^4$ model in the plane}.
Ann. Probab. 45 (2017), no. 4, 2398--2476.


\bibitem{Neal}
R.~M.~Neal,
{\it MCMC using Hamiltonian dynamics}.
Handbook of Markov chain Monte Carlo, 113--162. 
Chapman \& Hall/CRC Handb. Mod. Stat. Methods.
CRC Press, Boca Raton, FL, 2011.


\bibitem{Nel}
E.~Nelson,
{\it A quartic interaction in two dimensions}.
Mathematical Theory of Elementary Particles (Proc. Conf., Dedham, Mass., 1965), pp. 69--73.
MIT Press, Cambridge, Mass.-London, 1966.




\bibitem{OOT1}
T.~Oh, M.~Okamoto, L.~Tolomeo,
{\it Focusing $\Phi^4_3$-model with a Hartree-type nonlinearity}.
Mem. Amer. Math. Soc. 304 (2024), no. 1529, vi+143 pp.


\bibitem{OOT2}
T.~Oh, M.~Okamoto, L.~Tolomeo,
{\it Stochastic quantization of the $\Phi_3^3$-model}.
Mem. Eur. Math. Soc., 16.
EMS Press, Berlin, 2025. viii+145 pp.


\bibitem{OOTz}
T.~Oh, M.~Okamoto, N.~Tzvetkov,
{\it Uniqueness and non-uniqueness of the Gaussian free field evolution under the two-dimensional Wick ordered cubic wave equation}.
Ann. Inst. Henri Poincar\'e Probab. Stat. 60 (2024), no. 3, 1684--1728.


\bibitem{ORTz}
T.~Oh, T.~Robert, N.~Tzvetkov,
{\it Stochastic nonlinear wave dynamics on compact surfaces}.
Ann. H. Lebesgue 6 (2023), 161--223.


\bibitem{OTh18}
T.~Oh, L.~Thomann,
{\it A pedestrian approach to the invariant Gibbs measures for the 2-d defocusing nonlinear Schr\"odinger equations}.
Stoch. Partial Differ. Equ. Anal. Comput. 6 (2018), no. 3, 397--445.


\bibitem{OTWZ}
T.~Oh, L.~Tolomeo, Y.~Wang, G.~Zheng,
{\it Hyperbolic $P(\Phi)_2$-model on the plane}.
Comm. Math. Phys. 407 (2026), no. 2, Paper No. 34, 84 pp.


\bibitem{PW}
G.~Parisi, Y.~S.~Wu,
{\it Perturbation theory without gauge fixing}.
Sci. Sinica 24 (1981), no. 4, 483--496.


\bibitem{RSS}
S.~Ryang, T.~Saito, K.~Shigemoto,
{\it Canonical stochastic quantization}.
Progr. Theoret. Phys. 73 (1985), no. 5, 1295--1298.


\bibitem{Sa19}
M.~Salins,
{\it Smoluchowski-Kramers approximation for the damped stochastic wave equation with multiplicative noise in any spatial dimension}.
Stoch. Partial Differ. Equ. Anal. Comput. 7 (2019), no. 1, 86--122.


\bibitem{Shen}
H.~Shen,
{\it A stochastic PDE approach to large $N$ problems in quantum field theory: a survey}.
J. Math. Phys. 63 (2022), no. 8, Paper No. 081103, 21 pp.


\bibitem{SSZZ}
H.~Shen, S.~A.~Smith, R.~Zhu, X.~Zhu,
{\it Large $N$ limit of the $O(N)$ linear sigma model via stochastic quantization}.
Ann. Probab. 50 (2022), no. 1, 131--202.


\bibitem{SZZ}
H.~Shen, R.~Zhu, X.~Zhu,
{\it Large $N$ limit of the $O(N)$ linear sigma model in 3D}.
Comm. Math. Phys. 394 (2022), no. 3, 953--1009.


\bibitem{Smo}
M.~Smoluchowski,
{\it Drei Vortage \"uber Diffusion Brownsche Bewegung und Koagulation von Kolloidteilchen}.
Phys. Zeit. 17, 557--585 (1916).


\bibitem{Tol}
L.~Tolomeo,
{\it Global well posedness of the two-dimensional stochastic nonlinear wave equation on an unbounded domain}.
Ann. Probab. 49 (2021), no. 3, 1402--1426.


\bibitem{Tol2}
L.~Tolomeo,
{\it Ergodicity for the hyperbolic $P(\Phi)_2$-model}.
arXiv:2310.02190 [math.PR].


\bibitem{TW18}
P.~Tsatsoulis, H.~Weber,
{\it Spectral gap for the stochastic quantization equation on the 2-dimensional torus}.
Ann. Inst. Henri Poincar\'e Probab. Stat. 54 (2018), no. 3, 1204--1249.


\bibitem{Tz10}
N.~Tzvetkov,
{\it Construction of a Gibbs measure associated to the periodic Benjamin-Ono equation}.
Probab. Theory Related Fields 146 (2010), no. 3-4, 481--514.


\bibitem{Wil}
K.~G.~Wilson,
{\it Quantum field-theory models in less than 4 dimensions}.
Phys. Rev. D 7 (1973), 2911--2926.


\bibitem{XZ26}
L.~Xie, X.~Zhang,
{\it Uniform-in-time diffusion approximations for multiscale stochastic systems}.
arXiv:2604.00692 [math.PR].


\bibitem{Zine23}
Y.~Zine,
{\it Convergence problems for singular stochastic dynamics}.
Ph.D. thesis (2023).


\bibitem{Zine}
Y.~Zine,
{\it Smoluchowski-Kramers approximation for singular stochastic wave equations in two dimensions}.
Electron. J. Probab. 30 (2025), Paper No. 88, 49 pp.

\end{thebibliography}
\end{document}